\documentclass[11pt,twoside]{amsart}



   %
   %


      \usepackage{amssymb}
      \usepackage{graphicx}
      \usepackage{dcolumn,indentfirst,cite,color,hyperref}


   %

      \theoremstyle{plain}
     \newtheorem{thm}{Theorem}[section]
\newtheorem{lem}[thm]{Lemma}

\newtheorem{conj}[thm]{Conjecture}
\newtheorem{cor}[thm]{Corollary}
\newtheorem{pro}[thm]{Proposition}
\newtheorem{rmk}[thm]{Remark} 
\newtheorem{fact}[thm]{Fact}

\newtheorem{defi}[thm]{Definition}

\newtheorem{AFM}{{
\sc Claim}}

\def\prp#1){\hfill\break\hbox to\parindent{\hss{
\bf#1)}\enspace}
\ignorespaces}

\newcommand{\bess}{\begin{eqnarray*}}
\newcommand{\eess}{\end{eqnarray*}}
\input xy
\xyoption{all}




\usepackage{fancyhdr} 
\fancyhf{}            
\fancyhead[OR,EL]{\thepage}
\fancyhead[OC]{Moduli space of cubic Newton maps}
 \fancyhead[EC]{P. Roesch,  X. Wang and Y. Yin}
\pagestyle{fancy}


\begin{document}

%




\author{Pascale Roesch}
\address{Centre de Math\' ematiques et Informatique (CMI),
Aix-Marseille Universit\'e, Technop\^ole Ch\^ateau-Gombert,
39, rue F. Joliot Curie, 13453 Marseille Cedex 13, FRANCE}
\email{pascale.roesch@cmi.univ-mrs.fr}

\author{Xiaoguang Wang}
\address{School of Mathematical Sciences, Zhejiang University, Hangzhou, 310027, China}
\email{wxg688@163.com}

 \author{Yongcheng Yin}
 \address{School of Mathematical Sciences, Zhejiang University, Hangzhou, 310027, China}
 \email{yin@zju.edu.cn}

    \title[]{Moduli space of cubic Newton maps}

       \begin{abstract} 
         In this article, we study the topology and bifurcations of  the moduli space $\mathcal{M}_3$ of  cubic Newton maps. It's a subspace of the moduli space of cubic rational maps, carrying the Riemann orbifold  structure   $(\mathbb{\widehat{C}}, (2,3,\infty))$.
We prove two results:  

 $\bullet$ The boundary of the unique unbounded  hyperbolic component is a Jordan arc  and the  boundaries  of all other   hyperbolic components are Jordan curves;

 $\bullet$ The Head's angle map is surjective and monotone.   
 The fibers of this map are characterized  completely.
  
 The first result  is a  moduli space analogue of the first author's dynamical regularity theorem \cite{Ro08}. The second result  confirms a conjecture  of Tan Lei.
   \end{abstract}

   \subjclass[2010]{Primary 37F45; Secondary 37F10, 37F15}

   \keywords{parameter space, cubic Newton map, hyperbolic component, Jordan curve}

     \date{\today}


   \maketitle


\section{Introduction}\label{int}

Let $P$ be a polynomial of degree $d\geq2$. It can be written as
$$P(z)=a_d z^d+a_{d-1}z^{d-1}+\cdots+ a_1z+a_0,$$
where $a_0,\cdots, a_d$ are  complex numbers and $a_d\neq0$.
 The {\it Newton's method} $N_P$ of $P$ is defined by
$$N_P(z)=z-\frac{P(z)}{P'(z)}.$$
The method,  also known as the {\it Newton-Raphson method} named after Isaac Newton and Joseph Raphson,  was
first proposed to find successively better approximations to the roots (or zeros) of a real-valued function.
 In 1879,  Arthur Cayley \cite{C} first  noticed the difficulties in generalizing the Newton's method to complex roots of polynomials with degree greater than 2 and complex initial values.  This opened the way to  study  the theory of iterations of holomorphic functions, as initiated by Pierre Fatou and Gaston Julia around 1920.
In the literature, $N_P$ is also called the {\it Newton map} of $P$.  The study of Newton maps attracts a lot of people both in complex dynamical systems and in computational mathematics.

 \subsection{What is known} The Newton maps can be viewed as a dynamical system as well as a root-finding algorithm. Therefore, it provides a rich source to study  from various purposes.
 Here is an incomplete list of what's known for
  Newton maps from different views: 

\vspace{4pt}

{\it Topology  of Julia set:} The simple connectivity of the immediate attracting basins of cubic Newton maps was first proven by  Przytycki \cite{P}.  Shishikura \cite{Sh} proved that the  Julia sets of the  Newton maps  of   polynomials are always connected by means of quasiconformal surgery.
Applying  the Yoccoz puzzle theory,  Roesch \cite{Ro08} proved the local connectivity of the Julia sets for most cubic Newton maps. 

The combinatorial structure of the Julia sets of cubic Newton maps was first studied by Janet Head \cite{He}.  With the help of Thurston's theory on characterization of rational maps,  Tan Lei \cite{Tan97} showed that every post-critically finite cubic Newton map can be constructed by mating  two cubic polynomials; Building on the thesis \cite{Mi},  Lodge, Mikulich and  Schleicher \cite{LMS1,LMS2} gave a combinatorial classification of post-critically
finite Newton maps.

\vspace{4pt}

{\it Root-finding algorithm:} As a root-finding algorithm, Newton's method is effective for quadratic polynomials but may fail in the cubic case.
 McMullen \cite{Mc}  exhibited a generally convergent algorithm (apparently different from Newton's method) for cubics  and proved that there are no generally convergent
purely iterative algorithms for solving polynomials of degrees four or more. On the other hand, by generalizing  a previous result of Manning \cite{Ma},  Hubbard, Schleicher  and Sutherland \cite{HSS} proved that for every $d\geq2$,
  there is a finite universal set $S_d$ with cardinality at most ${O}(d\log^2 d)$ such that for any root of any   suitably normalized   polynomial of degree $d$, there is an  initial point in $S_d$ whose orbit converges to this root under iterations of its Newton map.
 For further extensions of these results, see  \cite{S} and the references therein.

\vspace{4pt}


{\it Beyond rational maps:} The dynamics of Newton's method for transcendental entire maps are intensively studied by many authors. Bergweiler
\cite{Be} proved a no-wandering-domain theorem for transcendental Newton maps
that satisfy some finiteness assumptions. Haruta \cite{Ha} showed  that when the Newton's method is applied to the exponential function of the
form $Pe^Q$ (where $P,Q$ are polynomials), the attracting basins of roots have finite area.  For the  Newton maps of entire functions,   Mayer and Schleicher \cite{MS}  showed
that the immediate basins are simply connected
and unbounded;  Buff, R\"uckert and Schleicher  further   investigated the dynamical  properties of  these maps, see \cite{BR, RS}.
For the higher dimensional cases,
Hubbard and  Papadopol \cite{HP}, Roeder \cite{Ro} studied the Newton's methods for two complex variables. 

 \subsection{Main results}

Most above known results share a common feature. They focus on the dynamical aspect of the Newton maps. In this paper, we study  the 
 topology and bifurcations  in the moduli space.

We first give some notations.
Let $f$ be a rational map, ${\rm Aut}(\mathbb{\widehat{C}})$ be the group of  M\"obius transformations. We use
$[f]=\{\phi f \phi^{-1};\phi\in {\rm Aut}(\mathbb{\widehat{C}})\}$ to denote the M\"obius conjugate class of $f$.

  It is worth observing that for any polynomial  $P$ of degree $d\geq2$, a  simple root of $P$ corresponds to  a super-attracting fixed point of its  Newton map $N_P$, and that $N_P$ and $P$ have the same degree   if and only if  $P$ has $d-1$ distinct roots.
  The moduli space of degree $d$ Newton maps, denoted by   $\mathcal{M}_d$,  is defined as the following set
  $$\{[N_P]; P \text{ is a degree $d$ polynomial with $d-1$ distinct roots}\}\footnote{Note that our definition excludes the degree-$d$ Newton maps arising from  polynomials of degree $>d$.}$$
  endowed with an orbifold structure.
   A point $\tau=[f]\in\mathcal{M}_d$ is said {\it hyperbolic } if the rational map $f$ is  {\it hyperbolic} (i.e. all critical orbits  of $f$ are attracted by the attracting cycles). 
   It's known that the hyperbolic set   $\mathcal{M}_d^{hyp}=\{\tau\in \mathcal{M}_d; \tau \text{ is hyperbolic}\}$ is an open subset of $\mathcal{M}_d$. 
   A connected component of 
  $\mathcal{M}_d^{hyp}$ is  called  a {\it hyperbolic component}.

The space $\mathcal{M}_2$  is trivial, it consists of  a singleton because every Newton map of a quadratic polynomial with distinct roots is M\"obius conjugate to the square map $z^2$, see \cite{B}. The moduli space $\mathcal{M}_3$ of Newton maps of cubic polynomials with distinct roots is a  non-trivial space  with the lowest degree.  It carries a Riemann orbifold  structure  (see $\S$ \ref{m-s}) 
and  it has  a unique unbounded hyperbolic component $\mathcal{H}$. The component  $\mathcal{H}$
 consists of the M\"obius conjugate classes of cubic Newton maps $f$ for which the free critical point is contained in the immediate basin of a polynomial root. By quasiconformal surgery (see\cite[Remark 2.2]{Ro08}), one sees that $\mathcal{H}$ consists of points $\tau=[f]$ for which the cubic Newton map $f$ is quasiconformally conjugate to the cubic polynomial $z^3+3z/2$   near its Julia set,  see Figure 3 (right) of the current paper.  Thus all maps in $\mathcal{H}$ have polynomial dynamical behaviors, and they are not {\it genuine} cubic rational maps.
 The picture of the  moduli space $\mathcal{M}_3$  (with a suitable parameterization)   first appeared in Curry, Garnett and  Sullivan's paper \cite[Fig. 3.1]{CGS}. In \cite{Tan97}, Tan Lei gave some descriptions of
this space as well as the hyperbolic components.

The current paper  is the   continuation of the first named author's work, where the following fundamental result \cite[Theorem 6]{Ro08} is proven:

\begin{thm}[Roesch \cite{Ro08}] \label{dyn-jordan}
\label{main0}  For any cubic Newton map in $\mathcal{M}_3\setminus \mathcal{H}$ and with no Siegel disk,  all the  Fatou components are bounded by Jordan curves.
\end{thm}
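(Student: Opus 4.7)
The plan is to deploy the Yoccoz puzzle machinery, adapted from polynomial dynamics to this cubic rational setting. The first step is to reduce the theorem to a statement about the three immediate basins of the roots of $P$. Since Sullivan's no-wandering-domains theorem applies, Shishikura's theorem rules out Herman rings for Newton maps of polynomials, and the hypothesis excludes Siegel disks, every Fatou component of $N_P$ is eventually iterated into either one of the three immediate root basins $B_1,B_2,B_3$ or into a (super)attracting or parabolic basin associated with the free critical point $c_f$. Because the $N_P$-preimage of a Jordan domain is again a Jordan domain when no critical point lies inertly on its boundary, the task reduces to showing that each periodic Fatou component is bounded by a Jordan curve; pulling back then handles every eventual preimage.

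Next I would construct the Yoccoz puzzle using B\"ottcher coordinates at each super-attracting fixed point to transport equipotentials and internal rays into $B_i$. The depth-zero graph $G_0$ is built from one equipotential in each $B_i$ together with finitely many internal rays that land at common repelling (pre)periodic points of two or three of the $\partial B_i$; the existence of such co-landing rays follows from the combinatorial structure of Newton basin boundaries first analysed by Head and Tan. Depth-$n$ puzzle pieces $P_n(z)$ are the connected components of $N_P^{-n}(\mathbb{\widehat{C}}\setminus G_0)$ containing $z$. The heart of the proof is the shrinking property: for every $z\in\partial B_i$ outside the grand orbit of $G_0$, the nested pieces $P_n(z)$ shrink to $\{z\}$. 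Once shrinking is established, a Carath\'eodory-type argument performed in the B\"ottcher coordinate promotes local connectivity of $\partial B_i$ to its being a Jordan curve.

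To establish shrinking I would split the argument according to the combinatorics of the orbit of $c_f$. In the non-renormalizable case, a Branner-Hubbard tableau together with a Gr\"otzsch-type modulus estimate yields geometric contraction of the annuli in the nest. In the renormalizable case I would extract a polynomial-like restriction from an appropriate first-return map around $c_f$; any puzzle piece that fails to shrink is then trapped in the filled Julia set of this polynomial-like map, so the problem reduces to local connectivity for a polynomial-like map of strictly lower degree. Finite renormalization cascades terminate in the non-renormalizable regime, while the infinitely renormalizable regime is controlled by Kahn-Lyubich style a priori bounds, which give local connectivity of the post-critical set. Periodic Fatou components attached to $c_f$ are treated by applying the same polynomial-like framework directly at $c_f$.

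The principal obstacle I anticipate is controlling the modulus growth when $c_f$ is persistently recurrent yet only finitely renormalizable, because the simultaneous presence of three super-attracting cycles interacting with the puzzle disrupts the clean principal-nest mechanism familiar from the quadratic polynomial setting. Careful bookkeeping of which root basin each piece in the nest around $z$ is adjacent to, together with a case analysis ruling out hidden Siegel-like combinatorics (using the no-Siegel-disk hypothesis) and ruling out obstructing annular neighborhoods of the post-critical set, should be expected to absorb most of the technical work.
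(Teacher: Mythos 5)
This theorem is not proved in the present paper: it is quoted from \cite{Ro08}, and Sections \ref{a-y} and \ref{yoccoz} here only recall the ingredients of that proof. Your skeleton (reduce to periodic components, build a Yoccoz puzzle, prove shrinking, split on renormalizability) is the right one, but two steps would not go through as written. First, the depth-zero graph. You build it from equipotentials and ``finitely many internal rays that land at common repelling (pre)periodic points of two or three of the $\partial B_i$'', deferring their existence to the Head--Tan combinatorics. That combinatorics (the Head's angle, the set $\Theta_\lambda$) only controls co-landing of internal rays from the two \emph{adjacent} basins $B^1_\lambda$ and $B^2_\lambda$; apart from $\infty$ and its preimages it produces no cuts of $\partial B^3_\lambda$, and a graph made of internal rays alone need not be admissible (non-degenerate annuli around the free critical point, graph disjoint from the critical orbit). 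This is exactly why \cite{Ro08} introduces the \emph{articulated rays} of Theorem \ref{a-ray}: Jordan arcs obtained by concatenating countably many closures of internal rays, stemming from $B^2_\lambda$ and converging to the landing point of $R^3_\lambda(1/7)$, subject to a $3$-periodicity relation. One then needs \emph{two} candidate graphs, $Y^I_\lambda(\kappa)$ and $Y^{II}_\lambda(\zeta,\eta)$, and a proof that at least one of them avoids the critical orbit and is admissible (Theorem \ref{newton-puzzle}). Your proposal has no substitute for this construction, and it is the heart of the matter.

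Second, the renormalizable case. You propose to control the infinitely renormalizable regime by Kahn--Lyubich a priori bounds, but such bounds are not known for arbitrary combinatorics of quadratic-like maps, so this would leave the theorem unproved for infinitely many parameters. The argument of \cite{Ro08} does not require local connectivity of the small Julia set at all: one shows that the small filled Julia set ${\rm Imp}(c)$ produced by Theorem \ref{Y-P} meets each $\partial B^\varepsilon_\lambda$ in at most one point, so that ${\rm Imp}(x)\cap\partial B^\varepsilon_\lambda=\{x\}$ for every $x\in\partial B^\varepsilon_\lambda$ and the basin boundaries are Jordan curves regardless of the internal structure of the renormalization (this is why Theorem \ref{roesch-jordan} needs no Siegel hypothesis). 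The remaining Fatou components, carried by the renormalization itself, are attracting or parabolic immediate basins of a quadratic-like map and are Jordan domains by classical theory; the no-Siegel-disk hypothesis enters precisely there, not in ruling out ``Siegel-like combinatorics'' inside the puzzle.
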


Our first  main result  is an analogue  of Theorem \ref{dyn-jordan} in the moduli space:

\begin{thm}\label{hyp-boundary}
\label{main} In the moduli space $\mathcal{M}_3$ of cubic Newton maps,

1.  the boundary of the  hyperbolic component  $\mathcal{H}$ is a Jordan arc, and

2. the boundaries of all other hyperbolic components  are  Jordan curves.
\end{thm}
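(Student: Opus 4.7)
The overall plan is to treat the unbounded component $\mathcal{H}$ and the bounded hyperbolic components by separate methods, using a phase--parameter correspondence to import Theorem~\ref{dyn-jordan} into the moduli space. After fixing a global parametrization of $\mathcal{M}_3$ (normalizing the three distinct roots of the underlying cubic polynomial so that the space becomes a one-parameter orbifold with signature $(2,3,\infty)$), the free critical point becomes a holomorphic function of the parameter, and the hyperbolic components fall into three natural classes: the unbounded polynomial-like component $\mathcal{H}$; the \emph{capture} components, in which the free critical orbit eventually lands in a fixed super-attracting basin; and the \emph{renormalizable} components, in which the free critical point lies in a periodic Fatou cycle disjoint from the root basins.

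For the unbounded component $\mathcal{H}$, the plan is to use quasiconformal surgery: every $f$ with $[f]\in\mathcal{H}$ is quasiconformally conjugate to the model polynomial $z^{3}+3z/2$ near its Julia set, so I expect a canonical holomorphic parametrization $\Phi:\mathcal{H}\to\mathbb{D}^{*}$ (recording, for instance, the B\"ottcher coordinate of the free critical value in the captured basin) which extends continuously to the boundary. Injectivity and surjectivity of the boundary extension would follow from rigidity of parabolic and Misiurewicz-type boundary parameters, together with a normal-families and Carath\'eodory argument, yielding that $\partial\mathcal{H}$ is a single Jordan arc joining a distinguished landing parameter to the cusp $\infty\in\widehat{\mathbb{C}}$.

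For a bounded hyperbolic component $\mathcal{U}$, the plan is to construct a nested sequence of \emph{parameter puzzle pieces} $\{\mathcal{P}_n(\tau_0)\}$ around every $\tau_0\in\partial\mathcal{U}$, obtained by transporting the dynamical puzzles of \cite{Ro08} via the phase--parameter dictionary: the $n$-th parameter piece is, roughly, the set of parameters for which the free critical value lies in a prescribed dynamical puzzle piece of a chosen reference map. Since $\mathcal{U}$ is expected to be conformally a disk via the standard multiplier (or B\"ottcher) map of a hyperbolic component, Theorem~\ref{dyn-jordan} applied at the center of $\mathcal{U}$ ensures that the corresponding dynamical Fatou components are Jordan disks, and this regularity should transport through a holomorphic motion of the puzzle boundaries. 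Proving that $\bigcap_{n}\overline{\mathcal{P}_n(\tau_0)}=\{\tau_0\}$ then yields local connectivity of $\partial\mathcal{U}$, which combined with simple connectivity of $\mathcal{U}$ gives a Jordan curve.

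The hard part will be the parameter-puzzle shrinking at boundary parameters. For non-renormalizable boundary points one has to adapt the \emph{a priori} annulus-moduli estimates of \cite{Ro08} from the dynamical to the parameter setting, presumably via a Lyubich--Yampolsky style holomorphic-motion argument; for renormalizable boundary points one must combine this with a Douady--Hubbard--McMullen straightening of a Mandelbrot-like copy sitting inside $\mathcal{M}_3$ and reduce to the known local connectivity of the Mandelbrot set at its hyperbolic components. Controlling the interaction between these two regimes at accumulation parameters, and in particular ruling out non-trivial decorations on $\partial\mathcal{U}$ coming from the second critical orbit of the Newton map, is where I expect the main technical obstacle to lie.
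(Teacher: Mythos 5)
Your proposal takes a genuinely different route from the paper's. The introduction explicitly states that a parapuzzle approach was attempted in the thesis \cite{R97} and that the present paper deliberately uses \emph{methods different from parapuzzle techniques}; your plan for the bounded components — transporting dynamical puzzle pieces to parameter pieces $\mathcal{P}_n(\tau_0)$ and proving $\bigcap_n\overline{\mathcal{P}_n(\tau_0)}=\{\tau_0\}$ via phase–parameter annulus-moduli transfer — is precisely the route the authors abandoned. What the paper does instead is prove a \emph{dynamical} rigidity theorem (Theorem \ref{rigidity1}: two parameters whose internal rays of the same angle $t$ both land at the free critical point must coincide), established by building a topological conjugacy from Yoccoz puzzles and upgrading it to a quasiconformal one via the Kozlovski–Shen–van Strien QC-criterion; local connectivity of $\partial\mathcal{H}_0^{\varepsilon}$ then follows from an \emph{impression} argument (Lemma \ref{land} plus Theorem \ref{jordan}), not from parameter-puzzle shrinking. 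For the capture components, the paper avoids the moduli transfer entirely and uses local stability of articulated rays together with a holomorphic-motion–and–lifting argument (Lemmas \ref{imp-h}, \ref{imp-2}, Theorem \ref{capture-jordan}). For the Type D (renormalizable) components, the paper already disposes of them in Theorem \ref{1c} by showing the multiplier map extends to a homeomorphism $\overline{\mathcal{B}}\to\overline{\mathbb{D}}$ — an elementary argument that needs no Mandelbrot-copy straightening nor any MLC input, which your proposal would require.

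There is also a genuine gap in your treatment of the unbounded component. You assert that injectivity of the boundary extension of $\Phi$ \emph{follows from rigidity of parabolic and Misiurewicz-type boundary parameters}, but for a generic (irrational, non-preperiodic under doubling) angle $t$, the landing parameter $\nu(t)\in\partial\mathcal{H}$ corresponds to a map whose free critical point lies on $\partial B_\lambda$ with an \emph{infinite, aperiodic} forward orbit — such maps are neither parabolic nor Misiurewicz, and they fill out all but a countable subset of $\partial\mathcal{H}$. Normal families and Carath\'eodory give you nothing here without an underlying uniqueness statement, and Thurston's theorem is unavailable because the maps are post-critically infinite. This is exactly where the paper spends its main technical effort: the irrational case of Theorem \ref{rigidity1}, which requires combining the Head's-angle combinatorics (to match the puzzle structures of the two candidate maps), the articulated-ray construction (to even have the puzzles), the enhanced-nest/complex-bounds machinery from \cite{KSS,KS,QY}, and the bounded-shape QC-criterion. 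Without an explicit replacement for this step, your proof of the Jordan-arc statement for $\partial\mathcal{H}$ does not close.
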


Here, we say a set  $\gamma$ is a {\it Jordan curve} (resp. {\it Jordan arc}) if it is homeomorphic to the circle $\mathbb{S}$
 (resp. the open interval $(0,1)$). 
 The reason that  $\partial \mathcal{H}$ is a  Jordan arc rather than a Jordan curve is that $\partial \mathcal{H}$
stretches towards the infinity `$\infty$' (abstract point where the  cubic Newton maps degenerate in the moduli space). This  $\infty$ is  exactly  $[{(2z^2-z)}/{(3z-2)}]$, the M\"obius conjugate class of  the Newton
map of cubic polynomial with double roots, say $z^2(z-1)$.  Nevertheless, the one-point compactification $\overline{\mathcal{M}_3}=\mathcal{M}_3\cup \{\infty\}$ of $\mathcal{M}_3$ is a topological sphere and we will see in Section \ref{rigidity} that $\partial \mathcal{H}\cup \{\infty\}$ is a Jordan curve in $\overline{\mathcal{M}_3}$.

 Theorem \ref{hyp-boundary}  
  is  partially  proven by the first named author in
 her thesis \cite{R97}, using parapuzzle techniques. The result and a sketch of proof  were announced in \cite{Ro99}. 
 The current paper will present a complete proof, with methods different from parapuzzle techniques, allowing us to treat all hyperbolic components.
 The strategy of the proof  
  follows the  treatment of the McMullen maps in \cite{QRWY}.  The difference is,  in the  McMullen map case, both the dynamical  plane and the parameter space have  rich symmetries,  allowing us to handle the combinatorial structure of the maps easily, however, for the cubic Newton maps, both the dynamical  plane and the parameter space  lack the  symmetries, we need exploit    
  {\it the Head's angle}  (see below) to classify the maps with different  combinatorial structure. The  complexity of  the combinatorics  makes the proof more delicate.

For each cubic Newton map $f$ with $[f]\in \mathcal{M}_3\setminus \mathcal{H}$, one can associate $f$   canonically with a  combinatorial number, {\it the Head's angle} $\boldsymbol{h}(f)\in(0,1/2]$  (see Section \ref{h-t} for precise definition).  As one will see in Section \ref{head}, this number characterizes   how and where the two adjacent immediate  basins of roots of the polynomial defining  $f$ touch.
It's known \cite{He,R97,Tan97} that  $\boldsymbol{h}(f)$
 is  contained in the set $\Xi$, which is defined by
 $$\Xi=\{\theta\in (0,1/2] ; 2^k\theta\in [\theta,1] \   ({\rm mod}  \ \mathbb{Z})  \text{ for all } k\geq0\}.$$
The set $\Xi\cup\{0\}$ is known to be  closed, perfect, totally disconnected and with Lebesgue measure zero, see \cite[Prop 2.16]{Tan97}.
 Tan Lei \cite{Tan97} proved that  every  rational number $\theta\in \Xi$ can be realized as the Head's angle of some cubic Newton map, by means of  mating  cubic polynomials  and applying   Thurston's Theorem \cite{DH2}.
 She conjectured \cite[p.229, Remark]{Tan97} that each irrational  angle $\theta\in \Xi$ can also be realized as the Head's angle of some cubic Newton map. 

Our second main result confirms this conjecture and characterizes the uniqueness of the realization:

 \begin{thm}\label{main3} Every angle $\theta\in \Xi$ can be realized as the Head's angle of some cubic Newton map. This Newton map is unique  up to M\"obius conjugation if and only if $\theta$ is not periodic under the  doubling map $t\mapsto 2t ({\rm mod} \  \mathbb{Z})$.
\end{thm}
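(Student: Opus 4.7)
The plan is to analyze the Head's angle as a continuous surjection $\boldsymbol{h}\colon \mathcal{M}_3 \setminus \mathcal{H} \to \Xi$ and to determine its fibers via combinatorial renormalization. Surjectivity will be obtained in two steps: Tan Lei's mating theorem \cite{Tan97} already produces realizations for every rational $\theta \in \Xi$; since $\Xi \cup \{0\}$ is perfect, these are dense in $\Xi$, and the irrational values will then be reached by a limiting argument in parameter space. The uniqueness dichotomy will fall out of whether the combinatorial nest associated to $\theta$ is periodic under doubling or not.

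For surjectivity, I would first verify that $\boldsymbol{h}$ is continuous on $\mathcal{M}_3 \setminus \mathcal{H}$. The Head's angle is recorded by the external rays in the two immediate basins of polynomial roots that land at common touching points; small quasiconformal perturbations of $f$ preserve this landing pattern away from degenerate parameters, and this yields continuity of $\boldsymbol{h}$ at every $[f]$. Next, given an irrational $\theta \in \Xi$, I would choose a sequence of rationals $\theta_n \in \Xi$ with $\theta_n \to \theta$ and corresponding realizations $[f_n]$ produced by Tan Lei's mating. The delicate point is to show that $\{[f_n]\}$ remains in a compact subset of $\mathcal{M}_3$ and does not escape to the cusp $\infty \in \overline{\mathcal{M}_3}$. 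Here Theorem \ref{hyp-boundary} is decisive: it forces each $[f_n]$ to lie in the closure of an appropriate hyperbolic component whose boundary is a Jordan curve in $\mathcal{M}_3$, hence uniformly bounded away from $\infty$. A subsequential limit $[f]$ then lies in $\mathcal{M}_3 \setminus \mathcal{H}$, and by continuity $\boldsymbol{h}(f) = \theta$.

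For the fiber structure, I would deploy the Yoccoz puzzle/parapuzzle machinery. When $\theta$ is not periodic under doubling, the combinatorics of the nested puzzle pieces around the free critical point is aperiodic, so no polynomial-like renormalization is possible along the critical orbit, and the nest shrinks to a point. Quasiconformal rigidity arguments along the lines of the proof of Theorem \ref{dyn-jordan} (using the Jordan property of Fatou components) then show that any two maps $[f], [g]$ with $\boldsymbol{h}(f) = \boldsymbol{h}(g) = \theta$ are qc-conjugate via a map whose Beltrami coefficient is supported on an invariant set carrying no invariant line fields; the conjugacy is therefore M\"obius, giving $[f]=[g]$. Conversely, if $\theta$ has period $p$ under doubling, the first-return map to the critical puzzle piece restricts to a quadratic-like map, and Douady--Hubbard straightening produces a Mandelbrot-like family inside $\boldsymbol{h}^{-1}(\theta)$; this fiber is in particular uncountable, and contains Tan Lei's post-critically finite realization as its super-attracting center.

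The main obstacle I foresee is the compactness argument in the surjectivity step: one must rule out the pathology that the realizing parameters $[f_n]$ drift towards $\infty \in \overline{\mathcal{M}_3}$ as $\theta_n$ approaches an irrational limit, which would produce a degenerate limit and no genuine realization of $\theta$. The resolution hinges on a quantitative understanding of how Head's angles are distributed across the parapuzzle structure of $\mathcal{M}_3$, and on Theorem \ref{hyp-boundary} to control the boundaries of the hyperbolic components in which the $[f_n]$ accumulate. A secondary difficulty is adapting the quasiconformal rigidity in the non-periodic case to the cubic Newton setting, where one critical point is already captured by a super-attracting basin and only the free critical point carries meaningful combinatorial information.
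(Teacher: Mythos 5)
Your proposed argument has a genuine gap at its foundation: the claim that $\boldsymbol{h}$ is continuous on $\mathcal{M}_3\setminus\mathcal{H}$ is false. The paper explicitly records this in Theorem~\ref{main2} (``$\boldsymbol{h}$ is continuous at $\lambda$ if and only if $\boldsymbol{h}(\lambda)\in\Xi\setminus\Theta_{dya}$'') and again in Corollary~\ref{head-semi-con}: $\boldsymbol{h}$ is only \emph{lower} semi-continuous, and the points of discontinuity are precisely the parameters with dyadic Head's angle. The reason for the discontinuity is visible from Theorem~\ref{H-tongue1} and Lemma~\ref{char-S}: on one side of the parameter $\lambda_\beta$ with $\boldsymbol{h}(\lambda_\beta)=\beta=p/2^n$, the Head's angle jumps discontinuously up to $\alpha=p/(2^n-1)$. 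Consequently, your limiting argument --- taking $\theta_n\to\theta$, $[f_n]\to[f]$ along a convergent subsequence and concluding $\boldsymbol{h}(f)=\theta$ --- does not go through as stated; lower semi-continuity only gives $\boldsymbol{h}(f)\geq\theta$ from below, and the jumps at dyadic angles must be controlled. This is not a cosmetic issue: the discontinuity set is dense in $\Xi$.

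The paper's actual route sidesteps the limiting argument entirely. Once $\partial\mathcal{H}_0^1$ and $\partial\mathcal{H}_0^2$ are shown to be Jordan curves (the hard part, using Theorem~\ref{rigidity1}), Carath\'eodory's theorem gives homeomorphisms $\nu_1\colon[0,1/2]\to\partial\mathcal{H}_0^1\cap\overline\Omega$ and $\nu_2\colon[1/2,1]\to\partial\mathcal{H}_0^2\cap\overline\Omega$ defined by landing points of parameter rays. Theorem~\ref{H-tongue1} identifies $\nu_1(t)=\nu_2(1-t)$ for every $t\in\partial(\Xi)$, and Corollary~\ref{H-tongue2} computes $\boldsymbol{h}(\nu_1(t))=t$ for $t\in\partial(\Xi)$; density of $\partial(\Xi)$ in $\Xi$ together with the semi-continuity of $\boldsymbol{h}$ (Corollary~\ref{head-semi-con}), applied only at points where it \emph{is} continuous, then upgrades this to all $t\in\Xi$. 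Surjectivity follows directly, with no compactness worry about escape to the cusp. The fiber description also falls out of Corollary~\ref{H-tongue2} explicitly: a singleton over $\Xi\cap\Theta_{dya}$ and over $\Xi\setminus\partial(\Xi)$, and a closed disk minus one boundary point over $\Xi\cap\Theta_{per}$. Your renormalization picture in the periodic case is a correct heuristic for why the fiber is uncountable, but it does not by itself yield the precise homeomorphism type $\overline{\mathbb D}\setminus\{1\}$, and in the non-periodic case the uniqueness is obtained in the paper by the parameterization, not by a separate qc-rigidity argument (although qc-rigidity, in the form of Theorem~\ref{rigidity1}, is the crucial input making the boundaries Jordan curves in the first place).
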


Since the Head's angle is  invariant under M\"obius conjugation, it induces a map, still denoted by $\boldsymbol{h}$, 
from $\mathcal{M}_3\setminus \mathcal{H}$ to $\Xi$. The map $\boldsymbol{h}$ is defined by sending the  
point $\tau=[f]\in\mathcal{M}_3\setminus \mathcal{H}$ to $\boldsymbol{h}(f)$. For any $\theta\in \Xi$,
 let $\boldsymbol{h}^{-1}({\theta})=\{\tau\in \mathcal{M}_3\setminus\mathcal{H}; \mathcal{\boldsymbol{h}}(\tau)=\theta\}$ be the fibre of $\boldsymbol{h}$ over $\theta$.
 Let $\Theta_{per}$ and $ \Theta_{dya}$ be the set of  periodic and dyadic angles $\theta\in (0,1/2]$  under the doubling map $t\mapsto 2t   ({\rm mod} \  \mathbb{Z})$, respectively.
They are written precisely as
\bess
&\Theta_{per}=\{t\in (0,1/2]; 2^kt=t \ ({\rm mod}  \ \mathbb{Z})  \text{ for some }  k\geq1 \}, & \\
&\Theta_{dya}=\{t\in (0,1/2]; 2^kt=1 \ ({\rm mod} \  \mathbb{Z})  \text{ for some }  k\geq1 \}. &
\eess
It's easy to check that $\Theta_{dya}\setminus\Xi\neq \emptyset $ and $\Theta_{per}\setminus\Xi\neq \emptyset$ (e.g. $5/16\in \Theta_{dya}\setminus\Xi, \   2/7\in \Theta_{per}\setminus\Xi$). With these notations, 
 Theorem \ref{main3} can be reformulated in terms of the mapping property of $\boldsymbol{h}$:

\begin{thm}\label{main2} The Head's angle map $\boldsymbol{h}:\mathcal{M}_3\setminus\mathcal{H}\rightarrow \Xi$ is surjective and  monotone\footnote{A map
 is said {\it monotone} if each of its fibre is connected.}. Precisely,

1.  if $\theta\in   \Xi\cap \Theta_{per}$, then $\boldsymbol{h}^{-1}({\theta})$ is  homeomorphic  to   $\overline{\mathbb{D}}\setminus\{1\}$;  

 2.  if $\theta\in \Xi\setminus \Theta_{per}$, then
 $\boldsymbol{h}^{-1}({\theta})$  is  a singleton.
 
\noindent  Moreover,  $\boldsymbol{h}$ is continuous at $\lambda$ if and only if $\boldsymbol{h}(\lambda)\in\Xi\setminus\Theta_{dya}$.
\end{thm}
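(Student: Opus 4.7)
The plan is to split the theorem into three statements—surjectivity, classification of fibres, and continuity—and to attack each by combining the combinatorial theory of the Head's angle with the topological control on hyperbolic components supplied by Theorem \ref{hyp-boundary}. Since Tan Lei already realises every rational $\theta\in\Xi$, the remaining work for surjectivity is to realise every irrational $\theta\in\Xi$. I would approximate such a $\theta$ by rational angles $\theta_n\in\Xi$ (density follows from the closed/perfect structure of $\Xi\cup\{0\}$) and pick representatives $\tau_n\in\boldsymbol{h}^{-1}(\theta_n)$. Working in the one-point compactification $\overline{\mathcal{M}_3}$ from the introduction, I would extract a convergent subsequence $\tau_n\to\tau_\infty$ and rule out the degenerate limit $\tau_\infty=\infty$ using the fact that irrational angles in $\Xi$ do not correspond to polynomial-like collapses (the combinatorics cannot be absorbed into $\mathcal{H}$ either). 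A semi-continuity property of the Head's angle with respect to the topology of $\mathcal{M}_3$ then forces $\boldsymbol{h}(\tau_\infty)=\theta$.

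For the fibre classification I would analyse $\boldsymbol{h}^{-1}(\theta)$ dynamically using the combinatorial model attached to $\theta$: each map $f$ with $\boldsymbol{h}(f)=\theta$ comes with a prescribed touching pattern of the two immediate basins, described in Section \ref{head}. When $\theta\in \Xi\setminus\Theta_{\mathrm{per}}$, the combinatorics force the free critical value to lie at a pre-periodic point of the puzzle, and the standard rigidity machinery for cubic Newton maps—Yoccoz-type puzzles available from \cite{Ro08} together with absence of invariant line fields on the Julia set—shows that two maps with the same non-periodic Head's angle are quasiconformally rigid, hence M\"obius conjugate. When $\theta\in \Xi\cap\Theta_{\mathrm{per}}$, the corresponding combinatorial model admits a free parameter: the multiplier of the period cycle carried by the touching points. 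I would set up a tuning/straightening correspondence between $\boldsymbol{h}^{-1}(\theta)$ and the closure of a hyperbolic component $\mathcal{U}_\theta\subset\mathcal{M}_3$ whose root on the boundary corresponds to the ``pinching'' direction where the construction degenerates. Theorem \ref{hyp-boundary} says $\partial\mathcal{U}_\theta$ is a Jordan curve, so $\overline{\mathcal{U}_\theta}$ is a closed topological disk, and removing the root gives the required homeomorphism with $\overline{\mathbb{D}}\setminus\{1\}$.

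For continuity, the key observation is that the Head's angle is locally constant on every hyperbolic component (hyperbolicity is structurally stable and the touching pattern cannot change across a hyperbolic domain). Hence $\boldsymbol{h}$ is continuous at interior points of each fibre. At an exterior point $\lambda$ with $\boldsymbol{h}(\lambda)\in\Xi\setminus\Theta_{\mathrm{dya}}$, the singleton case gives continuity directly, and for periodic but non-dyadic angles continuity follows from the compactness of $\overline{\mathcal{U}_\theta}$ together with the uniqueness at nearby non-periodic angles. Discontinuity at dyadic angles arises because a dyadic angle $\theta$ is a common limit of periodic angles $\theta_n\nearrow\theta$ and $\theta_n\searrow\theta$ in $\Xi$, and the corresponding parameter loci approach $\boldsymbol{h}^{-1}(\theta)$ from genuinely distinct sides of the same boundary piece, forcing sequences $\lambda_n\to\lambda$ with $\boldsymbol{h}(\lambda_n)\not\to \boldsymbol{h}(\lambda)$.

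The main obstacle I anticipate is the periodic case of the fibre description. Producing the homeomorphism $\boldsymbol{h}^{-1}(\theta)\cong\overline{\mathbb{D}}\setminus\{1\}$ requires (i) a canonical parametrisation of maps realising the periodic combinatorics by the multiplier of the associated cycle (analogous to the internal coordinate of a Mandelbrot-like family), (ii) extension of this parametrisation to the boundary via the Jordan-curve description of $\partial\mathcal{U}_\theta$, and (iii) an argument—relying on the non-periodic uniqueness and a careful accounting of how irrational angles accumulate on $\theta$—that there are no ``extra'' parameters in the fibre beyond those produced by the tuning. Organising this combinatorial bookkeeping so that it meshes cleanly with the rigidity and surjectivity arguments is where the weight of the proof will sit.
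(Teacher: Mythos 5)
Your treatment of the periodic case contains a genuine structural error. You identify $\boldsymbol{h}^{-1}(\theta)$, for $\theta=p/(2^q-1)\in\Xi\cap\Theta_{per}$, with the closure of a single hyperbolic component $\mathcal{U}_\theta$ parametrised by the multiplier of a cycle, and you propose to get the disk structure from Theorem \ref{hyp-boundary} applied to $\partial\mathcal{U}_\theta$. But the fibre over a periodic angle is not (the closure of) one hyperbolic component: by Theorem \ref{H-tongue1} and Corollary \ref{H-tongue2} it is an entire ``bean'' of the parameter space, namely $\Omega_0\cap\bigl(\bigl(\mathcal{U}_\alpha\cup\{\lambda_\alpha\}\bigr)\setminus\bigl(\mathcal{U}_\beta\cup\{\lambda_\beta\}\bigr)\bigr)$ with $(\beta,\alpha)$ the complementary interval of $\Xi$ having $\alpha=\theta$. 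This bean contains infinitely many hyperbolic components of all three types (capture domains of every level, renormalizable components, and their non-hyperbolic complement), so no single multiplier map or tuning correspondence can parametrise it, and your step (iii) — ruling out ``extra'' parameters beyond the tuning family — cannot succeed because those extra parameters genuinely exist. The correct mechanism for the homeomorphism with $\overline{\mathbb{D}}\setminus\{1\}$ is different: once $\partial\mathcal{H}_0^1$ and $\partial\mathcal{H}_0^2$ are Jordan curves, the landing-point maps $\nu_1,\nu_2$ of the parameter rays are homeomorphisms, the bean is the closed Jordan domain bounded by the two boundary arcs $\nu_1([\beta,\alpha])$ and $\nu_2([1-\beta,1-\alpha])$ (which meet exactly at $\lambda_\alpha$ and $\lambda_\beta$), and one deletes the single boundary point $\lambda_\beta=\nu_1(\beta)$, whose Head's angle is $\beta$ rather than $\alpha$. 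The regularity of hyperbolic component boundaries enters only through $\partial\mathcal{H}_0^1$ and $\partial\mathcal{H}_0^2$, not through the components inside the bean.

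The remaining parts of your plan are closer to the mark but take a detour the paper avoids. For surjectivity and the singleton fibres you propose compactness plus dynamical rigidity keyed to the Head's angle itself; the paper instead reads everything off the curve $\partial\mathcal{H}_0^1$: it shows $\boldsymbol{h}(\nu_1(t))=t$ on the dense set $\partial(\Xi)$ via Corollary \ref{H-tongue2}, extends this to all of $\Xi$ by continuity of $\nu_1$ together with the semi-continuity statement (Corollary \ref{head-semi-con}), and obtains uniqueness over non-boundary angles because the nested regions $V(\gamma_1,\gamma_2)$ containing the fibre shrink to the single point $\nu_1(\theta)$. If you insist on a direct rigidity argument ``same non-periodic Head's angle implies Möbius conjugate,'' you must first justify that the Head's angle alone pins down the position of the free critical point relative to the puzzle — it determines $\Theta_\lambda$ but not obviously the full combinatorics — and that is a nontrivial missing step. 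Your continuity discussion is essentially right in spirit and matches Corollary \ref{head-semi-con}.
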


\begin{figure}[h]
\begin{center}
\includegraphics[height=6cm]{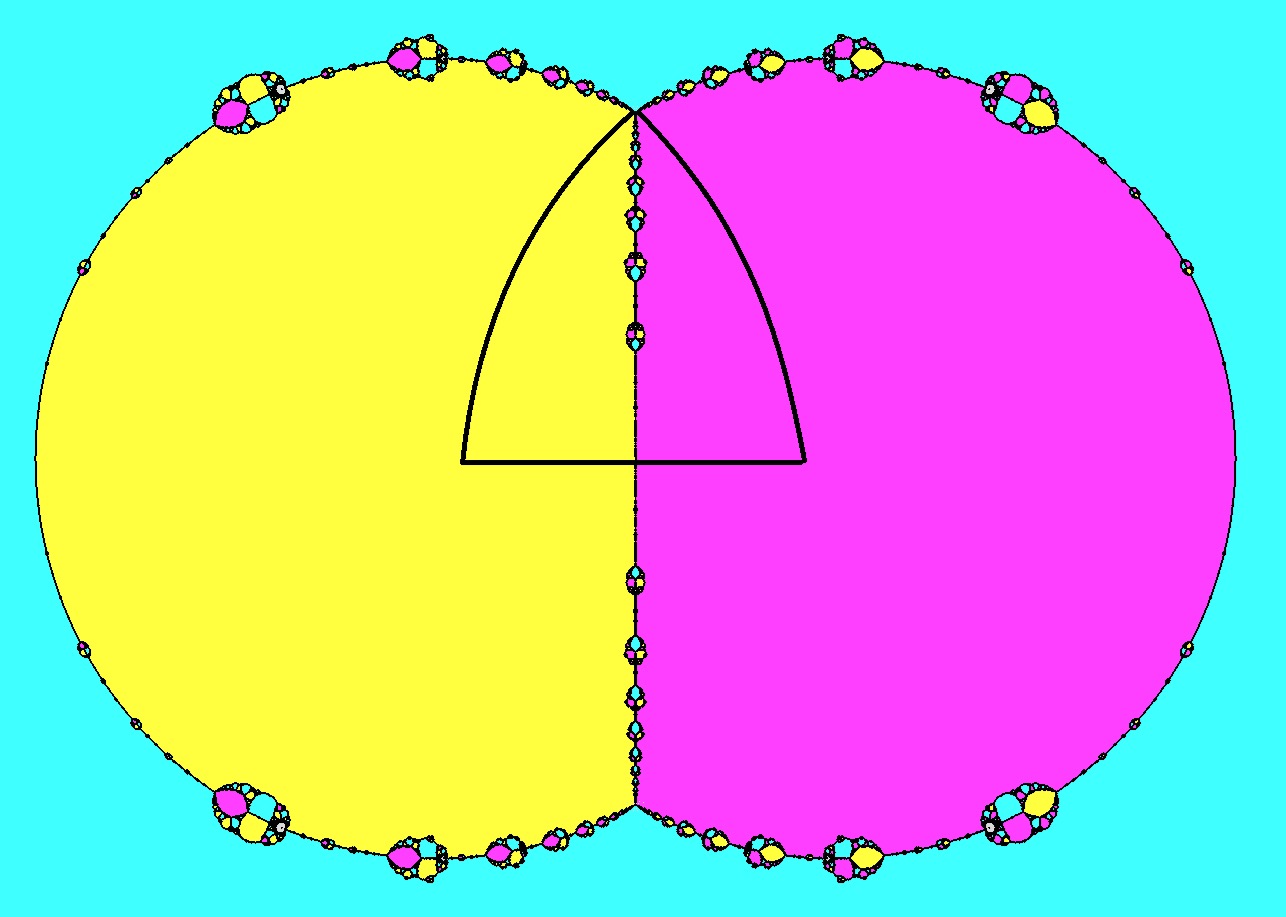}
\put(-135,105){$\Omega$} 
\put(-180,90){$\mathcal{H}_0^1$}   \put(-80,90){$\mathcal{H}_0^2$} \put(-20,20){$\mathcal{H}_0^3$}
 \caption{Head's angle and the parameter space $\mathcal{X}$.}
\end{center}\label{f5}
\end{figure}

Let's briefly describe the relation between the Head's angle and the space $\mathcal{M}_3$, with the help of Figure 1. The space $\mathcal{M}_3$ is six-fold covered by the parameter space $\mathcal{X}=\mathbb{C}\setminus\{\pm\frac{3}{2},0\}$ of  $N_\lambda(z)=\frac{2z^3-(\lambda^2-\frac{1}{4})}{3z^2-(\lambda^2+\frac{3}{4})}$ (see Section \ref{m-s}). There is a one-to-one correspondence between $\mathcal{M}_3$ and the region $\Omega$ (which is bounded by ray segments) plus some maps on the boundary $\partial \Omega$.
As is shown in Figure 1, there are two main hyperbolic components: $\mathcal{H}_0^1$ (in yellow) and $\mathcal{H}_0^2$ (in purple). Their boundaries in 
$\Omega$ can be parameterized by the angles in $(0,1/2)$ (or $(1/2,1)$). The complementary $\Omega-\mathcal{H}_0^1\cup\mathcal{H}_0^2$ in $\Omega$ is a `string of beans' (some kind of `Cantor necklace'), and each bean corresponds to a complementary interval of $\Xi$. The preimage of $\boldsymbol{h}$ either consists of a single point that is on the common boundary $\partial\mathcal{H}_0^1\cap \partial\mathcal{H}_0^2$, or is one of the beans (minus one boundary point).  This is the geometric picture of Theorem \ref{main2}.

Theorem \ref{main2}
  is slightly stronger than Theorem \ref{main3} for two reasons: first, it completely  characterizes the fibers of $\boldsymbol{h}$ over the whole set $\Xi$;
  second, it  characterizes the  points where the map $\boldsymbol{h}:\mathcal{M}_3\setminus\mathcal{H}\rightarrow \Xi$ is discontinuous.
  In fact, the characterization of the fiber of  $\boldsymbol{h}$ over $ \Xi\cap \Theta_{per}$ confirms another
 conjecture of Tan Lei \cite[p. 231, Conjecture]{Tan97}.  
  
 At last, we remark that for $d\geq4$, the moduli space $\mathcal{M}_d$ is a complex orbifold with dimension at least two. The boundary of the  hyperbolic components would  be much more complicated than that in dimension one.  We don't know how to deal with the higher dimensional case.

\subsection{Organization of the paper}   

In Section \ref{m-s}, we discuss the topological structure of the moduli space $\mathcal{M}_3$.  To this end, we introduce an one-parameter family of cubic Newton maps 
$\mathcal{F}$, parameterized by $\mathcal{X}$ which is a six-fold  covering of $\mathcal{M}_3$. Our main task is then reduced to study the 
topology and bifurcations in the underlying space $\mathcal{X}$.  The unbounded hyperbolic component $\mathcal{H}$ in $\mathcal{M}_3$ will split into three components
$\mathcal{H}_0^1, \mathcal{H}_0^2, \mathcal{H}_0^3$  in $\mathcal{X}$.

In Section \ref{description},   we give a dynamical parameterization of the hyperbolic components of  $\mathcal{F}$. It's the first step to study the topology of the hyperbolic components. Further 
steps will involve the dynamical properties of the cubic Newton maps,  as presented in the following three sections.

Precisely, Section \ref{fund-int} provides the basic knowledge of {\it the internal rays}, Section \ref{h-t} introduces   {\it the Head's angle} and its properties. 
The  Head's angle can  be used to classify the combinatorics of the cubic Newton maps    in a rough sense.
With these preparations, we recall the constructions  {\it the articulated rays} due to the first named author  in Section \ref{a-y} and highlight their {\it local stability property}.   The articulated rays are used to construct {\it the Yoccoz puzzle}  while their local stability property is used to study the boundary regularity of the hyperbolic components, as we shall see in the forthcoming sections.
 
 The aim of the next three sections is to show that   $\partial \mathcal{H}_0^\varepsilon$  is a Jordan curve. 
 To this end, we first characterize the maps on $\partial \mathcal{H}_0^\varepsilon$ and give a correspondence between the dynamical rays and parameter rays in Section \ref{char-maps}. Then we 
 revisit the dynamical  Yoccoz puzzle theory in Section \ref{yoccoz}. Using the theory, we establish the rigidity theorem in Section \ref{rigidity}. This enables us to prove further that  $\partial \mathcal{H}_0^\varepsilon$  is a Jordan curve. 
 
 In Section \ref{cap-jordan}, we will show that the boundaries of the hyperbolic components  of {\it capture type} are Jordan curves,  using technical arguments involving  the local stability property and the holomorphic motion  theory.  
 (Note that the boundaries of non-capture type hyperbolic components are already treated by Theorem \ref{1c}.)
 
 Finally, in Section \ref{final-s}, we will prove Theorems \ref{main2} and \ref{main3},  using  Theorem \ref{main}.

\subsection{Notations}       The most commonly used notations are as follows: 

$\mathbb{C}$: the set of all complex numbers or the complex plane.

$\mathbb{C}^*=\mathbb{C}\setminus\{0\}$: the complex plane minus the origin. 

$\mathbb{\widehat{C}}=\mathbb{C}\cup\{\infty\}$: the Riemann sphere.

$\mathbb{D}=\{z\in \mathbb{C}; |z|<1\}$: the unit  disk. 

$\mathbb{D}^*=\mathbb{D}\setminus\{0\}$: the unit  disk    minus the origin. 

$\mathbb{D}_r=\{z\in \mathbb{C}; |z|<r\}$: the  disk with radius $r$.

$\mathbb{S}=\mathbb{R}/\mathbb{Z}$: the unit circle.

$U\Subset V$ (or $V\Supset U$) means that the closure of $U$ is contained in $V$.

\subsection{Acknowledgement} We thank Tan Lei for  leading the first author to this problem and  offering generous ideas and constant
help.    Fei Yang provided the programs and  figures  in the paper.   X. Wang and Y. Yin are supported by National Science Foundation of China.

\section{Orbifold structure of $\mathcal{M}_3$} \label{m-s}

In this section, we discuss the orbifold structure of the moduli space $\mathcal{M}_3$.   For  a brief introduction to the Newton maps of quadratic and cubic polynomials, see Blanchard's paper \cite{B}.

Let $P$ be a polynomial of degree at least two. It can be factored as
$$P(z)=a(z-a_1)^{m_1}\cdots(z-a_d)^{m_d}$$
where $a$ is a nonzero complex number and $a_1,\cdots, a_d$ are  distinct roots of $P$, with multiplicities $m_1,\cdots,m_d\geq 1$, respectively.

Recall that the Newton map $N_P$ of $P$ is defined by 
$$N_P(z)=z-\frac{P(z)}{P'(z)}.$$
It satisfies that for every $1\leq k\leq d$, 
$$N_P(a_k)=a_k, \ N_P'(a_k)=\lim_{z\rightarrow a_k}\frac{P(z)P''(z)}{P'(z)^2}=\frac{m_k-1}{m_k}.$$

Therefore, each root $a_k$ of $P$ corresponds to  an   attracting  fixed point of $N_P$ with multiplier $\frac{m_k-1}{m_k}$.
It follows from the equation 
$$\frac{1}{N_P(z)-z}=-\sum_{k=1}^d\frac{m_k}{z-a_k}$$
that the degree of $N_P$ equals $d$, the number of  distinct roots of $P$.
One may also verifies that $\infty$ is a repelling fixed point of $N_P$ with multiplier 
$$\lambda_\infty=\frac{\sum_{k=1}^d m_k}{\sum_{k=1}^d m_k-1}.$$

The following result, essentially due to Janet Head, gives a characterization of the Newton maps of polynomials:

\begin{pro} \label{char} A rational map $f:\mathbb{\widehat{C}}\rightarrow \mathbb{\widehat{C}}$ of degree $d\geq2$
is the Newton map of a polynomial  if and only if
$f(\infty)=\infty$ and for all other fixed points $a_1, \cdots, a_d\in \mathbb{C}$, there exist  integers $m_k\geq 1$ such that
$f'(a_k)=\frac{m_k-1}{m_k}$ for all $1\leq k\leq d$.
\end{pro}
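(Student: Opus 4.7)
The plan is to treat the two directions separately, with necessity being a direct calculation and sufficiency the substantive step. For necessity, writing $P(z) = a\prod_k(z-a_k)^{m_k}$ and using the partial-fraction identity
$$\frac{1}{N_P(z)-z} \;=\; -\frac{P'(z)}{P(z)} \;=\; -\sum_{k=1}^{d} \frac{m_k}{z-a_k}$$
recorded in the paragraph above the statement, one reads off directly that $N_P$ fixes each $a_k$ with multiplier $(m_k-1)/m_k$, that $N_P(\infty)=\infty$, and that $\deg N_P = d$.

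For sufficiency, suppose $f$ satisfies the stated fixed-point data. The candidate polynomial is
$$P(z) \;:=\; \prod_{k=1}^{d} (z-a_k)^{m_k},$$
and I want to prove the single identity
$$\frac{1}{f(z)-z} \;=\; -\frac{P'(z)}{P(z)}, \qquad (\star)$$
which rearranges instantly to $f(z) = z - P(z)/P'(z) = N_P(z)$. Both sides of $(\star)$ are rational functions on $\mathbb{\widehat{C}}$, so I will show their difference is identically zero by comparing principal parts at all poles and the value at $\infty$, then invoking Liouville.

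At each finite point $a_k$ the hypothesis $f'(a_k)=(m_k-1)/m_k \neq 1$ gives the local expansion $f(z)-z = -(z-a_k)/m_k + O((z-a_k)^2)$, so $1/(f(z)-z)$ has a simple pole at $a_k$ with residue $-m_k$, exactly matching the $k$-th summand of $-P'/P$. Since the hypothesis lists $a_1,\ldots,a_d$ as the only finite fixed points of $f$, these account for all finite poles of the left-hand side. The remaining delicate point is the behavior at $\infty$: I need $1/(f(z)-z)$ to vanish there, which requires $\infty$ to be a \emph{simple} fixed point of $f$, so that $f(z)-z$ grows linearly rather than more slowly. I would deduce this from the fact that a degree $d$ rational map has exactly $d+1$ fixed points on $\mathbb{\widehat{C}}$ counted with multiplicity (equivalently, the rational fixed point/holomorphic Lefschetz formula $\sum_{\zeta}1/(1-f'(\zeta)) = 1$); the $d$ simple fixed points $a_k$ together with the fixed point at $\infty$ already exhaust this count, forcing $\infty$ to be simple as well. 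With this in hand, both sides of $(\star)$ vanish at $\infty$ and have identical principal parts at every finite pole, so their difference is a rational function on $\mathbb{\widehat{C}}$ with no poles that vanishes at $\infty$, hence identically zero.

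I expect the only genuinely nontrivial ingredient to be the fixed-point count at $\infty$ (or, equivalently, ruling out $f'(\infty)=1$), because without it one cannot pass from $(\star)$ at finite points to a global identity. Everything else is the standard trick of matching principal parts of two rational functions and applying Liouville's theorem.
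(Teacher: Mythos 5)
Your proof is correct, and it fills a genuine gap: the paper itself offers no proof of Proposition~\ref{char}, instead deferring to Head's thesis and to R\"uckert--Schleicher, so there is no in-text argument to compare against. Your partial-fraction/residue strategy is the standard one and is essentially the argument underlying those references; you have reconstructed it faithfully.

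Two small remarks on precision, neither of which is a gap. First, each $m_k\geq 1$ gives $f'(a_k)=(m_k-1)/m_k\in[0,1)$, so in particular $f'(a_k)\neq 1$; this is what makes each $a_k$ a \emph{simple} fixed point and is the quiet hypothesis your fixed-point count at $\infty$ really uses. You invoke this implicitly, but it is worth flagging since it is exactly why the count "$d$ simple finite fixed points plus $\infty$ already saturates $d+1$" closes the argument. Second, you parenthetically equate the count of $d+1$ fixed points with multiplicity to the holomorphic Lefschetz formula $\sum_\zeta 1/(1-f'(\zeta))=1$; the two are equivalent only once all fixed points are known to be simple, so it is cleaner to cite the multiplicity count directly, as you in fact do when you apply it. (As a sanity check, running the Lefschetz formula after the fact gives $f'(\infty)=\frac{\sum m_k}{\sum m_k-1}>1$, recovering the paper's formula for $\lambda_\infty$ and confirming $\infty$ is repelling, hence your claim that $f(z)-z$ grows linearly.) With these cosmetic caveats, the identification of the poles of $1/(f(z)-z)$ with the finite fixed points, the residue computation $\mathrm{Res}_{a_k}=-m_k$, the vanishing at $\infty$, and the Liouville step are all sound, and the conclusion $f=N_P$ with $P=\prod_k(z-a_k)^{m_k}$ follows.
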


We remark that the rational map $f$ satisfying the latter half part of Proposition \ref{char} is exactly the Newton map of the polynomial
$$P(z)=a(z-a_1)^{m_1}\cdots(z-a_d)^{m_d}$$
with  $a\neq0$. See  \cite[Proposition 2.1.2]{He}  or \cite[Corollary 2.9]{RS} for a proof.


Now we turn to discuss the space of cubic Newton maps. We first introduce an one-parameter
family $\mathcal{F}$ of monic and centered cubic polynomials with  distinct roots. We will see that the space of Newton maps of  this family is
a  six-fold (branched) covering space of $\mathcal{M}_3$.

The family $\mathcal{F}=\{P_\lambda\}_{\lambda\in\mathcal{X}}$ that we are interested in consists of the  following cubic polynomials with three distinct roots:
 $$P_\lambda(z)=\Big(z+\frac{1}{2}+\lambda\Big)\Big(z+\frac{1}{2}-\lambda\Big)(z-1),\  \lambda\in  \mathcal{X}:=\mathbb{C}\setminus\Big\{\pm\frac{3}{2},0\Big\}.$$
 We remark that the Newton map of any cubic polynomial with distinct roots is M\"obius conjugate to the Newton map of some $P_\lambda$ (in fact, by Lemma \ref{2-1}, there are six choices of $\lambda$).
 
  The Newton map of $P_\lambda$ is
 $$N_\lambda(z)=z-\frac{P_\lambda(z)}{P'_\lambda(z)}=\frac{2z^3-(\lambda^2-\frac{1}{4})}{3z^2-(\lambda^2+\frac{3}{4})}.$$
For any $ \lambda\in \mathcal{X}$,   the map $N_\lambda$ has four critical points
 $$b_0(\lambda)=0, \ b_1(\lambda)=-\lambda-\frac{1}{2}, \ b_2(\lambda)=\lambda-\frac{1}{2}, \ b_3(\lambda)=1.$$
Note that when $\lambda\in \mathcal{X}\setminus\{\pm\frac{1}{2}\}=\mathbb{C}\setminus\big\{\pm\frac{3}{2}, \pm\frac{1}{2}, 0\big\}$, the last three critical points are  simple\footnote{A critical point $c$ is {\it simple}
if the local degree of $N_\lambda$ at $c$ is two.} and fixed, so the dynamical
 behavior of $N_\lambda$ is essentially determined by the orbit of the  free critical point $b_0(\lambda)=0$.
Let $\mathcal{G}\subset Aut(\mathbb{\widehat{C}})$ be the finite group of M\"obius maps permuting three points $\pm\frac{1}{2}, \infty$. In fact, this group is generated by $\gamma_1$ and $\gamma_2$, which are defined by 
$$\gamma_1(\lambda)=\frac{\lambda+\frac{1}{2}}{\lambda-\frac{1}{2}}-\frac{1}{2}, \ \gamma_2(\lambda)=\frac{\frac{1}{2}-\lambda}{\frac{1}{2}+\lambda}+\frac{1}{2}. $$
One may verify that $\mathcal{G}$ consists of six elements:
$$\mathcal{G}=\langle\gamma_1,\gamma_2\rangle=\{id,\gamma_1,\gamma_2, \gamma_1\circ\gamma_2, \gamma_2\circ\gamma_1, \gamma_2^{-1}\circ\gamma_1\circ\gamma_2 \}.$$

\begin{lem}\label{2-1} Let $\mathcal{Q}$ be the space of quasi-regular maps \footnote{A {\it quasi-regular map}
 is locally a composition of a holomorphic map and a quasi-conformal map.}  $f:\widehat{\mathbb{C}}\rightarrow\widehat{\mathbb{C}}$ of degree three, with four critical points, three of which are    simple and fixed.

1. (Characterization) Any rational map $f\in \mathcal{Q}$ is M\"obius conjugate to some cubic Newton map $N_\lambda$ with $\lambda\in \mathcal{X}$.

2.  (Conjugation) Two cubic Newton maps $N_{\lambda_1}$ and  $N_{\lambda_2}$ are  M\"obius conjugate  if and only if $\lambda_1=\gamma(\lambda_2)$ for some $\gamma\in \mathcal{G}$.

3. (Deformation) Let $\{L_t\}_{t\in\mathbb{D}}\subset \mathcal{Q}$ be a continuous family of quasi-regular maps and $\{\mu_t\}_{t\in\mathbb{D}}$ be a continuous family of  Beltrami differentials, such that

  (a). $L_0=N_{\lambda_0},\mu_0\equiv 0$;

  (b). For all $t\in\mathbb{D}$,   $L_t^* \mu_t=\mu_t$ and $\|\mu_t\|:={\rm ess.sup}|\mu_t(z)|<1$.

Let  $\psi_t$  solve  $\frac{\bar{\partial}\psi_t}{{\partial}\psi_t}=\mu_t$ with $0,1,\infty$ fixed.
 Then $\psi_t\circ L_t\circ\psi_t^{-1}=N_{\lambda(t)}$, where
 $$\lambda(t)=\frac{3(\psi_t(\lambda_0-\frac{1}{2})-\psi_t(-\lambda_0-\frac{1}{2}))}{2(2-\psi_t(\lambda_0-\frac{1}{2})-\psi_t(-\lambda_0-\frac{1}{2}))}, t\in\mathbb{D}.$$
\end{lem}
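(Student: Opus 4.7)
The plan is to handle the three parts in order, with Part~1 providing the structural foundation via Proposition~\ref{char}. For \textbf{Part 1}, a rational map $f \in \mathcal{Q}$ has degree three and four fixed points counted with multiplicity, three of which are simple critical and hence super-attracting with multiplier zero. These match the multiplier formula $(m_k-1)/m_k$ of Proposition~\ref{char} with $m_k = 1$. The remaining fourth fixed point is necessarily non-critical, so conjugating by a M\"obius map sending it to $\infty$ places $f$ under the hypotheses of Proposition~\ref{char} and identifies the conjugate with the Newton map of some cubic polynomial $a(z-a_1)(z-a_2)(z-a_3)$ with three distinct roots. A final affine change of coordinates normalizes this triple to $\{-1/2 \pm \lambda,\, 1\}$: among three distinct points at most one is the midpoint of the other two, so I may choose $a_k$ that is not such a midpoint and send it to $1$ while symmetrizing the other two around $-1/2$. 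The condition $\lambda \in \mathcal{X} = \mathbb{C} \setminus \{0, \pm 3/2\}$ is exactly the condition that $\{-1/2 \pm \lambda,\, 1\}$ be pairwise distinct.

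For \textbf{Part 2}, any $\phi \in \mathrm{Aut}(\widehat{\mathbb{C}})$ with $\phi \circ N_{\lambda_2} \circ \phi^{-1} = N_{\lambda_1}$ must permute the four fixed points of $N_{\lambda_2}$. Since $\infty$ is distinguished as the unique non-critical (repelling) fixed point, with multiplier $3/2$, one has $\phi(\infty) = \infty$ and $\phi$ is affine. Then $\phi$ permutes the three super-attracting fixed points $\{-1/2 \pm \lambda_2,\, 1\}$ with $\{-1/2 \pm \lambda_1,\, 1\}$, producing at most six possibilities, each uniquely determined by the permutation. I would match these six relations directly with the six elements of $\mathcal{G} = \langle \gamma_1, \gamma_2 \rangle \cong S_3$, which acts by relabelling the root triple; conversely, each $\gamma \in \mathcal{G}$ arises from an explicit affine conjugation.

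For \textbf{Part 3}, the invariance $L_t^*\mu_t = \mu_t$ together with $\psi_t$ straightening $\mu_t$ to zero makes $F_t := \psi_t \circ L_t \circ \psi_t^{-1}$ a rational map of degree three lying in $\mathcal{Q}$. By Part~1, $F_t$ is M\"obius conjugate to a unique $N_{\lambda(t)}$. To extract $\lambda(t)$ explicitly, I track the four distinguished points of $L_0 = N_{\lambda_0}$: the three simple fixed critical points at $\lambda_0 \mp 1/2$ and $1$, and the non-critical fixed point $\infty$. Under $\psi_t$ (which fixes $1$ and $\infty$) they become $A := \psi_t(\lambda_0 - 1/2)$, $B := \psi_t(-\lambda_0 - 1/2)$, $1$, $\infty$. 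The unique affine map $\phi(z) = \alpha z + \beta$ fixing $\infty$ and $1$ (equivalently $\alpha + \beta = 1$) that symmetrizes $\{A, B\}$ around $-1/2$ satisfies $\alpha(A+B) + 2(1-\alpha) = -1$, giving $\alpha = 3/(2 - A - B)$ and therefore $\phi(A) - \phi(B) = 3(A-B)/(2-A-B)$, which yields the stated formula $\lambda(t) = 3(A-B)/(2(2-A-B))$. A direct check at $t = 0$ recovers $\lambda(0) = \lambda_0$.

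The main obstacle will be making precise the asserted equality $\psi_t \circ L_t \circ \psi_t^{-1} = N_{\lambda(t)}$: literal equality fails in general because the image fixed critical points $\{A, B\}$ need not be symmetric around $-1/2$, so the affine correction $\phi$ above must be absorbed implicitly into the identification. Handling this absorption consistently with the normalization $\psi_t(0) = 0$, $\psi_t(1) = 1$, $\psi_t(\infty) = \infty$ (and its continuous dependence on $t$) is what makes Part~3 more delicate than the first two parts, and is ultimately what the explicit formula for $\lambda(t)$ encodes.
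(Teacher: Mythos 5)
Your argument follows the same route as the paper: conjugate the repelling fixed point to $\infty$, identify the resulting map as a Newton map via Proposition~\ref{char}, then normalize the three roots. You build the affine map by hand where the paper packages the identical computation as a cross-ratio identity, and your formula for $\lambda(t)$ agrees with the one obtained there by equating $\chi(\psi_t(b_1),\psi_t(b_2),\psi_t(b_3),\infty)$ with $\chi(b_1(\lambda),b_2(\lambda),b_3(\lambda),\infty)$. Parts 1 and 2 are fine as written.

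The concern you raise at the end of Part 3 should be resolved rather than left as an acknowledged gap. You observe correctly that if $A=\psi_t(\lambda_0-\tfrac12)$ and $B=\psi_t(-\lambda_0-\tfrac12)$ do not satisfy $A+B=-1$ then $\psi_t L_t\psi_t^{-1}$ is only conjugate to, not equal to, $N_{\lambda(t)}$, and hypotheses (a),(b) as stated do not by themselves force this. The missing ingredient, used tacitly in the paper's own proof and satisfied in every application (Theorems~\ref{1a}, \ref{1b}, \ref{1c}), is that $L_t$ agrees with $N_{\lambda_0}$ near its fixed points and near $0$, so that $1,\infty$ remain fixed points of $L_t$ and $0$ remains its free critical point. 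Granting this, $\psi_t L_t\psi_t^{-1}$ is a cubic rational Newton map with fixed points $1,A,B,\infty$ and free critical point $\psi_t(0)=0$; for a monic cubic the free critical point is the barycenter $\tfrac13(1+A+B)$ of the roots, so $A+B=-1$. Then $2-A-B=3$, your expression reduces to $\lambda(t)=(A-B)/2$, and the correcting affine map you introduced is the identity, so the literal equality holds. Spelling out this normalization argument is what closes Part 3.
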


\begin{proof} 1. It's known from \cite[Lemma 12.1]{M} that any cubic   rational map $f\in \mathcal{Q}$  has four fixed points,  counted with multiplicity. Since $f$ already has three super-attracting fixed points, the fourth one must be repelling (see \cite{M}).
By M\"obius conjugation,  we may assume that  the repelling fixed point is at  $\infty$. Let $c_1, c_2, c_3$ be the  other three fixed points of $f$,  they are also critical points by assumption.
 By Proposition \ref{char}, we see  that $f$ is  the Newton map  of
$P(z)=(z-c_1)(z-c_2)(z-c_3)$.

Define the cross-ratio $\chi(z_1,z_2,z_3,z_4)$ of the quadruple $(z_1,z_2,z_3,z_4)$ by
$$\chi(z_1,z_2,z_3,z_4)=\frac{z_4-z_1}{z_4-z_3}\cdot\frac{z_2-z_3}{z_2-z_1}.$$

The map $\lambda\mapsto \chi(b_1(\lambda),b_2(\lambda), b_3(\lambda), \infty)$ is a M\"obius map.
So there is a unique
$\lambda_0\in\mathcal{X}$ satisfying
$$\chi(b_1(\lambda_0),b_2(\lambda_0), b_3(\lambda_0), \infty)=\chi(c_1,c_2, c_3, \infty).$$
This implies that there exists a  M\"obius map $\gamma\in Aut(\mathbb{\widehat{C}})$ sending
$c_1,c_2$, $c_3, \infty$ to $b_1(\lambda_0)$, $b_2(\lambda_0)$, $b_3(\lambda_0), \infty$, respectively.
By Proposition \ref{char}, the map $\gamma\circ f\circ\gamma^{-1}$ is the Newton map of $P_{\lambda_0}$.

2.  Note that if $h$ is a M\"obius conjugacy between $N_{\lambda_1}$ and $N_{\lambda_2}$, then $h$ maps the quadruple of fixed points
$(b_1(\lambda_1),b_2(\lambda_1), b_3(\lambda_1), \infty)$ to  $(b_{\varepsilon_1}(\lambda_2),b_{\varepsilon_2}(\lambda_2)$, $b_{\varepsilon_3}(\lambda_2),\infty)$,
 and vice visa, where $(\varepsilon_1,\varepsilon_2,\varepsilon_3)$ is a permutation of $(1,2,3)$.

 Since the  M\"obius maps  preserve the cross-ratio, we see  that $N_{\lambda_1}$ is M\"obius conjugate to $N_{\lambda_2}$ if and only if
$$\chi(b_1(\lambda_1),b_2(\lambda_1), b_3(\lambda_1), \infty)=\chi(b_{\varepsilon_1}(\lambda_2),b_{\varepsilon_2}(\lambda_2),b_{\varepsilon_3}(\lambda_2),\infty).$$

 Equivalently,  $\lambda_1=\gamma(\lambda_2)$ for some $\gamma\in \mathcal{G}$.

3.  By the first statement,    the map $\psi_t\circ L_t\circ\psi_t^{-1}$ is a cubic Newton map $N_{\lambda(t)}$, where
$\lambda(t)$ is  determined by
$$\chi(\psi_t(b_1(\lambda_0)), \psi_t(b_2(\lambda_0)),\psi_t(b_3(\lambda_0)),\infty)
=\chi(b_1(\lambda(t)), b_2(\lambda(t)), b_3(\lambda(t)),\infty).$$
Equivalently,
$$\frac{\psi_t(b_2(\lambda_0))-\psi_t(b_3(\lambda_0))}{\psi_t(b_2(\lambda_0))-\psi_t(b_1(\lambda_0))}
=\frac{b_2(\lambda(t))-b_3(\lambda(t))}{b_2(\lambda(t))-b_1(\lambda(t))}=\frac{\lambda(t)-3/2}{2\lambda(t)}.$$
Then we get $\lambda(t)$ as required.
 \end{proof}

The following result concerns the  orbifold structure of the moduli space $\mathcal{M}_3$. Here we will not give the precise definition of
orbifold, which can be found in  \cite[Appendix A]{Mc2}. We only use the following fact: A Riemann surface $S$ modulo  a finite subgroup of the automorphisms  group $Aut(S)$ is an orbifold, 
called {\it Riemann orbifold}.
\begin{thm}
The moduli space $\mathcal{M}_3$ is isomorphic to the Riemann orbifold $\mathcal{X}/\mathcal{G}\cong (\mathbb{\widehat{C}}, (2,3,\infty))$.
\end{thm}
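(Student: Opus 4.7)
The plan is to build on Lemma \ref{2-1} in two stages: first identify $\mathcal{M}_3$ complex-analytically with $\mathcal{X}/\mathcal{G}$, then compute that quotient orbifold explicitly.

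For the first stage, I consider $\Phi:\mathcal{X}\to\mathcal{M}_3$ given by $\lambda\mapsto[N_\lambda]$. Surjectivity of $\Phi$ is Lemma \ref{2-1}(1); Lemma \ref{2-1}(2) says the fibers are exactly the $\mathcal{G}$-orbits, giving a set-theoretic bijection $\mathcal{X}/\mathcal{G}\cong\mathcal{M}_3$. To promote this to an orbifold isomorphism, I would invoke Lemma \ref{2-1}(3): any holomorphic family $\{L_t\}$ of quasi-regular deformations of $N_{\lambda_0}$ with $L_t^*\mu_t=\mu_t$ yields a holomorphic path $t\mapsto\lambda(t)$ via the explicit formula. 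Since such families generate the complex-analytic structure on the ambient moduli space of cubic rational maps at $[N_{\lambda_0}]$, the map $\Phi$ is holomorphic; moreover the isotropy group of $[N_\lambda]$ in $\mathcal{M}_3$ (namely $\mathrm{Aut}(N_\lambda)$) coincides with the $\mathcal{G}$-stabilizer of $\lambda$, by the Möbius conjugation argument used in Lemma \ref{2-1}(2), so the two orbifold structures match.

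For the second stage, I extend the Möbius action of $\mathcal{G}$ from $\mathcal{X}$ to all of $\widehat{\mathbb{C}}$. Because $\mathcal{G}$ acts faithfully on the three-point set $\{-\tfrac{1}{2},\tfrac{1}{2},\infty\}$ and $|\mathcal{G}|=6$, we have $\mathcal{G}\cong S_3$. A direct Möbius calculation gives the fixed-point sets
\begin{equation*}
\mathrm{Fix}(\gamma_1)=\{-\tfrac{1}{2},\tfrac{3}{2}\},\quad \mathrm{Fix}(\gamma_2)=\{\tfrac{1}{2},-\tfrac{3}{2}\},\quad \mathrm{Fix}(\gamma_2\gamma_1\gamma_2)=\{\infty,0\},
\end{equation*}
which produce two $\mathcal{G}$-orbits $\{\pm\tfrac{1}{2},\infty\}$ and $\{0,\pm\tfrac{3}{2}\}$ of isotropy order $2$; the two $3$-cycles in $\mathcal{G}$ share the fixed-point set $\{\pm\tfrac{i\sqrt{3}}{2}\}$ (the roots of $4\lambda^2+3=0$), forming a single orbit of isotropy order $3$. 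All other orbits are free of size $6$, so $\widehat{\mathbb{C}}/\mathcal{G}$ is a Riemann sphere of orbifold signature $(2,2,3)$.

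Finally, one passes from $\widehat{\mathbb{C}}/\mathcal{G}$ to $\mathcal{X}/\mathcal{G}$ by deleting the image of $\widehat{\mathbb{C}}\setminus\mathcal{X}=\{0,\pm\tfrac{3}{2},\infty\}$: the full orbit $\{0,\pm\tfrac{3}{2}\}$ is removed, so its image (a cone point of order $2$) becomes a puncture; the orbit $\{\pm\tfrac{1}{2},\infty\}$ still meets $\mathcal{X}$ in $\{\pm\tfrac{1}{2}\}$, so its image persists as a cone point of order $2$; the cone point of order $3$ is untouched. This yields $\mathcal{X}/\mathcal{G}\cong(\widehat{\mathbb{C}},(2,3,\infty))$. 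The main obstacle is in the first stage: one must verify that the natural orbifold structure on $\mathcal{M}_3$ inherited from the moduli space of cubic rational maps really agrees with the quotient structure on $\mathcal{X}/\mathcal{G}$, and this is exactly what the holomorphic-deformation statement of Lemma \ref{2-1}(3) provides. The fixed-point and orbit bookkeeping in the second stage is routine.
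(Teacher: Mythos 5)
Your argument is correct and follows the same route as the paper, which simply invokes Lemma \ref{2-1} to identify $\mathcal{M}_3$ with $\mathcal{X}/\mathcal{G}$ and then asserts $\mathcal{X}/\mathcal{G}\cong(\widehat{\mathbb{C}},(2,3,\infty))$ without further computation. The extra work you supply — the stabilizer and orbit bookkeeping, and in particular the observation that $\mathcal{G}$ does not preserve $\mathcal{X}$ but the orbit $\{\pm\tfrac{1}{2},\infty\}$ still meets $\mathcal{X}$, so its image survives as the order-$2$ cone point while the fully deleted orbit $\{0,\pm\tfrac{3}{2}\}$ becomes the puncture — is exactly the justification the paper leaves implicit, and it is correct.
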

\begin{proof} By Lemma \ref{2-1},  the Newton map of any cubic polynomial with distinct roots  is M\"obius conjugate to some $N_\lambda$, and any M\"obius conjugacy
 descends to an element in $\mathcal{G}$.
Therefore, 
$\mathcal{M}_3 \cong \mathcal{X}/\mathcal{G}\cong (\mathbb{\widehat{C}}, (2,3,\infty)).$ \end{proof}

\begin{rmk}
A geometric picture of $\mathcal{M}_3$ is that it is the Riemann sphere with 3 special points, one is a puncture, 
the other two are locally quotients of the unit disk $\mathbb{D}$ by period 2 and period 3 rotations, respectively.
\end{rmk}

\section{Description of the hyperbolic components}\label{description}

This section gives the dynamical parameterizations of the hyperbolic components of the cubic Newton maps. The ideas resemble Douady-Hubbard's proof of the connectivity of the Mandelbrot set and the parameterization of the bounded hyperbolic components using the multiplier map. We include the details, for the readers' convenience.

It's known from the previous section that for 
$\lambda\in \mathcal{X}=\mathbb{C}\setminus\big\{\pm\frac{3}{2},0\big\}$, the Newton map $N_\lambda$ has three super-attracting fixed points:
$$b_1(\lambda)=-\lambda-\frac{1}{2}, \ b_2(\lambda)=\lambda-\frac{1}{2}, \ b_3(\lambda)=1.$$
Let $B_\lambda^\varepsilon$ be the immediate attracting basin of $b_\varepsilon(\lambda), \varepsilon=1,2,3$.

According to Tan Lei \cite{Tan97},  there are three types of hyperbolic components classified by  the behavior of the free critical orbit
 $\{N_\lambda^n(0);n\geq0\}$:

 \vspace{5pt}
 
\textbf{Type A} ({\it adjacent critical points}): the free critical point $0$ is contained in some immediate basin $B_\lambda^\varepsilon$.

\textbf{Type C} ({\it capture}):  the free critical orbit  converges to some attracting fixed point $b_\varepsilon(\lambda)$ but $0\notin B_\lambda^\varepsilon$.

\textbf{Type D} ({\it disjoint attracting orbits}):  the free critical orbit converges to some attracting cycle other than $b_1(\lambda), b_2(\lambda), b_3(\lambda)$.

 \vspace{5pt}
 
The connectivity of the Julia set $J(N_\lambda)$ is proven by Shishikura  \cite{Sh}. 
This implies,  in particular,  that each Fatou component of $N_\lambda$ is simply connected.  
 
For any $k\geq0$  and any $\varepsilon\in\{1,2,3\}$,  we define a parameter set $\mathcal{H}^\varepsilon_k$ by:
$$\mathcal{H}^\varepsilon_k=\{\lambda\in
\mathbb{C}^*; k \text{ is the first integer such that } N_\lambda^{k}(0)\in B^\varepsilon_\lambda\}.$$


See Figure 1.
Here are  some remarks about these parameter sets:

Type A components consist of $\mathcal{H}^1_0, \mathcal{H}^2_0, \mathcal{H}^3_0$.
Any map  $N_\lambda$ with  $\lambda\in \mathcal{H}^1_0\cup \mathcal{H}^2_0\cup\mathcal{H}^3_0$ is quasi-conformally conjugate to the cubic polynomial $z^3+\frac{3}{2}z$  
near  its Julia set (see \cite[Remark 2.2]{Ro08}).
For $\varepsilon=1,2$, the {\it center} $c_\varepsilon$ of $\mathcal{H}^\varepsilon_0$ is the unique parameter $\lambda\in\mathcal{H}^\varepsilon_0$ such that the free critical point $0$
coincides with $b_\varepsilon(\lambda)$.
 One has $c_1=-\frac{1}{2}, c_2=\frac{1}{2}$. The center of $\mathcal{H}^3_0$ is $c_3=\infty$.
 
 Type C components consist of all components of $\mathcal{H}_k^\varepsilon$ with $k\geq1$.
One may verify  that  $\mathcal{H}^\varepsilon_1=\emptyset$ (in fact, if $\mathcal{H}^\varepsilon_1\neq \emptyset$, then for any $\lambda\in\mathcal{H}^\varepsilon_1$, the set $N_\lambda^{-1}(B^\varepsilon_\lambda)$ has two connected components, each contains critical point and maps to $B^\varepsilon_\lambda$ of degree two, implying that $N_\lambda$ has degree at least four. Contradiction!).
A component of $\mathcal{H}_k^\varepsilon$ with $k\geq2$ is called a {\it capture domain} of level $k$. 

Type D components are  the hyperbolic components of  {\it renormalizable type}. This is because each  map $N_\lambda$  of Type $D$ is {\it renormalizable} in the  following sense: there exist two topological disks $U, V$ with $U\Subset V$, an integer $p\geq1$, such that  $N_\lambda^p: U\rightarrow V$ is a polynomial-like map of degree two  with connected filled Julia set, see \cite[Section 6]{Ro08}.

The following observation is due to Tan Lei \cite[Lemma 1.2]{Tan97}:

\begin{lem}[Tan Lei]\label{intersect}  $0\in \partial\mathcal{H}_0^1\cap \partial\mathcal{H}_0^2$ and $\pm \sqrt{3}i/2
\in \partial\mathcal{H}_0^1\cap \partial\mathcal{H}_0^2\cap \partial\mathcal{H}_0^3$.
\end{lem}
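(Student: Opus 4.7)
The proof has two parts. Throughout I exploit two symmetries: first, the formula for $N_\lambda$ depends on $\lambda$ only through $\lambda^2$, so $N_\lambda = N_{-\lambda}$ and the involution $\lambda \mapsto -\lambda$ exchanges $b_1(\lambda)$ with $b_2(\lambda)$; consequently $-\mathcal{H}_0^1 = \mathcal{H}_0^2$, and since $0$ is fixed by this involution, it suffices to establish $0 \in \partial\mathcal{H}_0^1$. Second, complex conjugation gives $\overline{N_\lambda(z)} = N_{\bar\lambda}(\bar z)$, so $N_{\sqrt{3}i/2}$ and $N_{-\sqrt{3}i/2}$ are conjugate by $z\mapsto\bar z$; therefore only $\lambda_0 = \sqrt{3}i/2$ needs to be handled.

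For $\lambda = 0$: the polynomial $P_0(z) = (z+1/2)^2(z-1)$ has a double root at $-1/2$, and the Newton map $N_0$ reduces (after cancellation of the common factor $z+1/2$ in $P_0/P_0'$) to a degree-$2$ rational map with attracting fixed point $-1/2$ of multiplier $1/2$ and super-attracting fixed point $1$. By Fatou's theorem the immediate basin of $-1/2$ contains a critical point of $N_0$, and since $z = 1$ is trivially in its own basin, the free critical point $z = 0$ must lie in the immediate basin of $-1/2$. For $\lambda$ near $0$, the attracting fixed point $-1/2$ unfolds into $b_1(\lambda), b_2(\lambda)$, and the original immediate basin splits into the pair $B_\lambda^1, B_\lambda^2$ (possibly with additional Fatou components in between). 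I would then invoke the quasi-conformal deformation framework of Lemma \ref{2-1}(3) to construct a family $\lambda(t) \to 0$ along which the critical point $z = 0$ remains inside $B_{\lambda(t)}^1$, witnessing $0 \in \partial\mathcal{H}_0^1$.

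For $\lambda_0 = \sqrt{3}i/2$: direct substitution (using $\lambda_0^2 = -3/4$) yields $N_{\lambda_0}(z) = (2z^3+1)/(3z^2)$, the Newton map of $z^3-1$. Its three super-attracting fixed points are the cube roots of unity and $\infty$ is a repelling fixed point with $N_{\lambda_0}(0) = \infty$, so $0$ is strictly preperiodic to $\infty$ and in particular $0 \in J(N_{\lambda_0})$. The three-fold rotational symmetry $z \mapsto e^{2\pi i/3}z$ of $N_{\lambda_0}$ fixes $0$ and cyclically permutes the three basins $B_{\lambda_0}^\varepsilon$, forcing $0$ to lie on the boundary of each (Cayley's classical observation). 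To deduce $\lambda_0 \in \partial\mathcal{H}_0^\varepsilon$ for each $\varepsilon$, I would perturb $\lambda_0$ in three distinct directions that break the three-fold symmetry in three symmetric ways, and for each direction use a deformation argument parallel to Case 1 to keep $0$ inside the corresponding immediate basin.

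The main obstacle is distinguishing immediate-basin behavior from capture in the perturbative arguments: a generic small perturbation of each degenerate parameter produces orbits that eventually hit some immediate basin $B_\lambda^\varepsilon$, but only along very specific parameter curves is the critical point $0$ actually in $B_\lambda^\varepsilon$ (as opposed to in a preimage). For instance, along the real axis as $\lambda \to 0^+$, the orbit of $0$ under $N_\lambda$ converges to $b_2(\lambda)$, yet $0$ itself sits in a capture Fatou component of growing depth, so $\lambda \in \mathcal{H}_k^2$ with $k \to \infty$, \emph{not} in $\mathcal{H}_0^2$. Identifying the curves on which $\mathcal{H}_0^\varepsilon$ actually accumulates at the degenerate parameters requires controlling the holomorphic motion of the B\"ottcher coordinate of $B_\lambda^\varepsilon$ through the degeneration, which is the delicate point of the argument.
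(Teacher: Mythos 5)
The paper offers no proof of Lemma \ref{intersect} (it cites Tan Lei), so your proposal stands on its own, and it has both a gap and a concrete error. The gap: in both cases the decisive step --- showing that each $\mathcal{H}_0^\varepsilon$ actually accumulates at the degenerate parameter --- is only announced, not carried out. Lemma \ref{2-1}(3) deforms a map $N_{\lambda_0}$ with $\lambda_0\in\mathcal{X}$ \emph{within} the family; it cannot by itself produce parameters tending to $\lambda=0$, where the family leaves $\mathcal{X}$ and the degree drops from $3$ to $2$, nor does it decide whether the critical point sits in the immediate basin rather than in a strictly preperiodic Fatou component after perturbation. The same applies to the three-direction perturbation you sketch at $\sqrt{3}i/2$. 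So neither accumulation statement is established.

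More seriously, the one concrete claim you make about the perturbative behaviour is false: you assert that for real $\lambda\to0^+$ the point $0$ lies in a capture component of growing depth, so that $\lambda\in\mathcal{H}_k^2$ with $k\to\infty$ rather than $\mathcal{H}_0^2$. In fact $(0,1/2)\subset\mathcal{H}_0^2$; the paper records exactly this when it lists $\mathcal{R}_0^2(1)=(0,1/2)$ as a parameter ray of $\mathcal{H}_0^2$. One verifies it by elementary real dynamics: for $\lambda\in(0,1/2)$ one has $P_\lambda(x)=x^3-(3/4+\lambda^2)x-(1/4-\lambda^2)$, with inflection at $x=0$, and on the interval $[b_2(\lambda),0]$ (where $b_2(\lambda)=\lambda-1/2<0$) both $P_\lambda<0$ and $P_\lambda'<0$, so $N_\lambda$ is increasing there with $N_\lambda(x)<x$, and $N_\lambda(0)\in(b_2(\lambda),0)$ (this reduces to $(\lambda-1/2)^2>0$). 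Hence $[b_2(\lambda),0]$ is a connected forward-invariant set on which every orbit decreases monotonically to $b_2(\lambda)$, so it lies in $B_\lambda^2$ and $0\in B_\lambda^2$. Combined with the involution $\lambda\mapsto-\lambda$ you already set up, this finishes $0\in\partial\mathcal{H}_0^1\cap\partial\mathcal{H}_0^2$ with no need to control B\"ottcher coordinates through the degeneration. The misdiagnosis matters: it leads you to hunt for delicate special curves, whereas the straight segments $(\pm 1/2,0)$ and, for $\pm\sqrt{3}i/2$, the circular arcs $\mathcal{R}_0^1(1/2)$ and $\mathcal{R}_0^2(1/2)$ listed immediately after the lemma (together with the M\"obius symmetry $\beta$ of Remark \ref{tran} to transport the conclusion to $\partial\mathcal{H}_0^3$) already do the whole job.
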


For $\varepsilon\in \{1,2,3\}$ and $\lambda\in \mathcal{X}$,   the Green function $G_\lambda^\varepsilon: B_\lambda^\varepsilon\rightarrow[-\infty, 0)$ of $N_{\lambda}$ is defined by
$$G_\lambda^\varepsilon(z)=\lim_{k\rightarrow\infty}{2^{-k}}\log |N_{\lambda}^k(z)-b_\varepsilon(\lambda)|.$$
Note that $(G_\lambda^\varepsilon)^{-1}(-\infty)$ consists of the iterated pre-images of $b_\varepsilon(\lambda)$ in  $B_\lambda^\varepsilon$.
The B\"ottcher map $\phi_{\lambda}^\varepsilon$ of $N_\lambda$ is defined in  a neighborhood $U_\lambda^\varepsilon$ of $b_\varepsilon(\lambda)$ by
$$\phi_{\lambda}^\varepsilon(z)=\lim_{k\rightarrow\infty}(N_{\lambda}^k(z)-b_\varepsilon(\lambda))^{2^{-k}},$$
where $ U_\lambda^\varepsilon= B_\lambda^\varepsilon$  if $0\notin  B_\lambda^\varepsilon$, and   $U_\lambda^\varepsilon$ 
is the connected component of 
$\{z\in B_\lambda^\varepsilon; G_\lambda^\varepsilon(z)<G_\lambda^\varepsilon(0)\}$ containing $b_\varepsilon(\lambda)$
  if $0\in  B_\lambda^\varepsilon$.
By definition, the B\"ottcher map $\phi_{\lambda}^\varepsilon$ satisfies   $\phi_{\lambda}^\varepsilon(b_\varepsilon(\lambda))=0$ and $\phi_{\lambda}^\varepsilon(N_{\lambda}(z))=N_{\lambda}(z)^2$  in $U_\lambda^\varepsilon$. It is  unique because  the local degree of $N_\lambda$ at $b_\varepsilon(\lambda)$ is two.

\begin{thm}\label{1a} For $\varepsilon\in\{1,2\}$, the map $\Phi^\varepsilon_0:\mathcal{H}^\varepsilon_0\rightarrow\mathbb{D}$ defined by
\begin{equation*}
\Phi^\varepsilon_0(\lambda)=
\begin{cases}
\phi^\varepsilon_\lambda(0),\ \  &\text{ when } \lambda\neq c_\varepsilon,\\
0,\ \ &\text{ when } \lambda=c_\varepsilon,
\end{cases}
\end{equation*}
is a double cover ramified exactly at $c_\varepsilon$. For $\varepsilon=3$, the map
\begin{equation*}
\Phi^3_0:
\begin{cases}
\mathcal{H}^3_0\rightarrow\mathbb{D}^* \\
\lambda\mapsto\phi^3_\lambda(0)
\end{cases}
\end{equation*}
is a double cover.
\end{thm}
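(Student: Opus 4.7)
The plan is to establish that $\Phi_0^\varepsilon$ is a proper holomorphic map of degree exactly two, with the stated branching behaviour.

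\textbf{Holomorphy and properness.} On $\mathcal{H}_0^\varepsilon\setminus\{c_\varepsilon\}$ the local degree of $N_\lambda$ at $b_\varepsilon(\lambda)$ equals two, so the B\"ottcher coordinate $\phi_\lambda^\varepsilon$ is defined by the uniformly convergent limit stated in the text and depends holomorphically on $\lambda$ on any compact subset; hence $\lambda\mapsto\phi_\lambda^\varepsilon(0)$ is holomorphic there. For $\varepsilon=1,2$, as $\lambda\to c_\varepsilon$ the critical point $0$ merges into $b_\varepsilon(\lambda)$ and $\phi_\lambda^\varepsilon(0)\to 0$; since $|\Phi_0^\varepsilon|<1$, Riemann's removable singularity theorem upgrades this to a holomorphic extension across $c_\varepsilon$. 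Properness follows because a sequence $\{\lambda_n\}\subset\mathcal{H}_0^\varepsilon$ exiting every compact subset forces $G_{\lambda_n}^\varepsilon(0)\to 0^-$ (either the basin bifurcates at $\partial\mathcal{H}_0^\varepsilon$, or, for $\varepsilon=3$, $\lambda_n$ approaches the degenerate ideal point $\infty$), whence $|\Phi_0^\varepsilon(\lambda_n)|=\exp G_{\lambda_n}^\varepsilon(0)\to 1$. Thus $\Phi_0^\varepsilon$ is a branched covering of finite degree $d$.

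\textbf{Degree via the $\mathcal{G}$-symmetry.} To show $d=2$, I exploit the M\"obius symmetry group $\mathcal{G}$ from Section~\ref{m-s}. Since $\mathcal{G}$ has order six and permutes the three components $\mathcal{H}_0^1,\mathcal{H}_0^2,\mathcal{H}_0^3$ transitively, the stabilizer of each $\mathcal{H}_0^\varepsilon$ in $\mathcal{G}$ has order two. Its non-trivial element transposes the two non-distinguished super-attracting fixed points, leaves the basin $B^\varepsilon$ (and the free critical point $0$) in place up to M\"obius conjugation, and thereby induces a holomorphic involution $\iota_\varepsilon\colon\mathcal{H}_0^\varepsilon\to\mathcal{H}_0^\varepsilon$ satisfying $\Phi_0^\varepsilon\circ\iota_\varepsilon=\Phi_0^\varepsilon$ (the B\"ottcher value at the free critical point is intrinsic to the conjugacy class of the basin dynamics). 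For $\varepsilon=1,2$, $\iota_\varepsilon$ fixes $c_\varepsilon$ uniquely, giving a branch point of order two; for $\varepsilon=3$, the fixed point of the corresponding M\"obius element sits at the ideal point $\infty\notin\mathcal{X}$, so $\iota_3$ acts freely on $\mathcal{H}_0^3$.

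\textbf{Surjectivity and conclusion.} Surjectivity onto $\mathbb{D}$ (resp.\ $\mathbb{D}^*$) is established by quasi-conformal surgery via Lemma~\ref{2-1}(3): given any $w_0$ in the target, modify a reference map (the center $N_{c_\varepsilon}$ for $\varepsilon=1,2$, or a fixed base point for $\varepsilon=3$) by an $N_\lambda$-invariant Beltrami differential supported near $\overline{B^\varepsilon}$ that prescribes the new B\"ottcher value of $0$ to be $w_0$, and straighten. Combining properness, surjectivity, and the invariance $\Phi_0^\varepsilon\circ\iota_\varepsilon=\Phi_0^\varepsilon$, one obtains that $\Phi_0^\varepsilon$ descends to a proper holomorphic bijection $\mathcal{H}_0^\varepsilon/\iota_\varepsilon\to\mathbb{D}$ (resp.\ $\mathbb{D}^*$), hence to a biholomorphism; lifting back shows that $\Phi_0^\varepsilon$ is a degree-two branched cover with the stated ramification.

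\textbf{Main obstacle.} The central technical point is the identification of the involution $\iota_\varepsilon$ together with the verification $\Phi_0^\varepsilon\circ\iota_\varepsilon=\Phi_0^\varepsilon$, and the claim that $\Phi_0^\varepsilon$ has no further fibre coincidences beyond those produced by $\iota_\varepsilon$. Equivalently, one must prove that $\phi_\lambda^\varepsilon(0)$ is a complete M\"obius-conjugacy invariant of the restriction of $N_\lambda$ to its distinguished basin---so that two Newton maps sharing this invariant differ precisely by the transposition element of $\mathcal{G}$ stabilizing $\mathcal{H}_0^\varepsilon$.
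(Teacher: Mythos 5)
The route you take is genuinely different from the paper's, but it contains a real gap that you yourself flag as the ``main obstacle'' without resolving it. Your framework is: establish properness and surjectivity, identify the order-two stabiliser $\iota_\varepsilon\in\mathcal{G}$ of $\mathcal{H}_0^\varepsilon$, check $\Phi_0^\varepsilon\circ\iota_\varepsilon=\Phi_0^\varepsilon$, and conclude by descending to the quotient. The identity $\Phi_0^\varepsilon\circ\iota_\varepsilon=\Phi_0^\varepsilon$ is indeed correct (uniqueness of the local-degree-two B\"ottcher coordinate shows $\phi_\lambda^\varepsilon(0)$ is a conjugacy invariant), and the involution does stabilise the right component. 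However, ``properness + surjectivity + $\iota_\varepsilon$-invariance'' does \emph{not} yield that $\mathcal{H}_0^\varepsilon/\iota_\varepsilon\to\mathbb{D}$ is a bijection: a proper holomorphic map descending to the quotient is still only a proper map of some degree $d'\geq1$, and to show $d'=1$ you must prove that any two $\lambda_1,\lambda_2$ with $\Phi_0^\varepsilon(\lambda_1)=\Phi_0^\varepsilon(\lambda_2)$ are in the same $\mathcal{G}$-orbit. That is exactly the completeness-of-invariant statement you label as ``the central technical point'' and then do not prove. It is not a bookkeeping detail but the substance of the theorem.

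The paper avoids this entirely by a different mechanism. It proves that $\Phi_0^\varepsilon\colon\mathcal{H}_0^\varepsilon\setminus\{c_\varepsilon\}\to\mathbb{D}^*$ is a local hyperbolic isometry: from $\arg\phi_{\lambda_1}^\varepsilon(0)=\arg\phi_{\lambda_2}^\varepsilon(0)$ it builds (via a pull-back construction with the interpolating map $\delta$ on $U_\lambda^\varepsilon$) a quasi-conformal conjugacy whose dilatation realises the hyperbolic distance $d_{\mathbb{D}^*}(\phi_{\lambda_1}^\varepsilon(0),\phi_{\lambda_2}^\varepsilon(0))$, and the Schwarz-lemma equality case then forces $\Phi_0^\varepsilon\circ\lambda$ to be a covering of $\mathbb{D}^*$. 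Once $\Phi_0^\varepsilon$ is known to be a covering and $\mathcal{H}_0^\varepsilon\setminus\{c_\varepsilon\}$ has at least two ends, the domain must be $\mathbb{D}^*$ and properness is automatic; the degree is then read off from the explicit local expansion $\Phi_0^\varepsilon(\lambda)=3b_\varepsilon(\lambda)^2+O(b_\varepsilon(\lambda)^3)$ near $c_\varepsilon$. This last computation is what your approach is missing an analogue of: you would need either the isometry argument, or a rigidity argument showing $\phi_\lambda^\varepsilon(0)$ separates points modulo $\mathcal{G}$, and neither is supplied. Without that, the proof is incomplete.

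A secondary remark: the paper does not even assume $\mathcal{H}_0^\varepsilon$ is simply connected in advance; it derives the uniformisation $\mathcal{H}_0^\varepsilon\setminus\{c_\varepsilon\}\cong\mathbb{D}^*$ from the covering-map property. Your quotient argument tacitly uses that $\mathcal{H}_0^\varepsilon$ is a disk and that the quotient by $\iota_\varepsilon$ is again a disk, both of which should be justified rather than assumed.
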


\begin{proof}   
We only consider the cases when $\varepsilon=1,2$. The proof for $\Phi_0^3$ is essentially same.
We first show that for $\varepsilon\in\{1,2\}$, the map $\Phi^\varepsilon_0$ is proper. To this end, we will show that
\begin{equation*}
\Phi^\varepsilon_0:
 \mathcal{H}^\varepsilon_0\setminus\{c_\varepsilon\}\rightarrow
\mathbb{D}^*
\end{equation*}
 is a covering map. It will then be equivalent to prove  that $\Phi^\varepsilon_0$ is a local isometry because the domain and the  range are hyperbolic surfaces. 
 
Fix different parameters $\lambda_1, \lambda_2\in \mathcal{H}^\varepsilon_0\setminus\{c_\varepsilon\}$ with $\arg \phi_{\lambda_1}^\varepsilon(0)=\arg \phi_{\lambda_2}^\varepsilon(0)$. We claim that there is a quasi-conformal  conjugacy $h$ between $N_{\lambda_1}$ and $N_{\lambda_2}$, satisfying that
$$d_{\mathbb{D}}(0, \|\mu_h\|)=d_{\mathbb{D}^*}(\phi_{\lambda_1}^\varepsilon(0), \phi_{\lambda_2}^\varepsilon(0)),$$
where $d_{S}(\cdot,\cdot)$ is the hyperbolic distance of  $S=\mathbb{D}, \mathbb{D}^*$. 

For $k=1,2$,  recall that  
  $U_{\lambda_k}^\varepsilon$ 
is the component of 
$\{z\in B_{\lambda_k}^\varepsilon; G_{\lambda_k}^\varepsilon(z)<G_{\lambda_k}^\varepsilon(0)\}$ containing $b_\varepsilon(\lambda_k)$.
We define a quasiconformal  map
$\delta:U_{\lambda_1}^\varepsilon\rightarrow U_{\lambda_2}^\varepsilon$
by $\delta=(\phi^\varepsilon_{\lambda_2})^{-1}\circ (z\mapsto z^{\frac{\alpha+1}{2}}\bar{z}^{\frac{\alpha-1}{2}})\circ \phi^\varepsilon_{\lambda_1}$,
 where $\alpha$ satisfies $|\phi^\varepsilon_{\lambda_1}(0)|^\alpha=|\phi^\varepsilon_{\lambda_2}(0)|$.

Because  ${\lambda_1}$ and ${\lambda_2}$ are in the same hyperbolic component,  we can find a  
quasi-conformal  conjugacy $\zeta_0$  between $N_{\lambda_1}$ and $N_{\lambda_2}$.  
In order to improve the quality of  $\zeta_0$, we   may modify  $\zeta_0$ so that
\begin{equation*}
\zeta_0=\begin{cases}
(\phi^\epsilon_{\lambda_2})^{-1}\circ  \phi^\epsilon_{\lambda_1}, & \text {  near }  b_\epsilon(\lambda_1),  \epsilon\in\{1,2,3\}\setminus\{\varepsilon\}, \\
\delta,   & \text{ in  }   U_{\lambda_1}^\varepsilon . 
\end{cases}
\end{equation*}

Then we can get  a sequence of quasi-conformal  maps
$\zeta_n:\mathbb{\widehat{C}}\rightarrow \mathbb{\widehat{C}}$
such that $N_{\lambda_2}\circ \zeta_{n+1}= \zeta_n\circ N_{\lambda_1}$ for
$n\geq0$, and $\zeta_{n+1}$ is isotopic to $\zeta_n$ rel $
N_{\lambda_1}^{-n}(P(N_{\lambda_1}))$, where  $P(N_{\lambda_1})$ is the post-critical set of $N_{\lambda_1}$.
Note that 
the dilatations $K(\zeta_n)_{n\geq1}$  are uniformly 
bounded above by $K(\zeta_0)$,  this implies that $\{\zeta_n\}$ is a normal family. 
Therefore the maps $\{\zeta_n\}$ converge to a   quasi-conformal map  $h$,  conjugating  $N_{\lambda_1}$ to $N_{\lambda_2}$.
One may observe  that the Beltrami coefficient $\mu_h=h_{\bar{z}}/h_{z}$  of $h$ satisfies
$$\|\mu_h\|=\|\mu_{\delta}\|=\bigg|\frac{\alpha-1}{\alpha+1}\bigg|=\bigg|\frac{\log
|\phi^\varepsilon_{\lambda_1}(0)|-\log |\phi^\varepsilon_{\lambda_2}(0)|}{\log |\phi^\varepsilon_{\lambda_1}(0)|+\log
|\phi^\varepsilon_{\lambda_2}(0)|}\bigg|.$$
Equivalently, $d_{\mathbb{D}}(0, \|\mu_h\|)=d_{\mathbb{D}^*}(\phi^\varepsilon_{\lambda_1}(0), \phi^\varepsilon_{\lambda_2}(0)).$
This completes the proof of the claim.

  Let $\alpha_t$ solve the Beltrami equation
$$\frac{\overline{\partial} \alpha_t}{\partial \alpha_t}=t\frac{\mu_h}{\|\mu_h\|},  \ t\in \mathbb{D},$$
normalized so that $\alpha_t$ fixes $0,1,\infty$.
Since $\mu_h$ is $N_{\lambda_1}$-invariant (i.e. $N_{\lambda_1}^*(\mu_h)=\mu_h$), by Lemma \ref{2-1},  there is a holomorphic map $\lambda:\mathbb{D}\rightarrow \mathcal{H}^\varepsilon_0\setminus\{c_\varepsilon\}$ such that $\lambda(0)=\lambda_1$ and  $\alpha_t\circ N_{\lambda_1}\circ \alpha_t^{-1}=N_{\lambda(t)}$.
Moreover $\lambda(\|\mu_h\|)=\lambda_2$.
Note that $\Phi_0^\varepsilon\circ\lambda: \mathbb{D}\rightarrow\mathbb{D}^*$ is holomorphic, by  Schwarz lemma,
$$d_{\mathbb{D}^*}(\phi^\varepsilon_{\lambda_1}(0), \phi^\varepsilon_{\lambda_2}(0))=d_{\mathbb{D}^*}(\Phi_0^\varepsilon\circ\lambda(0),\Phi_0^\varepsilon\circ\lambda(\|\mu_h\|))\leq d_{\mathbb{D}}(0, \|\mu_h\|).$$
As we have shown that, the first and last terms of the  above inequality are equal,  
we deduce that $\Phi_0^\varepsilon\circ\lambda: \mathbb{D}\rightarrow\mathbb{D}^*$ is a covering map.
It turns out that   $\Phi_0^\varepsilon: \mathcal{H}^\varepsilon_0\setminus\{c_\varepsilon\}\rightarrow\mathbb{D}^*$ is necessarily a covering map.
 From the fact that  $\mathcal{H}^\varepsilon_0\setminus\{c_\varepsilon\}$ has  at least two boundary components, we see that $\mathcal{H}^\varepsilon_0\setminus\{c_\varepsilon\}$ is isomorphic to $\mathbb{D}^*$ and $\Phi_0^\varepsilon$ is a proper map  from $\mathcal{H}^\varepsilon_0\setminus\{c_\varepsilon\}$ to $\mathbb{D}^*$.

It remains to show that $\Phi_0^\varepsilon$ has degree two. For this, we will show 
$$\Phi_0^\varepsilon(\lambda)=3b_\varepsilon(\lambda)^2+{O}(b_\varepsilon(\lambda)^3)  \text{ near }  c_\varepsilon,$$
and the degree of $\Phi_0^\varepsilon$ is indicated in the power of  $b_\varepsilon(\lambda)$.

To get the above expression, we may conjugate  $N_\lambda$ to the new map
$$ f_\lambda(z)=\beta_\lambda\circ N_\lambda\circ \beta_\lambda^{-1}(z)=z^2\frac{1+\frac{2}{3}\frac{a(\lambda)}{b_\varepsilon(\lambda)^2}z}{1+2z+
\frac{a(\lambda)}{b_\varepsilon(\lambda)^2}z^2},$$ 
where $$\beta_\lambda(w)=\frac{b_\varepsilon(\lambda)}{a(\lambda)}(w-b_\varepsilon(\lambda)), a(\lambda)=b_\varepsilon(\lambda)^2-\frac{1}{3}\Big(\frac{3}{4}+\lambda^2\Big).$$
Then
the map $\beta_\lambda$ sends the critical point $0$ of $N_\lambda$ to the critical point $c(\lambda)=-b_\varepsilon(\lambda)^2/a(\lambda)$ of $f_\lambda$.
The B\"ottcher map $\psi_\lambda$ of $f_\lambda$ is well-defined at $c(\lambda)$ and
$$\phi^\varepsilon_\lambda(0)=\psi_\lambda(c(\lambda))=3b_\varepsilon(\lambda)^2+{O}(b_\varepsilon(\lambda)^3).$$
This completes the proof.
\end{proof}

\begin{rmk} \label{tran} The following  diagram of conformal maps is commutative
$$\xymatrix{
\mathcal{H}_0^1 \ar[r]^\gamma \ar[rd]_{\Phi_0^{1}} &
\mathcal{H}_0^2 \ar[d]^{\Phi_0^{2}} &
\mathcal{H}_0^3 \ar[l]_\beta \ar[ld]^{\Phi_0^{3}}\\
&\mathbb{D}}$$
where $\gamma(\lambda)=-\lambda, \beta(\lambda)=\frac{1}{2}+\frac{1}{\lambda-\frac{1}{2}}$.
\end{rmk}

\begin{thm} \label{1b}
 Let $\mathcal{H}$ be a component of $\mathcal{H}^\varepsilon_k$ with  $k\geq2$.
Then the map   $\Phi_\mathcal{H}:\mathcal{H}\rightarrow{\mathbb{D}} $ defined by
$\Phi_\mathcal{H}(\lambda)=\phi_\lambda^\varepsilon(N_\lambda^{k}(0))$
is a conformal isomorphism.
\end{thm}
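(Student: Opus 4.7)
The plan is to prove that $\Phi_\mathcal{H}$ is a holomorphic, proper, and injective map into $\mathbb{D}$; any such map between Riemann surfaces must have image equal to $\mathbb{D}$ (as a nonempty, open, and closed subset of the connected $\mathbb{D}$) and hence is a conformal isomorphism. The key structural observation, absent in Theorem \ref{1a}, is that because $k\geq 2$ for a capture component, we have $0\notin B^\varepsilon_\lambda$ throughout $\mathcal{H}$, so $b_\varepsilon(\lambda)$ is the unique critical point of $N_\lambda$ inside $B^\varepsilon_\lambda$. The B\"ottcher coordinate therefore extends to a conformal isomorphism $\phi^\varepsilon_\lambda:B^\varepsilon_\lambda\to\mathbb{D}$, so that $\Phi_\mathcal{H}(\lambda)=\phi^\varepsilon_\lambda(N^k_\lambda(0))\in\mathbb{D}$ is well-defined and takes the value $0$ only at the unique center of $\mathcal{H}$. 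Since both $N^k_\lambda(0)$ and $\phi^\varepsilon_\lambda$ depend holomorphically on $\lambda$, $\Phi_\mathcal{H}$ is holomorphic.

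For properness, suppose $\lambda_n\in\mathcal{H}$ accumulates at $\lambda_\infty\in\partial\mathcal{H}$ with $\Phi_\mathcal{H}(\lambda_n)\to w_\infty\in\mathbb{D}$. The degenerate cases $\lambda_\infty\in\{\pm\frac{3}{2},0\}$ or $\lambda_\infty=\infty$ can be dispatched by direct inspection of the Newton map at these collisions of fixed points, so assume $\lambda_\infty\in\mathcal{X}$; then $b_\varepsilon(\lambda_\infty)$ remains a super-attracting fixed point, and the compact sets $(\phi^\varepsilon_{\lambda_n})^{-1}(\overline{\mathbb{D}_r})$, for any $r\in(|w_\infty|,1)$, converge in Hausdorff distance to a compact subset of $B^\varepsilon_{\lambda_\infty}$. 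Hence $N^k_{\lambda_\infty}(0)\in B^\varepsilon_{\lambda_\infty}$, all critical orbits of $N_{\lambda_\infty}$ are attracted to super-attracting fixed points, so $\lambda_\infty$ is hyperbolic and lies in some capture component $\mathcal{H}'\subseteq\mathcal{H}^\varepsilon_{k'}$ with $k'\leq k$. The same continuity argument applied at level $k'$ shows that the combinatorial invariant is locally constant, forcing $k'=k$; then $\lambda_n\in\mathcal{H}'$ for large $n$, and since connected components are disjoint, $\mathcal{H}=\mathcal{H}'$, contradicting $\lambda_\infty\in\partial\mathcal{H}$.

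Injectivity is established by the quasi-conformal surgery used in Theorem \ref{1a}. Given $\lambda_1,\lambda_2\in\mathcal{H}$ with $\Phi_\mathcal{H}(\lambda_1)=\Phi_\mathcal{H}(\lambda_2)$, the compositions $\delta_\epsilon:=(\phi^\epsilon_{\lambda_2})^{-1}\circ\phi^\epsilon_{\lambda_1}:B^\epsilon_{\lambda_1}\to B^\epsilon_{\lambda_2}$ are \emph{conformal} conjugacies between $N_{\lambda_1}|_{B^\epsilon_{\lambda_1}}$ and $N_{\lambda_2}|_{B^\epsilon_{\lambda_2}}$ for each $\epsilon\in\{1,2,3\}$, and the hypothesis guarantees $\delta_\varepsilon(N^k_{\lambda_1}(0))=N^k_{\lambda_2}(0)$. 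Starting from a global QC conjugacy $\zeta_0$ between $N_{\lambda_1}$ and $N_{\lambda_2}$ (which exists because both lie in the same hyperbolic component), modified to coincide with $\delta_\epsilon$ on a neighborhood of each $b_\epsilon(\lambda_1)$, the iterative lifts $\zeta_{n+1}=N_{\lambda_2}^{-1}\circ\zeta_n\circ N_{\lambda_1}$, with branches chosen to send the Fatou component of $0$ for $N_{\lambda_1}$ to its analogue for $N_{\lambda_2}$, have uniformly bounded dilatation and converge to a QC conjugacy $h$. Each pullback enlarges the region on which $\zeta_n$ is already conformal by one additional preimage of the immediate basins, so in the limit $\mu_h$ is supported on the Julia set, which has Lebesgue measure zero by hyperbolicity of $N_{\lambda_1}$. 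Hence $h$ is M\"obius, satisfies $h(b_\epsilon(\lambda_1))=b_\epsilon(\lambda_2)$ for every $\epsilon$, and Lemma \ref{2-1}(2) forces $\lambda_1=\lambda_2$. The main obstacle is the consistency of these iterated lifts: one must verify that the branch choices produce a single well-defined limit $h$ respecting the combinatorics of $\mathcal{H}$ (so $h$ does not pick up an extraneous element of $\mathcal{G}\setminus\{\mathrm{id}\}$) and that $\mu_h$ indeed vanishes almost everywhere—both guaranteed by the explicit modification of $\zeta_0$ on neighborhoods of all three super-attracting fixed points and by the hyperbolicity of $N_{\lambda_1}$.
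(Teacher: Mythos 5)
Your route — show that $\Phi_\mathcal{H}$ is holomorphic, proper, and injective, then invoke the standard fact that such a map between Riemann surfaces onto a connected target is a conformal isomorphism — is genuinely different from the paper's. The paper instead \emph{constructs a global holomorphic right inverse} $\sigma:\mathbb{D}\to\mathcal{H}$ with $\sigma(\Phi_\mathcal{H}(\lambda_0))=\lambda_0$ and $\Phi_\mathcal{H}\circ\sigma=\mathrm{id}$, by running the holomorphic-motion/Measurable-Riemann-Mapping surgery over hyperbolic disks $D(\zeta_0,\kappa)$ of arbitrarily large radius and appealing to Lemma~\ref{2-1}(3). Once such a $\sigma$ exists, $\sigma(\mathbb{D})$ is open and closed in $\mathcal{H}$, hence all of $\mathcal{H}$, which gives both surjectivity and injectivity at once without ever examining $\partial\mathcal{H}$. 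That is precisely the virtue of the paper's approach here, and it exposes the main weakness in yours.

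The weak point is properness, specifically your dismissal of the degenerate boundary parameters. You write that the cases $\lambda_\infty\in\{\pm\tfrac32,0\}$ or $\lambda_\infty=\infty$ ``can be dispatched by direct inspection.'' For $\lambda_\infty=\infty$ this is defensible: for $|\lambda|$ large, every bounded $z$ satisfies $N_\lambda(z)\to 1=b_3(\lambda)$, so $0\in B^3_\lambda$ and $\lambda\in\mathcal{H}^3_0$; hence $\mathcal{H}\subset\mathcal{H}^\varepsilon_k$ with $k\geq2$ is bounded. But at $\lambda\in\{\pm\tfrac32,0\}$ two of the super-attracting fixed points collide, the degree of $N_\lambda$ drops from three to two, and the B\"ottcher coordinate and basin $B^\varepsilon_\lambda$ do not degenerate in an obviously controllable way. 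Your ``Hausdorff convergence of $(\phi^\varepsilon_{\lambda_n})^{-1}(\overline{\mathbb{D}_r})$'' argument is only valid for $\lambda_\infty\in\mathcal{X}$, and there is no short argument available at this stage that excludes accumulation of $\mathcal{H}$ at those punctures. In fact the paper only establishes $\overline{\mathcal{H}}\subset\Omega$ in Section~\ref{cap-jordan}, and the proof there rests on the Head's-angle apparatus (Corollary~\ref{H-tongue2}) developed after Theorem~\ref{1b}. So, as written, your properness step has a genuine circularity/gap: it implicitly needs a boundary-control fact proved later by tools that depend on the hyperbolic-component parameterizations you are trying to establish.

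Your injectivity argument via the Theorem~\ref{1a}-style QC surgery is sound in outline and is a reasonable alternative to the paper's inverse construction, though it does rely (as you acknowledge) on verifying consistency of the branch choices in the lifts and on $\zeta_0$ preserving the labeling of $\{b_1,b_2,b_3\}$ so that the resulting M\"obius conjugacy lies in $\mathcal{G}$ acting trivially; since $\lambda_1$ and $\lambda_2$ lie in a common connected hyperbolic component, the initial topological conjugacy can indeed be arranged to preserve the labels, so this part is fixable. The decisive issue you should address is the boundary behavior at the punctures; either supply an argument that a level-$k$ capture domain cannot accumulate at $\lambda\in\{\pm\tfrac32,0\}$ without using results proven downstream, or switch to the paper's strategy of constructing the inverse directly, which avoids boundary considerations altogether.
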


\begin{proof}  It's clear that $\Phi_\mathcal{H}$ is holomorphic.
To show that $\Phi_\mathcal{H}$ is a conformal map, it suffices to construct a holomorphic map $\lambda: \mathbb{D}\rightarrow \mathcal{H}$ such that
$\Phi_\mathcal{H}\circ\lambda =id$.

Fix $\lambda_0\in \mathcal{H}$ and set $\zeta_0=\Phi_\mathcal{H}(\lambda_0)$. Let $B$ be the Fatou component of $N_{\lambda_0}$ containing $N_{\lambda_0}^{k-1}(0)$.
For $\kappa>0$, let
$D(\zeta_0,\kappa)=\{\zeta\in\mathbb{D};
d_{\mathbb{D}}(\zeta,\zeta_0)<\kappa\}$ be the hyperbolic disk
centered at $\zeta_0$ with radius $\kappa$,  and $B_{\kappa}=
(N_{\lambda_0}|_{B})^{-1}\circ(\phi_{\lambda_0}^\varepsilon)^{-1}(D(\zeta_0,\kappa))$.

Let $E=\partial D(\zeta_0,\kappa)\cup\{\zeta_0\}$. We
 define a map $h: D(\zeta_0,\kappa)\times E\rightarrow\mathbb{D}$ by:
\begin{equation*}
h(\zeta,z)=
\begin{cases}
 z,\ \  &z\in \partial D(\zeta_0,\kappa),\\
\zeta,\ \ &z=\zeta_0.
\end{cases}
\end{equation*}

It is easy to check that

$\bullet \ h(\zeta_0,z)=z,\ z\in E $,

$\bullet$  for every fixed $\zeta\in D(\zeta_0,\kappa), \ z\mapsto
h(\zeta,z)$ is injective on $E$,

$\bullet$  for every fixed $z\in E, \ \zeta\mapsto h(\zeta,z)$ is
holomorphic in $D(\zeta_0,\kappa)$.

Thus $h: D(\zeta_0,\kappa)\times E\rightarrow\mathbb{D}$ is a
holomorphic motion parameterized by $D(\zeta_0,\kappa)$ with base
point $\zeta_0$. By the Holomorphic Motion  Theorem (see \cite{GJW} or
\cite{Sl}), $h$ admits an extension   $H:
D(\zeta_0,\kappa)\times \mathbb{\widehat{C}}\rightarrow\mathbb{\widehat{C}}$, which is also a holomorphic motion.
Let
$$\delta_{\zeta}(z)=\phi_{\lambda_0}^{-1}\circ
H(\zeta,\phi_{\lambda_0}^\varepsilon\circ N_{\lambda_0}(z)),  \ (\zeta,z)\in D(\zeta_0,\kappa)\times B_\kappa. $$
 It is easy to check that $\delta_{\zeta}$ satisfies:

$\bullet \ \delta_{\zeta_0}(z)=N_{\lambda_0}(z) \text{ for all }\
z\in B_{\kappa}$;

$\bullet \
\delta_{\zeta}(N_{\lambda_0}^{k-1}(0))=(\phi^\varepsilon_{\lambda_0})^{-1}(\zeta)$;

$\bullet \
\delta_\zeta:B_{\kappa}\rightarrow(\phi^\varepsilon_{\lambda_0})^{-1}(D(\zeta_0,\kappa))$
is a quasi-conformal map for any fixed $\zeta$;

$\bullet \ \zeta\mapsto\delta_{\zeta}(z)$ is holomorphic for any fixed $z\in B_{\kappa}$.

\

 Now we define a quasi-regular map
\begin{equation*}
L_\zeta(z)=\begin{cases}
 \delta_{\zeta}(z),\ \  &z\in B_{\kappa},\\
N_{\lambda_0}(z),\ \ &z\in \widehat{\mathbb{C}}\setminus
B_{\kappa}.
 \end{cases}
 \end{equation*}
Let $\sigma$ be the standard complex structure, we  construct an $L_\zeta$-invariant complex structure $\sigma_\zeta$
as follows:
\begin{equation*} \sigma_\zeta=
\begin{cases} (N_{\lambda_0}^m)^*(\delta_{\zeta}^*\sigma),\ \
 &\mathrm{in} \ N_{\lambda_0}^{-m}(B_{\kappa}), \  \ m\geq 0,\\
 \sigma,\ \ &\mathrm{in}\ \widehat{\mathbb{C}}\setminus \bigcup_{m\geq0}N_{\lambda_0}^{-m}(B_{\kappa}).
 \end{cases}
 \end{equation*}

The   Beltrami coefficient $\mu_\zeta$ of  $\sigma_\zeta$ satisfies
$\|\mu_\zeta\|<1$ for
 all $\zeta\in D(\zeta_0,\kappa)$ since $L_\zeta$ is holomorphic outside $B_\kappa$. By
 the Measurable Riemann Mapping Theorem \cite{A}  and Lemma \ref{2-1}, there exist 
 
 1.  a  family of quasi-conformal maps
$\psi_\zeta:\mathbb{\widehat{C}}\rightarrow \mathbb{\widehat{C}}$, parameterized by $ D(\zeta_0,\kappa)$, 
each 
 solves $\frac{\overline{\partial} \psi_\zeta}{{\partial} \psi_\zeta}=\mu_\zeta$ and  fixes $0,1,\infty$, and

2.  a holomorphic map $\lambda:D(\zeta_0,\kappa)\rightarrow\mathcal{H}$ such that
$\lambda(0)=\lambda_0$ and $\psi_\zeta\circ L_\zeta\circ \psi_\zeta^{-1}=N_{\lambda(\zeta)}$.

\

   For $\zeta\in D(\zeta_0,\kappa)$, we have
 \bess
\Phi_{\mathcal{H}}(\lambda(\zeta))&=&
\phi^\varepsilon_{\lambda(\zeta)}(N_{\lambda(\zeta)}^k(0))
=\phi^\varepsilon_{\lambda(\zeta)}\circ \psi_\zeta\circ L_\zeta^k\circ
\psi_\zeta^{-1}(0)\\
&=&\phi^\varepsilon_{\lambda(\zeta)}\circ \psi_\zeta\circ \delta_\zeta\circ
N_{\lambda_0}^{k-1}(0)=\phi^\varepsilon_{\lambda_0}\circ \delta_\zeta\circ
N_{\lambda_0}^{k-1}(0)=\zeta.
 \eess

Note that $\kappa$ can be arbitrarily large,  the map $\Phi_{\mathcal{H}}$
actually admits a global inverse map.
\end{proof}

Let $\mathcal{B}$ be a hyperbolic component of renormalizable type.
 Then for any $\lambda\in \mathcal{B}$, the map $N_\lambda$ has an  attracting cycle other than $b_1(\lambda), b_2(\lambda), b_3(\lambda)$, with period say $p$. 
  This  cycle contains   a point, say $z_\lambda$,   in the  Fatou  component  containing the critical point $0$. 
 Clearly, the map $\lambda\mapsto z_\lambda$ is holomorphic throughout $\mathcal{B}$.

\begin{thm}\label{1c}
  The multiplier map $\rho:\mathcal{B}\rightarrow \mathbb{D}$ defined by $\rho(\lambda)=(N_\lambda^p)'(z_\lambda)$ is a conformal map.
 It can be extended to a homeomorphism from $\overline{\mathcal{B}}$ to $\overline{\mathbb{D}}$.
\end{thm}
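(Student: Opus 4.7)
The plan follows the Douady--Hubbard template for multiplier maps of hyperbolic components. First I would verify that $\rho$ is holomorphic and proper. Holomorphy is immediate: applying the implicit function theorem to $N_\lambda^p(z)=z$ (nondegenerate since $|(N_\lambda^p)'(z_\lambda)|<1$ on $\mathcal{B}$) shows that $z_\lambda$ depends holomorphically on $\lambda$, so $\rho(\lambda)=(N_\lambda^p)'(z_\lambda)$ is holomorphic. Properness is equally routine: if $\lambda_n\to\lambda^*\in\partial\mathcal{B}$ with $\rho(\lambda_n)\to\mu\in\mathbb{D}$, continuity makes the attracting cycle persist at $\lambda^*$, forcing $\lambda^*\in\mathcal{B}$, a contradiction. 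Hence $|\rho(\lambda)|\to 1$ as $\lambda\to\partial\mathcal{B}$.

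The main content is that $\rho$ has degree one, which I would prove by constructing a global holomorphic right inverse via quasi-conformal surgery, in the spirit of the argument for Theorem \ref{1b}. Fix $\lambda_0\in\mathcal{B}$, let $W_0$ be the immediate basin of $z_{\lambda_0}$ under $f_0:=N_{\lambda_0}^p$, and let $\phi:(W_0,z_{\lambda_0})\to(\mathbb{D},0)$ be the Riemann map, so that $B_0:=\phi\circ f_0\circ\phi^{-1}$ is a degree-two Blaschke product on $\mathbb{D}$ with attracting fixed point at $0$ of multiplier $\rho(\lambda_0)$. For each $\mu\in\mathbb{D}$ I would produce a degree-two Blaschke product $B_\mu$ agreeing with $B_0$ in a neighborhood of $\partial\mathbb{D}$ but with multiplier $\mu$ at $0$, by quasi-conformal interpolation on a fundamental annulus for the attracting dynamics. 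Transporting back by $\phi$ and spreading through the whole periodic cycle of basins via the dynamics of $N_{\lambda_0}$, this yields a quasi-regular map $L_\mu:\mathbb{\widehat{C}}\to\mathbb{\widehat{C}}$ that coincides with $N_{\lambda_0}$ outside the periodic basins. Pulling back the standard complex structure under all iterates of $L_\mu$ produces an $L_\mu$-invariant Beltrami form $\sigma_\mu$ with $\|\sigma_\mu\|_\infty<1$, depending holomorphically on $\mu$. The Measurable Riemann Mapping Theorem together with Lemma \ref{2-1}(3) then yields a holomorphic map $\lambda:\mathbb{D}\to\mathcal{B}$ with $\rho\circ\lambda=\mathrm{id}_{\mathbb{D}}$. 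Combined with properness, this forces $\rho$ to be a conformal isomorphism onto $\mathbb{D}$.

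For the extension to a homeomorphism of closures, I would appeal to the Douady--Hubbard straightening theorem applied to the renormalization $N_\lambda^p:U_\lambda\to V_\lambda$, which is polynomial-like of degree two with connected filled Julia set for every $\lambda\in\mathcal{B}$. Straightening gives a hybrid equivalence to a unique $z^2+c(\lambda)$, and the induced map $c:\mathcal{B}\to W_0$ is a conformal isomorphism onto the corresponding hyperbolic component $W_0$ of the Mandelbrot set, intertwining $\rho$ with the classical multiplier map $\rho_{W_0}:\overline{W_0}\to\overline{\mathbb{D}}$. Using a holomorphic motion of the polynomial-like tubing over $\mathcal{B}$ and the $\lambda$-lemma to secure a uniform lower bound on $\mathrm{mod}(V_\lambda\setminus\overline{U_\lambda})$ up to $\partial\mathcal{B}$, I would extend $c$ continuously to $\overline{\mathcal{B}}\to\overline{W_0}$; composing with the classical homeomorphism $\rho_{W_0}$ gives the required extension. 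The \emph{main obstacle} is precisely this last point: parabolic implosion at parameters $\lambda^*\in\partial\mathcal{B}$ where the cycle becomes neutral can in principle destroy the continuity of dynamical constructions, and the uniform control of the tubing up to $\partial\mathcal{B}$—needed both for continuity and for injectivity of the boundary extension—is the genuinely delicate technical step.
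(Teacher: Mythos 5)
Your argument for conformality of $\rho$ is essentially the paper's: holomorphy and properness via the implicit function theorem, then a quasi-conformal surgery on the Blaschke model $B_\zeta(z)=z\frac{z+\zeta}{1+\bar\zeta z}$ to produce a section of $\rho$, followed by the Measurable Riemann Mapping Theorem and Lemma \ref{2-1}. One small caveat: you assert that the invariant Beltrami form depends \emph{holomorphically} on $\mu$, which a quasi-conformal interpolation will not generally give you and which you do not need; the paper only produces a \emph{continuous} local section $\lambda(\zeta)$ with $\rho\circ\lambda=\mathrm{id}$, which already forces $\rho$ to be a covering of the simply connected disk, hence conformal.

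For the boundary extension you take a genuinely different and much heavier route — straightening the degree-two renormalization and transferring the problem to the boundary of a hyperbolic component of the Mandelbrot set — and this is where the proposal has a real gap. You correctly identify that the continuity and injectivity of the straightening (equivalently, of the tubing) up to $\partial\mathcal{B}$ is "the genuinely delicate technical step," but you do not supply it, and parabolic implosion is exactly the phenomenon that makes this approach hard to close. The paper avoids all of this with a short direct argument: applying the implicit function theorem to the system $N_\lambda^p(z)=z$, $(N_\lambda^p)'(z)=e^{2\pi i t}$ shows that $\partial\mathcal{B}$ is an analytic curve except at $\rho^{-1}(1)$, hence locally connected, so $\rho$ extends continuously to $\overline{\mathcal{B}}$; and since a boundary parameter is uniquely determined by the multiplier $e^{2\pi i t}$ of its neutral cycle, the extension is injective, i.e.\ $\partial\mathcal{B}$ is a Jordan curve and $\rho:\overline{\mathcal{B}}\to\overline{\mathbb{D}}$ is a homeomorphism. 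To repair your write-up, either carry out the implicit-function-theorem argument directly, or actually prove the uniform modulus bound and continuity of straightening on $\overline{\mathcal{B}}$ that you only announce.
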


\begin{proof}
  By the  implicit function theorem, if the sequence   $\lambda_n$ in $\mathcal{B}$  approaches  the boundary  $\partial\mathcal{B}$, then
$|\rho(\lambda_n)|\rightarrow1$. This implies that   the  map  $\rho:\mathcal{B}\rightarrow \mathbb{D}$  is proper.
To show $\rho$ is conformal, it suffices to show $\rho$ admits a local inverse.

Fix $\lambda_0\in \mathcal{B}$ and set $\rho_0=\rho(\lambda_0)$. Let 
$A_0$ be the Fatou component containing  $0$ and  $z_{\lambda_0}$. There is a conformal map
$\phi:A_0\rightarrow \mathbb{D}$ such that $\phi(z_{\lambda_0})=0$ and
$\phi N_{\lambda_0}^p \phi^{-1}(z)=B_{\rho_0}(z)$, where
$$B_{\zeta}(z)=z\frac{z+\zeta}{1+\bar{\zeta}z}.$$

 Then there is a neighborhood $\mathcal{U}$ of $\rho_0$ and
 a continuous family of quasi-regular maps $\widetilde{B}:\mathcal{U}\times \mathbb{D}\rightarrow\mathbb{D}$ such that

 (a).  $\widetilde{B}(\rho_0,z)=B_{\rho_0}(z)$ for any $z\in \mathbb{D}$, ;

 (b). Fix any $\zeta\in \mathcal{U}$,  define a quasi-regular map by

 \begin{equation*}
\widetilde{B}(\zeta,z)=\begin{cases}
B_{\zeta}(z), \ &|z|<\delta,\\
 B_{\rho_0}(z), \  &\frac{1}{2}<|z|<1,\\
\text{interpolation},\  &\delta\leq |z|\leq \frac{1}{2}.
\end{cases}
\end{equation*}
 where $\delta$ is a small positive  number.


In this way,  we  get a continuous family $\{L_\zeta\}_{\zeta\in\mathcal{U}}$ of quasi-regular maps:
\begin{equation*}
L_\zeta(z)=\begin{cases}
 (N_{\lambda_0}^{p-1}|_{N_{\lambda_0}(A_0)})^{-1} ( \phi^{-1} \widetilde{B}(\zeta, \phi(z))),\ &z\in  A_0,\\
N_{\lambda_0}(z),\ &z\in\mathbb{\widehat{C}}\setminus A_0.
\end{cases}
\end{equation*}

We construct an $L_\zeta$-invariant complex structure $\sigma_\zeta$  such  that

$\bullet$ $\sigma_{\rho_0}$ is  the standard complex structure $\sigma$ on $\mathbb{\widehat{C}}$.

$\bullet$ $\sigma_\zeta$ is continuous with respect to $\zeta\in\mathcal{U}$.

$\bullet$ $\sigma_{\zeta}$ is the standard complex structure near the attracting cycle and outside $\cup_{k\geq0}N_{\lambda_0}^{-k}(A_0)$.

The Beltrami coefficient $\mu_\zeta$ of $\sigma_\zeta$ satisfies $\|\mu_\zeta\|<1$. By the Measurable Riemann Mapping Theorem \cite{A} and Lemma \ref{2-1},
there exists a continuous family of
quasi-conformal maps $\psi_\zeta$ solving $\frac{\bar{\partial} \psi_\zeta}{\partial \psi_\zeta}=\mu_\zeta$ and fixing $0,1, \infty$,  and a continuous map $\lambda:\mathcal{U}\rightarrow\mathcal{B}$ such that $\psi_\zeta\circ L_\zeta\circ\psi_\zeta^{-1}=N_{\lambda(\zeta)}.$
The multiplier of $N_{\lambda(\zeta)}$ at the  attracting cycle other than $b_1(\lambda(\zeta)), b_2(\lambda(\zeta)), b_3(\lambda(\zeta))$ is exactly $\zeta$. Therefore $\rho(\lambda(\zeta))=\zeta$ for all $\zeta\in\mathcal{U}$.
This implies that $\rho$ is a covering map. Since $\mathbb{D}$ is simply connected,
    $\rho$ is actually a conformal map.

The map $\rho$ has a continuous extension to the boundary $\partial \mathcal{B}$. By the implicit function theorem, the boundary $\partial \mathcal{B}$
is an analytic curve except at $\rho^{-1}(1)$. So $\partial \mathcal{B}$ is locally connected. Since for any $\lambda\in \partial \mathcal{B}$,
the multiplier $e^{2\pi i t}$ of the neutral  cycle of $N_\lambda$ is uniquely determined by the angle $t\in \mathbb{S}$, we conclude that 
  $\partial \mathcal{B}$
is a Jordan curve.  This is equivalent to say that  $\rho$ can be extended to  a homeomorphism from $\overline{\mathcal{B}}$ to $\overline{\mathbb{D}}$.
\end{proof}


%




\section{Fundamental domain and rays}\label{fund-int}

In this section, we first introduce the fundamental domain in the parameter plane, then study the basic properties of the dynamical internal rays and parameter rays.

\subsection{The  fundamental domain $\mathcal{X}_{FD}$} \label{parameter-ray}
 We first define
 $$\Omega=\Big\{\lambda\in\mathcal{X}=\mathbb{C}\setminus\big\{\pm\frac{3}{2},0\big\}; |\lambda-\frac{1}{2}|<1,  |\lambda+\frac{1}{2}|<1, {\rm Im} (\lambda)>0\Big\}.$$
 By Theorem \ref{1a}, the maps
\bess         
 \Phi_0^1:\mathcal{H}_0^1\cap\overline{\Omega}\rightarrow\mathbb{D}\cap \{ {\rm Im} (z)\geq0\}, \ 
 \Phi_0^2:\mathcal{H}_0^2\cap\overline{\Omega}\rightarrow\mathbb{D}\cap \{ {\rm Im} (z)\leq0\}
 \eess
 are homeomorphisms.

The {\it parameter ray} $\mathcal{R}^1_0(t)$ of angle $t\in [0, \frac{1}{2}]$ in $\mathcal{H}_0^1$ and the {\it parameter ray} $\mathcal{R}_0^2(\theta)$ of angle $\theta\in [\frac{1}{2},1]$ in $\mathcal{H}_0^2$ are defined respectively by
$$\mathcal{R}^1_0(t):=(\Phi_0^1)^{-1}((0,1)e^{2\pi i t}), \  \mathcal{R}_0^2(\theta):=(\Phi_0^2)^{-1}((0,1)e^{2\pi i \theta}).$$
Here are four examples of parameter rays:
$$\mathcal{R}_0^1(0)=(-1/2,0),\  \mathcal{R}_0^1({1}/{2})=\{{1}/{2}+e^{ia}; a\in ({2\pi}/{3}, \pi)\},$$
$$\mathcal{R}_0^2(1)=(0,{1}/{2}),\  \mathcal{R}_0^2({1}/{2})=\{-{1}/{2}+e^{ia}; a\in (0,{\pi}/{3})\}.$$

\

\begin{figure}[h]
\begin{center}
\includegraphics[height=7cm]{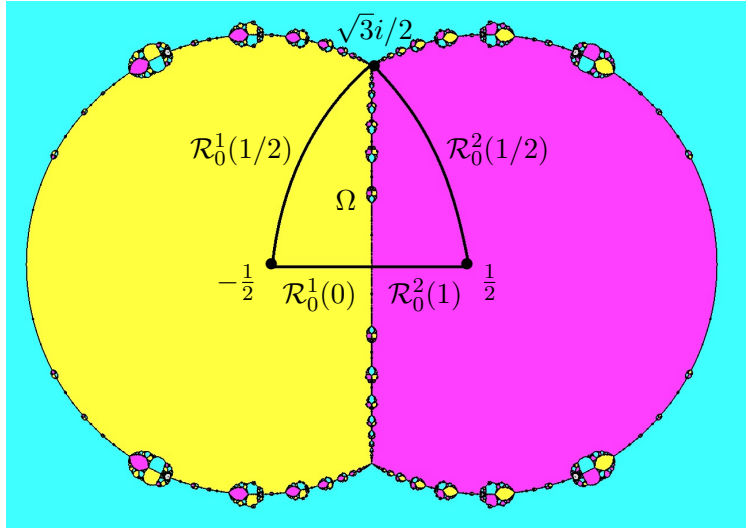}
\put(-200,90){$-\frac{1}{2}$}  \put(-182,97){$\bullet$} \put(-210,140){$\mathcal{R}_0^1(1/2)$}  \put(-175,85){$\mathcal{R}_0^1(0)$} \put(-135,85){$\mathcal{R}_0^2(1)$}  \put(-100,90){$\frac{1}{2}$}   \put(-108,97){$\bullet$}  
\put(-155,120){$\Omega$} \put(-113,140){$\mathcal{R}_0^2(1/2)$} \put(-155,185){$\sqrt{3}i/2$} \put(-143,172){$\bullet$}
 \caption{The region $\Omega$ is bounded by  four parameter rays.}
\end{center}\label{f5}
\end{figure}


By  Lemma \ref{2-1},   for each map $N_{\lambda}$, one can find  $\lambda_0\in \mathcal{X}_{FD}$, where
$$\mathcal{X}_{FD}=\Omega\cup\mathcal{R}_0^1(0)\cup\mathcal{R}_0^1({1}/{2})\cup\{\sqrt{3}i/2, -1/2\},$$
so that $N_{\lambda}$ is  conjugate to $N_{\lambda_0}$ by a M\"obius transformation.  Besides, no two maps in  $\mathcal{X}_{FD}$ are M\"obius conjugate.
For this reason, we call $\mathcal{X}_{FD}$  {\it the fundamental domain} of the parameter space. Clearly, there is a bijection between  $\mathcal{X}_{FD}$ and $\mathcal{M}_3$.



Note that  $\lambda=\sqrt{3}i/2$ is the only parameter in  $\mathcal{X}_{FD}$ for which the free critical point $0$ is mapped by $N_\lambda$ to the repelling fixed
 point $\infty$. In this case, the map $N_\lambda$ is post-critically finite and therefore has a locally connected Julia set. In fact, each Fatou component is bounded by a  Jordan 
 curve.
 
 \begin{figure}[!htpb]
  \setlength{\unitlength}{1mm}
  {\centering
  \includegraphics[width=62mm]{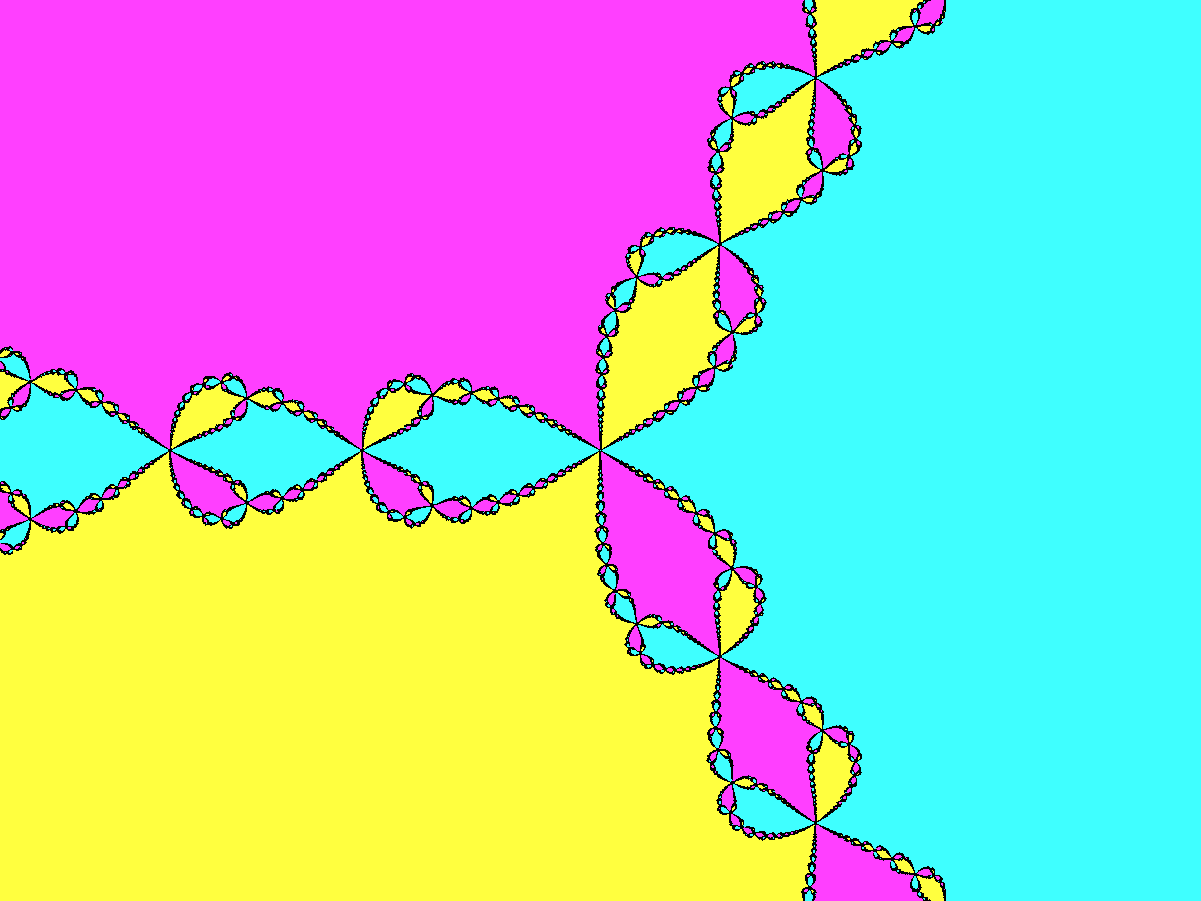}
  \includegraphics[width=62mm]{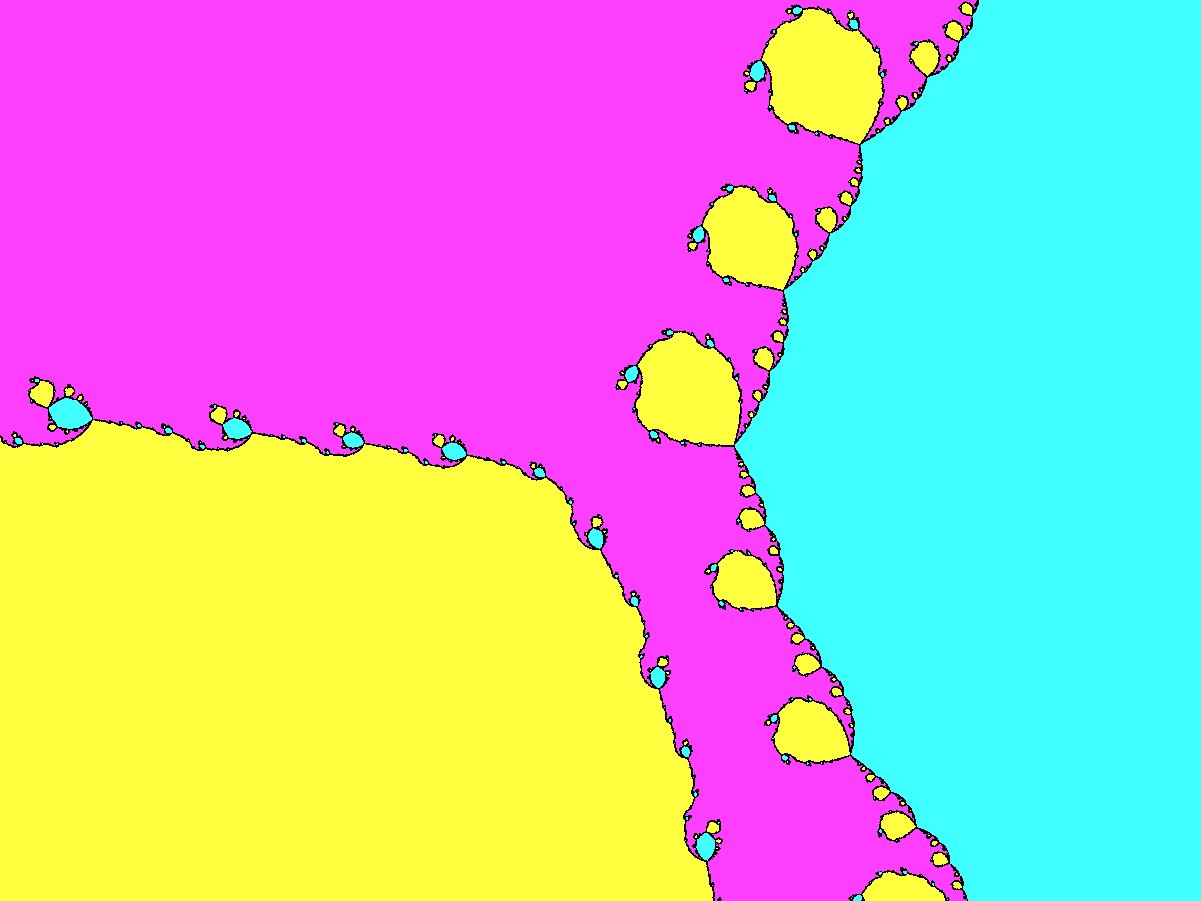}   }
  \caption{Julia sets of $N_\lambda$ with $\lambda=\sqrt{3}i/2$ (left) and $0.05+0.4i  \in \Omega\cap  \mathcal{H}_0^2$ (right). 
  The basins of $b_1(\lambda), b_2(\lambda), b_3(\lambda)$ are colored yellow, purple, cyan, respectively.} 
\end{figure}

%

Define $\Omega_0=\Omega-\mathcal{H}_0^1\cup\mathcal{H}_0^2.$ Let's recall the following dynamical result

\begin{thm}[Roesch \cite{Ro08}]\label{roesch-jordan}  \label{Jordan} For any  $\lambda\in \Omega_0$,  the boundaries of  $B^1_\lambda, B^2_\lambda, B^3_\lambda$ are all Jordan curves. So are their iterated pre-images. 
\end{thm}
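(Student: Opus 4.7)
The plan is to deploy the Yoccoz puzzle machinery adapted to cubic Newton maps. First I would observe that by Shishikura's connectivity theorem each immediate basin $B^\varepsilon_\lambda$ is simply connected, and for $\lambda\in\Omega_0$ the free critical point $0$ lies outside $B^1_\lambda\cup B^2_\lambda$ by definition of $\Omega_0$, while for $B^3_\lambda$ the same holds generically (the exceptional case $0\in B^3_\lambda$ is handled by an analogous polynomial-like argument using the pseudo-equator coming from $\mathcal{H}_0^3$). Hence each restriction $N_\lambda:B^\varepsilon_\lambda\to B^\varepsilon_\lambda$ is a degree-two branched cover ramified only at $b_\varepsilon(\lambda)$, and the B\"ottcher map extends to a conformal isomorphism $\phi^\varepsilon_\lambda:B^\varepsilon_\lambda\to\mathbb{D}$. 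Pulling back radial rays produces well-defined internal rays $R^\varepsilon_\lambda(t)$ for every $t\in\mathbb{S}$.

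Next I would build Yoccoz puzzles from the articulated rays of Section~\ref{a-y} together with level-$n$ equipotentials in the three B\"ottcher coordinates. The articulated rays play the role that external rays play for polynomials: they begin as internal rays inside some $B^\varepsilon_\lambda$, land at preperiodic repelling points, and are continued through neighboring basins in a manner combinatorially prescribed by the Head's angle $\boldsymbol{h}(\lambda)$. Taking pull-backs by $N_\lambda^n$ partitions a neighborhood of $J(N_\lambda)$ into puzzle pieces of depth $n$ in the usual way.

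The central step, and the main technical obstacle, is to prove that every nested sequence $P_0\supset P_1\supset\cdots$ of puzzle pieces satisfies $\bigcap_n\overline{P_n}=\{\text{point}\}$. Away from the forward orbit of $0$ the diameters shrink by hyperbolic contraction on the complement of the postcritical set. Near the free critical orbit the argument splits along the combinatorics: in the non-renormalizable case a Branner-Hubbard-Yoccoz style annulus argument shows that the moduli $\mathrm{mod}(P_n\setminus\overline{P_{n+1}})$ sum to infinity and force the intersection to be a point; in the renormalizable case one extracts a quadratic-like restriction $N_\lambda^p:U\to V$ as in Section~\ref{description}, straightens it, and transfers the connected-Julia-set conclusion back. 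Because the cubic Newton family lacks the symmetry that simplifies analogous arguments in e.g.\ the McMullen family, the bookkeeping of which puzzle piece at each depth contains the free critical point $0$ must be tracked carefully through the Head's angle classification of Section~\ref{h-t}; this combinatorial accounting is where the serious work sits.

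Once puzzle-piece shrinking is established at every point of $\partial B^\varepsilon_\lambda$, each internal ray $R^\varepsilon_\lambda(t)$ lands and the induced map $\mathbb{S}\to\partial B^\varepsilon_\lambda$ is a continuous bijection, so $\partial B^\varepsilon_\lambda$ is a Jordan curve. For an iterated pre-image $U$ of some $B^\varepsilon_\lambda$ under $N_\lambda^n$, local connectivity of $J(N_\lambda)$ (a by-product of the same shrinking) lets $N_\lambda^n:U\to B^\varepsilon_\lambda$ extend continuously to the closures; since $U$ is simply connected and the map has finite degree, $\partial U$ is a branched cover of the Jordan curve $\partial B^\varepsilon_\lambda$ by a topological circle, hence itself a Jordan curve.
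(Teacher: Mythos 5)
Your overall strategy is the right one: this theorem is not proved in the present paper but is quoted from \cite{Ro08}, and the proof there is indeed a Yoccoz puzzle argument built from articulated rays and equipotentials, with the conclusion that impressions of boundary points are singletons. So the architecture of your sketch matches the actual proof. (One small factual point: for $\lambda\in\Omega$ one automatically has $0\notin B^3_\lambda$, so the ``exceptional case'' you hedge against for $B^3_\lambda$ does not occur and needs no pseudo-equator device.)

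There are, however, two genuine gaps. First, your central claim that \emph{every} nested sequence of puzzle pieces shrinks to a point is false exactly in the renormalizable (Type D) case: as Theorem \ref{Y-P} in this paper records, when the puzzle is periodic at the critical point the impression of $0$ (and of each of its preimages) is a quadratic-like filled Julia set, not a point. Saying one ``straightens and transfers the connected-Julia-set conclusion back'' does not address the actual difficulty, which is to show that $\partial B^\varepsilon_\lambda$ is still locally connected and injectively parameterized at the points where these small filled Julia sets touch it. The resolution in \cite{Ro08} is to prove that each copy of the small filled Julia set meets $\partial B^\varepsilon_\lambda$ in at most one point (cf.\ the use of \cite[Lemma 6.5]{Ro08} in Section \ref{char-maps} here), so that the intersections $\overline{P_n}\cap\partial B^\varepsilon_\lambda$ still shrink to points even though the pieces themselves do not; without this step the argument does not close. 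Second, even where all impressions are singletons, you only obtain that the extended B\"ottcher parameterization $\mathbb{S}\to\partial B^\varepsilon_\lambda$ is a continuous \emph{surjection} (Carath\'eodory); the injectivity --- i.e.\ that no two internal rays of $B^\varepsilon_\lambda$ land at a common point --- is a separate statement that must be derived from the puzzle combinatorics (two rays landing together would bound a sector whose intersection with $\partial B^\varepsilon_\lambda$ the puzzle forces to be nonempty), and it is precisely the difference between ``locally connected boundary'' and ``Jordan curve.'' The final step on iterated preimages also needs a word about preimage components containing the free critical point $0$ in their closure, but that is minor once the two issues above are repaired.
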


\begin{rmk}  Theorem \ref{roesch-jordan} is not true for $\lambda\in \Omega\cap(\mathcal{H}_0^1\cup\mathcal{H}_0^2)$. For example, when $\lambda=0.05+0.4i\in \Omega\cap\mathcal{H}_0^2$, the boundary $\partial B^2_\lambda$ is homeomorphic to the boundary of basin of infinity the cubic polynomial $z^3+\frac{3}{2}z$, which is not a Jordan curve. See Figure 3.
\end{rmk}

\subsection{The (dynamical) internal ray}
As is known in Section \ref{description}, for $\varepsilon\in\{1,2,3\}$ and $\lambda\in\mathcal{X}\setminus\{\pm\frac{1}{2}\}$,  the B\"ottcher map $\phi_\lambda^\varepsilon$ is a conformal map from the neighborhood $U_\lambda^\varepsilon$ of $b_\varepsilon(\lambda)$ to   $\mathbb{D}_{\delta}$, where
\begin{equation*}
\delta=\begin{cases}
 1,\ &\text{ if } 0\notin B_\lambda^\varepsilon,\\
e^{G_\lambda^\varepsilon(0)},\ &\text{ if }  0\in B_\lambda^\varepsilon.
\end{cases}
\end{equation*}

Fix an angle $\theta\in \mathbb{S}=\mathbb{R}/\mathbb{Z}$. We will define the internal ray with angle $\theta$ in various situations, as follows: 

\textit{ Case 1 : $0\notin B_\lambda^\varepsilon$}. In this case,   the internal ray of angle $\theta$, denoted by $R_\lambda^\varepsilon(\theta)$,  is defined as
 $(\phi_\lambda^\varepsilon)^{-1}(\{re^{2\pi i \theta}; 0<r<1\}).$
 
\textit{ Case 2 : $0\in B_\lambda^\varepsilon$ }.   In this case,  the B\"ottcher map
$\phi_\lambda^\varepsilon$ can be extended to  a homeomorphism from $\overline{U_\lambda^\varepsilon}$ to   $\overline{\mathbb{D}_{\delta}}$. Therefore the angle  $\theta_0=\arg\phi_\lambda^\varepsilon(0)$ is well-defined.  Let $\Theta_0=\{\beta\in\mathbb{S}; 2^n\beta=\theta_0  \text{ for some } n\geq0\}$.  For any $\alpha\in \mathbb{S}$, let 
$\ell_0^\lambda(\alpha)=(\phi_\lambda^\varepsilon)^{-1}(\{re^{2\pi i \alpha}; 0<r<\delta\})$ and for $n\geq1$, denote inductively  $\ell_n^\lambda(\theta)$ to be the component of $N_\lambda^{-n}(\ell_0^\lambda(2^n\theta))$ containing $\ell_{n-1}^\lambda(\theta)$. 
Let's now look at  the following set
$$R_\lambda^\varepsilon(\theta)=\bigcup_{k\geq 1}\ell_{k}^\lambda(\theta).$$

There are two possibilities:

 \textit{ Case 2.1:  $\theta\notin\Theta_0$. }  In this case,  all sets $\ell_n^\lambda(\theta)$ avoid the iterated preimages of the free  critical point $0$. Therefore all $\ell_n^\lambda(\theta)$ are analytic curves.  The set $R_\lambda^\varepsilon(\theta)$ is an analytic curve called {\it the internal ray of angle $\theta$}. 
 
  \textit{ Case 2.2 :  $\theta\in\Theta_0$. }  Clearly  $2^n\theta=\theta_0$ for some $n\geq0$ and $\ell_{n+1}^\lambda(\theta)$ is not an analytic  curve. In this case, 
  we say that  the set  $R_\lambda^\varepsilon(\theta)$   {\it bifurcates}.
  

In Case 1 and Case 2.1,  the map $r_\theta:R_\lambda^\varepsilon(\theta)\rightarrow (0,1)$ defined by
$r_\theta(z)=e^{G_\lambda^\varepsilon(z)}$ gives  a natural parameterization  of $R_\lambda^\varepsilon(\theta)$.

The internal rays can also be defined for any Fatou component $U_\lambda$ eventually mapped to  $B_\lambda^\varepsilon$. By pulling back the set $R_\lambda^\varepsilon(\theta)$ in  $B_\lambda^\varepsilon$, one gets the set $R_{U_\lambda}(\theta)$ in  $U_\lambda$. We call  $R_{U_\lambda}(\theta)$  an {\it internal ray of angle $\theta$ in $U_\lambda$}, if its orbit 
$$R_{U_\lambda}(\theta)\mapsto N_\lambda(R_{U_\lambda}(\theta))\mapsto N^2_\lambda(R_{U_\lambda}(\theta))\mapsto\cdots$$
does not meet the free critical point $0$.  When $U_\lambda=B_\lambda^\varepsilon$, we simply write $R_{B_\lambda^\varepsilon}(\theta)$ as $R_\lambda^\varepsilon(\theta)$.


\

In the following, we  concentrate our attention on the maps in $\Omega$. We will give some properties of the internal rays of these maps.


\begin{fact}\label{con} Fix  $\lambda\in \Omega$. Suppose that the internal ray  $R_\lambda^\varepsilon(\theta)$ lands at a repelling periodic point, say $p_{\lambda}$.
Then there is a neighborhood $\mathcal{U}\subset \Omega$ of $\lambda$ satisfying:

1. For  every $u\in \mathcal{U}$, the set $R_u^\varepsilon(\theta)$  does not bifurcate (therefore it is an internal ray),  and it lands at a  repelling periodic point $p_{u}$.

2. The closed ray  $\overline{R_u^\varepsilon}(\theta)$ moves continuously in Hausdorff topology with respect to $u\in \mathcal{U}$.

\end{fact}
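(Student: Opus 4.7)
The plan is to combine holomorphic persistence of the landing point (via the implicit function theorem) with a linearization-plus-pullback argument showing that a periodic internal ray landing at a repelling cycle is structurally stable. Since $p_\lambda$ is periodic of some period $p$ and $R_\lambda^\varepsilon(\theta)$ lands at it, $\theta$ must itself be periodic under doubling, with period $N$ dividing $p$; in particular $R_\lambda^\varepsilon(\theta)$ is $N_\lambda^N$-invariant and the doubling orbit $O=\{\theta, 2\theta, \ldots, 2^{N-1}\theta\}$ is finite. Solving $N_u^p(z)=z$ near $(\lambda, p_\lambda)$ by the implicit function theorem yields a holomorphic map $u\mapsto p_u$ on a neighborhood $\mathcal{U}_0$ of $\lambda$ on which $p_u$ remains repelling.

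Next, I would shrink to $\mathcal{U}_1\subseteq\mathcal{U}_0$ on which $R_u^\varepsilon(\theta)$ does not bifurcate. By Case 2.2 the ray bifurcates exactly when $u\in\mathcal{H}_0^\varepsilon$ and $\theta_0(u):=\arg\phi_u^\varepsilon(0)$ lies in the finite set $O$; Theorem \ref{1a} provides continuous dependence of $\theta_0(u)$ on $u\in\mathcal{H}_0^\varepsilon$, and by hypothesis $\theta_0(\lambda)\notin O$ whenever defined, so shrinking kills bifurcation. If $\lambda\notin\overline{\mathcal{H}_0^\varepsilon}$, one further shrinks so that $\mathcal{U}_1\cap\mathcal{H}_0^\varepsilon=\emptyset$ and Case 1 applies trivially.

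The substantive step is matching the landing. Pick a small round disk $V$ around $p_\lambda$ in which $N_\lambda^N$ is linearizable (Koenigs), and let $\psi_\lambda:V\to V'\Subset V$ be the contracting inverse branch fixing $p_\lambda$. By structural stability of repelling cycles, on some $\mathcal{U}\subseteq\mathcal{U}_1$ the corresponding branch $\psi_u$ is well-defined on $V$, fixes $p_u$, and has contraction factor $\leq \rho<1$ uniformly in $u$. The tail of $\overline{R_\lambda^\varepsilon}(\theta)$ inside $V$ equals the union of iterated $\psi_\lambda$-images of a short arc crossing $\partial V$; the complementary outer part from a fixed initial arc near $b_\varepsilon(\lambda)$ to that crossing is built by finitely many forward pull-backs of the B\"ottcher parametrization, none of which meets the free critical orbit by non-bifurcation. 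Both pieces deform continuously with $u\in\mathcal{U}$---the outer part by continuity of finitely many inverse branches of $N_u$, the tail by the uniform contraction of $\psi_u$---so gluing them produces the full internal ray $R_u^\varepsilon(\theta)$ landing at $p_u$, and Hausdorff continuity of $\overline{R_u^\varepsilon}(\theta)$ follows by combining the two estimates. The principal technical obstacle is ensuring that this gluing is uniform in $u$: the crossing point $q_u$ with $\partial V$ must vary continuously, and the number of outer pull-backs needed to bridge it to the fixed initial arc near $b_\varepsilon(u)$ must be bounded independently of $u$; both reduce to continuity of finitely many dynamical data on compact sets, which is controlled once non-bifurcation has been established.
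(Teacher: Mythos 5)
Your overall strategy --- persist the repelling point by the implicit function theorem, cut the ray into a finite B\"ottcher part and a tail that is the orbit of one fundamental segment under a contracting inverse branch in a linearizing disk, and glue --- is exactly the paper's proof, and that part of your write-up is sound. One small slip: the period $N$ of $\theta$ under doubling is a \emph{multiple} of the period $p$ of $p_\lambda$, not a divisor of it ($N_\lambda^N$ fixes the ray, hence its landing point, so $p\mid N$); your later use of the inverse branch of $N_u^N$ fixing $p_u$ implicitly requires this correct direction anyway.

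The genuine gap is in your non-bifurcation step. You reduce it to the continuity of $\theta_0(u)=\arg\phi_u^\varepsilon(0)$ on $\mathcal{H}_0^\varepsilon$, and you dispose of the remaining case by shrinking $\mathcal{U}$ off $\overline{\mathcal{H}_0^\varepsilon}$. This leaves the case $\lambda\in\partial\mathcal{H}_0^\varepsilon\cap\Omega$ untreated: there $\theta_0(\lambda)$ is undefined, every neighborhood of $\lambda$ meets $\mathcal{H}_0^\varepsilon$, and at this stage of the paper nothing controls the values $\theta_0(u)$ for $u\in\mathcal{H}_0^\varepsilon$ close to $\lambda$. Indeed $\{u\in\mathcal{H}_0^\varepsilon:\theta_0(u)=2^n\theta\}$ is a union of parameter rays, and whether such rays can accumulate at $\lambda$ is precisely the landing/impression question (Lemma \ref{r-ray-land}, Theorem \ref{rigidity1}) that is settled \emph{later} and whose proofs themselves invoke Fact \ref{con}; using it here would be circular. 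The boundary case is not a removable corner, since the Fact is applied throughout to parameters in $\Omega_0$, which contains $\partial\mathcal{H}_0^1\cap\Omega$ and $\partial\mathcal{H}_0^2\cap\Omega$. The fix is the paper's ordering: obtain non-bifurcation as a \emph{byproduct} of the gluing rather than as a prerequisite. The finitely many truncated rays $\ell_{m+1}^u(2^k\theta)$, $0\le k<N$, are compact arcs avoiding the critical point $0$ at $u=\lambda$; being defined and avoiding $0$ is an open condition in $u$ (argue level by level on the pull-backs), and beyond level $m+1$ the continuation lies in the linearizing neighborhood of the repelling cycle, which contains no critical points, so no bifurcation can occur there. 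With that replacement your gluing argument closes.
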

\begin{proof}   The idea is to decompose the internal ray into two parts: one near the attracting fixed point and the other near the repelling periodic point.  Each part  moves continuously.
This implies that,  after gluing them  together, the internal ray itself moves continuously.  Here is the detail:

There exist a neighborhood $\mathcal{U}$ of $\lambda$ and a number $\delta\in(0,1)$ such that for all $u\in \mathcal{U}$,  the B\"ottcher map $\phi_u^\varepsilon$  is defined in  a neighborhood $U_u^\varepsilon$ of 
$b_\varepsilon(u)$, and maps $U_u^\varepsilon$  onto $\mathbb{D}_\delta$. We use the notations $\ell_{k}^u(\theta)$ as above. We may shrink $\mathcal{U}$ if necessary so that 

1.  after perturbation in  $\mathcal{U}$,   the $N_\lambda$-repelling periodic  point $p_\lambda$  becomes an  $N_u$-repelling periodic point $p_u$, and 

 2. the  section
 $S_{u}=\overline{\ell_{m+1}^u(\theta)\setminus \ell_{m}^u(\theta)}$ for some large $m$ (independent of $u\in \mathcal{U}$) is contained in a linearized neighborhood $Y_u$ of $p_u$.

We may further assume that $ \mathcal{U}$ is small enough so that  for all   $u\in  \mathcal{U}$,  the set $\bigcup_k\ell_{m+1}^u(2^k\theta)$ avoids the free critical point $0$.  This guarantees that the set $R_u^{\varepsilon}(\theta)$ does not bifurcate. Therefore, it defines an internal ray.
It's clear that $\ell_{m+1}^u(\theta)$ moves continuously with respect to $u\in  \mathcal{U}$.

Note that $\theta$ is periodic under the doubling map $t\mapsto 2t  \ ({\rm mod} \  \mathbb{Z})$. Let $l$ be its period.
In the neighborhood $Y_u$ of $p_u$, the inverse $(N_u^l|_{Y_u})^{-1}$ is contracting.   
This implies that the closure of  the arc
$$T_{u}=\bigcup_{k\geq0}  (N_u^{lk}|_{Y_u})^{-1}(S_{u})$$
moves continuously with respect to $u\in \mathcal{U}$. 

Finally, the continuity of  $u\mapsto\overline{R_u^{\varepsilon}}(\theta)$ follows   from the fact $R_u^{\varepsilon}(\theta)={\ell_{m+1}^u(\theta)\cup T_{u}}$.\end{proof}

An immediate corollary of  Fact \ref{con} is the following

\begin{fact}\label{int-ray}  For all $\lambda\in \Omega$, the set  $R_\lambda^\varepsilon(t)$ with $\varepsilon\in \{1,2,3\}$ and $t\in \{0,1/2\}$  does not bifurcate, and its closure 
$\overline {R_\lambda^\varepsilon}(t)$ moves continuously in Hausdorff topology with respect to the parameter $\lambda\in \Omega$.
\end{fact}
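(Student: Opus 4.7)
The plan is to invoke Fact \ref{con} at every $\lambda_0\in\Omega$ (with a minor extension to cover the pre-fixed angle $t=1/2$), thereby obtaining local Hausdorff continuity of $\overline{R_\lambda^\varepsilon}(t)$ near $\lambda_0$; since local continuity at every point yields continuity on $\Omega$, this suffices. To apply Fact \ref{con} we must verify two hypotheses for each $\varepsilon\in\{1,2,3\}$ and $t\in\{0,1/2\}$: the ray $R_{\lambda_0}^\varepsilon(t)$ does not bifurcate, and it lands at a repelling (pre-)periodic point.

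For the non-bifurcation, recall that $R_\lambda^\varepsilon(t)$ bifurcates only when $0\in B_\lambda^\varepsilon$ and $2^n t=\arg\phi_\lambda^\varepsilon(0)$ for some $n\geq 0$. For $t\in\{0,1/2\}$ this forces $\arg\phi_\lambda^\varepsilon(0)\in\{0,1/2\}$, i.e.\ $\lambda\in\mathcal{R}_0^\varepsilon(0)\cup\mathcal{R}_0^\varepsilon(1/2)$. For $\varepsilon\in\{1,2\}$, the four explicit parameter rays listed in Subsection \ref{parameter-ray} lie on $\mathbb{R}\cup\partial\Omega$, hence miss the open set $\Omega$. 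For $\varepsilon=3$, the conformal conjugacy $\beta(\lambda)=\frac{1}{2}+\frac{1}{\lambda-1/2}$ of Remark \ref{tran} sends $\mathcal{R}_0^3(t)$ to $\mathcal{R}_0^2(t)$; a direct computation shows $\mathcal{R}_0^3(0)\subset\mathbb{R}$ and $\mathcal{R}_0^3(1/2)\subset\partial\Omega$, again disjoint from $\Omega$. For the landing, the angle $t=0$ is fixed by doubling, so $R_{\lambda_0}^\varepsilon(0)$ is $N_{\lambda_0}$-invariant and its accumulation set on $\partial B_{\lambda_0}^\varepsilon$ is a compact invariant subset of $J(N_{\lambda_0})$ consisting of fixed points; the only non-super-attracting fixed point of $N_{\lambda_0}$ is $\infty$ (multiplier $3/2$), so the ray lands there. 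For $t=1/2$ we have $N_{\lambda_0}(R_{\lambda_0}^\varepsilon(1/2))=R_{\lambda_0}^\varepsilon(0)$, so its landing $q_{\lambda_0}$ satisfies $N_{\lambda_0}(q_{\lambda_0})=\infty$ and $q_{\lambda_0}\neq\infty$; since $\Omega$ excludes $\sqrt{3}i/2$, we have $N_{\lambda_0}(0)\neq\infty$, so $q_{\lambda_0}$ is non-critical and an isolated pre-fixed point.

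With these hypotheses established, the proof of Fact \ref{con} applies almost verbatim: for $t=0$ one linearizes $N_u$ at $\infty$, and for $t=1/2$ one additionally uses the holomorphic branch of $N_u^{-1}$ defined near $q_u$ (which depends continuously on $u$ because $q_u$ is non-critical) to pull back a terminal section of the ray; splicing with the B\"ottcher-coordinate piece and passing to the closure yields Hausdorff continuity in a neighborhood of $\lambda_0$. The main obstacle I anticipate is the non-bifurcation step for $\varepsilon=3$, where the conjugation-by-$\beta$ computations must be carried out carefully because $\mathcal{H}_0^3$ is unbounded with center at $\infty$; the remaining manipulations are routine.
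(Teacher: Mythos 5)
Your proposal follows essentially the same route as the paper: establish non-bifurcation and landing at a (pre-)repelling point for the two angles, then invoke Fact~\ref{con} locally, pulling back the angle-$0$ ray to handle $t=1/2$. Two places deserve comment. First, you overcomplicate the case $\varepsilon=3$: the paper simply observes that $\Omega\cap\mathcal H_0^3=\emptyset$ (the free critical point is never in $B_\lambda^3$ for $\lambda\in\Omega$), so the set $R_\lambda^3(t)$ cannot bifurcate for any $t$; the detour through $\beta$ and the explicit loci of $\mathcal R_0^3(0),\mathcal R_0^3(1/2)$, which you flag as the main anticipated obstacle, is unnecessary. For $\varepsilon\in\{1,2\}$ your parameter-ray argument is correct and equivalent to the paper's (which uses the parameterization $\Phi_0^\varepsilon$ to see that $\arg\phi_\lambda^\varepsilon(0)\in(0,1/2)\cup(1/2,1)$ when $\lambda\in\Omega\cap\mathcal H_0^\varepsilon$). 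Second, your landing step contains a real imprecision: the accumulation set of the invariant ray $R_{\lambda_0}^\varepsilon(0)$ is \emph{a priori} only a compact connected forward-invariant subset of $\partial B_{\lambda_0}^\varepsilon$, not a set ``consisting of fixed points.'' To conclude it is a singleton one needs the landing theorem for periodic internal rays (equivalently, local connectivity of $\partial B_{\lambda_0}^\varepsilon$, supplied by Theorem~\ref{roesch-jordan} when $\lambda_0\in\Omega_0$ and by hyperbolicity plus connectivity of the Julia set when $\lambda_0\in\Omega\cap\mathcal H_0^\varepsilon$); the paper is also terse on this point, but your phrasing asserts something stronger than is immediately available. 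Once the ray is known to land, the identification of the landing point as $\infty$, and your handling of $t=1/2$ via the non-critical preimage $q_u$, are both correct and match the intent of the paper's ``and therefore its preimage'' remark.
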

\begin{proof}  Since  $\Omega$ is simply connected,   the argument can be local. 
 Note that  if  we can  show that  the set  $R_\lambda^\varepsilon(0)$ with $\varepsilon\in \{1,2,3\}$  does not bifurcate, then it necessarily lands at  the  repelling fixed point $\infty$ (because the other three fixed points are super-attracting).  
It  will then follow from  Fact \ref{con} that  the closure  $\overline {R_\lambda^\varepsilon}(0)$ (and therefore its preimage  $\overline {R_\lambda^\varepsilon}(1/2)$)  moves continuously in a neighborhood of $\lambda$.
 
 In the following, we show that $R_\lambda^\varepsilon(0)$  does not bifurcate. This is clearly true for $\varepsilon=3$ because $\lambda\in \Omega$, so it suffices to consider the case 
 $\varepsilon=1,2$.
 There are two possibilities:  
 
If $\lambda\in \Omega_0=\Omega\setminus(\mathcal{H}_0^1\cup \mathcal{H}_0^2)$, then the free critical point $0\notin B_\lambda^1\cup B_\lambda^2$. So   $R_\lambda^\varepsilon(t)$ does not bifurcate.

 If $\lambda\in  \Omega\cap\mathcal{H}_0^\varepsilon$ for $\varepsilon\in\{1,2\}$, then $\arg\phi_\lambda^\varepsilon(0)\in (0,1/2)$ or $(1/2,1)$. In this case, neither $0$ nor $1/2$ is in the set $\{\theta\in\mathbb{S}; 2^n\theta=\arg\phi_\lambda^\varepsilon(0) \text{ for some } n\geq0\}$. 
 It follows from the discussion of Case 2.1 in the beginning of this section that   $R_\lambda^\varepsilon(t)$ does not bifurcate.
\end{proof}

By the proof of Fact \ref{int-ray},
for any $\lambda\in\Omega$, the  repelling fixed point $\infty$ of $N_\lambda$ is the common landing point of the internal rays $R_\lambda^1(0),
R_\lambda^2(0), R_\lambda^3(0)$.
Besides itself, the point $\infty$ has two other preimages,  counted with multiplicity.
Therefore there are exactly two rays of  $R_\lambda^1(1/2),
R_\lambda^2(1/2), R_\lambda^3(1/2)$ landing at the same preimage of $\infty$. Here is  a precise statement: 

\begin{fact}\label{order0} For any $\lambda\in \Omega$, 

1.  the rays
$R_\lambda^1(1/2),
R_\lambda^2(1/2)$ land at the same preimage of $\infty$, and

2.   the rays $R_\lambda^1(0), 
R_\lambda^2(0), R_\lambda^3(0)$ land at   $\infty$ in positive cyclic order.
\end{fact}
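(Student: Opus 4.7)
The plan is to use continuity (Fact~\ref{int-ray}) together with the connectedness of $\Omega$ to reduce both statements to a computation at a single symmetric parameter. I will take $\lambda_0=it$ with $0<t<\sqrt{3}/2$; one checks $\lambda_0\in\Omega$ directly from the defining inequalities. Since $\lambda_0^2=-t^2\in\mathbb{R}$, the map $N_{\lambda_0}$ has real coefficients and commutes with complex conjugation $c(z)=\bar z$. This symmetry swaps $b_1(\lambda_0)=-1/2-it \leftrightarrow b_2(\lambda_0)=-1/2+it$, fixes $b_3(\lambda_0)=1$, and individually fixes the two real poles $p_\pm(\lambda_0)=\pm\sqrt{(3/4-t^2)/3}$. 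By uniqueness of the B\"ottcher coordinate one has $\phi_{\lambda_0}^2\circ c=c\circ\phi_{\lambda_0}^1$, which translates to $c(R_{\lambda_0}^1(\theta))=R_{\lambda_0}^2(-\theta)$ and $c(R_{\lambda_0}^3(\theta))=R_{\lambda_0}^3(-\theta)$ for every angle $\theta$.

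\emph{Part 1.} Since $-1/2\equiv 1/2\pmod{1}$, the symmetry sends $R_{\lambda_0}^1(1/2)$ onto $R_{\lambda_0}^2(1/2)$ and exchanges their landing points. But each landing is a preimage of $\infty$ other than $\infty$, hence an element of $\{p_+,p_-\}$, which is pointwise fixed by $c$; the two rays therefore land at a common pole. Throughout $\Omega$ one has $\lambda^2\neq -3/4$, so the two poles $p_\pm(\lambda)=\pm\sqrt{(\lambda^2+3/4)/3}$ remain simple and distinct and depend holomorphically on $\lambda$. Combined with the Hausdorff continuity of $\overline{R_\lambda^\varepsilon(1/2)}$ supplied by Fact~\ref{int-ray}, this forces the landing point of $R_\lambda^\varepsilon(1/2)$ itself to be a continuous function of $\lambda\in\Omega$. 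The subset of $\Omega$ on which $R_\lambda^1(1/2)$ and $R_\lambda^2(1/2)$ share a landing point is therefore both open (nearby landings cannot jump across the separation between the two distinct poles) and closed (limits of coinciding landings still coincide); since it contains $\lambda_0$ and $\Omega$ is connected, it is all of $\Omega$.

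\emph{Part 2.} Each $R_\lambda^\varepsilon(0)$ is $N_\lambda$-invariant and must land at a fixed point; as $b_1,b_2,b_3$ are super-attracting, the landing is the unique non-super-attracting fixed point $\infty$. To pin down the cyclic order I work at $\lambda_0$. A direct factorization yields
\[
N_{\lambda_0}(z)-1=\frac{(z-1)^2(2z+1)}{3z^2+t^2-3/4},\qquad z-N_{\lambda_0}(z)=\frac{(z-1)\bigl((z+1/2)^2+t^2\bigr)}{3z^2+t^2-3/4},
\]
both strictly positive on $(1,+\infty)$. Hence $(1,+\infty)$ is $N_{\lambda_0}$-invariant with every orbit decreasing to $1$, and by uniqueness $R_{\lambda_0}^3(0)=(1,+\infty)$, arriving at $\infty$ along the positive real direction. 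On the other hand each basin $B_{\lambda_0}^\varepsilon$ is connected, and $B_{\lambda_0}^1,B_{\lambda_0}^2$ are disjoint from $\mathbb{R}$ (real orbits cannot converge to the non-real $b_1,b_2$), so $B_{\lambda_0}^1$ lies entirely in the open lower half-plane and $B_{\lambda_0}^2$ in the open upper half-plane; consequently $R_{\lambda_0}^1(0)$ approaches $\infty$ from the lower half-plane and $R_{\lambda_0}^2(0)$ from the upper half-plane. In the chart $w=1/z$ at $\infty$, where the Riemann sphere's positive orientation is counterclockwise, the tangent directions of $R^3(0),R^1(0),R^2(0)$ at $w=0$ are respectively the positive real axis, the open upper half-plane, and the open lower half-plane; read counterclockwise this gives the cyclic order $R^3(0),R^1(0),R^2(0)$, which by cyclic shift is the asserted positive cyclic order $R_\lambda^1(0),R_\lambda^2(0),R_\lambda^3(0)$. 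Continuity (Fact~\ref{int-ray}) and connectedness of $\Omega$ then propagate the conclusion to all parameters.

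The main subtlety is the continuity of the landing point itself (not only of the closed ray) used in Part~1. Fact~\ref{int-ray} only asserts Hausdorff continuity of $\overline{R_\lambda^\varepsilon(1/2)}$; the strengthening to continuity of the landing requires an adaptation of the pullback/linearization argument of Fact~\ref{con} to landings at preimages of the repelling fixed point $\infty$, and is routine because $p_\pm(\lambda)$ are simple, distinct, and depend holomorphically on $\lambda$ throughout $\Omega$.
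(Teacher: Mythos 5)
Your proof is correct and follows essentially the same strategy as the paper: verify the statement at a symmetric parameter $\lambda_0=it$ on the imaginary axis by exploiting the anti-holomorphic symmetry $z\mapsto\bar z$, and then propagate to all of $\Omega$ via the Hausdorff continuity of the closed rays (Fact~\ref{int-ray}) and connectedness. The minor differences are cosmetic: in Part~1 you use the symmetry to conclude a common (real) landing point without identifying which pole, whereas the paper explicitly pins $R_{\lambda}^3(1/2)$ to the positive pole and deduces the other two rays take the negative pole; in Part~2 you verify $R_{\lambda_0}^3(0)=(1,+\infty)$ by the factorization $N_{\lambda_0}(z)-1$ and $z-N_{\lambda_0}(z)$, while the paper invokes the reality of $\phi_{\lambda_0}^3$; and you spell out the open-closed argument that the paper leaves implicit. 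Your closing caveat about the landing point's continuity is reasonable caution, but it is already covered by the mechanism of Fact~\ref{con} (the landing point is tracked as a holomorphically moving preimage of the repelling fixed point $\infty$), which is precisely what the proof of Fact~\ref{int-ray} exploits.
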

\begin{proof} 

First note that fix any $\varepsilon\in\{1,2\}$, by Fact \ref{int-ray}, the maps $\lambda\mapsto\overline{R_\lambda^{\varepsilon}}(0)$ and
  $\lambda\mapsto\overline{R_\lambda^{\varepsilon}}(1/2)$ 
 are continuous throughout $\Omega$. 
So it suffices to prove Fact \ref{order0} for some particular $\lambda\in \Omega$.   

\begin{figure}[h]
\begin{center}
\includegraphics[height=6cm]{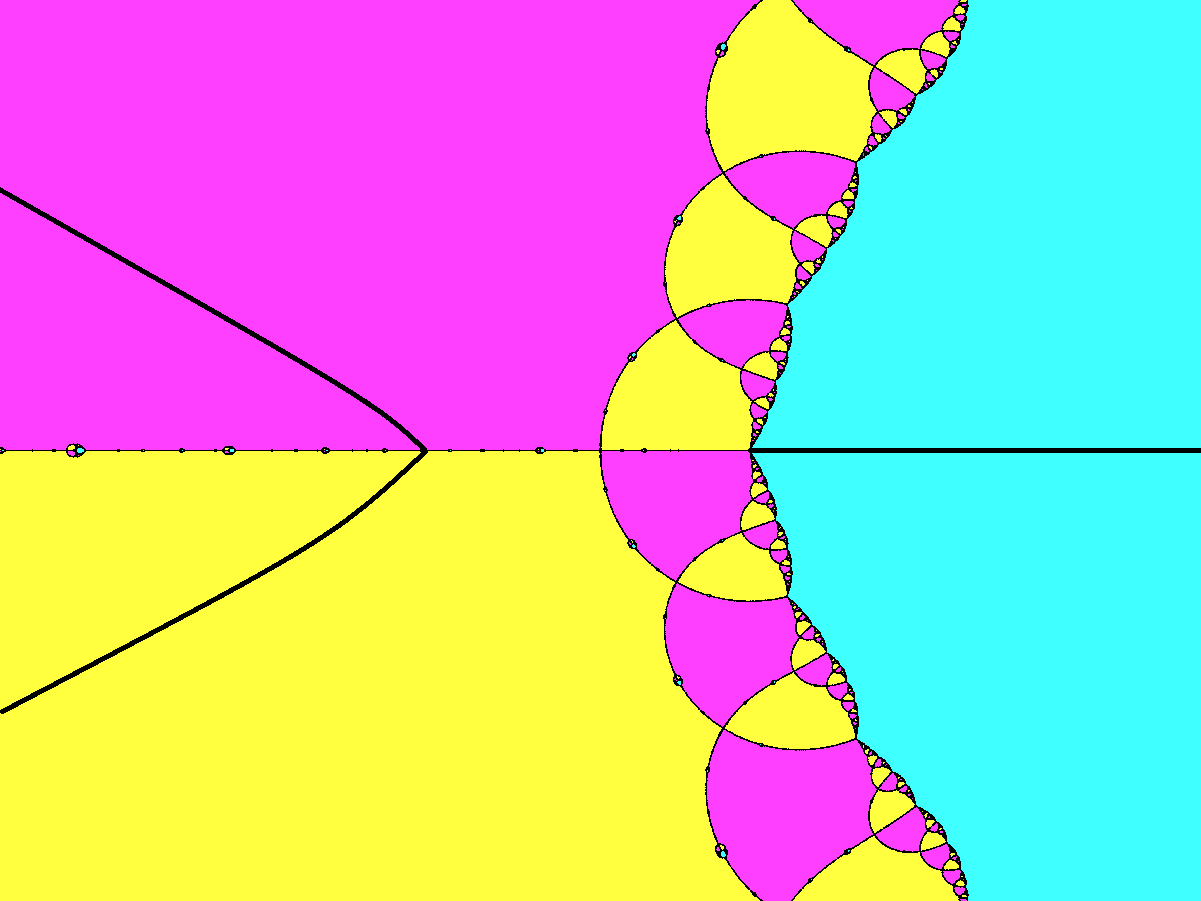}
\put(-210,35){$0$}\put(-210,130){$0$}     \put(-155,98){$1/2$}   \put(-155,65){$1/2$}   \put(-180,103){$\bullet$}   \put(-180,60){$\bullet$}  \put(-180,50){$b_1({\lambda})$}
\put(-180,113){$b_2(\lambda)$}    \put(-60,93){$b_3(\lambda)$}    \put(-53,83){$\bullet$}   \put(-73,73){$1/2$}   \put(-33,73){$0$}   
 \caption{Julia set of   $N_{\lambda}$ with $\lambda=0.1i$. The  internal angles $0, 1/2$ are labeled beside the corresponding internal rays.}
\end{center}\label{f5}
\end{figure}

Let's look at the case  
 $\lambda=i \epsilon$ for some small number $\epsilon>0$. In this case,  the map $N_\lambda$ is a real rational function and  satisfies  $N_{\lambda}(z)=N_{\bar{\lambda}}(z)=
\overline{N_\lambda(\overline{z})}$. So the B\"ottcher maps satisfy 
\bess\overline{\phi_{\lambda}^1({\overline{z}})}&=&\lim_{k\rightarrow\infty}(N_{\overline{\lambda}}^k(\overline{\overline{z}})-\overline{b_1(\lambda)})^{2^{-k}}\\
&=&\lim_{k\rightarrow\infty}(N_{{\lambda}}^k({z})-b_2(\lambda))^{2^{-k}}\\
&=&\phi_{\lambda}^2({z})
\eess
in a neighborhood of the super-attracting point $b_2(\lambda)$, this implies that $R_\lambda^1(1/2)$ and $R_\lambda^2(1/2)$ are symmetric about the real axis.
On the other hand, it follows from the fact $\overline{\phi_{\lambda}^3({\overline{z}})}=\phi_{\lambda}^3(z)$ that each  of $R_\lambda^3(0), R_\lambda^3(1/2)$ is 
symmetric about the real axis. Therefore,
$$R_\lambda^3(0)=(1,+\infty), \   R_\lambda^3(1/2)\subset (-\infty,1).$$
Note that $N^{-1}_\lambda(\infty)=\{\infty, \pm \sqrt{\frac{3+4\lambda^2}{12}}\}$. So $R_\lambda^3(1/2)=(\sqrt{\frac{3+4\lambda^2}{12}},1)$, and $R_\lambda^1(1/2),  R_\lambda^2(1/2)$ land at the same point $-\sqrt{\frac{3+4\lambda^2}{12}}$.   See Figure 4.

Note that $R_\lambda^1(0), R_\lambda^2(0)$ are also symmetric about the real axis, stemming from $b_1(\lambda)=-i\epsilon-1/2,  b_2(\lambda)=i\epsilon-1/2$, respectively.
Therefore,   $R_\lambda^1(0)$ lies in the lower half plane and $R_\lambda^2(0)$ is  in the upper half plane, implying that the rays   $R_\lambda^1(0),
R_\lambda^2(0), R_\lambda^3(0)$ land at   $\infty$ in positive cyclic order (also called counter clockwise order).
\end{proof}

\subsection{The parameter ray}

We shall prove the following basic properties of the parameter rays:

\begin{lem} [Rational rays land] \label{r-ray-land}  Let $t\in(0,1/2)\cap \mathbb{Q}$.

1. The parameter ray $\mathcal{R}^1_0(t)${\rm(}or $\mathcal{R}^2_0(1-t)${\rm)} converges to a parameter $\lambda_0$.

2. For this $\lambda_0$, in the dynamical plane of $N_{\lambda_0}$, the internal ray $R_{\lambda_0}^1(t)${\rm(}or $R_{\lambda_0}^2(1-t)${\rm)} converges to a pre-periodic point $x_0$, either pre-repelling or pre-parabolic.  In the former case, $x_0=0$.  
 \end{lem}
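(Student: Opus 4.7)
The plan is to adapt the Douady--Hubbard landing theorem for rational rays to the internal B\"ottcher parameterization $\Phi_0^1$ of $\mathcal{H}_0^1$; the case $\mathcal{R}_0^2(1-t)$ is symmetric via Remark \ref{tran}. By Theorem \ref{1a}, $\Phi_0^1: \mathcal{H}_0^1 \to \mathbb{D}$ is a proper double cover ramified at $c_1 = -1/2$, so $\mathcal{R}_0^1(t)$ is a properly embedded smooth arc whose accumulation set $X$ in $\overline{\mathcal{M}_3}$ is a non-empty, compact, connected subset of $\partial \mathcal{H}_0^1$.

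The key ingredient is the dynamical--parameter correspondence. For $\lambda \in \mathcal{R}_0^1(t)$ one has $\phi_\lambda^1(0) = r_\lambda e^{2\pi i t}$ with $r_\lambda = e^{G_\lambda^1(0)} \in (0,1)$, meaning the free critical point $0$ always sits on the dynamical internal ray $R_\lambda^1(t)$ at potential $\log r_\lambda$. A normal-family argument shows that as $\lambda \to \lambda_0 \in X$ along the ray, the inverse B\"ottcher maps $(\phi_\lambda^1)^{-1}$ converge uniformly on each compact sub-disk of $\mathbb{D}$ to $(\phi_{\lambda_0}^1)^{-1}$. Consequently, whenever $R_{\lambda_0}^1(t)$ is a well-defined ray landing at a point $x_0 \in \partial B_{\lambda_0}^1$, continuity identifies the critical point with the landing point: $x_0 = \lim_{\lambda \to \lambda_0} 0 = 0$.

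Next I would analyze the landing of $R_{\lambda_0}^1(t)$ using the rationality of $t$. Writing $t = p/q$ in lowest terms, the doubling orbit of $t$ is pre-periodic with some pre-period $k$ and period $n$; correspondingly, $N_{\lambda_0}$ sends $R_{\lambda_0}^1(t)$ successively to $R_{\lambda_0}^1(2t), R_{\lambda_0}^1(4t), \ldots$, reaching a periodic cycle of $n$ rays after $k$ iterates. A Snail-Lemma style argument then shows this periodic cycle of rays lands at a periodic cycle whose multiplier has modulus either $>1$ (repelling) or $=1$ (parabolic); irrationally neutral alternatives are excluded, since a periodic internal ray cannot terminate at the boundary of a Siegel disk or at a Cremer point. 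Pulling back through the $k$ pre-periodic iterates, $R_{\lambda_0}^1(t)$ lands at a pre-periodic point $x_0$. In the repelling case the previous paragraph forces $x_0 = 0$ and $0 \in \partial B_{\lambda_0}^1$, giving the pre-repelling conclusion. In the parabolic case $0$ is instead captured into the attached parabolic basin and $x_0 \neq 0$ is the parabolic landing point -- indeed $x_0 = 0$ would make $0$ a critical periodic point, hence super-attracting, incompatible with $\lambda_0 \in \partial \mathcal{H}_0^1$.

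Finally, to promote the non-empty connected $X$ to a singleton and establish that $\mathcal{R}_0^1(t)$ genuinely converges, I would invoke a rigidity argument: the combinatorial data determined above -- the angle-$t$ landing of the critical orbit on a pre-repelling cycle in the Misiurewicz case, or of the dynamical ray on a parabolic cycle in the parabolic case -- uniquely determines $\lambda_0 \in \partial\mathcal{H}_0^1$. Pre-repelling uniqueness follows from a Thurston-style quasiconformal surgery, and parabolic uniqueness from the implicit function theorem applied to the multiplier map of the parabolic cycle. The main obstacle I expect is the parabolic alternative: the B\"ottcher coordinate degenerates delicately near a parabolic parameter on $\partial \mathcal{H}_0^1$, and ruling out that $\mathcal{R}_0^1(t)$ spirals into its accumulation set requires either a parabolic-implosion estimate or an appeal to the rigidity theorem of Section \ref{rigidity}.
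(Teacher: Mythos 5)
Your skeleton matches the paper's: place the critical point on the dynamical ray $R_\lambda^1(t)$ for $\lambda$ on the parameter ray, use the Snail Lemma and the rationality of $t$ to conclude that $R_{\lambda_0}^1(t)$ lands at a pre-repelling or pre-parabolic point, and then upgrade the connected accumulation set to a singleton. However, two steps do not close as written. First, the identification $x_0=0$ in the repelling case: uniform convergence of $(\phi_\lambda^1)^{-1}$ on compact subsets of $\mathbb{D}$ says nothing about the boundary point $e^{2\pi i t}$, and the point $r_\lambda e^{2\pi i t}$ carrying the critical point escapes every compact subset as $\lambda\to\partial\mathcal{H}_0^1$; moreover, for $\lambda_j$ on the parameter ray the set $R_{\lambda_j}^1(t)$ \emph{bifurcates} at $0$, so it is not even an internal ray whose closure you can pass to the limit. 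The paper's fix is to pass to the first periodic angle $2^m t$ in the doubling orbit (where the ray is genuine and lands at a repelling periodic point), apply the local stability of such rays (Fact \ref{con}) to get Hausdorff convergence $\overline{R_{\lambda_j}^1}(2^m t)\to\overline{R_{\lambda_0}^1}(2^m t)$, deduce $N_{\lambda_0}^m(0)\in\overline{R_{\lambda_0}^1}(2^m t)$, and then pull back using the Jordan-curve property of $\partial B_{\lambda_0}^1$ (Theorem \ref{roesch-jordan}). This same stability statement is also what forces $t$ to be strictly preperiodic and $m>0$ in the repelling case, a point you skip.

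Second, and more seriously, your final step is not a proof. You propose Thurston rigidity for the Misiurewicz alternative and ``the implicit function theorem applied to the multiplier map'' for the parabolic one, and you explicitly flag the parabolic case as an unresolved obstacle; the IFT does not by itself yield uniqueness of a parabolic parameter with prescribed combinatorics, and you also do not address an accumulation set containing both Misiurewicz and parabolic parameters. The paper's resolution is much more elementary and avoids rigidity entirely: every accumulation point $\lambda_0$ satisfies one of finitely many explicit analytic equations (either the critical orbit relation $N_{\lambda_0}^{k}\circ N_{\lambda_0}^{m}(0)=N_{\lambda_0}^{m}(0)$, or the parabolic system $N_{\lambda_0}^{k+m}(x_0)=N_{\lambda_0}^{m}(x_0)$, $(N_{\lambda_0}^{k})'(N_{\lambda_0}^{m}(x_0))=1$), each of which has only finitely many solutions in $\lambda_0$. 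A connected subset of a finite set is a singleton, so $\mathcal{R}_0^1(t)$ lands. No parabolic implosion and no appeal to Section \ref{rigidity} is needed.
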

 
 \begin{proof}  Let $\lambda_0$ be an accumulation point of $\mathcal{R}^1_0(t)$. That is, there is a sequence of parameters  $\{\lambda_j\}_{j\geq 1}$ on $\mathcal{R}^1_0(t)$ with $\lambda_j\rightarrow \lambda_0$. Since $t$ is rational, there are two integers $m\geq0, k>0$ such that $N_{\lambda_0}^{k}\circ N_{\lambda_0}^{m}(R_{\lambda_0}^1(t))=N_{\lambda_0}^{m}(R_{\lambda_0}^1(t))$. By the Snail Lemma \cite[Lemma 16.2]{M}, the landing point $x_0$ of  internal ray $R_{\lambda_0}^1(t)$ 
 is  either  (pre-)parabolic satisfying that $(N_{\lambda_0}^{k})'(N_{\lambda_0}^{m}(x_0))=1$,   or (pre-)repelling.
 In the latter case, by Fact \ref{con},  we have that 
 
(1). $t$ is strictly preperiodic and $m>0$.  (If not, then $x_0$ is a repelling periodic point. By Fact \ref{con}, there is a neighborhood $\mathcal{U}$ of $\lambda_0$ such that all $R_{\lambda}^1(t)$ with  $\lambda\in \mathcal{U}$ are internal rays. But this is impossible for those $\lambda_j\in \mathcal{U}$).

(2). when $j$ is large, the closures of the internal rays $\overline{R_{\lambda_j}^1}(2^mt)$ converge to $\overline{R_{\lambda_0}^1}(2^mt)$ in Hausdorff topology as $j\rightarrow\infty$.

 Since $N_{\lambda_j}^m(0)\in R_{\lambda_j}^1(2^mt)$ for all $j$, we necessarily have $N_{\lambda_0}^m(0)\in\overline{R_{\lambda_0}^1}(2^mt)$. It follows that $R_{\lambda_0}^1(2^mt)$ lands at $N_{\lambda_0}^m(0)$. By Theorem \ref{roesch-jordan}, the internal ray $R_{\lambda_0}^1(t)$ lands at the critical point $0$. This proves the second statement.

 The proof of the first statement goes as follows:  note that the system 
 \bess &N_{\lambda_0}^{k}\circ N_{\lambda_0}^{m}(x_0)=N_{\lambda_0}^{m}(x_0), \ 
  (N_{\lambda_0}^{k})'(N_{\lambda_0}^{m}(x_0))=1 \    \text{(parabolic case)}&\\
  &N_{\lambda_0}^{k}\circ N_{\lambda_0}^{m}(0)=N_{\lambda_0}^{m}(0) \    \text{(repelling case)}&
  \eess
  has  finitely many solutions of $\lambda_0$'s. Therefore the accumulation set of $\mathcal{R}^1_0(t)$, known  as 
  a connected subset of $\partial \mathcal{H}_0^1$, necessarily consists of finitely many points. 
  This implies that the accumulation set of $\mathcal{R}^1_0(t)$ is a singleton, showing the convergence of  $\mathcal{R}^1_0(t)$.
   \end{proof}

\begin{lem} \label{pre-rigidity} Let $\lambda_1, \lambda_2\in\Omega_0$ and $t=p/2^k\in (0,1/2)$. Assume that

(1) the  internal ray   $R_{\lambda_1}^1(t)$ lands at the critical point $0$;

(2) the parameter ray $\mathcal{R}_0^1(t)$ lands at $\lambda_2$.

 Then we have $\lambda_1=\lambda_2$.
 \end{lem}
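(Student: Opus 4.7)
The plan is to reduce the claim to a rigidity statement: both $\lambda_1$ and $\lambda_2$ satisfy a finite polynomial equation, and a quasi-conformal deformation will connect them holomorphically within this discrete set of parameters, forcing them to coincide.

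I first observe that both $N_{\lambda_1}$ and $N_{\lambda_2}$ are post-critically finite with $N_{\lambda_i}^{k}(0) = \infty$. For $\lambda_1$, since $2^k t \equiv 0 \pmod{\mathbb{Z}}$, iterating the hypothesis that $R_{\lambda_1}^1(t)$ lands at $0$ gives that $N_{\lambda_1}^k(0)$ coincides with the landing point of $R_{\lambda_1}^1(0)$, which is the repelling fixed point $\infty$ by Fact \ref{int-ray}. For $\lambda_2$, approximate by a sequence $\lambda_n \in \mathcal{R}_0^1(t)$ with $\lambda_n \to \lambda_2$. For each $\lambda_n$ the radial segment $\ell_0^{\lambda_n}(t) \subset B_{\lambda_n}^1$ ends precisely at the critical point $0$, and by Theorem \ref{roesch-jordan} together with the continuous dependence of B\"ottcher coordinates, this segment converges in Hausdorff topology to $\overline{R_{\lambda_2}^1(t)}$, so $R_{\lambda_2}^1(t)$ lands at $0$; the same forward iteration gives $N_{\lambda_2}^k(0) = \infty$. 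Because $N_\lambda^k(0) = \infty$ is a polynomial equation in $\lambda$, the set $\Lambda := \{\lambda \in \mathcal{X} : N_\lambda^k(0) = \infty\}$ is finite and contains both $\lambda_1$ and $\lambda_2$.

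Next, I construct a quasi-conformal homeomorphism $h:\widehat{\mathbb{C}} \to \widehat{\mathbb{C}}$ conjugating $N_{\lambda_1}$ to $N_{\lambda_2}$. On each immediate basin $B_{\lambda_1}^\varepsilon$ ($\varepsilon = 1, 2, 3$) set $h := (\phi_{\lambda_2}^\varepsilon)^{-1} \circ \phi_{\lambda_1}^\varepsilon$; this is a conformal isomorphism extending continuously to the Jordan-curve boundaries by Theorem \ref{roesch-jordan}. The common ray-landing at $0$ guarantees that the critical orbits of $N_{\lambda_1}$ and $N_{\lambda_2}$ have matching itineraries through the preimages of the basins, so successive pull-backs by $N_{\lambda_1}$ and $N_{\lambda_2}$ consistently define $h$ on every iterated preimage of an immediate basin, with uniformly bounded dilatation. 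Local connectivity of the post-critically finite Julia sets then allows one to extend $h$ across $J(N_{\lambda_1})$ to a global quasi-conformal homeomorphism, whose Beltrami coefficient $\mu := h_{\bar z}/h_z$ is $N_{\lambda_1}$-invariant. To conclude, for $\zeta \in \mathbb{D}$ set $\mu_\zeta := \zeta \mu/\|\mu\|$ and apply Lemma \ref{2-1}(3): this yields a holomorphic map $\zeta \mapsto \lambda(\zeta)$ with $\lambda(0) = \lambda_1$ and $\lambda(\|\mu\|) = \lambda_2$, and every $N_{\lambda(\zeta)}$ is quasi-conformally conjugate to $N_{\lambda_1}$. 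The conjugacy preserves the identity $N_{\lambda(\zeta)}^k(0) = \infty$, so $\lambda(\zeta) \in \Lambda$ for all $\zeta$. Since $\Lambda$ is finite, the holomorphic function $\lambda(\cdot)$ is constant, and hence $\lambda_1 = \lambda_2$.

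The main obstacle is the middle step---building the quasi-conformal conjugacy $h$. While its definition on the immediate basins is immediate from B\"ottcher theory, extending $h$ across the Julia set and the iterated preimages of the basins requires establishing the full combinatorial equivalence of the two dynamical systems. The single combinatorial datum furnished by the hypothesis (the identification of $0$ with the landing point of $R^1(t)$) must be propagated, through the dynamics, to match all touching points between Fatou components and all preimages of $\infty$. This is the heart of the argument, and it is here that the dyadic nature of $t$ is decisively used: it ensures that the post-critically finite combinatorics on both sides are entirely dictated by the expansion of $t$ under doubling.
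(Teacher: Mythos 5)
Your overall route is genuinely different from the paper's. The paper's proof is short: it notes (via Lemma \ref{r-ray-land}) that $R^1_{\lambda_2}(t)$ also lands at $0$, so both maps are post-critically finite, and then it only builds a homeomorphism $\psi$ agreeing with the B\"ottcher conjugacies on $\overline{B^1_{\lambda_1}}\cup\overline{R^2_{\lambda_1}}(0)\cup\overline{R^3_{\lambda_1}}(0)$, checks that $\psi$ and its lift are homotopic rel the post-critical set (easy, because the complement of that set is simply connected), and invokes Thurston's rigidity theorem. Your first step (both maps satisfy $N^k_\lambda(0)=\infty$, a non-degenerate algebraic condition with finitely many solutions) and your last step (the Beltrami-path $\zeta\mapsto\lambda(\zeta)$ from Lemma \ref{2-1}(3) is a holomorphic map into a finite set, hence constant) are both correct, and the deformation ending is a nice touch since it does not even require $|J(N_{\lambda_1})|=0$.

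The gap is your middle step, and it is not a routine one. You define $h$ conformally on the immediate basins and assert that "successive pull-backs \dots consistently define $h$ on every iterated preimage" and that "local connectivity of the post-critically finite Julia sets then allows one to extend $h$ across $J(N_{\lambda_1})$ to a global quasi-conformal homeomorphism." Two things are missing. First, consistency of the pull-backs requires that \emph{all} identifications between boundaries of Fatou components (equivalently, the full landing pattern of internal rays, including the Head's angle data $\Theta_{\lambda_1}=\Theta_{\lambda_2}$ and all touching points at preimages of $\infty$) agree for the two maps; the single datum that $R^1(t)$ lands at $0$ for both does not obviously propagate to this, and proving that it does is exactly what Steps 1--2 of the proof of Theorem \ref{rigidity1} accomplish using the Head's angle theory and the Yoccoz puzzle (Theorem \ref{newton-puzzle}). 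Second, even granting a continuous conjugacy on the closure of the Fatou set, local connectivity gives at best a continuous extension across $J$; quasiconformality of the extension is a quantitative statement requiring uniformly bounded shapes of shrinking puzzle pieces and the QC-criterion (Lemma \ref{qc-c}), which is the entire content of Step 3 of the proof of Theorem \ref{rigidity1}. Here the free critical point lies \emph{in} the Julia set (it maps onto the repelling fixed point $\infty$), so these maps are not hyperbolic and no soft argument applies. In short, your plan can be completed, but only by importing the puzzle-plus-QC-criterion machinery of Sections \ref{yoccoz}--\ref{rigidity}; the paper's use of Thurston's theorem is precisely the device that lets one avoid constructing the quasiconformal conjugacy in the post-critically finite case, and as written your proof assumes the hardest part.
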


Lemma \ref{pre-rigidity} is useful in the proof of Theorem \ref{H-tongue1}. Its stronger version, Theorem \ref{rigidity1}, will be proven   after we introduce the Yoccoz puzzle theory.

 \begin{proof} By (the proof of) Lemma \ref{r-ray-land}, we know that the  internal ray   $R_{\lambda_2}^1(t)$ also lands at the critical point $0$.   So the maps $N_{\lambda_1}$ and $N_{\lambda_2}$ both are post-critically finite. 
   Define a homeomorphism $\psi: \mathbb{\widehat{C}}\rightarrow \mathbb{\widehat{C}}$
 satisfying the following two properties:
 
$\bullet$  $\psi|_{\overline{B_{\lambda_1}^1}}=(\phi_{\lambda_2}^{1})^{-1}\circ\phi_{\lambda_1}^{1}$ (note that the right side can be extended to $\overline{B^1_{\lambda_1}}$ because  
 $\partial {B}_{\lambda_1}^1$  and $\partial {B}_{\lambda_2}^1$ are Jordan curves, see Theorem \ref{roesch-jordan}).
 
$\bullet$   $\psi|_{\overline{R^\varepsilon_{\lambda_1}}(0)}=(\phi_{\lambda_2}^{\varepsilon})^{-1}\circ\phi_{\lambda_1}^{\varepsilon}|_{\overline{R^\varepsilon_{\lambda_1}}(0)}, \  \varepsilon=2,3$. 
  
  Then there is a homeomorphism $\phi: \mathbb{\widehat{C}}\rightarrow \mathbb{\widehat{C}}$ satisfying that 
  $$\psi \circ N_{\lambda_1}=N_{\lambda_2}\circ \phi, \  \psi|_{\overline{B_{\lambda_1}^1}\cup \overline{R^2_{\lambda_1}}(0)\cup \overline{R^3_{\lambda_1}}(0)}= \phi|_{\overline{B_{\lambda_1}^1}\cup \overline{R^2_{\lambda_1}}(0)\cup \overline{R^3_{\lambda_1}}(0)}.$$
  Note that  $\mathbb{\widehat{C}}\setminus (\overline{B_{\lambda_1}^1}\cup \overline{R^2_{\lambda_1}}(0)\cup \overline{R^3_{\lambda_1}}(0))$ is simply connected,  
  $\psi$ can be deformed continuously to $\phi$ in  $\mathbb{\widehat{C}}\setminus (\overline{B_{\lambda_1}^1}\cup \overline{R^2_{\lambda_1}}(0)\cup \overline{R^3_{\lambda_1}}(0))$. This, in particular,  means that $\psi$ and $\phi$ are homotopic rel the post-critical set $P(N_{\lambda_1}) \subset \overline{B_{\lambda_1}^1}\cup \overline{R^2_{\lambda_1}}(0)\cup \overline{R^3_{\lambda_1}}(0)$.  In other words, $N_{\lambda_1}$ and $N_{\lambda_2}$ are combinatorially equivalent in the sense of Thurston. By Thurston's Theorem \cite{DH2},   $N_{\lambda_1}$ and $N_{\lambda_2}$ are M\"obius conjugate.  We necessarily have $\lambda_1=\lambda_2$ because they are both in the fundamental domain $\mathcal{X}_{FD}$.
 \end{proof}

\section{The Head's angle}\label{h-t}

In this section, we will introduce a very important combinatorial number for the cubic Newton maps: {\it the Head's angle}.  This number   characterizes completely how and where the two adjacent immediate  basins  of the super-attracting fixed points  touch (when $\lambda \in \Omega_0$, these two adjacent immediate  basins are exactly $B_\lambda^1$ and $B_\lambda^2$).   

The main part of this section,  Section \ref{head-tongue}, is to  characterize all the   maps sharing the same   Head's angle $\theta$, in the case that $\theta$ is periodic or dyadic. 
Properties of the Head's angles are also given.

\subsection{The Head's angle}\label{head}
 Fix any  $\lambda\in \Omega_0\cup\{\sqrt{3}i/2\}$ and any $t\in \mathbb{S}$, 
  the sets $R_\lambda^1(t)$ and $R_\lambda^2(t)$   are  internal rays.  
 We define the following set of
internal angles:
 \bess &\Theta_\lambda=\{t\in (0,1]; \text{ the rays } R_\lambda^1(t)  \text{ and } R_\lambda^2(1-t)  \text{  land at
the same point}\}.&\eess

The set $\Theta_\lambda$ plays an important role in exploring the dynamical behavior  of $N_\lambda$. 
So it is worth listing some  properties of $\Theta_\lambda$ :

\vspace{5 pt}

 (1). {\it For all $\lambda\in \Omega_0\cup\{\sqrt{3}i/2\}$, the set $\Theta_\lambda$
is closed, forward invariant under the doubling map $\tau: t\mapsto 2t   ({\rm mod } \ \mathbb{Z})$,
 without interior and with $1$ as an accumulation point.}
 
  For a proof of this fact, 
  see  \cite[Lemma 2.12]{Tan97};
 
\vspace{5 pt}

(2).  {\it  For all $\lambda\in \Omega_0$, the set $\Theta_\lambda$ contains $\{1, 1-{1}/{2^n}; n\geq 1\}$.}

 See \cite[Corollary 3.7 and  Corollary 7.11]{Ro08}, we remark that this fact also holds when $\lambda=\sqrt{3}i/2$. These angles are the ones where the internal ray lands at $\infty$ or its iterated preimages.

\vspace{5 pt}

(3).  {\it  For any $\lambda\in \Omega_0$, there is a large integer $\ell=\ell(\lambda)>0$ such that when $p\geq \ell$, one has $1-\frac{1}{2^p-1}\in \Theta_\lambda$.}

 See \cite[Corollary 3.16]{Ro08}. We remark that it suffices to choose $\ell$ so that 
 $$\min\Big\{2^k(1-\frac{1}{2^\ell-1}) \  {\rm mod } \ \mathbb{Z}; k\geq 0\Big\}=\frac{1}{2}-\frac{1}{2(2^\ell-1)}>\boldsymbol{h}(\lambda),$$ where $\boldsymbol{h}(\lambda)$ is the Head's angle of $N_\lambda$ defined below.

\vspace{5 pt}

Even if we can't  write down explicitly  all  the elements  of  $\Theta_\lambda$,   we may associate the set  $\Theta_\lambda$ with a real number, which will uniquely determine  $\Theta_\lambda$.
This number is called  {\it the Head's angle}. 

\begin{figure}[h]
\begin{center}
\includegraphics[height=6cm]{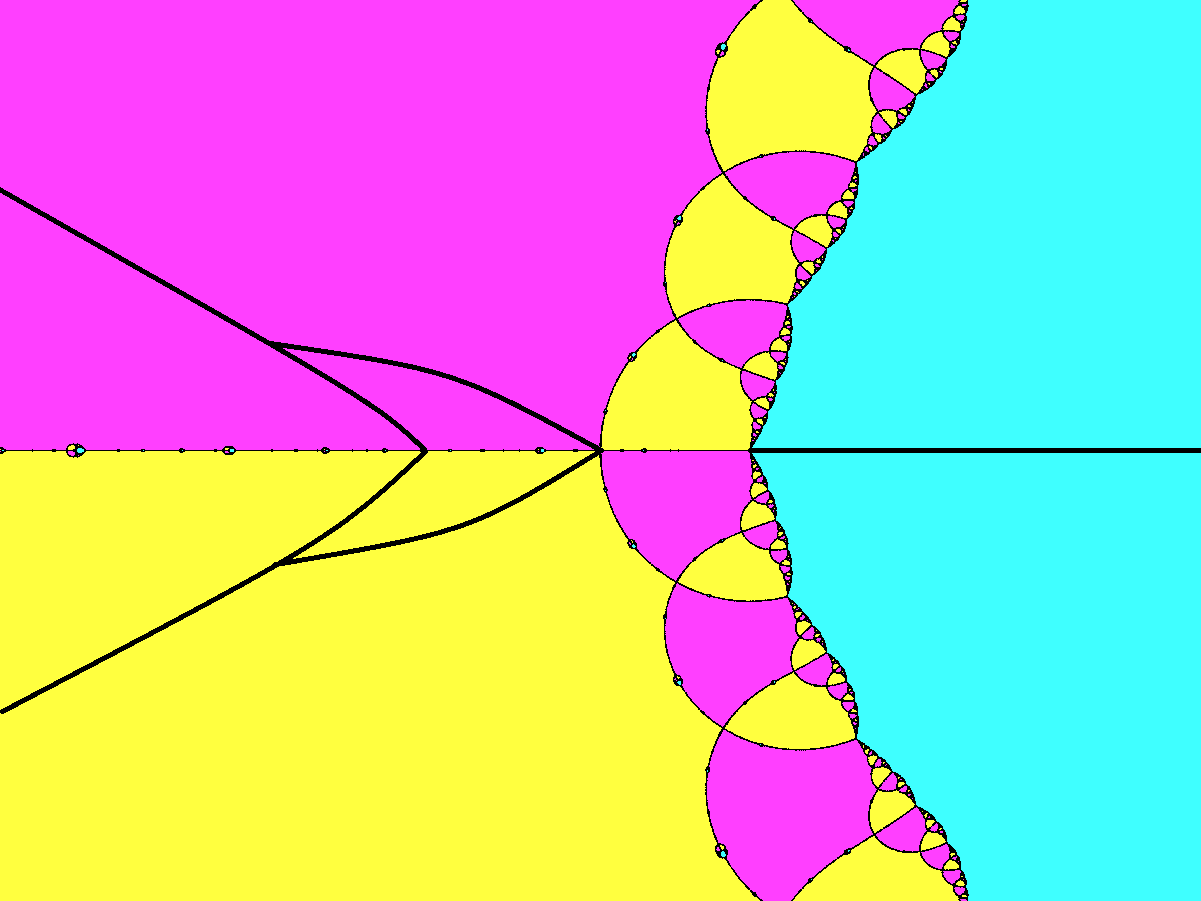}
\put(-210,35){$0$}   \put(-210,130){$0(=1)$}      \put(-172,90){$\frac{1}{2}$}   \put(-172,72){$\frac{1}{2}$}  \put(-177,102){$\bullet$}   \put(-177,62){$\bullet$}  \put(-180,53){$b_1({\lambda})$}
 \put(-140,60){$\alpha$}  \put(-140,103){$1-\alpha$}
\put(-180,110){$b_2(\lambda)$}    \put(-60,93){$b_3(\lambda)$}    \put(-53,83){$\bullet$}   
 \caption{Julia set of   $N_{\lambda}$ with $\lambda=0.1i$. The  Head's angle is $\alpha$.}
\end{center}\label{f5}
\end{figure}




\begin{defi} For $\lambda\in\Omega_0\cup\{\sqrt{3}i/2\}$,  the Head's angle $\boldsymbol{h}(\lambda)$ of $N_\lambda$ is defined  as the infimum of the angles in  $\Theta_\lambda$ (as a subset of $(0,1]$).
\end{defi}


\begin{fact}\label{head-special} When  $\lambda=\sqrt{3}i/2$, the Head's angle $\boldsymbol{h}(\lambda)=1/2$.
\end{fact}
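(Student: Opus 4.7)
When $\lambda = \sqrt{3}\,i/2$, a direct computation gives $\lambda^2 = -3/4$, hence
\[
P_\lambda(z)=\bigl((z+\tfrac12)^2-\lambda^2\bigr)(z-1)=(z^2+z+1)(z-1)=z^3-1,
\]
and $N_\lambda(z)=(2z^3+1)/(3z^2)$ is the classical Newton map of $z^3-1$. It carries the $\mathbb{Z}/3$-rotational symmetry $N_\lambda\circ\rho=\rho\circ N_\lambda$ with $\rho(z)=\omega z$ and $\omega=e^{2\pi i/3}$, cyclically permuting the three super-attracting fixed points (the cube roots of unity) and their basins. Moreover $N_\lambda$ is post-critically finite, since the only free critical orbit is $0\mapsto\infty$ with $\infty$ fixed.

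The plan is to establish both $\boldsymbol{h}(\lambda)\leq 1/2$ and $\boldsymbol{h}(\lambda)\geq 1/2$. For the upper bound, I observe that $P'_\lambda(z)=3z^2$ has a double zero at $0$, so $N_\lambda$ has a pole of order $2$ there, giving $N_\lambda(0)=\infty$ with local degree $2$ and $N_\lambda^{-1}(\infty)=\{\infty,0\}$ set-theoretically. By Fact~\ref{order0}, $R^\varepsilon_\lambda(0)$ lands at $\infty$ for every $\varepsilon$; lifting along the local branch at $0$, the other preimage ray in $B^\varepsilon_\lambda$, namely $R^\varepsilon_\lambda(1/2)$, must land at the only other preimage of $\infty$, which is $0$. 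In particular $R^1_\lambda(1/2)$ and $R^2_\lambda(1-1/2)$ co-land at $0$, so $1/2\in\Theta_\lambda$, recovering also the $n=1$ case of property~(2) in \S\ref{head}.

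For the lower bound, suppose for contradiction that there exists $t_0\in\Theta_\lambda\cap(0,1/2)$, and let $p\in\partial B^1_\lambda\cap\partial B^2_\lambda$ be the common landing point of $R^1_\lambda(t_0)$ and $R^2_\lambda(1-t_0)$. Forward invariance of $\Theta_\lambda$ under $\tau:t\mapsto 2t\pmod{\mathbb{Z}}$ forces $\{2^kt_0\bmod 1\}_{k\geq 0}\subset\Theta_\lambda$, and the corresponding orbit $\{N_\lambda^k(p)\}$ stays in $\partial B^1\cap\partial B^2$, a pre-periodic set by post-critical finiteness and local connectivity of $J(N_\lambda)$. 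The $\mathbb{Z}/3$-symmetry then forces every symmetric co-landing $(R^1(s),R^2(1-s))$ on this pairwise intersection to occur at a point of the grand orbit of $\infty$: any other pre-periodic co-landing would, after applying $\rho$ and $\rho^2$, produce incompatible meetings of $B^3_\lambda$ with $B^1_\lambda$ and $B^2_\lambda$ at the same angular slots. Hence $2^nt_0\equiv 0\pmod{\mathbb{Z}}$ for some $n\geq 1$, so $t_0$ is dyadic; backtracking the admissible doubling orbits through the identification $R^\varepsilon(1/2)\ni 0$ shows that necessarily $t_0\in\{1-1/2^m:m\geq 1\}$, giving $t_0\geq 1/2$, which contradicts $t_0<1/2$.

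The hardest step is the combinatorial/symmetry argument in the lower bound: ruling out exotic symmetric co-landings on $\partial B^1\cap\partial B^2$ outside the grand orbit of $\infty$. This relies on combining the $\mathbb{Z}/3$-symmetry with the Jordan structure of the basin boundaries in the PCF setting to confine symmetric ray-pairs $(R^1(s),R^2(1-s))$ to the tree of pre-images of $\infty$, from which the dyadic constraint $t_0=1-1/2^m$ follows.
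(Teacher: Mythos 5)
Your upper bound argument---showing $1/2\in\Theta_\lambda$ by noting that $N_\lambda^{-1}(\infty)=\{\infty,0\}$ and hence $R^1_\lambda(1/2), R^2_\lambda(1/2)$ both land at the free critical point $0$---is correct and coincides with the paper's opening step.

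The lower bound contains a genuine gap. You assert that the orbit $\{N_\lambda^k(p)\}$ of the co-landing point lies in ``a pre-periodic set,'' and from this conclude $2^nt_0\equiv 0\pmod{\mathbb{Z}}$ for some $n$. But $\partial B^1_\lambda\cap\partial B^2_\lambda$ is not a priori a finite or discrete pre-periodic set: it is exactly the set of co-landing points indexed by $\Theta_\lambda$, and until $\boldsymbol{h}(\lambda)=1/2$ is established one cannot exclude irrational angles from $\Theta_\lambda$. Concluding that $t_0$ must be rational from membership of its landing orbit in $\partial B^1\cap\partial B^2$ presupposes the conclusion you are trying to prove. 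The $\mathbb{Z}/3$-symmetry argument is also unsubstantiated: the rotation $\rho$ transports a co-landing of $(B^1,B^2)$ to a co-landing of $(B^2,B^3)$ (possibly with a $1/2$-shift in B\"ottcher angle, depending on the normalization of each $\phi^\varepsilon$), but you give no constraint that this would violate; nothing forces such co-landings to lie on the grand orbit of $\infty$. Finally, even granting that $t_0$ is dyadic, the jump to $t_0\in\{1-1/2^m:m\geq1\}$ is asserted, not proved.

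The paper's lower bound replaces all of this with a local combinatorial argument at the critical point $0$. Using the cyclic order in which $R^1_\lambda(1/2)$, $T^2_\lambda$, $T^1_\lambda$, $R^2_\lambda(1/2)$ attach to $0$ (where $T^\varepsilon_\lambda$ is the component of $N_\lambda^{-1}(B^\varepsilon_\lambda)$ other than $B^\varepsilon_\lambda$), one examines the $N_\lambda$-preimage of the configuration $\overline{R^1_\lambda}(0)\cup\overline{R^1_\lambda}(\theta)\cup\overline{R^2_\lambda}(0)\cup\overline{R^2_\lambda}(1-\theta)$ for $\theta\in[1/2,1)\cap\Theta_\lambda$, and sees that $R^1_\lambda(\theta/2)$ attaches to a ray inside $T^2_\lambda$ while $R^2_\lambda(1-\theta/2)$ attaches to a ray inside $T^1_\lambda$; these rays therefore cannot co-land, so $\theta/2\notin\Theta_\lambda$. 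Iterating the doubling map then shows $(0,1/2)\cap\Theta_\lambda=\emptyset$. Your proposal would need an argument of this concrete, preimage-tracking type in place of the symmetry heuristic.
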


 \begin{proof}
 In this case, $N_\lambda(0)=\infty$. In the dynamical plane, it is known from Fact \ref{order0} that the internal rays $R_\lambda^1(1/2)$ and $R_\lambda^2(1/2)$ land at the  critical point $0$.  This implies that $\boldsymbol{h}(\sqrt{3}i/2)\leq1/2$.  One should also observe by the local behavior of $N_\lambda$ near the critical point $0$  that $R_\lambda^1(1/2),   T_\lambda^2, T_\lambda^1, R_\lambda^2(1/2)$ attach  $0$ in positive cyclic order, where $T_\lambda^\varepsilon$ is the component of $N_\lambda^{-1}(B_\lambda^\varepsilon)$ other that $B_\lambda^\varepsilon$. 
 To prove that $\boldsymbol{h}(\lambda)=1/2$, it suffices to show

{\it Claim:  For any $\theta\in[1/2,1)\cap \Theta_\lambda$,  we have $\theta/2\notin\Theta_\lambda$.}
 
 \begin{figure}[h]
\begin{center}
\includegraphics[height=6cm]{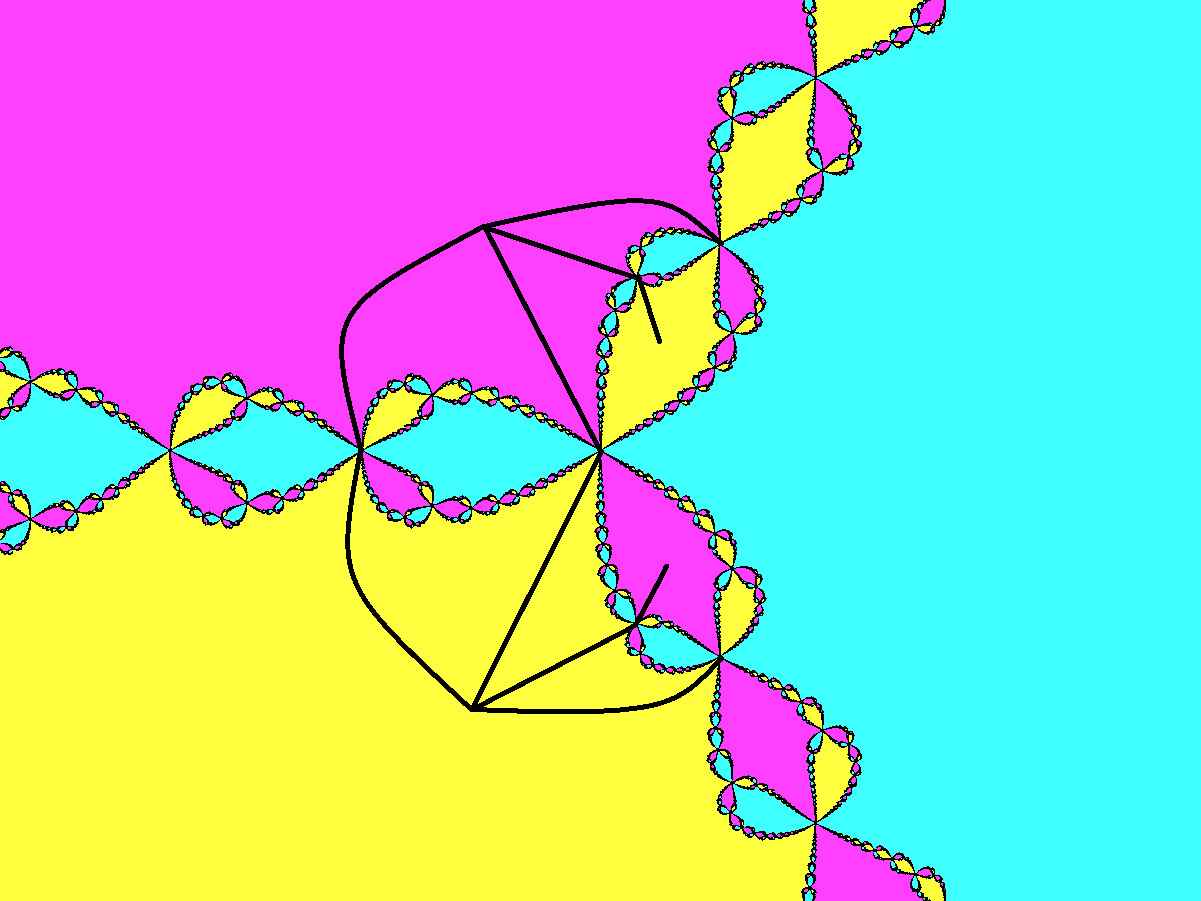}
   \put(-190,110){$1-\theta$}    \put(-170,55){$\theta$}   \put(-135,102){$\frac{1}{2}$}     \put(-130,115){$\frac{2-\theta}{2}$}   \put(-110,100){$\theta$}  \put(-105,68){$1-\theta$} 
     \put(-110,140){$\frac{3}{4}$}  \put(-135,60){$\frac{1}{2}$}   
    \put(-125,50){$\frac{\theta}{2}$}     \put(-115,25){$\frac{1}{4}$} 
   \put(-138,125){$\bullet$}   \put(-140,34){$\bullet$}  \put(-150,20){$b_1({\lambda})$}
\put(-145,138){$b_2(\lambda)$}    \put(-60,93){$b_3(\lambda)$}    \put(-53,83){$\bullet$}   
 \caption{Julia set of   $N_{\lambda}$ with $\lambda=\sqrt{3}i/2$. The  internal angles  are labeled beside the corresponding internal rays.}
\end{center}\label{f5}
\end{figure}

 The claim implies that $(0,1/2)\cap \Theta_\lambda=\emptyset$. In what follows, we prove this claim.   By looking at the $N_\lambda$-preimage of the 
  components of  $$\mathbb{\widehat{C}}\setminus \Big( \overline{{R}_\lambda^1}(0)\cup  \overline{{R}_\lambda^1}(\theta)\cup \overline{{R}_\lambda^2}(0)\cup  \overline{{R}_\lambda^2}(1-\theta)\Big),$$ 
  we see that $R_\lambda^1(\theta/2)$  attaches the internal ray in 
 $ T_\lambda^2$ of angle $1-\theta$, and $R_\lambda^2(1-\theta/2)$  attaches the internal ray in 
 $ T_\lambda^1$ of angle $\theta$.  Therefore the internal rays $R_\lambda^1(\theta/2)$ and $R_\lambda^2(1-\theta/2)$ can not land at the same point. This means $\theta/2\notin\Theta_\lambda$, completing the proof.
  \end{proof}

 
  

\begin{pro} \label{head-property}  For any $\lambda\in\Omega_0$,  the Head's angle $\boldsymbol{h}(\lambda)$ satisfies:

1. $0<\boldsymbol{h}(\lambda)<\frac{1}{2}$ and  $\boldsymbol{h}(\lambda)\in \Theta_\lambda$.

2. $\theta\in \Theta_\lambda$ if and only if $2^n\theta\in[\boldsymbol{h}(\lambda),1]$ for all $n\geq0$. This implies that  $\Theta_\lambda=\bigcap_{k\geq0}\tau^{-k}([\boldsymbol{h}(\lambda),1])$ is totally disconnected .

\end{pro}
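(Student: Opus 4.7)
The plan is to handle the two parts in sequence, with Part (1) laying the groundwork for the characterization in Part (2).

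For Part (1) I will establish an upper bound, a lower bound, and then the infimum being attained. For the upper bound $\boldsymbol{h}(\lambda) < 1/2$, I will use property (3): for $p$ sufficiently large, $1 - 1/(2^p-1) \in \Theta_\lambda$, and a direct orbit computation shows $\tau^{p-1}(1-1/(2^p-1)) = (2^{p-1}-1)/(2^p-1) = 1/2 - 1/(2(2^p-1))$. By forward invariance (property (1)) this angle lies in $\Theta_\lambda$, so $\boldsymbol{h}(\lambda) \leq 1/2 - 1/(2(2^p-1)) < 1/2$. For the lower bound $\boldsymbol{h}(\lambda) > 0$, the crucial geometric input is that $\infty$ is an isolated point of $\partial B_\lambda^1 \cap \partial B_\lambda^2$. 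I will combine Fact \ref{order0} (the three rays $R_\lambda^\varepsilon(0)$ land at $\infty$ in positive cyclic order) with a local linearization at the repelling fixed point $\infty$ (multiplier $3/2$): in a sufficiently small neighborhood $V$ of $\infty$, these three rays cut $V \setminus \{\infty\}$ into three sectors, one contained in each basin, so $\overline{B_\lambda^1} \cap \overline{B_\lambda^2} \cap V = \{\infty\}$. Since each $\partial B_\lambda^\varepsilon$ is a Jordan curve (Theorem \ref{roesch-jordan}), the B\"ottcher map extends to a homeomorphism $\pi_\varepsilon : \overline{\mathbb{D}} \to \overline{B_\lambda^\varepsilon}$; any sequence $\theta_n \in \Theta_\lambda$ with $\theta_n \to 0^+$ would produce distinct points $\pi_1(\theta_n) = \pi_2(1-\theta_n) \to \infty$ in $\partial B_\lambda^1 \cap \partial B_\lambda^2$, contradicting isolatedness. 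Closedness of $\Theta_\lambda$ in $(0,1]$ then gives $\boldsymbol{h}(\lambda) \in \Theta_\lambda$.

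For Part (2), the forward implication is immediate from forward invariance combined with $\Theta_\lambda \subset [\boldsymbol{h}(\lambda), 1]$. For the converse, set $K = \bigcap_{k \geq 0} \tau^{-k}([\boldsymbol{h}(\lambda), 1])$; I need to show $K \subset \Theta_\lambda$. The strategy is a symbolic-coding argument. Via $\pi_1, \pi_2$ the action of $N_\lambda$ on $\partial B_\lambda^\varepsilon$ is conjugate to $\tau$, and the meeting arc corresponds to $[\boldsymbol{h}(\lambda),1]$ on the first basin and $[0, 1-\boldsymbol{h}(\lambda)]$ on the second, glued by $t \leftrightarrow 1-t$ at $\boldsymbol{h}(\lambda)$. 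This yields a Markov coding whose admissible itineraries are precisely the orbits staying in $[\boldsymbol{h}(\lambda),1]$. For rational $\theta \in K$ (periodic or pre-periodic under $\tau$), the rays $R_\lambda^1(\theta)$ and $R_\lambda^2(1-\theta)$ land at (pre-)periodic points with matching itineraries; the Snail Lemma combined with the Jordan-curve regularity of $\partial B_\lambda^\varepsilon$ forces the two landing points to coincide, so $\theta \in \Theta_\lambda$. Irrational $\theta \in K$ are then obtained by approximating with such rationals and invoking the closedness of $\Theta_\lambda$. Total disconnectedness finally follows because $K$ is the intersection of finite unions of closed intervals whose diameters shrink geometrically under the expansion of $\tau$, so the intersection has empty interior and is a closed nowhere-dense subset of $(0,1]$.

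The main obstacle is the reverse inclusion $K \subset \Theta_\lambda$ in Part (2). Forward invariance alone is far from enough to pin down $\Theta_\lambda$ inside $[\boldsymbol{h}(\lambda),1]$: an abstract closed forward-invariant subset can be much smaller than $K$. The argument truly requires matching the symbolic coding on $K$ with the topology of the intersection $\partial B_\lambda^1 \cap \partial B_\lambda^2$, and in particular verifying carefully that rational angles in $K$ really do yield a single common landing point in both basin boundaries. This is where the Jordan-curve conclusion of Theorem \ref{roesch-jordan} enters in an essential way, and it is the delicate step that must be handled with care.
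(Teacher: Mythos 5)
The paper disposes of this proposition with a citation to \cite[Lemma 3.15 and Corollary 7.11]{Ro08}, so you are attempting a self-contained argument. Your upper bound $\boldsymbol{h}(\lambda)<1/2$ via property (3) is correct, and so is the total-disconnectedness argument once $\boldsymbol{h}(\lambda)>0$ is known. However, the lower bound is where the genuine content lies, and your argument for it is based on a false claim.

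You assert that $\infty$ is an isolated point of $\partial B_\lambda^1\cap\partial B_\lambda^2$. This is false, and the paper's own property (1) contradicts it: $\Theta_\lambda$ has $1$ as an accumulation point, so choosing $\theta_n\in\Theta_\lambda$ with $\theta_n\to 1^-$ produces touching points $\pi_1(\theta_n)=\pi_2(1-\theta_n)$ converging to $\pi_1(1)=\infty$ but distinct from $\infty$. Thus $\infty$ is accumulated by the touching set. What is true --- and what you need --- is the one-sided statement that $\Theta_\lambda$ does not accumulate at $1\equiv 0$ from the side of angles $0^+$; but that is exactly the assertion $\boldsymbol{h}(\lambda)>0$, so your argument is circular and relies on a geometric picture that is simply wrong. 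The supporting claim that the three rays $R_\lambda^\varepsilon(0)$ cut a small neighborhood of $\infty$ into three sectors, ``one contained in each basin,'' is also incorrect: each sector is bounded by rays belonging to \emph{two different} basins (by the cyclic order in Fact \ref{order0}) and contains portions of both, together with Julia set and possibly other Fatou components.

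For Part 2 the reverse inclusion $K\subset\Theta_\lambda$ is the nontrivial direction, and your ``Markov coding'' sketch does not carry the weight you put on it. You describe a ``meeting arc'' identified with $[\boldsymbol{h}(\lambda),1]$ on $\partial B_\lambda^1$ and $[0,1-\boldsymbol{h}(\lambda)]$ on $\partial B_\lambda^2$, but the meeting set is $\Theta_\lambda$, which (by the proposition itself) is totally disconnected, not an arc. The statement that the ``admissible itineraries are precisely the orbits staying in $[\boldsymbol{h}(\lambda),1]$'' is essentially a restatement of the conclusion, and the Snail Lemma concerns landing behavior of a single ray at a (pre)periodic point; it does not give the identification of landing points on two distinct Jordan curves. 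A genuine proof here must extract something nontrivial from the dynamics (Roesch's approach in \cite{Ro08} goes through the puzzle construction), and your sketch does not supply that input.
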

\begin{proof} For the  proof, see \cite[Lemma 3.15 and  Corollary 7.11]{Ro08}. \end{proof}






\subsection{The set $\Xi$}  \label{cand-set}


It's known from Proposition \ref{head-property} that for any $\lambda\in \Omega_0$, the Head's  angle $\boldsymbol{h}(\lambda)$ of $N_\lambda$ is contained in the set
$\Xi$,  here we recall the definition
 $$\Xi=\{\theta\in (0,1/2] ; 2^k\theta\in [\theta,1] \   ({\rm mod}  \ \mathbb{Z})  \text{ for all } k\geq0\}.$$
 It's clear that $1/2^k \in \Xi$, for all $k\geq 1$.
 
 

\begin{lem}[Tan Lei \cite{Tan97}] \label{char-S}  The set  $\Xi$ satisfies the following properties:

1. The set $\Xi\cup\{0\}$  is closed, totally disconnected, perfect (without isolated points)
and of $0$-Lebesgue measure. 

2. Let  $(\beta,\alpha)$  be a connected component of 
$(0,1/2)\setminus \Xi$, then   $\alpha$ and  $\beta$  are of the form 
$$ \alpha = {p\over2^n-1} \quad\text{ and }\quad
   \beta = {p\over2^n},  \quad\text{ with }\quad n\geq 2.$$
   
3. If $\beta=p/2^m$ {\rm (}resp. $\alpha=p/(2^n-1)${\rm)} is in~$\Xi$, it is necessarily the infimum 
 {\rm (}resp. supremum{\rm )} of a connected component $(\beta,\overline\alpha)$ {\rm(}resp. $(\overline\beta,\alpha)${\rm)} of
$(0,1/2)\setminus \Xi$, where $\overline\alpha=p/(2^m-1)$ {\rm(}resp. $\overline\beta=p/2^n${\rm)}.   

\end{lem}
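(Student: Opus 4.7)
My plan is to establish Parts~2 and~3 first, since the combinatorial description of the gaps will give me the structural backbone I need for the topological statements in Part~1.

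\textbf{Parts~2 and~3.} I would begin with the direct inclusion: for every $n\ge 2$ and every integer $p$ with $0<p/2^n<1/2$, the interval $I_{n,p}=(p/2^n,\,p/(2^n-1))$ is disjoint from~$\Xi$. The key observation is that any $\theta\in I_{n,p}$ lies in $[p/2^n,(p+1)/2^n)$, hence its first $n$ binary digits agree with those of $p/2^n$, so $\tau^n(\theta)=2^n\theta-p$. The inequality $\tau^n(\theta)<\theta$ reduces to $\theta<p/(2^n-1)$, which is the hypothesis. For the converse direction, let $(\beta,\alpha)\subset (0,1/2)\setminus\Xi$ be a maximal component and define, for $\theta\in(\beta,\alpha)$, the integer $T(\theta)=\min\{k\ge 1 : \tau^k(\theta)\in (0,\theta)\}$, which is finite because $\theta\notin\Xi$. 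Since $\{T\le k\}$ is a finite union of open conditions in $\theta$, the function $T$ is upper semicontinuous and integer-valued, so it attains a minimum value~$n$ at some $\theta^*$. In a neighborhood of $\theta^*$ avoiding the dyadic discontinuities of $\tau,\tau^2,\ldots,\tau^n$, the iterate $\tau^n$ is affine, $\tau^n(\theta)=2^n\theta-j$, and the local set where $\tau^n(\theta)\in(0,\theta)$ is exactly $(j/2^n,\,j/(2^n-1))$. By the direct inclusion this interval is disjoint from~$\Xi$, and maximality of $(\beta,\alpha)$ forces the two intervals to coincide. This yields Part~2; Part~3 follows by running the same correspondence in reverse, since once $\beta=p/2^m$ (in lowest terms) is in~$\Xi$, the interval $I_{m,p}$ is a maximal gap immediately to its right, and symmetrically on the other side for $\alpha=p/(2^n-1)$.

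\textbf{Part~1.} Closedness of $\Xi\cup\{0\}$ follows from writing it as the intersection $\bigcap_{k\ge 0}\{\theta\in[0,1/2] : \tau^k(\theta)\in[\theta,1]\,(\mathrm{mod}\,\mathbb{Z})\}$, each defining condition being closed once one adopts the convention that $0$ is identified with $1$ on the circle. Measure zero is immediate from Birkhoff's ergodic theorem applied to the doubling map: Lebesgue-almost every $\theta$ has equidistributed orbit, so visits $(0,\theta)$ with positive frequency, and such a $\theta$ cannot lie in~$\Xi$. Total disconnectedness follows formally: a closed subset of $\mathbb{R}$ of Lebesgue measure zero has empty interior, and any connected subset of $\mathbb{R}$ with empty interior is a singleton. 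Perfectness is a consequence of Parts~2 and~3: an isolated point $\theta_0\in\Xi$ would have to be simultaneously the right endpoint of some maximal gap (hence $\theta_0=p/(2^n-1)$) and the left endpoint of another (hence $\theta_0=q/2^m$); but in $(0,1/2]$ no number can be written both ways, because in lowest terms the first has odd denominator while the second has a power of two as denominator.

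The step I expect to be delicate is the minimizer argument for Part~2: one needs the local affine interval $(j/2^n,j/(2^n-1))$ identified near $\theta^*$ to coincide \emph{exactly} with the maximal gap $(\beta,\alpha)$, not merely to be contained in it. The containment is clean from upper semicontinuity of~$T$ and the affine formula; upgrading to equality requires using both the direct inclusion (to know the affine interval is itself disjoint from~$\Xi$) and the minimality of~$n$ (to rule out that the gap extends further on either side by some different return time). Once these pieces are aligned, the remaining combinatorics unpacks directly.
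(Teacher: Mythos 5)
The paper does not actually prove this lemma: it simply cites Tan Lei's Proposition~2.16 and Lemma~2.24 in~\cite{Tan97}. Your proposal supplies a self-contained combinatorial proof, so the real comparison is with Tan Lei's development rather than anything in the text; as such it is a genuinely independent route and, with the caveats below, a correct one. The direct inclusion $(p/2^n,\,p/(2^n-1))\cap\Xi=\emptyset$ is clean, and the return-time minimization over the gap is the right mechanism for the converse. The delicate step you flag does close the way you sketch: if $j/(2^n-1)$ were interior to $(\beta,\alpha)$, note $\tau^n$ fixes it, so its finite orbit must already enter $(0,j/(2^n-1))$ within $n$ steps, giving $T(j/(2^n-1))\le n$, and in fact $T<n$ since $\tau^n$ itself does not decrease it — contradicting minimality; similarly $\tau^n(j/2^n)=0\equiv 1$, so $T(j/2^n)<n$ if it were interior. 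One ingredient you should make explicit, because Part~3 silently relies on it, is that minimality of $n$ also forces $j$ to be \emph{odd}: if $j=2j'$, then $\tau^{n-1}(\theta^*)=2^{n-1}\theta^*-j'\in(0,\theta^*)$ already, contradicting $T(\theta^*)=n$. With $j$ odd, matching a lowest-terms $\beta=p/2^m$ (with $p$ odd) to a gap endpoint $q/2^{n'}$ with $q$ odd uniquely pins down $n'=m$ and $q=p$. The $\alpha$-direction of Part~3 is slightly more than "running in reverse": you need both that the period of $\alpha$ divides $n'$ (from $\tau^{n'}$ fixing $\alpha'$) and that the containment $(p/2^n,p/(2^n-1))\subseteq(\beta',\alpha')$ forces $n'\le n$; together these give $n'=n$. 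Part~1 is fine — Birkhoff is heavier machinery than strictly required for measure zero but valid, and the odd-versus-power-of-two denominator observation for perfectness is exactly right, with the endpoints $0$ and $1/2$ handled implicitly ($1/2^k\to 0$, and $1/2=p/(2^n-1)$ is arithmetically impossible so $1/2$ cannot bound a gap on the right).
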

%
%
%
%
\proof
Points 1) and~3) correspond to   Proposition~2.16  in \cite{Tan97}. Point 2)
is exactly  Lemma~2.24 in~\cite{Tan97}. 
\qed

%


\vspace{5pt}

Recall that the sets $\Theta_{per}, \Theta_{dya}$ in Section \ref{int} are defined as follows
\bess
&\Theta_{per}=\{t\in (0,1/2]; 2^kt=t \ ({\rm mod}  \ \mathbb{Z})  \text{ for some }  k\geq1 \}, & \\
&\Theta_{dya}=\{t\in (0,1/2]; 2^kt=1 \ ({\rm mod} \  \mathbb{Z})  \text{ for some }  k\geq1 \}. &
\eess
The following fact is easy to verify 


\begin{fact} \label{angle-Xi}  Let $\partial(\Xi)$ be the collection of  the endpoints of all  interval  components of $(0,1/2)\setminus\Xi$, together with $1/2$. Then
$$ \partial(\Xi)=(\Xi\cap\Theta_{per})\cup(\Xi\cap\Theta_{dya}).$$
\end{fact}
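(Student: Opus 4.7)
The plan is to prove both inclusions by directly invoking Lemma \ref{char-S}, which has already done all the combinatorial work in classifying the endpoints of components of $(0,1/2)\setminus\Xi$.

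For the inclusion $\partial(\Xi)\subseteq (\Xi\cap\Theta_{per})\cup(\Xi\cap\Theta_{dya})$, I would argue as follows. First, $1/2$ belongs to $\Xi\cap\Theta_{dya}$, since $1/2\in \Xi$ and $2\cdot 1/2 \equiv 0\pmod{\mathbb{Z}}$. Any other element of $\partial(\Xi)$ is an endpoint of some complementary component $(\beta,\alpha)$ of $(0,1/2)\setminus\Xi$. By Lemma \ref{char-S}(2), the left endpoint has the form $\beta=p/2^n\in\Theta_{dya}$ and the right endpoint has the form $\alpha=p/(2^n-1)\in\Theta_{per}$. Since Lemma \ref{char-S}(1) asserts that $\Xi\cup\{0\}$ is closed and both $\alpha,\beta$ lie in the open interval $(0,1/2)$, they must belong to $\Xi$. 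Hence every element of $\partial(\Xi)$ lies in $(\Xi\cap\Theta_{dya})\cup(\Xi\cap\Theta_{per})$.

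For the reverse inclusion, take $\theta\in (\Xi\cap\Theta_{per})\cup(\Xi\cap\Theta_{dya})$. The case $\theta=1/2$ is immediate from the definition of $\partial(\Xi)$. If $\theta\in \Xi\cap\Theta_{dya}$ with $\theta<1/2$, then $\theta=p/2^m$, and Lemma \ref{char-S}(3) tells us that $\theta$ is the infimum of some complementary component $(\theta,\bar\alpha)$; in particular $\theta\in\partial(\Xi)$. The case $\theta\in \Xi\cap\Theta_{per}$ is symmetric: writing $\theta=p/(2^n-1)$ and applying the supremum statement in Lemma \ref{char-S}(3) places $\theta$ as a right endpoint of a complementary component.

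There is no real obstacle here; the substantive content lives in Lemma \ref{char-S}. The only minor subtlety is that the point $1/2$ lies on the right boundary of the ambient interval $(0,1/2]$ rather than as an interior endpoint of some complementary component, so it must be included in $\partial(\Xi)$ by convention and handled as a separate case in both inclusions. One also wants to verify silently that endpoints of components lying in the open interval $(0,1/2)$ are actually in $\Xi$ (and not just in its closure), but this is automatic from the closedness of $\Xi\cup\{0\}$.
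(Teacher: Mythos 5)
Your proof is correct. The paper offers no argument for this Fact (it is stated as "easy to verify" immediately after Lemma \ref{char-S}), and your verification — both inclusions from parts (2) and (3) of Lemma \ref{char-S}, membership of the endpoints in $\Xi$ from the closedness in part (1), and the separate treatment of $1/2$ (which is the one dyadic point of $\Xi$ not arising as the left endpoint of a complementary component inside $(0,1/2)$) — is exactly the intended reduction.
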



By definition and Proposition \ref{head-property}, we know that the Head's angle $\boldsymbol{h}(\lambda)$ is the minimum of the angles in  the set $\Theta_\lambda$. The following result will tell us in which  case   $\boldsymbol{h}(\lambda)$ is an accumulation point of  $\Theta_\lambda$.

\begin{pro} \label{density} 
Let $\alpha\in \Xi$, we define a set $C_\alpha$ by
$$C_\alpha=\bigcap_{k\geq0}\tau^{-k}([\alpha,1]).$$

  1. If  $\alpha$ is dyadic, then  $\alpha$ is an isolated point of  $C_\alpha$;

2.   If  $\alpha$ is non-dyadic, then  $\alpha$ is an accumulation point of  the non-dyadic (or  dyadic) rational  numbers in $C_\alpha$. 
\end{pro}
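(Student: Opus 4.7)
My plan handles the two parts separately, using representatives in $(0,1]$ for $\tau$ (so that $1$ is fixed).

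For part~1, write $\alpha=p/2^n$ in lowest terms, so $p$ is odd. A direct computation gives $\tau^k(\alpha)=r_k/2^{n-k}$ with $r_k:=p\bmod 2^{n-k}$ odd for $0\leq k\leq n-1$; hence $\tau^k(\alpha)\neq 1/2$ for $k<n-1$, $\tau^{n-1}(\alpha)=1/2$, and $\tau^n(\alpha)=1$. For $\theta=\alpha+\epsilon$ with $0<\epsilon<\alpha/2^n$, the iterates satisfy $\tau^k(\theta)=\tau^k(\alpha)+2^k\epsilon$ for $k=0,\ldots,n-1$ (no crossing of the discontinuity at $1/2$), and then $\tau^n(\theta)=2^n\epsilon<\alpha$, so $\theta\notin C_\alpha$. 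For $\theta<\alpha$, already $\tau^0(\theta)=\theta\notin[\alpha,1]$, so $\theta\notin C_\alpha$. Hence some punctured neighbourhood of $\alpha$ misses $C_\alpha$.

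For part~2, I will produce rational approximations to $\alpha$ in $C_\alpha$ from above, leveraging the endpoint description of components of $(0,1/2)\setminus\Xi$. First, I will show that $\alpha$ is accumulated by $\Xi$ from above: by Lemma~\ref{char-S}(1), $\Xi$ is perfect, so $\alpha$ is some accumulation point of $\Xi$; if the accumulation were only from below, then by Lemma~\ref{char-S}(3) $\alpha$ would have to be the left endpoint $p/2^m$ of a complementary component, contradicting non-dyadicity. Second, I will observe that between $\alpha$ and any $\alpha_0\in\Xi$ with $\alpha_0>\alpha$ there lie infinitely many components of $(0,1/2)\setminus\Xi$, as otherwise $\Xi$ would contain an open subinterval, violating total disconnectedness. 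Picking such a sequence of components $(\beta_n,\overline{\alpha}_n)$ with $\overline{\alpha}_n\downarrow\alpha$ forces $\beta_n\downarrow\alpha$ as well, since $\beta_n$ must satisfy $\alpha<\beta_n<\overline{\alpha}_n$ (otherwise $\alpha$ would lie in the complementary component, contradicting $\alpha\in\Xi$).

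Finally, by Lemma~\ref{char-S}(2), $\beta_n=p_n/2^{m_n}$ is dyadic and $\overline{\alpha}_n=p_n/(2^{m_n}-1)$ is non-dyadic rational (with $m_n\geq 2$). Both endpoints lie in $\Xi$, and since $\beta_n,\overline{\alpha}_n>\alpha$, the inclusion $\Xi\cap[\alpha,1/2]\subset C_\alpha$ (which follows at once from the $\Xi$-condition) gives $\beta_n,\overline{\alpha}_n\in C_\alpha$, so $\alpha$ is simultaneously a limit of dyadic and of non-dyadic rationals in $C_\alpha$. The main subtle step is the one-sided accumulation claim, where Lemma~\ref{char-S}(3) is essential to exclude the dyadic case for $\alpha$; once that is granted, a single sequence of complementary components yields both kinds of rational approximation at once.
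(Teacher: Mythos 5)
Your proof is correct and takes essentially the same approach as the paper's: for dyadic $\alpha$ you verify by direct iteration that a right-neighbourhood of $\alpha$ escapes $[\alpha,1]$ under $\tau^n$, and for non-dyadic $\alpha$ you use the gap structure of $\Xi$ from Lemma~\ref{char-S} to produce dyadic and non-dyadic rational elements of $\Xi\cap[\alpha,1/2]\subset C_\alpha$ converging to $\alpha$ from above, exactly as the paper does but with the accumulation-from-above step spelled out. One small citation slip: the fact you need (that the infimum of a complementary component is of the form $p/2^m$, hence dyadic, so a non-dyadic $\alpha$ cannot be a left gap-endpoint) is Lemma~\ref{char-S}(2), whereas (3) is the converse implication.
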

\begin{proof}   If   $\alpha$ is dyadic, then there is a smallest integer $k\geq 1$ such that $2^k \alpha=1$. Take $\delta=\alpha/2^{k+1}$,  we claim that $(\alpha, \alpha+\delta)\cap C_\alpha=\emptyset$. This is because $2^{k}(\alpha, \alpha+\delta)= (0, \alpha/2) ({\rm mod } \ \mathbb{Z})$ which has no intersection with $[\alpha,1]$. Therefore $\alpha$ is an isolated point of  $C_\alpha$.


To prove the second statement, note that $[\alpha, 1)\cap \Xi\subset C_\alpha$. Because $\alpha$ is non-dyadic and by Lemma  \ref{char-S}, we can find a decreasing sequence of rationals $\{r_n\}\subset (\Xi\cap\Theta_{per})\cap C_\alpha$ (or $(\Xi\cap\Theta_{dya}) \cap C_\alpha$) with limit $\alpha$, where each $r_n$ takes the form $\frac{p_n}{2^{q_n}-1}$ (or $\frac{p_n}{2^{q_n}}$). This completes the proof.
\end{proof}

\subsection{Maps with the same Head's angle}  \label{head-tongue}
There are two natural questions: 

{\it Q1: Which angles $\theta\in\Xi$ arise  as Head's angles for cubic Newton maps?}

{\it Q2: Which cubic Newton maps have the same Head's angle?}

These questions both have complete answers, see Theorems \ref{main3} and \ref{main2}, whose proofs are given in Section \ref{final-s}.
But for the moment, with what we have known in hand, we only consider (special cases of) the second question Q2. 
For some special cases of  $\theta\in\Xi$,   Tan Lei posed the following conjecture \cite[p.231 Conjecture]{Tan97}:
 
\begin{conj} Given any component $(\beta,\alpha)$ of  $(0,1/2)\setminus \Xi$,  there is a unique connected component of
 $\Omega-\overline{\mathcal{H}_0^1\cup \mathcal{H}_0^2}$ such that
the maps in the closure of this component are precisely the maps with Head's
angle $\beta$ or $\alpha$. 
\end{conj}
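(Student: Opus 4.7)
The proof will follow as a corollary of Theorem~\ref{main2}, which has been established earlier in the paper. By Lemma~\ref{char-S}, the endpoints of the component $(\beta,\alpha)$ take the form $\alpha=p/(2^n-1)$ and $\beta=p/2^n$ for some integer $n\geq 2$, so $\alpha\in\Xi\cap\Theta_{per}$ (periodic of period dividing $n$ under doubling) while $\beta\in\Xi\cap\Theta_{dya}$ is strictly pre-periodic (in particular $\beta\notin\Theta_{per}$). Theorem~\ref{main2} then supplies the fibers directly: $\boldsymbol{h}^{-1}(\alpha)$ is homeomorphic to $\overline{\mathbb{D}}\setminus\{1\}$, and $\boldsymbol{h}^{-1}(\beta)=\{\lambda_\beta\}$ is a singleton. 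My plan is to realize the desired bean as a closed Jordan disk $\overline{\mathcal{B}_\alpha}\subset\Omega$, where $\mathcal{B}_\alpha$ is a hyperbolic component of renormalizable type whose closure is exactly $\boldsymbol{h}^{-1}(\{\alpha,\beta\})$, and then to identify $\mathcal{B}_\alpha$ with the desired component of $\Omega-\overline{\mathcal{H}_0^1\cup\mathcal{H}_0^2}$.

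I would first identify the hyperbolic component $\mathcal{B}_\alpha$ inside $\boldsymbol{h}^{-1}(\alpha)$: any parameter $\lambda\in\boldsymbol{h}^{-1}(\alpha)$ for which $N_\lambda$ is hyperbolic must lie in a renormalizable-type component, because by definition of the Head's angle the internal rays $R^1_\lambda(\alpha)$ and $R^2_\lambda(1-\alpha)$ co-land at an $N_\lambda$-periodic point of exact period $n$, which bifurcates into a period-$n$ attracting cycle under an appropriate perturbation. This gives a unique period-$n$ renormalizable component $\mathcal{B}_\alpha\subset\boldsymbol{h}^{-1}(\alpha)$, which by Theorem~\ref{1c} satisfies $\overline{\mathcal{B}_\alpha}\cong\overline{\mathbb{D}}$ via the multiplier map. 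Matching this Jordan disk with the model $\overline{\mathbb{D}}\setminus\{1\}$ for $\boldsymbol{h}^{-1}(\alpha)$: the interior corresponds to $\mathcal{B}_\alpha$, the circle minus one point to $\partial\mathcal{B}_\alpha\setminus\{\lambda_\beta\}\subset\boldsymbol{h}^{-1}(\alpha)$, and the missing boundary point (where the period-$n$ cycle becomes parabolic with multiplier $1$) to $\lambda_\beta$. Thus $\overline{\mathcal{B}_\alpha}=\boldsymbol{h}^{-1}(\{\alpha,\beta\})$.

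To identify $\mathcal{B}_\alpha$ as a component of $\Omega-\overline{\mathcal{H}_0^1\cup\mathcal{H}_0^2}$, I first observe that $\lambda_\beta$ lies on $\partial\mathcal{H}_0^1\cap\partial\mathcal{H}_0^2$: it is the common landing point of the dyadic parameter rays $\mathcal{R}^1_0(\beta)$ and $\mathcal{R}^2_0(1-\beta)$ by Lemma~\ref{r-ray-land}. Since $\partial\mathcal{B}_\alpha\setminus\{\lambda_\beta\}\subset\boldsymbol{h}^{-1}(\alpha)$ is disjoint from $\overline{\mathcal{H}_0^1\cup\mathcal{H}_0^2}$, the Jordan curve $\partial\mathcal{B}_\alpha$ meets $\overline{\mathcal{H}_0^1\cup\mathcal{H}_0^2}$ only at $\lambda_\beta$, so $\mathcal{B}_\alpha$ is indeed a connected component of $\Omega-\overline{\mathcal{H}_0^1\cup\mathcal{H}_0^2}$ with closure $\overline{\mathcal{B}_\alpha}=\boldsymbol{h}^{-1}(\{\alpha,\beta\})$. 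Uniqueness follows because any component of $\Omega-\overline{\mathcal{H}_0^1\cup\mathcal{H}_0^2}$ whose closure realizes only Head's angles $\alpha$ or $\beta$ must intersect the connected fiber $\boldsymbol{h}^{-1}(\alpha)$, hence must contain its unique hyperbolic component $\mathcal{B}_\alpha$. The main obstacle I anticipate is verifying that $\partial\mathcal{B}_\alpha\setminus\{\lambda_\beta\}$ truly avoids $\overline{\mathcal{H}_0^1\cup\mathcal{H}_0^2}$; this draws on the precise discontinuity profile of $\boldsymbol{h}$ on $\Xi\cap\Theta_{dya}$ recorded in Theorem~\ref{main2}, which singles out $\lambda_\beta$ as the unique point where the bean attaches to the main hyperbolic components.
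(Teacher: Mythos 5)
Your proposal rests on a false identification: you take the ``bean'' --- the relevant connected component of $\Omega-\overline{\mathcal{H}_0^1\cup\mathcal{H}_0^2}$ --- to be a single renormalizable-type hyperbolic component $\mathcal{B}_\alpha$, and claim $\overline{\mathcal{B}_\alpha}=\boldsymbol{h}^{-1}(\{\alpha,\beta\})$. That cannot be right. The fiber $\boldsymbol{h}^{-1}(\alpha)$ contains, besides one period-$n$ renormalizable component, infinitely many capture domains (Type C) and further renormalizable components of higher period, as well as non-hyperbolic parameters: indeed Section \ref{cap-jordan} observes, via Corollary \ref{H-tongue2}, that \emph{every} capture domain of level $k\geq 2$ lies entirely inside a fiber over some periodic angle $\alpha=p/(2^n-1)$. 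So your step ``any hyperbolic parameter in $\boldsymbol{h}^{-1}(\alpha)$ must lie in a renormalizable-type component'' fails, $\mathcal{B}_\alpha$ is a proper subset of the bean, and $\partial\mathcal{B}_\alpha$ certainly does not separate the bean from $\overline{\mathcal{H}_0^1\cup\mathcal{H}_0^2}$. The correct picture (Section \ref{final-s}) is that the closure of the bean is the closed Jordan disk bounded by the two arcs $\nu_1([\beta,\alpha])\subset\partial\mathcal{H}_0^1$ and $\nu_2([1-\alpha,1-\beta])\subset\partial\mathcal{H}_0^2$, which meet only at the two pinch points $\lambda_\beta$ and $\lambda_\alpha$; the homeomorphism of $\boldsymbol{h}^{-1}(\alpha)$ with $\overline{\mathbb{D}}\setminus\{1\}$ comes from deleting $\lambda_\beta$ from that disk, not from the multiplier map of Theorem \ref{1c}. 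You also misplace the root of the cardioid: by Theorem \ref{H-tongue1} the parameter where the co-landing point is parabolic (multiplier $1$) is $\lambda_\alpha$, whereas $\lambda_\beta$ is a Misiurewicz-type parameter (the critical point $0$ is the landing point of $R^1_{\lambda_\beta}(\beta)$ with $\beta$ dyadic), which does not lie on $\partial\mathcal{B}_\alpha$ at all.

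There is, in addition, a structural circularity. In the paper this conjecture is \emph{not} a corollary of Theorem \ref{main2}: it is proven in Section \ref{head-tongue} via Theorem \ref{H-tongue1} and Corollary \ref{H-tongue2} (an induction over the dyadic/periodic endpoints of the gaps of $\Xi$, showing that $\mathcal{R}_0^1(\theta)$ and $\mathcal{R}_0^2(1-\theta)$ co-land for $\theta\in\{\alpha,\beta\}$ and that $\mathcal{U}_\theta=\mathcal{X}_\theta$), and Theorem \ref{main2} is only established in Section \ref{final-s} \emph{using} Corollary \ref{H-tongue2} together with the Jordan-curve property of $\partial\mathcal{H}_0^1$ and $\partial\mathcal{H}_0^2$. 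So even if your geometric identifications were repaired, deducing the conjecture from Theorem \ref{main2} would assume what is to be proved.
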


To confirm this conjecture (a tiny mistake of the conjecture is corrected in Corollary \ref{H-tongue2}), we establish the following crucial result. 

\begin{thm}\label{H-tongue1} Let $(\beta,\alpha)$ be a  connected component of $(0,1/2)\setminus \Xi$, and let $\theta=\beta$ or $\alpha$.

1. The parameter rays $\mathcal{R}_0^1(\theta)$ and $\mathcal{R}_0^2(1-\theta)$ land at the same point, say $\lambda_\theta$.

2. Let $\mathcal{U}_\theta$ be the bounded component of
 $$\mathbb{C}\setminus \Big(\Big[-\frac{1}{2},\frac{1}{2}\Big]\cup \overline{\mathcal{R}_0^1}(\theta)\cup \overline{\mathcal{R}_0^2}(1-\theta)\Big),$$ then $\mathcal{U}_\theta\cup \{\lambda_\theta\}$
 consists of the parameters $\lambda\in \Omega$ for which, in the dynamical plane, the internal rays ${R}_\lambda^1(\theta)$ and ${R}_\lambda^2(1-\theta)$
 land at the same point $x_\lambda$. Moreover, this  point $x_\lambda$ is 
 \begin{itemize}
      \item 0, if $\lambda=\lambda_\beta$;
     \item a parabolic periodic point, if $\lambda=\lambda_\alpha$;
     \item pre-repelling point,  whose orbit does not contain  the critical point $0$, if $\lambda\in \mathcal{U}_\theta$. 
     \end{itemize}
 In particular, $\lambda_\alpha\ne\lambda_\beta$.
\end{thm}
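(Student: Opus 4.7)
The plan is to first land the two parameter rays at a common point $\lambda_\theta$, then dynamically identify $\mathcal{U}_\theta\cup\{\lambda_\theta\}$ as the locus where $R_\lambda^1(\theta)$ and $R_\lambda^2(1-\theta)$ co-land, and finally classify the landing point. By Lemma \ref{char-S}, $\beta=p/2^n$ is dyadic under doubling while $\alpha=p/(2^n-1)$ is periodic. Applying Lemma \ref{r-ray-land} to each of $\mathcal{R}_0^1(\theta)$ and $\mathcal{R}_0^2(1-\theta)$, each lands at a parameter at which the relevant internal ray accumulates at a pre-periodic point that is either pre-repelling or pre-parabolic; in the pre-repelling case Lemma \ref{r-ray-land}(2) forces this point to be the free critical point $0$.

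For the dyadic case $\theta=\beta$, let $\lambda_+$ denote the landing point of $\mathcal{R}_0^1(\beta)$. At $\lambda_+$ the map $N_{\lambda_+}$ is post-critically finite and the critical point $0$ sits on $\partial B_{\lambda_+}^1$ as the landing point of $R_{\lambda_+}^1(\beta)$. Since $0$ has local degree two, Theorem \ref{roesch-jordan} combined with a combinatorial analysis via Fact \ref{order0} forces $0$ to lie also on $\partial B_{\lambda_+}^2$ as the landing point of $R_{\lambda_+}^2(1-\beta)$ (the invariance of $\Theta_{\lambda_+}$ under doubling and the orbit structure of $\beta$ pin the second angle down to $1-\beta$). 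Applying the pre-rigidity Lemma \ref{pre-rigidity} then yields $\lambda_+$ as the landing point of $\mathcal{R}_0^2(1-\beta)$ as well, giving $\lambda_\beta:=\lambda_+$. For the periodic case $\theta=\alpha$, I would approximate $\alpha$ from below by a sequence of dyadic angles $\beta_k\in\Xi$ with $\beta_k\nearrow\alpha$ supplied by Proposition \ref{density}, take a subsequential limit of the already-constructed parameters $\lambda_{\beta_k}$, and combine continuity of dynamical rays outside a parabolic neighborhood with a Thurston-style rigidity adapted to parabolic post-critical combinatorics to identify the common limit as the desired $\lambda_\alpha$.

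The identification of $\mathcal{U}_\theta\cup\{\lambda_\theta\}$ with the co-landing locus in $\Omega$ proceeds by double inclusion. Using Fact \ref{con}, the set $V_\theta\subset\Omega$ of parameters at which the two internal rays co-land at a repelling pre-periodic point is open, and the closed ray pair varies continuously on connected components of $V_\theta$. Starting from a parameter in $\mathcal{U}_\theta$ close to the real-axis edge $[-1/2,1/2]$, where explicit calculation analogous to Fact \ref{order0} confirms co-landing, I would propagate co-landing throughout $\mathcal{U}_\theta$ by connectedness. Conversely, if $\lambda$ leaves $\mathcal{U}_\theta$ by crossing its boundary in $\Omega$, then the co-landing must degenerate, forcing the free critical orbit to meet one of the two internal rays --- precisely the dynamical condition defining membership in $\mathcal{R}_0^1(\theta)$ or $\mathcal{R}_0^2(1-\theta)$. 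The trichotomy for $x_\lambda$ then follows by inspecting the three types of parameter, and $\lambda_\alpha\neq\lambda_\beta$ is immediate since a free critical point is super-attracting and cannot be parabolic.

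The main obstacle I anticipate lies in the periodic case $\theta=\alpha$, where Fact \ref{con} does not apply at parabolic landing points: both the identification of a common parabolic landing parameter for the two parameter rays and the persistence of co-landing near $\lambda_\alpha$ demand a more delicate stability argument along the parabolic bifurcation locus --- either via the dyadic-approximant limit sketched above combined with Thurston-type rigidity for parabolic combinatorial data, or via a direct implicit-function-theorem analysis at the indifferent cycle.
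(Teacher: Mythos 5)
Your overall outline (land the rays, identify the co-landing locus, classify the landing point) matches the shape of the theorem, but the proposal leaves the hardest steps unproved, and in two places the sketched argument would not close. First, your dyadic case rests on the assertion that at the landing point $\lambda_+$ of $\mathcal{R}_0^1(\beta)$ the critical point $0$ is \emph{also} the landing point of $R_{\lambda_+}^2(1-\beta)$, justified only by ``a combinatorial analysis via Fact \ref{order0}'' and ``the invariance of $\Theta_{\lambda_+}$ under doubling.'' This is precisely the content of the paper's Lemma \ref{exist-beta} ($\boldsymbol{h}(\lambda_+)=\beta$), whose proof requires a genuine argument (introducing the auxiliary angle $\gamma$ attached to the preimage component of $B^2$ and running a three-case analysis on the last iterate of $\beta$ in $[\gamma,\boldsymbol{h}(\lambda)]$); nothing in Fact \ref{order0}, which only treats the angles $0$ and $1/2$, pins this down. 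Likewise your ``explicit calculation near the real-axis edge'' as a base point for propagating co-landing is unavailable for a general $\theta\in\partial(\Xi)$: whether $R_\lambda^1(\theta)$ and $R_\lambda^2(1-\theta)$ co-land for a given real-symmetric $\lambda$ depends on whether $\theta\in\Theta_\lambda$, which is exactly what is at issue. The paper instead gets nonemptiness from Lemma \ref{exist-beta} together with a monotonicity statement ($\mathcal X_\theta\subset\mathcal Y_\theta\subset\mathcal X_{\theta'}$ for $\theta<\theta'$ in $\Xi\cap\mathbb Q$), and the whole proof is an induction on the denominator of $\theta$ — the inductive hypothesis is needed to know that everything happens inside the disk $\mathcal U_{\beta'}$, which is what makes the final separation/topology argument work. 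Your proposal has no substitute for this containment.

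Second, and more seriously, propagation by connectedness from openness of the co-landing locus is not enough: you must also control its relative boundary inside $\mathcal{U}_\theta$. A priori there can be isolated parameters in $\mathcal{U}_\theta$ where a ray bifurcates at a deeper preimage of $0$, where the landing point's orbit hits $0$, or (for $\theta=\alpha$) where the landing point is parabolic; none of these lie on $\mathcal{R}_0^1(\theta)\cup\mathcal{R}_0^2(1-\theta)$. The paper shows the boundary is contained in the parameter rays at the angles $\tau^i(\theta)$ plus a finite set $\mathcal F_\theta$, shows only the rays at angle $\theta$ itself enter $\mathcal U_{\beta'}$ (using minimality of $\beta'$), deduces $\mathcal U_\theta=\mathcal X_\theta\cup\mathcal F_\theta$, and then kills $\mathcal F_\alpha$ by a maximum-modulus argument applied to the multiplier functions $\rho_\varepsilon(\lambda)=(N_\lambda^k)'(z_\varepsilon(\lambda))$ at a putative interior parabolic parameter. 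You flag the periodic case as ``the main obstacle'' but offer only a Thurston-type rigidity ``adapted to parabolic post-critical combinatorics,'' which is not available (the maps are not postcritically finite), or an unspecified implicit-function-theorem analysis; neither yields the uniqueness of $\lambda_\alpha$, which the paper obtains from the connectedness of $\mathcal Q_\alpha=\bigcap_n\bigl(\overline{\mathcal U}_{\beta_n}\setminus(\mathcal U_\alpha\cup\mathcal H_0^1\cup\mathcal H_0^2)\bigr)$ together with the finiteness of the set of parabolic parameters. Until these points are supplied, the proof is incomplete exactly where the theorem is delicate.
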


See Figure 7 and Figure 8.

 \begin{figure}[!htpb]
  \setlength{\unitlength}{1mm}
  {\centering
  \includegraphics[width=62mm]{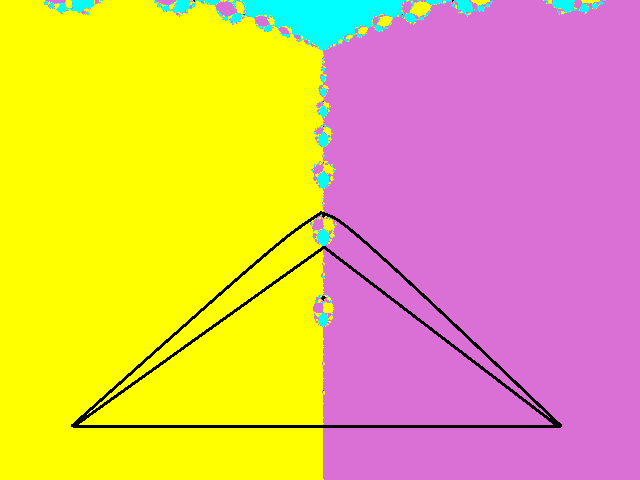}
  \includegraphics[width=62mm]{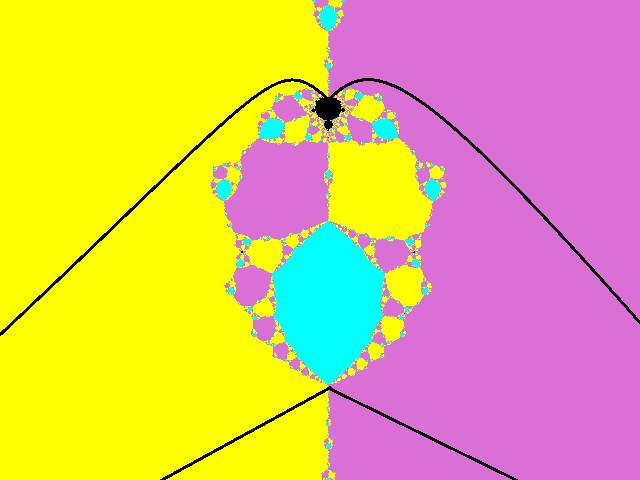}
  \put(-47,33){$\alpha$} \put(-47,2){$\beta$}     \put(-32,40){$\lambda_\alpha$} \put(-32,12){$\lambda_\beta$} 
  \put(-105,11){$\beta$}    \put(-107,21){$\alpha$}    \put(-92,10){$1-\beta$}    \put(-85,20){$1-\alpha$}
  \put(-95,27){$\lambda_\alpha$} \put(-95,18){$\lambda_\beta$} 
}
  \caption{Parameter rays landing at the same point (left) and the local picture (right).
  } 
\end{figure}

 \begin{figure}[!htpb]
  \setlength{\unitlength}{1mm}
  {\centering
  \includegraphics[width=62mm]{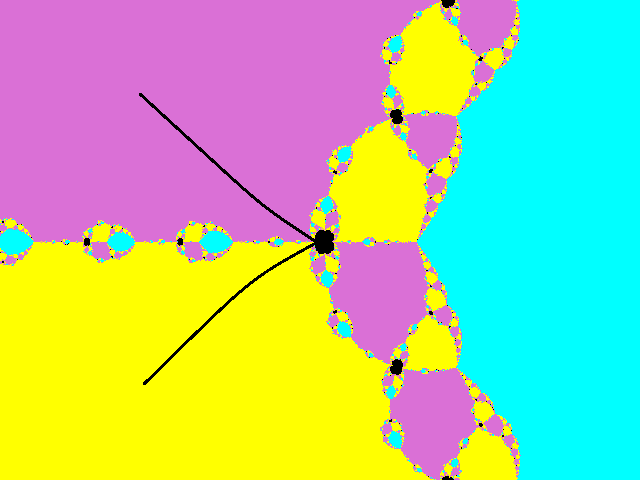}
  \includegraphics[width=62mm]{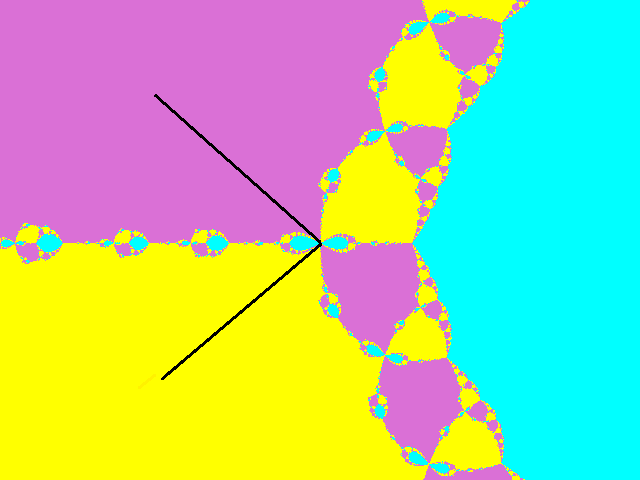}
  \put(-32,22){$\bullet \ 0$}   
  \put(-40,12){$\beta$}
      \put(-102,15){$\alpha$}   
 }
  \caption{Left:  the dynamical plane of $N_{\lambda_\alpha}$, the  Head's angle   is  $\alpha$ and the ray $R_{\lambda_\alpha}^1(\alpha)$ lands at a parabolic point. Right:  the dynamical plane of $N_{\lambda_\beta}$, the  Head's angle   is  $\beta$ and the ray $R_{\lambda_\beta}^1(\beta)$ lands at the critical point $0$.
   } 
\end{figure}

We remark  that the theorem is also true for $\theta=1/2$, which is not included in the statement. Before the proof, we need  a lemma:

\begin{lem}\label{exist-beta}
Let $n\ge1$ and $\theta=p/2^n\in \Xi$. Then there  exists  a 
parameter~$\lambda$ in~$\Omega_0$ with $\boldsymbol{h}(\lambda)=\theta$. 
\end{lem}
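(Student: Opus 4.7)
The plan is to take $\lambda$ to be the landing point of the rational parameter ray $\mathcal{R}_0^1(\theta)$. I focus on the generic case $0<\theta<1/2$; the case $\theta=1/2$ (i.e.\ $n=1$, $p=1$) is not needed in the sequel and can be handled directly using Fact~\ref{head-special} via perturbation of $\sqrt{3}i/2$ into $\Omega_0$. Since $\theta=p/2^n$ is rational, Lemma~\ref{r-ray-land} produces a landing point $\lambda_0\in\partial\mathcal{H}_0^1\cap\overline{\Omega}$ and asserts that in the dynamical plane of $N_{\lambda_0}$ the internal ray $R_{\lambda_0}^1(\theta)$ converges to the free critical point $0$. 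Iterating $n$ times sends $R_{\lambda_0}^1(\theta)$ to $R_{\lambda_0}^1(2^n\theta)=R_{\lambda_0}^1(0)$, which lands at $\infty$; hence $N_{\lambda_0}^n(0)=\infty$ and $N_{\lambda_0}$ is post-critically finite.

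First I would verify that $\lambda_0\in\Omega_0$. Since $0\in\partial B_{\lambda_0}^1\subset J(N_{\lambda_0})$, the free critical orbit lies in the Julia set and is not captured by any super-attracting fixed point, so $\lambda_0\notin\mathcal{H}_0^1\cup\mathcal{H}_0^2\cup\mathcal{H}_0^3$. The inclusion $\mathcal{R}_0^1(\theta)\subset\Omega$ gives $\lambda_0\in\overline{\Omega}$, and a direct enumeration of $\overline{\Omega}\setminus\Omega$ shows that the only point lying in $\mathcal{X}$ and avoiding $\mathcal{H}_0^1\cup\mathcal{H}_0^2$ is $\sqrt{3}i/2$, which realises only the Head's angle $1/2$ by Fact~\ref{head-special}. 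Since $\theta<1/2$ this case is excluded, giving $\lambda_0\in\Omega_0$.

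To see that $\theta\in\Theta_{\lambda_0}$, recall that $\partial B_{\lambda_0}^1$ and $\partial B_{\lambda_0}^2$ are Jordan curves by Theorem~\ref{roesch-jordan}. The local degree of $N_{\lambda_0}$ at $0$ equals two, and in view of the positive cyclic ordering of the three immediate basins around $\infty$ given in Fact~\ref{order0}, the critical point $0$ lies on the common boundary arc of $B_{\lambda_0}^1$ and $B_{\lambda_0}^2$; the unique internal ray from $B_{\lambda_0}^2$ landing at $0$ then has angle $1-\theta$, determined by the local $2$-to-$1$ structure. Hence $\theta\in\Theta_{\lambda_0}$ and $\boldsymbol{h}(\lambda_0)\leq\theta$.

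The main obstacle is the matching lower bound $\boldsymbol{h}(\lambda_0)\geq\theta$. If some $\theta_0<\theta$ belonged to $\Theta_{\lambda_0}$, with common landing point $q_0$ of $R_{\lambda_0}^1(\theta_0)$ and $R_{\lambda_0}^2(1-\theta_0)$, then the uniqueness of access from $B_{\lambda_0}^1$ to any boundary point along its Jordan boundary would force $q_0\ne0$. I would then derive a contradiction by a Thurston combinatorial rigidity argument modelled on Lemma~\ref{pre-rigidity}: the post-critically finite model associated with the $\theta_0$-pattern would be Thurston equivalent to $N_{\lambda_0}$, and normalising both maps in the fundamental domain $\mathcal{X}_{FD}$ would force the critical positions, and hence the combinatorial angles, to coincide, yielding $\theta_0=\theta$. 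Equivalently, one may invoke Tan Lei's mating construction \cite{Tan97} to conclude that the post-critically finite cubic Newton map with critical orbit $0\mapsto\cdots\mapsto\infty$ of length $n$ and Head's angle a prescribed dyadic $p/2^n\in\Xi$ is unique up to M\"obius conjugation, hence realised precisely at $\lambda_0$.
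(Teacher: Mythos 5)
Your starting point coincides with the paper's: take $\lambda_0$ to be the landing point of $\mathcal R_0^1(\theta)$, use Lemma~\ref{r-ray-land} to conclude that $R_{\lambda_0}^1(\theta)$ lands at the free critical point $0$, hence $N_{\lambda_0}^n(0)=\infty$ and $N_{\lambda_0}$ is post-critically finite, and verify $\lambda_0\in\Omega_0$. From there, however, both of your bounds on $\boldsymbol{h}(\lambda_0)$ rest on arguments with genuine gaps, and they are the opposite of what the paper finds easy versus hard.

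For the upper bound $\boldsymbol{h}(\lambda_0)\le\theta$, you assert that $0$ lies on the common boundary of $B_{\lambda_0}^1$ and $B_{\lambda_0}^2$ and that the $B_{\lambda_0}^2$-internal ray through $0$ has angle $1-\theta$, citing Fact~\ref{order0} and the local degree at $0$. But Fact~\ref{order0} controls the picture only at $\infty$ and at its direct preimage $N_{\lambda_0}^{n-1}(0)$. Pulling back further, the preimage of $\partial B_{\lambda_0}^2$ near a point of the critical orbit splits between $\partial B_{\lambda_0}^2$ and $\partial T_{\lambda_0}^2$ (the other preimage component of $B_{\lambda_0}^2$), and nothing in the local $2$-to-$1$ structure alone tells you which branch the orbit of $0$ follows. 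In fact the assertion ``$R_{\lambda_0}^2(1-\theta)$ lands at $0$'' is, via Proposition~\ref{head-property}, \emph{equivalent} to $\boldsymbol{h}(\lambda_0)\le\theta$, so you are assuming what you want to prove. This inequality is precisely what the paper's proof works hardest to establish, via the introduction of the auxiliary angle $\gamma$ (the largest $t$ with a certain first-preimage landing property) and a three-case analysis on the last iterate of $\theta$ inside $[\gamma,\boldsymbol{h}(\lambda)]$ under the doubling map.

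For the lower bound $\boldsymbol{h}(\lambda_0)\ge\theta$, your proposed Thurston/mating argument is circular. You suggest that if $\theta_0<\theta$ were the Head's angle, a post-critically finite model with ``the $\theta_0$-pattern'' would be Thurston equivalent to $N_{\lambda_0}$, forcing $\theta_0=\theta$. But the Head's angle is an intrinsic dynamical invariant of $N_{\lambda_0}$; there is no rival map to compare against, and constructing a PCF model with Head's angle $\theta_0$ (via Tan Lei's mating) and then Thurston-identifying it with $N_{\lambda_0}$ requires knowing in advance that $N_{\lambda_0}$ has Head's angle $\theta_0$ --- there is no contradiction produced, only a tautology. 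Likewise, invoking uniqueness of ``the PCF Newton map with Head's angle $p/2^n$'' presupposes that $N_{\lambda_0}$ has that Head's angle. The paper instead gets $\theta\le\boldsymbol{h}(\lambda_0)$ cheaply from the critical-point location lemma of~\cite{Ro08} ([Lemma~3.14]\cite{Ro08}): since $R_{\lambda_0}^1(\boldsymbol{h}(\lambda_0))$ and $R_{\lambda_0}^2(1-\boldsymbol{h}(\lambda_0))$ land together, $0$ must lie in $\overline{V_{1,2}(0,\boldsymbol{h}(\lambda_0))}$, which forces the landing angle $\theta$ of the ray through $0$ to satisfy $\theta\le\boldsymbol{h}(\lambda_0)$. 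You should replace your Thurston sketch with this dynamical fact, and replace the ``unique access'' heuristic for the other inequality with the $\gamma$/doubling-map casework (or a genuine substitute for it) before the proof can be considered complete.
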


\proof
For $n=1$, it suffices  to take the map $N_\lambda$ with 
$\lambda=i\sqrt3/2$ (see Fact \ref{head-special}). For $n>1$, let $\lambda$  be the landing  point of the  ray
$\mathcal{R}_0^1(p/2^{n})$. Since the ray $R^1_\lambda(p/2^{n})$ lands at  a
 pre-repelling  point, it lands in fact at the 
critical point $0$ (see the proof of Lemma~\ref{r-ray-land}). 

Let  $w_2=w_2(\lambda)$ denote the first  pre-image of $b_2(\lambda)$, $U_\lambda$ be the Fatou component containing $w_2$. 
We denote by
$\gamma$ the largest $t\in(0,1/2)$ such that the rays
$R^1_\lambda(t)$ and $(N_\lambda|_{U_\lambda})^{-1}(R^2_\lambda(1-2t))$ land at the same
point. It is clear that $2\gamma \in \Theta_\lambda$. 
By the location of the critical point $0$ \cite[Lemma 3.14]{Ro08}, we have   $\gamma \le \theta \le \boldsymbol{h}(\lambda)$.
Let's denote by $\theta'$  the last iteration of   $\theta$
in $[\gamma,\boldsymbol{h}(\lambda)]$ (it can happen that $\gamma=\boldsymbol{h}(\lambda)$). Since $\tau^j(\theta')\ge\boldsymbol{h}(\lambda)$ for all $j\ge1$, \ 
$\tau(\theta')\in C_{\boldsymbol{h}(\lambda)}=\Theta_\lambda$. 
There are three possibilities:


%
%

Case 1: $\theta'\in(\gamma,\boldsymbol{h}(\lambda))$. Therefore, $\tau(\theta')\in \Theta_\lambda\cap(\tau(\gamma),\tau(\boldsymbol{h}(\lambda)))$. But this set is empty by the definition of $\gamma$. 

Case 2: $\theta'=\boldsymbol{h}(\lambda)$. By looking at the successive $N_\lambda$-preimages of $\overline{R_\lambda^1}(\theta')\cup \overline{R_\lambda^2}(1-\theta')$, we see that $\theta=\theta'$ implying  $\boldsymbol{h}(\lambda)=\theta$.

Case 3: $\theta'=\gamma$.
%
This implies $2\theta'\in \Theta_\lambda$, namely $R_\lambda^1(2\theta')$ and  $R_\lambda^2(1-2\theta')$ land at the same point. 
By looking at the  $N_\lambda$-preimage of $\overline{R_\lambda^1}(2\theta')\cup \overline{R_\lambda^2}(1-2\theta')$, we see that 
$R_\lambda^1(\theta')$ and $R_\lambda^2(1-\theta')$ both land at the  critical point $0$, meaning that 
$\boldsymbol{h}(\lambda)=\gamma=\theta=\theta'$.
\endproof

\proof:
Let  $\mathcal Y_\theta$ be the set of   parameters $\lambda\in \Omega$ for which the following 
conditions are satisfied\,:
\begin{itemize}
\item
the sets $R_\lambda^1(\theta)$ and $R_\lambda^2(1-\theta)$ are internal rays;
\item
these two rays land at the same  point~$x=x(\lambda)$\,;
\item
the point $x$ is an  eventually repelling  periodic point.
\end{itemize}
We then  define  $\mathcal X_\theta$ as the set of   parameters of  
$\mathcal Y_\theta$ for which the orbit of the landing point~$x$ does not  contain
the critical point~$0$.



\medskip

\setcounter{AFM}{0}
\begin{AFM}  
If  $\theta,\theta'$ are in  $\Xi\cap\mathbb{Q}$ and if  $\theta<\theta'$, then
$$ \mathcal X_\theta \subset \mathcal Y_\theta \subset 
   \mathcal X_{\theta'} \subset \mathcal Y_{\theta'} \,.$$
\end{AFM}

\proof(\textsc{of Claim} a) 
Indeed, if $\lambda$ is in   $\mathcal Y_\theta$, since $R_\lambda^1(\theta)$ and
$R_\lambda^2(1-\theta)$ land at the same   point, the critical point~$0$ are in
the closure of  $V_{1,2}(0,\theta)$ (see  \cite[Lemma 3.14]{Ro08}), where  $V_{1,2}(0,\theta)$ is  the component of 
$\mathbb{\widehat{C}}\setminus \big( \overline{{R}_\lambda^1}(0)\cup \overline{{R}_\lambda^2}(0)\cup \overline{{R}_\lambda^1}(\theta) \cup \overline{{R}_\lambda^2}(1-\theta)\big)$ 
containing  the set ${R}_\lambda^1(\theta/2)$.
  Since $\theta'\in\Xi$, for every $i\ge0$, \ $\tau^i(\theta')$
belong to  $[\theta',1]$,   implying that the orbits of the two sets
$R_\lambda^1\bigl(\theta'\bigr)$ and $R_\lambda^2\bigl(1-\theta'\bigr)$ are always in   $\mathbb{\widehat{C}}\setminus \overline{V_{1,2}(0,\theta)}$.  Therefore $R_\lambda^1\bigl(\theta'\bigr), R_\lambda^2\bigl(1-\theta'\bigr)$ are internal rays.

Let $U_k$ be the component of $N_\lambda^{-k}(\mathbb{\widehat{C}}\setminus \overline{V_{1,2}(0,\theta)})$ containing $R_\lambda^1\bigl(\theta'\bigr)\cup R_\lambda^2\bigl(1-\theta'\bigr)$. Then $V_k=U_0\cap U_1\cap \cdots \cap U_k,  \ k\in \mathbb{N}$ is a shrinking sequence of disks,   each bounded by four internal rays. 
It's easy to verify that 
$$\overline{R_\lambda^1}\bigl(\theta'\bigr)\cup \overline{R_\lambda^2}\bigl(1-\theta'\bigr)=\bigcap_k \overline{V_k}.$$
This implies that the internal rays  $R_\lambda^1\bigl(\theta'\bigr), R_\lambda^2\bigl(1-\theta'\bigr)$ land at the same point
 in  $\mathbb{\widehat{C}}\setminus \overline{V_{1,2}(0,\theta)}$. Moreover, this point is necessarily pre-repelling because of the location of the critical point (see  \cite[Lemma 3.14]{Ro08}),
 and also in  $\mathcal X_{\theta'}$ since $R_\lambda^1(\tau^i(\theta'))$
stays in  $\mathbb{\widehat{C}}\setminus \overline{V_{1,2}(0,\theta)}$  for $i\ge0$.
\endproof

\medskip

From Lemma~\ref{char-S},  if $(\beta,\alpha)$ is a connected component of 
$(0,1/2)\setminus \Xi$, its bounds can be written as  $\alpha=p/(2^k-1)$ and $\beta=p/2^k$.

We  now  prove  by induction on 
$k$,  the following property  
\begin{align*}
\hbox to0pt{\hss$\displaystyle\frak P(k)$\,:\enspace}
   \text{for all \ } & \theta \in
   \Bigl\{\alpha={p\over2^k-1}, \beta={p\over2^k}\Bigr\} \cap
   \Xi, \\
   \left\{ \vphantom{\begin{aligned}
   &\mathcal X_\theta = \mathcal U_{\theta} \text{ \ and} \\
   &\mathcal R_{\mathcal H_-}(2\theta),\ \ \mathcal R_{\mathcal H_+}(-2\theta)
   \text{ \ land at the same    point.} \end{aligned}}\right.
 & \begin{aligned}
   &\mathcal X_\theta = \mathcal U_{\theta} \text{ \ and} \\
   &\mathcal{R}_0^1(\theta), \ \mathcal{R}_0^2(1-\theta)
   \text{ \ land at the same    point.} \end{aligned}
\end{align*}

\smallskip
$\bullet$ {\sl Proof of $\frak P(1)$}\,: \ 
In this case, we need show that $\mathcal X_{1/2}=\Omega$. By Fact \ref{order0},
for all $\lambda\in\Omega$, the sets  $R_\lambda^1(1/2)$ and 
$R_\lambda^2(1/2)$ are internal rays and land at a   pre-image of infinity,  which is  a repelling fixed point. 
Thus, $\mathcal Y_{1/2}=\Omega$. On the other side, 
if $\lambda$ is in $\Omega$, it is not on   $\overline{\mathcal {R}_0^1}(1/2)$.
Thus, $R_\lambda^1(1/2)$ does not land at the critical point~$0$. So, $\lambda$
is in  $\mathcal X_{1/2}$, hence $\mathcal X_{1/2}=\Omega$. Finally, $\frak P(1)$ is  verified since  the rays $\mathcal R_0^1(1/2)$ and $\mathcal R_0^2(1/2)$ both land at   $i\sqrt3/2$ and it follows that  $\Omega=\mathcal U_{1/2}$.

\medskip
$\bullet$ {\sl Idea of the proof of  $\frak P(k)$}\,: \ 
Thanks to the  Claim, we can proof at first that  $\mathcal X_\theta$ is 
included in an open set of the   form $\mathcal U_{\beta'}$, where  $\beta'$ verifies
one of the  properties $\frak P(i)$, ${1\le i \le k-1}$. Then we prove that 
$\mathcal X_\theta$  is an open set whose boundary is contained  in  $\overline{\mathcal{R}_0^1}(\theta)\cup\overline{\mathcal{R}_0^2}(1-\theta)$   and  a finite number of points. 
It follows then by   a topology  argument (see below)  that 
$\mathcal X_\theta$ and $\mathcal U_{\theta}$   differ by a finite set, which is shown to be empty in the final step.

\medskip
$\bullet$ {\sl Proof of  $\frak P(k)$}\,: \ 
Define 
$$ \mathcal E_k = \left\{ {q\over2^r}\in \Xi \mid
   {q\over2^r} > \alpha, \text{ and } r<k\right\} \,.$$
Every element of $\mathcal E_k$ is the  infimum of  a connected component 
  $(q/2^r,q/(2^r-1))$ of $(0,1/2)\setminus \Xi$. Denote by  $\beta'$  the  minimum
of~$\mathcal E_k$.

\begin{AFM}
The sets  $\mathcal X_\alpha$ and  $\mathcal X_\beta$ are open and non empty. 
\end{AFM}

\proof 
If  $\beta_0$ is a dyadic element of~$\Xi$,  Lemma~\ref{exist-beta}
implies that  $\mathcal Y_{\beta_0}$ is non empty.  By Claim a,  for any  $\beta_0<\beta$,
 we have $\mathcal Y_{\beta_0}\subset\mathcal X_\beta\subset\mathcal X_\alpha$. This  allows us to conclude that   $\mathcal X_\beta$ and $\mathcal X_\alpha$
are  non empty.

By definition, for $\lambda_0\in\mathcal X_\theta$, $\theta\in\{\alpha,
\beta\}$, the landing point~$z(\lambda_0)$ of the rays
$R^1_{\lambda_0}(\theta), R^2_{\lambda_0}(1-\theta)$ is  pre-repelling,
and does not contain the critical point in its orbit. Thus, $z(\lambda_0)$ posses a neighborhood in which 
$N_\lambda$, for  $\lambda$ in a neighborhood of~$\lambda_0$, has a  unique pre-repelling point
 $z(\lambda)$  (Fact \ref{con}). Moreover,  the 
orbit of $z(\lambda)$  does not contain the critical   point. This shows that    $\lambda\in\mathcal X_\theta$.
\endproof

\begin{AFM}
For $\theta\in\{\alpha,\beta\}$, the intersection $\partial\mathcal X_\theta \cap
\Omega$ is contained in 
$$ \bigcup_{i\in I}\mathcal R_{0}^1 \bigl(\tau^i(\theta)\bigr) \cup
   \mathcal R_0^2\bigl(1-\tau^i(\theta)\bigr) \cup \mathcal F_\theta $$
where  $\mathcal F_\theta$ is a finite set of points and 
$I=\{i\in\{0,\dots,k-1\}\mid \tau^i(\theta)\in[0,1/2]\}$. Moreover, if  $\lambda
\in\mathcal F_\theta$ and if  $\theta=\beta$ {\rm(}resp. if  $\theta=\alpha${\rm)}, one of the  rays 
$R^1_\lambda(\theta), R^2_\lambda(1-\theta)$ converges to the  critical point~$0$, whose orbit contains $\infty$
{\rm(}resp.  to a parabolic periodic  point{\rm)}.
\end{AFM}

\proof
Indeed, take $\lambda \in \partial\mathcal X_\theta\cap \Omega$. Since $\lambda$ 
is not in $\mathcal X_\theta$,  one of the following cases arises:
\begin{itemize}
\item
One of the sets  $R^1_\lambda(\theta)$, $R^2_\lambda(1-\theta)$ bifurcates. It means that
 one of the sets $R^1_\lambda(\tau^i(\theta))$,
$R_\lambda^2(1-\tau^i(\theta))$ with $0\le i\leq k-1$, contains the critical point~$0$,
in particular $\tau^i(\theta)$ is in   $[0,1/2]$. The parameter~$\lambda$ then belongs
to  $\mathcal R_{0}^1 \bigl(\tau^i(\theta)\bigr)$ or to $\mathcal R_0^2\bigl(1-\tau^i(\theta)\bigr)$.
\item
The sets   $R^1_\lambda(\theta)$, $R^2_\lambda(1-\theta)$ are internal rays and one of them converges to a  pre-periodic point 
whose orbit contains the critical point $0$.
Necessarily, $\theta=\beta$ and the  parameter~$\lambda$ is a solution of one of the equations 
$N_\lambda^{i+1}(0)=N^i_\lambda(0)$, \ $i\leq k$. This gives a finite number of values of $\lambda$. 
\item
The sets   $R^1_\lambda(\theta)$, $R^2_\lambda(1-\theta)$ are internal rays and one of them  converges to a pre-periodic point $x$, not pre-repelling.   In this case   $\theta=\alpha$, and  point $x$ is 
 necessarily
parabolic.   Then, the  system
$$ \left\{ \begin{aligned}
N^k_\lambda(x) &= x \\ (N^k_\lambda)'(x) &= 1 \end{aligned} \right. $$
has a  solution. So  $\lambda$ belongs to a finite set. 
\endproof
\end{itemize}

\begin{AFM}
For $\theta\in\{\alpha,\beta\}$, the set  $\mathcal X_\theta$  is contained in  
$\mathcal U_{\beta'}$ and the intersection $\partial\mathcal X_\theta\cap\mathcal U_{\beta'}$
is  included in 
$$\mathcal R_{0}^1 \bigl(\theta\bigr) \cup
   \mathcal R_0^2\bigl(1-\theta\bigr) \cup
   \mathcal F_\theta \,.$$
\end{AFM}

\proof
The inclusion $\mathcal X_\theta\subset\mathcal X_{\theta'}=\mathcal U_{\beta'}$ follows from the  induction hypothesis on~$\beta'$ and \textsc{Claim} a.

By \textsc{Claim} c, it is enough to prove that 
$$\{\tau^i(\theta) \in [0,1/2]; 0\le i \le k-1\} \cap (0, \beta')=\{\theta\}.$$

%

For $\theta=\beta$, let $i$ be the largest integer such that  $\tau^i(\beta)
\in (0,\beta')$. Necessarily, $\tau^i(\beta)\in \Xi$, because $\tau^{i+k}(\beta)\geq \beta'>\tau^i(\beta)$ for all 
$k\geq 0$.  Moreover, if $i>0$, then $\tau^i(\beta)\in
\mathcal E_k$ because  $\tau^i(\beta)>\alpha$.  But this contradicts the   minimality of  
$\beta'$, so $i=0$.

For $\theta=\alpha$ and $1\le i\le k-1$, the fact that $(\tau^i(\beta),\tau^i(\alpha))$ is an interval  implying that
 $\beta'\le\tau^i(\beta)<\tau^i(\alpha)$.
\endproof

\begin{AFM}
The open sets $\mathcal U_{\beta'}\setminus\overline{\mathcal X}_\alpha$ and
$\mathcal U_{\beta'}\setminus\overline{\mathcal X}_\beta$ are non empty. 
\end{AFM}

\proof
Indeed, it is known from Lemma \ref{char-S} that  $\Xi\cup\{0\}$ has no isolated points and since   $\alpha$  is the supremum 
of the connected  component $(\beta, \alpha)$ of  $(0,1/2)\setminus \Xi$, the interval
$(\alpha,\beta')$ contains at least a connected  component of the open set  $(0,1/2)\setminus \Xi$. 
We denote by $\beta_1$ its infimum. 
By Lemma~\ref{exist-beta}, there is a parameter~$\lambda_1$ 
with $\boldsymbol{h}(\lambda_1)=\beta_1$, and the ray $R_{\lambda_1}^1(\beta_1)$ lands at  the critical point $0$. 
Such  $\lambda_1$  is 
in $\mathcal Y_{\beta_1}$, thus in  $\mathcal X_{\beta'}=\mathcal U_{\beta'}$ (by induction). 
It's clear that $\lambda_1\notin \mathcal X_\alpha$.
If  $\lambda_1\in\partial \mathcal X_\alpha \cap \mathcal U_{\beta'}$,
then by Claim \textbf{d}, either  it is on  $\mathcal R_0^1(\alpha )\cup\mathcal R_0^2(1-\alpha)$, or  $N_{\lambda_1}$ has a  parabolic  point 
(by the proof of Claim c). But, by construction of $N_{\lambda_1}$, this is impossible. 
In conclusion, $\lambda_1$ is in  $\mathcal U_{\beta'}\setminus \overline{\mathcal
X}_\alpha \subset \mathcal U_{\beta'} \setminus \overline{\mathcal X}_\beta$.
\endproof

We finish the proof  of  $\frak P(k)$. \ By the hypothesis of induction, the open set  $\mathcal X_{\beta'}=\mathcal U_{\beta'}$ is a disk of  $\widehat{\mathbb C}$.  The set 
$\mathcal X_\theta$ is included in  $\mathcal U_{\beta'}$  from  Claim a.

Define two closed arcs $\gamma_1:[0,1]\rightarrow \overline{\mathcal R_0^1}(\theta)$ and $\gamma_2:[0,1]\rightarrow \overline{\mathcal R_0^2}(1-\theta)$ by
$$\gamma_1(t)=(\Phi_0^1)^{-1}(te^{2\pi i \theta}), \  \gamma_2(t)=(\Phi_0^2)^{-1}(te^{2\pi i (1-\theta)}).$$

The two open sets $\mathcal X_\theta, \mathcal U_{\beta'}$  and the arcs $\gamma_1, \gamma_2$ satisfy that 
\begin{itemize}

\item
semi-open  arcs  $\gamma_\varepsilon((0,1])$, $\varepsilon=1,2$, are contained in the disk ~$\mathcal U_{\beta'}$;

\item
$\gamma_\varepsilon(0)\in\partial  \mathcal U_{\beta'}$, $\varepsilon=1,2$\,\rm;

\item
$\partial \mathcal X_\theta \cap \mathcal U_{\beta'}$ is included in the  union of a finite set of points and  the semi-open  arcs
  $\gamma_\varepsilon((0,1])$, $\varepsilon=1,2$\,\rm;
  
\item
$ \mathcal U_{\beta'}\setminus\overline{\mathcal X_\theta } \neq \emptyset$.
\end{itemize}

In the following, we claim that $\gamma_1(1)=\gamma_2(1)$, which means that $\mathcal R_0^1(\theta)$ 
and $\mathcal R_0^2(1-\theta)$ land at the same point.  To this end, let $F$ be the finite set so that 
$\partial \mathcal X_\theta \cap \mathcal U_{\beta'} \subset \gamma_1((0,1])\cup \gamma_2((0,1])\cup F$. If 
$\gamma_1(1)\neq\gamma_2(1)$, let's consider the open set $U=\mathcal U_{\beta'}
 \setminus (\gamma_1((0,1])\cup \gamma_2((0,1])\cup F)$. It's clear that $ \mathcal X_\theta\subset U$ and 
 $\partial \mathcal X_\theta \cap U=\emptyset$. This implies that  $U=\mathcal X_\theta$ and $\overline{\mathcal X_\theta }
 =\overline{U}\supset  \mathcal U_{\beta'}$, contradicting the fact $ \mathcal U_{\beta'}\setminus\overline{\mathcal X_\theta } \neq \emptyset$.
 This proves the claim.

The open arc $\gamma_1((0,1])\cup \gamma_2((0,1])$  then separates the  $\mathcal U_{\beta'}$ into two components: $\mathcal U_{\theta}$ and  $ \mathcal U_{\beta'}\setminus\overline{\mathcal U_\theta }$.

 By Claim e,   we know that the  parameter~$\lambda_1$ of Head angle~$\beta_1(> \alpha)$ is contained in $ \mathcal U_{\beta'}\setminus\overline{\mathcal X_\theta}$. Note that $\beta_1\notin \mathcal U_{\beta'}\setminus\overline{\mathcal U_\theta }$.
 We have that 
 $$\mathcal U_\theta= \mathcal X_\theta \cup  \mathcal F_\theta,$$
 where $\mathcal F_\theta$ is a finite set in $\mathcal U_\theta$.  To obtain $\frak P(k)$, we will show in the following that 
 $\mathcal F_\theta=\emptyset$.
 In fact, if $\lambda_0\in\mathcal F_\theta\neq\emptyset$, then there are two possibilities:

If $\theta=\beta$, then one of the   rays $R^1_{\lambda_0}(\tau^i(\theta)),
R^2_{\lambda_0}(1-\tau^i(\theta))$, $0\le i\le k-1$, converges to the critical point $0$
 (\textsc{Claim} c). It follows that the  corresponding angle $\tau^i(\theta)$ 
belongs to  $[0,1/2]$ and $\lambda_0$~is the landing point of the  ray
$\mathcal R_{0}^1(\tau^i(\theta))$ or $\mathcal R_{0}^2(1-\tau^i(\theta))$ \
(Lemma \ref{pre-rigidity}).  But, since $\tau^i(\theta)\ge\theta$ (because $\theta\in \Xi$), none of these  rays
is in $\mathcal U_{\theta}$.

If  $\theta=\alpha$, then one of the  rays $R^1_{\lambda_0}(\theta),
R^2_{\lambda_0}(1-\theta)$ converges to a parabolic   point of period $k$
(\textsc{Claim} c). Let us take a  disk $\Delta \subset \mathcal U_{\alpha}$ such that 
$\Delta\cap\mathcal F_\theta=\{\lambda_0\}$. For $\lambda\in\Delta\setminus
\{\lambda_0\}$, the rays $R_\lambda^1(\theta),R_\lambda^2(1-\theta)$ converge
to  repelling  $k$-periodic points which are stable and that we  denote by 
$z_1(\lambda), z_2(\lambda)$. The functions
$$ \rho_\varepsilon : \Delta\setminus\{\lambda_0\} \longrightarrow {\mathbb C}\setminus \mathbb D,
   \quad \lambda\longmapsto \rho_\varepsilon(\lambda)
 = \bigl(N_\lambda^k\bigr)'\bigl(z_\varepsilon(\lambda)\bigr), $$
are holomorphic  functions and  have an unessential   singularity at~$\lambda_0$.
However, $|\rho_\varepsilon|$ reaches its maximum at~$\lambda_0$, which is  impossible.

This finishes  the proof of the identity $\mathcal U_{\theta}=\mathcal X_\theta$ and
so of  $\frak P(k)$.

 To  complete the proof  of Theorem \ref{H-tongue1}, it 
is enough to verify the  following  two claims.

\begin{AFM} Let $\mathcal Z_\beta$  be the set of  parameters $\lambda\in\Omega$  for which $R^1_\lambda(\beta)$, $R^2_\lambda(1-\beta)$  are internal rays converging to the critical  point~$0$. Then
$$ \mathcal Y_\beta \setminus \mathcal X_\beta = \mathcal Z_\beta
 = \bigl\{\lambda_\beta\bigr\} \,.$$
\end{AFM}

\proof
The identity $\{\lambda_\beta\}=\mathcal Z_\beta$ comes from Lemma \ref{pre-rigidity}. On the other side, if $\lambda$ belongs to  $\mathcal Y_\beta\setminus
\mathcal X_\beta$,
the rays $R^1_\lambda(\beta), R^2_\lambda(1-\beta)$ land at the same 
point whose orbit contains the critical  point~$0$. Since  $0$ and the set ${R}_\lambda^1(\beta/2)$
are contained in the same connected component of 
$\mathbb{\widehat{C}}\setminus \big( \overline{{R}_\lambda^1}(0)\cup \overline{{R}_\lambda^2}(0)\cup \overline{{R}_\lambda^1}(\beta) \cup \overline{{R}_\lambda^2}(1-\beta)\big)$ 
   and that $\tau^k(\beta)\in (\beta,1]$ for every
$k>0$, we conclude that  $R^1_\lambda(\beta)$  converges to~$0$ and $\lambda$
thus belongs to   $\mathcal Z_\beta$. The converse is clear. 
\endproof

\begin{AFM}
Let $\mathcal Z_\alpha$ be the set of  parameters $\lambda\in\Omega$ for which $R^1_\lambda(\alpha)$, $R^2_\lambda(1-\alpha)$ are internal rays converging to the same parabolic 
point. Then
$$ \mathcal Z_\alpha = \bigl\{\lambda_\alpha\} \,.$$
\end{AFM}

\proof
Let  $\beta_n$ be a sequence of dyadic  elements of~$\Xi$ converging to~$\alpha$ and satisfying $\beta_n>\alpha$. We show now that 
$$ \mathcal Z_\alpha = \mathcal Q_\alpha \quad\text{ where }\quad
   \mathcal Q_\alpha = \bigcap_{n\ge0} \Bigl( \overline{\mathcal U}_{\beta_n} \setminus
   \bigl( \mathcal U_{\alpha} \cup \mathcal H_0^1 \cup \mathcal H_0^2 \bigr) \Bigr) \,.$$
The conclusion  then follows since it is clear that   $\mathcal Z_\alpha$ is  finite,  
$\mathcal Q_\alpha$ is  connected and  that $\lambda_\alpha$ belongs to~$\mathcal
Q_\alpha$.

If $\lambda\in\mathcal Q_\alpha$, for every $n\ge0$, the rays
$R^1_\lambda(\beta_n), R^2_\lambda(1-\beta_n)$ land at the same point.
By continuity, the rays $R^1_\lambda(\alpha), R^2_\lambda(1-\alpha)$
land also at a common  point, say $x$. Since 
$x\in\partial B^1_\lambda$ which is a Jordan curve, it is not an  irrational indifferent point. On the other side, $x$ is periodic and in the Julia set,  so its 
 orbit cannot contain the critical  point. It follows that, $x$~is not repelling since   
  $\lambda$ does not belong to~$\mathcal U_{\alpha}$. Therefore, $x$~is
parabolic and  $\lambda\in\mathcal Z_\alpha$.

If $\lambda\in\mathcal Z_\alpha$, then   $\boldsymbol{h}(\lambda)\leq\alpha$. 
Since  $\tau^k(\beta_n)\ge\beta_n>\alpha$ for every  $k\ge0$, the angle $\beta_n\in \Theta_\lambda$ 
and the internal rays 
$R^1_\lambda(\beta_n),R^2_\lambda(-\beta_n)$ thus land at the same pre-repelling point. 
It follows that $\lambda\in\mathcal
U_{\beta_n}$ for every~$n$. On the other side,  $\lambda$ is not in
$\mathcal U_{\alpha}\cup\mathcal H_0^1\cup\mathcal H_0^2$ since $\mathcal U_{\alpha}=\mathcal X_\alpha$
and  $N_\lambda$ has a parabolic point. Therefore,  $\lambda$ belongs to
$\mathcal Q_\alpha$.
\endproof

The proof  of Theorem \ref{H-tongue1} is completed.
\endproof

Recall that the fiber of $\boldsymbol{h}:\Omega_0\cup\{\sqrt{3}i/2\}\rightarrow \Xi$ over $\theta\in\Xi$ is defined by
$$\boldsymbol{h}^{-1}(\theta)=\{\lambda\in \Omega_0\cup\{\sqrt{3}i/2\};\boldsymbol{h}(\lambda)=\theta \}.$$


\begin{cor} \label{H-tongue2}  Let $(\beta,\alpha)$ be a  connected component of $(0,1/2)\setminus \Xi$, then
$$\boldsymbol{h}^{-1}(\alpha)=\Omega_0\cap\big(\big(\mathcal{U}_\alpha\cup\{\lambda_\alpha\}\big)\setminus\big(\mathcal{U}_\beta\cup\{\lambda_\beta\}\big)\big), \ \boldsymbol{h}^{-1}(\beta)=\{\lambda_\beta\}.$$
\end{cor}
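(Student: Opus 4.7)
I will deduce both identities from Theorem~\ref{H-tongue1} and Proposition~\ref{head-property}. The former provides the equivalence $\theta \in \Theta_\lambda \Longleftrightarrow \lambda \in \mathcal{U}_\theta \cup \{\lambda_\theta\}$ for $\theta\in\{\alpha,\beta\}$, while the latter writes $\Theta_\lambda=\bigcap_{k\ge 0}\tau^{-k}([\boldsymbol{h}(\lambda),1])$. Combining them yields the crucial arithmetic observation: for any $\theta\in\Xi$ and any $\lambda\in\Omega_0$,
\[
\boldsymbol{h}(\lambda)\le\theta \ \Longrightarrow\ \theta\in\Theta_\lambda \ \Longleftrightarrow\ \lambda\in\mathcal{U}_\theta\cup\{\lambda_\theta\},
\]
because, by the defining condition of $\Xi$, $\tau^k(\theta)\in[\theta,1]\subset[\boldsymbol{h}(\lambda),1]$ in $\mathbb{S}$ (with $0\sim 1$) for every $k\ge 0$.

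\textbf{First identity.} Suppose $\lambda\in\Omega_0$ satisfies $\boldsymbol{h}(\lambda)=\alpha$. Then $\alpha\in\Theta_\lambda$ places $\lambda$ in $\mathcal{U}_\alpha\cup\{\lambda_\alpha\}$, and $\lambda\in\mathcal{U}_\beta\cup\{\lambda_\beta\}$ would give $\beta\in\Theta_\lambda$, hence $\boldsymbol{h}(\lambda)\le\beta<\alpha$, a contradiction. Conversely, take $\lambda$ in the right-hand side of the first identity. Then $\alpha\in\Theta_\lambda$ gives $\boldsymbol{h}(\lambda)\le\alpha$, while $\boldsymbol{h}(\lambda)\le\beta$ would, by the observation above, put $\lambda$ in $\mathcal{U}_\beta\cup\{\lambda_\beta\}$, again a contradiction. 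Since $\boldsymbol{h}(\lambda)\in\Xi$ and $(\beta,\alpha)\cap\Xi=\emptyset$, the only remaining possibility is $\boldsymbol{h}(\lambda)=\alpha$.

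\textbf{Second identity.} The membership $\lambda_\beta\in\boldsymbol{h}^{-1}(\beta)$ is essentially the content of Lemma~\ref{exist-beta}. Conversely, suppose $\lambda\in\boldsymbol{h}^{-1}(\beta)\setminus\{\lambda_\beta\}$, so $\lambda\in\mathcal{U}_\beta$. Using the perfectness of $\Xi\cup\{0\}$ and the density of its rational points (Lemma~\ref{char-S}), pick a sequence $\beta_n\uparrow\beta$ in $\Xi\cap\mathbb{Q}$; each $\beta_n$ is an endpoint of some component of $(0,1/2)\setminus\Xi$, so Theorem~\ref{H-tongue1} yields an open set $\mathcal{U}_{\beta_n}$, and the nesting property in its proof gives $\mathcal{U}_{\beta_n}\subset\mathcal{U}_\beta$. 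The plan is then to show $\bigcup_n\mathcal{U}_{\beta_n}=\mathcal{U}_\beta$, so that $\lambda\in\mathcal{U}_{\beta_n}$ for some $n$; then $\beta_n\in\Theta_\lambda$ gives $\boldsymbol{h}(\lambda)\le\beta_n<\beta$, contradicting $\boldsymbol{h}(\lambda)=\beta$.

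\textbf{Main obstacle.} The exhaustion $\bigcup_n\mathcal{U}_{\beta_n}=\mathcal{U}_\beta$ is the only delicate step. It rests on the Hausdorff continuity, as $\theta\uparrow\beta$ through $\Xi\cap\mathbb{Q}$, of the closed parameter rays $\overline{\mathcal{R}_0^1}(\theta),\overline{\mathcal{R}_0^2}(1-\theta)$ together with their common landing point $\lambda_\theta$: given this, any $\lambda\in\mathcal{U}_\beta$ lies at positive distance from $\partial\mathcal{U}_\beta$ and hence inside $\mathcal{U}_{\beta_n}$ for all large $n$. The needed continuity is a parameter-plane analogue of the stability results in Section~\ref{fund-int} (Fact~\ref{con} and Lemma~\ref{r-ray-land}).
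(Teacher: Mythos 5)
Your treatment of the first identity, and of the inclusion $\{\lambda_\beta\}\subset\boldsymbol{h}^{-1}(\beta)$, is correct and is exactly the intended reading of Theorem~\ref{H-tongue1} (the equivalence $\theta\in\Theta_\lambda\Leftrightarrow\lambda\in\mathcal{U}_\theta\cup\{\lambda_\theta\}$ combined with $\Theta_\lambda=C_{\boldsymbol{h}(\lambda)}$ and $(\beta,\alpha)\cap\Xi=\emptyset$). The problem is the reverse inclusion $\boldsymbol{h}^{-1}(\beta)\subset\{\lambda_\beta\}$. Your exhaustion claim $\bigcup_n\mathcal{U}_{\beta_n}\supset\mathcal{U}_\beta$ is not an independent stepping stone: for $\lambda\in\mathcal{U}_\beta\cap\Omega_0$ one has $\lambda\notin\mathcal{U}_{\beta_n}\cup\{\lambda_{\beta_n}\}$ if and only if $\beta_n\notin\Theta_\lambda$, i.e.\ $\boldsymbol{h}(\lambda)>\beta_n$, so $\mathcal{U}_\beta\cap\Omega_0\setminus\bigcup_n\mathcal{U}_{\beta_n}$ is \emph{exactly} $\mathcal{U}_\beta\cap\boldsymbol{h}^{-1}(\beta)$. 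The exhaustion is therefore equivalent to the statement you are trying to prove, and your proposed justification does not close the circle: Fact~\ref{con} and Lemma~\ref{r-ray-land} give stability of a \emph{fixed-angle} dynamical ray under perturbation of $\lambda$, not Hausdorff continuity of $\theta\mapsto\overline{\mathcal{R}_0^1}(\theta)$ as the angle varies. Continuity of $\theta\mapsto\lambda_\theta$ at $\theta=\beta$ from the left is essentially one-sided local connectivity of $\partial\mathcal{H}_0^1$ at $\lambda_\beta$, which the paper establishes only in Theorem~\ref{jordan}, whose proof (via Step~1 of Theorem~\ref{rigidity1}) already cites this corollary; moreover $\beta$ is precisely a point where the dynamical picture jumps (the co-landing point degenerates from pre-repelling points to the critical point), so no soft continuity argument is available there.

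The intended route is dynamical, not parametric. If $\boldsymbol{h}(\lambda)=\beta$, then $\beta\in\Theta_\lambda$ places $\lambda$ in $\mathcal{Y}_\beta=\mathcal{X}_\beta\sqcup\{\lambda_\beta\}$, and one must rule out $\lambda\in\mathcal{X}_\beta=\mathcal{U}_\beta$ by showing that minimality of the dyadic angle $\beta$ in $\Theta_\lambda$ forces the common landing point of $R^1_\lambda(\beta)$ and $R^2_\lambda(1-\beta)$ to be the critical point $0$ itself. This is the content of the ``location of the critical point'' results of \cite[Lemma 3.14]{Ro08} already exploited in Lemma~\ref{exist-beta} and in Claim~f of the proof of Theorem~\ref{H-tongue1}: the critical point lies in $\overline{V_{1,2}(0,\beta)}$, while $\tau^k(\beta)\in(\beta,1]$ for $k>0$ keeps the forward orbit of the landing point out of that region, so the landing point must be $0$. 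Then $\lambda\in\mathcal{Z}_\beta=\{\lambda_\beta\}$ by Claim~f (i.e.\ Lemma~\ref{pre-rigidity}). You need to supply this dynamical step; as written, the argument for the second identity is circular.
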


\subsection{Further properties of the Head's angle}\label{further-prop}

Theorem \ref{H-tongue1} allows us to give some further properties of the Head's angles.

Given two numbers $\gamma_1, \gamma_2\in  \partial(\Xi)$ 
 with $0<\gamma_1<\gamma_2<1/2$,
 define 
$V(\gamma_1, \gamma_2)$ to be the bounded component of 
 $$\mathbb{C}\setminus \Big( \overline{\mathcal{R}_0^1}(\gamma_1)\cup \overline{\mathcal{R}_0^2}(1-\gamma_1)\cup \overline{\mathcal{R}_0^1}(\gamma_2) \cup \overline{\mathcal{R}_0^2}(1-\gamma_2)\Big).$$    
 
  \begin{lem} \label{Head-bounds} For any $u\in V(\gamma_1, \gamma_2)\cap \Omega_0$, the Head's angle of $N_u$ satisfies 
 $$\gamma_1\leq\boldsymbol{h}(u)\leq \gamma_2.$$
  \end{lem}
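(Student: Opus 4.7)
The plan is to establish the two inequalities separately, both by reducing to the characterization $\mathcal{U}_\theta = \mathcal{X}_\theta$ of Theorem~\ref{H-tongue1} and the definition $\boldsymbol{h}(u) = \inf \Theta_u$ together with the invariance property $\Theta_u = \bigcap_{k\ge0}\tau^{-k}([\boldsymbol{h}(u),1])$ from Proposition~\ref{head-property}. A preliminary step is to observe that, because $\partial(\Xi)\subset \Xi$ (the set $\Xi\cup\{0\}$ is closed by Lemma~\ref{char-S}) and since the $\mathcal{X}_\theta$'s are nested monotonically in $\theta$ (by Claim~a in the proof of Theorem~\ref{H-tongue1}), we have $\mathcal{U}_{\gamma_1}\subset\mathcal{U}_{\gamma_2}$; hence the annular region $V(\gamma_1,\gamma_2)$ coincides with $\mathcal{U}_{\gamma_2}\setminus\overline{\mathcal{U}_{\gamma_1}}$, and in particular the four parameter rays together with $\lambda_{\gamma_1},\lambda_{\gamma_2}$ form its topological boundary.

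For the upper bound $\boldsymbol{h}(u)\leq\gamma_2$, I note that $u\in V(\gamma_1,\gamma_2)\subset\mathcal{U}_{\gamma_2}$. Applying Theorem~\ref{H-tongue1} with $\theta=\gamma_2$, the internal rays $R_u^1(\gamma_2)$ and $R_u^2(1-\gamma_2)$ land at a common pre-repelling point, which precisely means $\gamma_2\in\Theta_u$. The desired inequality follows from $\boldsymbol{h}(u)=\inf\Theta_u$.

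For the lower bound $\boldsymbol{h}(u)\ge\gamma_1$, I argue by contradiction, assuming $\boldsymbol{h}(u)<\gamma_1$. Since $\gamma_1\in\Xi$, the orbit $\{\tau^k(\gamma_1)\}_{k\ge0}$ lies in $[\gamma_1,1]$, and therefore strictly above $\boldsymbol{h}(u)$. By Proposition~\ref{head-property}(2) this places $\gamma_1$ in $\Theta_u$, so the rays $R_u^1(\gamma_1),R_u^2(1-\gamma_1)$ are internal rays landing at a common preperiodic point $z$. By the Snail Lemma $z$ is either pre-parabolic or pre-repelling. In the pre-parabolic case, the characterization in Theorem~\ref{H-tongue1} forces $u=\lambda_{\gamma_1}$, which lies on $\partial V(\gamma_1,\gamma_2)$ and not in the open set $V(\gamma_1,\gamma_2)$, a contradiction. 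In the pre-repelling case, I must still verify that the orbit of $z$ avoids the free critical point $0$; this is because, by \cite[Lemma 3.14]{Ro08}, the critical point lies in $\overline{V_{1,2}(0,\boldsymbol{h}(u))}$, while each iterate $N_u^k(z)$ sits on $R_u^1(\tau^k(\gamma_1))$ or $R_u^2(1-\tau^k(\gamma_1))$ and is therefore separated from $0$ by the strict inequality $\tau^k(\gamma_1)>\boldsymbol{h}(u)$. The conclusion of Theorem~\ref{H-tongue1} then gives $u\in\mathcal{X}_{\gamma_1}=\mathcal{U}_{\gamma_1}$, contradicting $u\in V(\gamma_1,\gamma_2)\cap\mathcal{U}_{\gamma_1}=\emptyset$.

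The main obstacle I expect is the bookkeeping in the lower bound: one must dispose of both the pre-parabolic alternative (only possible when $\gamma_1$ is periodic, and tied to a single boundary parameter $\lambda_{\gamma_1}$) and, in the pre-repelling alternative, rigorously verify that the orbit of $z$ avoids $0$ so as to hit the exact hypothesis of Theorem~\ref{H-tongue1}. Once this is handled, the proof reduces to a direct application of the characterization of $\mathcal{U}_\theta$ combined with the monotonicity of these regions in $\theta$.
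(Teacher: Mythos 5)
Your proposal is essentially correct and follows the same strategy as the paper's proof: reduce both inequalities to the identification $\mathcal{U}_\theta=\mathcal{X}_\theta$ from Theorem~\ref{H-tongue1}, combined with $\Theta_u=C_{\boldsymbol{h}(u)}$ from Proposition~\ref{head-property}. The upper bound is handled identically. For the lower bound, you apply Theorem~\ref{H-tongue1} directly at $\theta=\gamma_1$, whereas the paper implicitly works with angles in $(0,\gamma_1)$ (which only makes sense for gap endpoints, so a gap endpoint in $[\boldsymbol{h}(u),\gamma_1)$ is chosen); your variant is a touch more direct and avoids that choice. Both are valid.

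One place that needs tightening: your sentence "each iterate $N_u^k(z)$ sits on $R_u^1(\tau^k(\gamma_1))$ or $R_u^2(1-\tau^k(\gamma_1))$'' misstates the geometry — $N_u^k(z)$ is the common landing point of those two rays, not a point on them — and the phrase "separated from $0$ by the strict inequality $\tau^k(\gamma_1)>\boldsymbol{h}(u)$'' asserts the conclusion rather than proving it. To close the gap, observe: if $N_u^k(z)=0$ then $0\in\partial B_u^1$ and is the landing point of $R_u^1(\tau^k(\gamma_1))$; but $0\in\overline{V_{1,2}(0,\boldsymbol{h}(u))}$ forces the internal angle of $0$ on $\partial B_u^1$ to lie in $[0,\boldsymbol{h}(u)]$, and since $\partial B_u^1$ is a Jordan curve (Theorem~\ref{roesch-jordan}) distinct angles land at distinct points, this contradicts $\tau^k(\gamma_1)\ge\gamma_1>\boldsymbol{h}(u)$. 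Alternatively, and more cleanly: when $\gamma_1$ is periodic (an $\alpha$-endpoint) the landing point $z$ is a repelling \emph{periodic} point, and a repelling cycle cannot contain a critical point since the multiplier would then vanish; when $\gamma_1$ is dyadic (a $\beta$-endpoint), Claim~f inside the proof of Theorem~\ref{H-tongue1} shows that $\mathcal{Y}_{\gamma_1}\setminus\mathcal{X}_{\gamma_1}=\{\lambda_{\gamma_1}\}$, so a critical-point hit forces $u=\lambda_{\gamma_1}\in\partial V(\gamma_1,\gamma_2)$, a contradiction. Either patch completes your argument.
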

  \begin{proof}
 By Theorem \ref{H-tongue1},  we see that the two internal rays $R_u^1(\gamma_2)$ and $R_u^2(1-\gamma_2)$ land at the same point.
 So by the definition of the Head's angle, we have $\boldsymbol{h}(u)\leq \gamma_2$. On the other hand, for any $\theta\in(0, \gamma_1)$, again by Theorem  \ref{H-tongue1}, the  internal rays $R_u^1(\theta)$ and $R_u^2(1-\theta)$ would never land at the same point. So we have $\boldsymbol{h}(u)\geq \gamma_1$. 
 \end{proof}
 
   \begin{lem} \label{semi-cont} For any  $\epsilon>0$ and any $\lambda\in \Omega_0$, there is a neighborhood $\mathcal{U}$ of  $\lambda$, such that for all  $u\in\mathcal{U}\cap\Omega_0 $,  the Head's angle $\boldsymbol{h}(u)$ of $N_u$ satisfies 
 $$(1- \epsilon)\boldsymbol{h}(\lambda) \leq\boldsymbol{h}(u)\leq \frac{4}{3}\boldsymbol{h}(\lambda).$$
  \end{lem}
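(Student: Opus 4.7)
Set $h_0:=\boldsymbol{h}(\lambda)$. The plan is to exhibit two angles $\gamma_1,\gamma_2\in\partial(\Xi)$ with $0<\gamma_1<h_0\leq\gamma_2$ such that $\lambda$ has a neighborhood inside $V(\gamma_1,\gamma_2)$, and then invoke Lemma~\ref{Head-bounds}. The backbone of the argument is the following identification, extracted from Theorem~\ref{H-tongue1}, Corollary~\ref{H-tongue2} and Proposition~\ref{head-property}: for $\theta\in\partial(\Xi)$ and $u\in\Omega_0$,
$$u\in\mathcal{U}_\theta\cup\{\lambda_\theta\}\iff \boldsymbol{h}(u)\leq \theta,$$
the equivalence coming from $\theta\in\Theta_u=C_{\boldsymbol{h}(u)}$ together with the fact that $\theta\in\Xi$ forces $\min_{k\geq0}2^k\theta=\theta$. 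Since the parameter rays bounding $\mathcal{U}_\theta$ lie in $\mathcal{H}_0^1\cup\mathcal{H}_0^2$ and hence outside $\Omega_0$, one also deduces $\overline{\mathcal{U}_\theta}\cap\Omega_0=(\mathcal{U}_\theta\cap\Omega_0)\cup\{\lambda_\theta\}$; in particular, $\gamma_1<h_0$ implies $\lambda\notin\overline{\mathcal{U}_{\gamma_1}}$ and $\gamma_2\geq h_0$ (together with $\boldsymbol{h}(\lambda_{\gamma_2})=\gamma_2>h_0$) implies $\lambda\in\mathcal{U}_{\gamma_2}$.

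For the choice of $\gamma_2$ with $h_0\leq\gamma_2\leq \tfrac{4}{3} h_0$: if $h_0=p/2^n$ is dyadic, Lemma~\ref{char-S} identifies the complementary interval $(h_0,p/(2^n-1))$ of $\Xi$, and $\gamma_2:=p/(2^n-1)\in\partial(\Xi)$ satisfies $\gamma_2/h_0=2^n/(2^n-1)\leq 4/3$; if $h_0$ is non-dyadic, Proposition~\ref{density} provides rationals in $\partial(\Xi)$ decreasing to $h_0$, from which one picks $\gamma_2$ with $h_0<\gamma_2<\tfrac{4}{3}h_0$. The case $h_0=1/2$ is trivial as $\boldsymbol{h}(u)\leq 1/2$ automatically. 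For the choice of $\gamma_1$, the easy case occurs when $h_0$ is \emph{not} a right-endpoint of a component of $(0,1/2)\setminus\Xi$; this covers both $h_0\in\Xi\setminus\partial(\Xi)$ and the dyadic case $h_0=p/2^n$ (a simple divisibility argument based on Lemma~\ref{char-S} shows $p/2^n$ is never of the form $p'/(2^m-1)$). In this case $h_0$ is a left-accumulation point of $\Xi$, and since the endpoints of complementary intervals are dense in the perfect, totally disconnected set $\Xi\cup\{0\}$, we may pick $\gamma_1\in\partial(\Xi)$ with $(1-\epsilon)h_0<\gamma_1<h_0$. Then $\lambda\in\mathcal{U}_{\gamma_2}\setminus\overline{\mathcal{U}_{\gamma_1}}$ (an open set), so $\lambda$ admits a neighborhood $\mathcal{U}\subset V(\gamma_1,\gamma_2)$, and Lemma~\ref{Head-bounds} yields $(1-\epsilon)h_0<\gamma_1\leq\boldsymbol{h}(u)\leq\gamma_2\leq\tfrac{4}{3}h_0$ for all $u\in\mathcal{U}\cap\Omega_0$.

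The main obstacle is the remaining case $h_0=p/(2^n-1)$, where $(\beta,h_0)$ is a complementary interval with $\beta=p/2^n=(1-1/2^n)h_0$; the closest element of $\partial(\Xi)$ below $h_0$ is $\beta$ itself, which may lie below $(1-\epsilon)h_0$ (e.g.\ $h_0=1/3$, $\beta=1/4$). Here we bypass Lemma~\ref{Head-bounds} for the lower bound. By the identification above and Corollary~\ref{H-tongue2}, $\lambda\notin\overline{\mathcal{U}_\beta}$, so $\lambda$ admits a neighborhood $\mathcal{U}$ disjoint from $\overline{\mathcal{U}_\beta}$ and contained in $\mathcal{U}_{\gamma_2}$. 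For any $u\in\mathcal{U}\cap\Omega_0$, $u\notin\mathcal{U}_\beta$ forces $\boldsymbol{h}(u)>\beta$; but $\boldsymbol{h}(u)\in\Xi$ and $(\beta,h_0)\cap\Xi=\emptyset$ then yield the stronger inequality $\boldsymbol{h}(u)\geq h_0>(1-\epsilon)h_0$, while the upper bound $\boldsymbol{h}(u)\leq\gamma_2\leq\tfrac{4}{3}h_0$ is immediate from $u\in\mathcal{U}_{\gamma_2}$. This finishes the plan in all cases.
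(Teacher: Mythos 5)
Your proof is correct and follows essentially the same strategy as the paper's: bracket $\lambda$ between parameter rays with angles $\gamma_1,\gamma_2\in\partial(\Xi)$ near $h_0$, then invoke Theorem~\ref{H-tongue1} and Lemma~\ref{Head-bounds}. You organize the case analysis differently (splitting on the choices of $\gamma_1$ and $\gamma_2$ separately, rather than on whether $\boldsymbol{h}(\lambda)$ is a right endpoint, a left endpoint, or an interior point of $\Xi$, as the paper does), but the underlying tools are identical. Where you genuinely improve on the paper's write-up is the case $h_0=p/(2^n-1)$: the paper sets $\gamma_1=\beta=p/2^n$ and cites Lemma~\ref{Head-bounds} to conclude $\boldsymbol{h}(u)\geq\alpha=h_0$, but that lemma only delivers $\boldsymbol{h}(u)\geq\gamma_1=\beta$, and $\beta$ may lie below $(1-\epsilon)h_0$ (your example $h_0=1/3$, $\beta=1/4$ makes this concrete). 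You correctly identify and fill this small gap: since $\boldsymbol{h}(u)\in\Xi$ and $(\beta,\alpha)\cap\Xi=\emptyset$, one gets $\boldsymbol{h}(u)\geq\alpha$ once $\boldsymbol{h}(u)>\beta$, and $\boldsymbol{h}(u)=\beta$ would force $u=\lambda_\beta$ by Corollary~\ref{H-tongue2}, which is excluded because $\lambda_\beta\in\partial\mathcal{U}_\beta$ lies outside your neighborhood. Your preliminary equivalence $u\in\mathcal{U}_\theta\cup\{\lambda_\theta\}\iff\boldsymbol{h}(u)\leq\theta$ for $u\in\Omega_0$, $\theta\in\partial(\Xi)$ is a clean reformulation of Theorem~\ref{H-tongue1} combined with Proposition~\ref{head-property}, and it makes this bookkeeping transparent.
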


  We remark that the right part of the above inequality cannot be improved to be $ \boldsymbol{h}(u)\leq (1+ \epsilon)\boldsymbol{h}(\lambda)$.  
    This is because that the Head's angle map  
  $\boldsymbol{h}: \Omega_0\rightarrow \Xi$ is not continuous, following from Theorem \ref{H-tongue1}.
  
\begin{proof}  
Recall that by Lemma \ref{char-S}, each number $s\in \Xi$ can be approximated by a sequence of  numbers  in $\partial(\Xi)$.   Let  $(\beta,\alpha)$  be a component of $(0,1/2)\setminus \Xi$.
Fix a small number $\epsilon>0$.  There are three possibilities:


If $\boldsymbol{h}(\lambda)=\alpha\in \partial(\Xi)$,  then $\boldsymbol{h}(\lambda)$ can be approximated by a decreasing   sequence of  numbers  in $\partial(\Xi)$. We may take $\gamma_1=\beta$ and  $\gamma_2\in (\alpha, (1+\epsilon) \alpha)\cap \partial(\Xi)$. By Theorem \ref{H-tongue1}, the set  $V(\gamma_1, \gamma_2)$ contains a neighborhood $\mathcal{U}$ of $\lambda$.  By Lemma \ref{Head-bounds}, for any $u\in \mathcal{U}\cap\Omega_0 $, we have $\boldsymbol{h}(\lambda)=\alpha \leq\boldsymbol{h}(u)\leq (1+\epsilon)  \boldsymbol{h}(\lambda)$.

If $\boldsymbol{h}(\lambda)=\beta\in \partial(\Xi)$, then $\boldsymbol{h}(\lambda)$ can be approximated by an increasing   sequence of  numbers  in $\partial(\Xi)$. Take $\gamma_1\in ((1-\epsilon)\beta, \beta)\cap \partial(\Xi)$ and $\gamma_2=\alpha$. By  Lemma \ref{char-S},  the number 
$\frac{\alpha}{\beta}$ takes the form $\frac{2^p}{2^{p}-1}$ for some $p\geq 2$. By Theorem \ref{H-tongue1}, the set  $V(\gamma_1, \gamma_2)$ contains a neighborhood $\mathcal{U}$ of $\lambda$.  By Lemma \ref{Head-bounds}, for any $u\in \mathcal{U}\cap\Omega_0 $, we have 
$$\boldsymbol{h}(\lambda)(1-\epsilon) \leq\boldsymbol{h}(u)\leq  \alpha= \frac{\alpha}{\beta} \boldsymbol{h}(\lambda)=\frac{2^p}{2^{p}-1}\boldsymbol{h}(\lambda) \leq  \frac{4}{3}\boldsymbol{h}(\lambda).$$

If $\boldsymbol{h}(\lambda)\in \Xi\setminus  \partial(\Xi)$, then $\boldsymbol{h}(\lambda)$ can be approximated by two sequences of  numbers  in $\partial(\Xi)$ from both sides. We may choose two numbers $\gamma_1, \gamma_2$ with
$$\gamma_1\in ((1-\epsilon)\boldsymbol{h}(\lambda) , \boldsymbol{h}(\lambda))\cap \partial(\Xi), \  \gamma_2\in (\boldsymbol{h}(\lambda), (1+\epsilon)\boldsymbol{h}(\lambda))\cap \partial(\Xi).$$  By Theorem \ref{H-tongue1}, the set  $V(\gamma_1, \gamma_2)$ contains a neighborhood $\mathcal{U}$ of $\lambda$.  By Lemma \ref{Head-bounds}, for any $u\in \mathcal{U}\cap\Omega_0 $, we have 
$(1-\epsilon)\boldsymbol{h}(\lambda) \leq\boldsymbol{h}(u)\leq (1+\epsilon)  \boldsymbol{h}(\lambda)$.
\end{proof}


The following result is  a byproduct of the proof of Lemma \ref{semi-cont}:
 \begin{cor} \label{head-semi-con}  The Head's angle map $\boldsymbol{h}: \Omega_0\rightarrow \Xi$ is lower semi-continuous, that is,   $\liminf_{u\rightarrow \lambda}\boldsymbol{h}(u)\geq \boldsymbol{h} (\lambda)$ for all $\lambda\in \Omega_0$. Moreover, $\lambda\in \Omega_0$ is a point of discontinuity if and only if $\boldsymbol{h}(\lambda)\in \Xi\cap \Theta_{dya}$.    \end{cor}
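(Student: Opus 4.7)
The plan is to extract both assertions directly from the case analysis in the proof of Lemma \ref{semi-cont}, supplemented by Theorem \ref{H-tongue1} and Corollary \ref{H-tongue2}.

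First, for lower semi-continuity: the left-hand inequality $(1-\epsilon)\boldsymbol{h}(\lambda)\le \boldsymbol{h}(u)$ of Lemma \ref{semi-cont} holds on a neighborhood of any $\lambda\in\Omega_0$ with $\epsilon>0$ arbitrary. Letting $\epsilon\to 0$ along a nested sequence of such neighborhoods gives $\liminf_{u\to\lambda}\boldsymbol{h}(u)\ge\boldsymbol{h}(\lambda)$, i.e.\ lower semi-continuity on all of $\Omega_0$.

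For the continuity characterization, I would use Fact \ref{angle-Xi} to split $\Xi$ into three disjoint strata: $\Xi\setminus\partial(\Xi)$, \ $\Xi\cap\Theta_{per}$ (the right endpoints $\alpha=p/(2^n-1)$ of complementary components of $(0,1/2)\setminus\Xi$), and $\Xi\cap\Theta_{dya}$ (the left endpoints $\beta=p/2^n$). These are exactly the three cases treated in the proof of Lemma \ref{semi-cont}. In the first two strata the inspection of that proof shows that the upper bound can in fact be taken as $(1+\epsilon)\boldsymbol{h}(\lambda)$: in $\Xi\setminus\partial(\Xi)$ this is automatic from the two-sided approximation by elements of $\partial(\Xi)$ guaranteed by Lemma \ref{char-S}, while at $\alpha\in\Xi\cap\Theta_{per}$ one uses a decreasing approximation $\gamma_2\in(\alpha,(1+\epsilon)\alpha)\cap\partial(\Xi)$ together with $\gamma_1=\beta$. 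Combined with lower semi-continuity this yields continuity of $\boldsymbol{h}$ at every $\lambda$ with $\boldsymbol{h}(\lambda)\in\Xi\setminus\Theta_{dya}$.

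It remains to exhibit a genuine discontinuity when $\boldsymbol{h}(\lambda)=\beta\in\Xi\cap\Theta_{dya}$. Let $(\beta,\alpha)$ be the corresponding complementary component of $(0,1/2)\setminus\Xi$. By Corollary \ref{H-tongue2}, the fibre $\boldsymbol{h}^{-1}(\beta)$ is the singleton $\{\lambda_\beta\}$, and $\lambda_\beta$ is the common landing point of $\mathcal{R}_0^1(\beta)$ and $\mathcal{R}_0^2(1-\beta)$, hence $\lambda_\beta\in\partial\mathcal{U}_\beta$. Since $\mathcal{U}_\beta\cup\{\lambda_\beta\}\subset\mathcal{U}_\alpha\cup\{\lambda_\alpha\}$ with $\lambda_\beta\in\overline{\mathcal{U}_\alpha}$, every neighborhood of $\lambda_\beta$ meets the open set $\mathcal{U}_\alpha\setminus(\overline{\mathcal{U}_\beta}\cup\{\lambda_\beta\})$, whose points lie in $\boldsymbol{h}^{-1}(\alpha)$ by Corollary \ref{H-tongue2}. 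Choosing $u_n\to\lambda_\beta$ in this set yields $\boldsymbol{h}(u_n)=\alpha>\beta=\boldsymbol{h}(\lambda_\beta)$, so $\boldsymbol{h}$ fails to be upper semi-continuous, hence fails to be continuous, at $\lambda_\beta$. The one subtle point in the plan is the sharpening of the upper estimate in Lemma \ref{semi-cont} from $\tfrac{4}{3}\boldsymbol{h}(\lambda)$ to $(1+\epsilon)\boldsymbol{h}(\lambda)$ in the non-dyadic cases; this amounts to approximating $\boldsymbol{h}(\lambda)$ from above by elements of $\partial(\Xi)$, which is exactly what fails at a dyadic $\beta$, where the nearest element of $\Xi$ above $\beta$ is $\alpha>\beta$—and this gap is precisely what produces the jump responsible for the discontinuity.
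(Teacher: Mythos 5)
Your overall strategy coincides with the paper's intended route: lower semi-continuity and continuity on $\Xi\setminus\Theta_{dya}$ are extracted from the three-case analysis in the proof of Lemma~\ref{semi-cont}, and the jump at dyadic Head's angles is sourced to the structure in Theorem~\ref{H-tongue1} and Corollary~\ref{H-tongue2}. The first two parts of your argument are correct.

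There is, however, a gap in the discontinuity step. You write that every neighborhood of $\lambda_\beta$ meets $\mathcal{U}_\alpha\setminus(\overline{\mathcal{U}_\beta}\cup\{\lambda_\beta\})$, "whose points lie in $\boldsymbol{h}^{-1}(\alpha)$ by Corollary~\ref{H-tongue2}." That corollary actually gives $\boldsymbol{h}^{-1}(\alpha)=\Omega_0\cap\bigl((\mathcal{U}_\alpha\cup\{\lambda_\alpha\})\setminus(\mathcal{U}_\beta\cup\{\lambda_\beta\})\bigr)$, so only the $\Omega_0$-points of the lune $L:=\mathcal{U}_\alpha\setminus\overline{\mathcal{U}_\beta}$ carry Head's angle $\alpha$; the remaining points of $L$ lie in $\mathcal{H}_0^1\cup\mathcal{H}_0^2$ and have no Head's angle at all. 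So when you "choose $u_n\to\lambda_\beta$ in this set," you need $u_n\in\Omega_0$, and that requires showing $\Omega_0\cap L$ accumulates at $\lambda_\beta$ — i.e., that the "bean" $\boldsymbol{h}^{-1}(\alpha)$ really does attach at $\lambda_\beta$. This is the substantive point of the discontinuity claim and is not a formal consequence of the set identity in Corollary~\ref{H-tongue2}: \emph{a priori}, a small disk $D$ around $\lambda_\beta$ could satisfy $D\cap L\subset\mathcal{H}_0^1\cup\mathcal{H}_0^2$, in which case the only nearby $\Omega_0$-points would lie in $\overline{\mathcal{U}_\beta}$ with Head's angle $\le\beta$, and no jump would be visible. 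To close the gap one can, for instance, use Theorem~\ref{1a} to identify $E_\varepsilon:=L\cap\mathcal{H}_0^\varepsilon$ ($\varepsilon=1,2$) as connected open disjoint sets with $\lambda_\beta\in\overline{E_1}\cap\overline{E_2}$, observe that $L$ is a Jordan domain bounded by $\overline{\mathcal{R}_0^1}(\beta)\cup\overline{\mathcal{R}_0^2}(1-\beta)$ and $\overline{\mathcal{R}_0^1}(\alpha)\cup\overline{\mathcal{R}_0^2}(1-\alpha)$, and then show by a plane-topology argument that no punctured neighborhood of $\lambda_\beta$ in $L$ lies entirely in $E_1\sqcup E_2$. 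Alternatively, once Theorem~\ref{jordan} is available, the landing map $\nu_1$ is a homeomorphism, $\nu_1(\theta)\to\lambda_\beta$ as $\theta\to\beta^+$, and $\boldsymbol{h}(\nu_1(\theta))=\alpha$ for $\theta\in(\beta,\alpha)$ gives the jump immediately — but this uses results proved later than the stated corollary. Either way, an explicit argument for the accumulation claim needs to be supplied.
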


\section{Articulated rays}\label{a-y}

 In this section, we first recall the  construction of the articulated rays due to the first author \cite{Ro08}, which play a crucial role in the Yoccoz puzzle theory, then we show that 
 the articulated rays satisfy the {\it  local stability property}. This property is important in our later discussion.

 \subsection{The articulated ray}

The articulated ray, analogous to the external ray in the polynomial case, is  the starting point of the  Yoccoz puzzle theory for the  cubic Newton maps.
It's a kind of  Jordan arc that intersects with the Julia set in  countably many  points. The construction of  the articulated rays is due to Roesch \cite[Proposition 4.3]{Ro08}:

\begin{thm}[Roesch]\label{a-ray}
Let $\lambda\in \Omega_0$ and $\zeta\in \Theta_\lambda\cap(\boldsymbol{h}(\lambda), 2\boldsymbol{h}(\lambda))$ be a non-dyadic rational angle.
Then   there exists a unique 
closed arc $L_\lambda(\zeta)$ stemming from $B_\lambda^2$ with angle $-\zeta/4$, converging to $y_\lambda$ which is the  landing point of the internal ray $R_\lambda^3(1/7)$  and satisfying the
3-periodicity condition:
$$N_\lambda^3(L_\lambda(\zeta))=L_\lambda(\zeta)\cup
\overline{R_\lambda^1}(\zeta)\cup \overline{R_\lambda^2}(-\zeta)\cup \overline{R_\lambda^1}(2\zeta)\cup \overline{R_\lambda^2}(-2\zeta).$$
\end{thm}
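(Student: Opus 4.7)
The plan is to realize $L_\lambda(\zeta)$ as a nested union of iterated pull-backs of a finite seed, anchored at $y_\lambda$. The key algebraic point is that $2^3\cdot(1/7)\equiv 1/7\pmod{\mathbb{Z}}$, so $y_\lambda$ has exact period three under $N_\lambda$. Since $y_\lambda$ lies on $\partial B_\lambda^3$, which is a Jordan curve by Theorem~\ref{roesch-jordan}, it is a repelling periodic point of period three, and one obtains a well-defined holomorphic inverse branch $\sigma$ of $N_\lambda^3$ in a linearizing neighborhood $V$ of $y_\lambda$, fixing $y_\lambda$ and acting as a strict contraction.

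Set $E:=\overline{R_\lambda^1}(\zeta)\cup\overline{R_\lambda^2}(-\zeta)\cup\overline{R_\lambda^1}(2\zeta)\cup\overline{R_\lambda^2}(-2\zeta)$. The hypotheses $\zeta,2\zeta\in\Theta_\lambda$ together with $\zeta>\boldsymbol{h}(\lambda)$ imply, via Proposition~\ref{head-property}, that none of the four rays of $E$ bifurcates and that the pairs $(R_\lambda^1(\zeta),R_\lambda^2(-\zeta))$ and $(R_\lambda^1(2\zeta),R_\lambda^2(-2\zeta))$ land at two distinct pre-repelling points. Linking these landing points by short arcs through the relevant pre-image Fatou components along the orbit terminating at $y_\lambda$ yields a connected seed arc $\widehat{E}\supset E$ with $y_\lambda$ as one endpoint. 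Set $L_n:=\widehat{E}\cup\sigma(\widehat{E})\cup\cdots\cup\sigma^n(\widehat{E})$; since $\sigma$ contracts, $\sigma^n(\widehat{E})$ shrinks exponentially and accumulates only at $y_\lambda$, so the Hausdorff limit
\[
L_\lambda(\zeta):=\overline{\textstyle\bigcup_{n\geq 0}L_n}
\]
is a closed arc converging to $y_\lambda$. By construction $N_\lambda^3(L_\lambda(\zeta))=L_\lambda(\zeta)\cup E$. Identifying the inverse branch on the outermost segment, the ray $\overline{R_\lambda^2}(-2\zeta)\subset E$ pulls back under $\sigma$ to $\overline{R_\lambda^2}(-\zeta/4)$, which is therefore the outermost piece of $L_\lambda(\zeta)$, yielding the stemming condition. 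Uniqueness follows from the same contraction: if $L'$ satisfies the same functional equation and converges to $y_\lambda$, then near $y_\lambda$ it is a $\sigma$-invariant arc through $y_\lambda$ in $V$, hence agrees with $L_\lambda(\zeta)$ there, and the functional equation propagates the agreement outward.

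The main obstacle is the combinatorial bookkeeping at each pull-back step: one must verify that $\sigma$ selects at every stage a genuine (non-bifurcating) internal ray and that consecutive rays along $\sigma^n(\widehat{E})$ share common landing points, so that the union is a topological arc and not merely a countable disjoint collection of ray closures. This is precisely where the hypothesis $\zeta\in(\boldsymbol{h}(\lambda),2\boldsymbol{h}(\lambda))$ is decisive: combined with the characterization $\Theta_\lambda=\bigcap_{k\geq 0}\tau^{-k}([\boldsymbol{h}(\lambda),1])$ from Proposition~\ref{head-property}, it forces every pre-angle encountered in the construction to remain strictly above $\boldsymbol{h}(\lambda)$, which rules out the bifurcation phenomenon described in Section~\ref{fund-int}. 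The Jordan-curve regularity of the Fatou-component boundaries (Theorem~\ref{roesch-jordan}) together with the landing continuity of Fact~\ref{int-ray} then guarantees that the successive articulations truly match, so that $L_\lambda(\zeta)$ is a topological arc with the required $3$-periodicity.
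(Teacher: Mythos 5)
Your high-level template---realize $L_\lambda(\zeta)$ as the orbit closure of a finite seed arc under a contracting branch of $N_\lambda^{-3}$ anchored at $y_\lambda$---agrees in spirit with the paper's construction in Section~\ref{a-y}. However, the seed is chosen incorrectly in a way that breaks both the periodicity condition and the arc structure, and the local linearization you invoke is not the contraction the construction actually rests on.

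The decisive error is the requirement $\widehat{E}\supset E$. The equation $N_\lambda^3(L_\lambda(\zeta))=L_\lambda(\zeta)\cup E$ forces $E\not\subset L_\lambda(\zeta)$: at $b_2(\lambda)$ the articulated ray begins with $\overline{R_\lambda^2}(-\zeta/4)$ alone, and the rays $\overline{R_\lambda^2}(-\zeta),\overline{R_\lambda^2}(-2\zeta)$ only appear as \emph{extra} material after one application of $N_\lambda^3$. If instead $L_\lambda(\zeta)\supset\widehat{E}\supset E$, then $N_\lambda^3(L_\lambda(\zeta))\supset N_\lambda^3(E)=\overline{R_\lambda^1}(8\zeta)\cup\overline{R_\lambda^2}(-8\zeta)\cup\overline{R_\lambda^1}(16\zeta)\cup\overline{R_\lambda^2}(-16\zeta)$, which for a generic rational $\zeta$ is not contained in $L_\lambda(\zeta)\cup E$, so the periodicity identity fails. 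The correct seed is a \emph{pre-image} arc $Y$ with $N_\lambda^3(Y)=E$, not a superset of $E$. In the paper one first forms the connected arc $X_u=\overline{R_u^2}(-\zeta)\cup\overline{R_u^1}(\zeta)\cup\overline{R_u^1}(\zeta/2)\cup\overline{g_{u,1}(R_u^2(-\zeta))}$, for which $N_u(X_u)=E$, and sets $Y_u=g_{u,2}\circ g_{u,2}(X_u)$, so that $N_u^3(Y_u)=E$ and the outermost piece of $Y_u$ is precisely $\overline{R_u^2}(-\zeta/4)$.

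Two further structural problems. First, your $\sigma$ lives only on a linearizing neighborhood $V$ of $y_\lambda$, yet $\widehat{E}$ contains material far from $y_\lambda$ (entire internal rays in $B_\lambda^1$ and $B_\lambda^2$), so $\sigma(\widehat{E})$ is undefined as written. The paper instead works with the globally defined branch $H_u=g_{u,2}\circ g_{u,2}\circ g_{u,1}:V_u\to V_u$ on the large topological disk $V_u$; the contraction there is a Schwarz-lemma statement for the hyperbolic metric of $V_u$, not linearization at $y_\lambda$. Second, if $\widehat{E}$ has $y_\lambda$ as an endpoint, then so does every $\sigma^n(\widehat{E})$ (since $\sigma(y_\lambda)=y_\lambda$), and $\bigcup_n\sigma^n(\widehat{E})$ is a bouquet of arcs hinged at $y_\lambda$, not a single arc. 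The correct seed does not meet $y_\lambda$ at all: $Y_u$ runs from $b_2(u)$ to $H_u(b_2(u))$, so the iterates $H_u^k(Y_u)$ chain end-to-end, with endpoints $H_u^k(b_2(u))\to P_u=y_\lambda$, producing a genuine closed arc in the limit. Finally, the ``short arcs'' you propose linking the landing points of $E$ to $y_\lambda$ are, in effect, the deep tail of the very articulated ray being constructed, so that step is circular.
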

 
  \begin{figure}[h]
\begin{center}
\includegraphics[height=5.5cm]{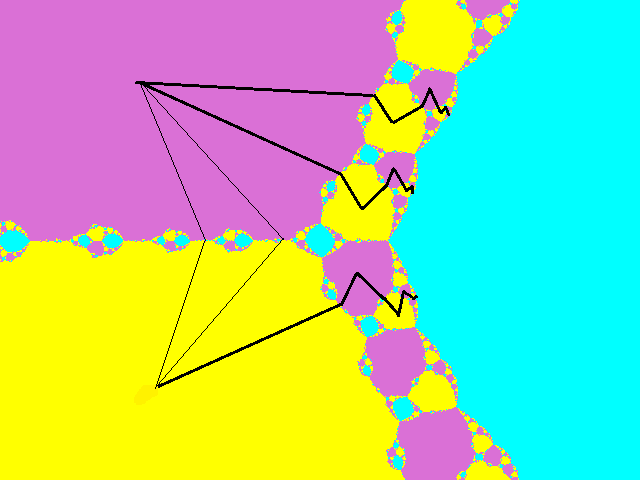}
   \put(-120,133){$-\frac{\zeta}{4}$}     \put(-100,133){$L^0$}       \put(-100,102){$L^1$} \put(-110,62){$L^2$}        \put(-170,139){$b_2(\lambda)$}     \put(-166,126){$\bullet$}  
    \put(-120,113){$\frac{-\zeta}{2}$}   \put(-139,102){$-\zeta$}    \put(-175,100){$-2\zeta$}  
  \put(-160,28){$\bullet$}  \put(-168,17){$b_1({\lambda})$}
   \put(-55,88){$b_3(\lambda)$}    \put(-45,78){$\bullet$}     \put(-65,115){$ \ \ y_\lambda$}  
 \caption{The articulate ray $L_\lambda(\zeta)=L^0$, satisfying $N_\lambda(L^0)=L^1$, $N_\lambda^2(L^0)=L^2\cup
\overline{R_\lambda^1}(\zeta)\cup \overline{R_\lambda^2}(-\zeta)$, and 
$N_\lambda^3(L^0)=L^0\cup
\overline{R_\lambda^1}(\zeta)\cup \overline{R_\lambda^2}(-\zeta)\cup \overline{R_\lambda^1}(2\zeta)\cup \overline{R_\lambda^2}(1-2\zeta)$. Note that four internal rays are  also included in the picture.}
\end{center}\label{f5}
\end{figure}

 The closed arc constructed in Theorem \ref{a-ray} is called  {\it the articulated ray}.
Theorem \ref{a-ray} is of fundamental importance  when we study the dynamics of a single map.
 However,  it is   insufficient for our purpose when we  are exploring the parameter plane because
 it does not tell us how the articulated ray moves as the parameter changes.
We need to establish {\it the local stability  property} (though it looks somewhat standard), saying that the articulated rays are also defined for the nearby maps, and what's more, they move  continuously  in Hausdorff topology as  the parameter varies. 
 To this end, we will  sketch the construction of the articulated rays following Roesch \cite{Ro08}, and we will see that the local stability  property will  follow immediately.
 

In our discussion,  we fix  some  $\lambda\in\Omega_0=\Omega\setminus(\mathcal{H}_0^1\cup\mathcal{H}_0^2)$.  By Lemma \ref{semi-cont}, there is a neighborhood $\mathcal{U}\subset\Omega$ of $\lambda$ such that for all $u\in \mathcal{U}\cap\Omega_0$,
$$\Big(1-\frac{1}{10}\Big)\boldsymbol{h}(\lambda)\leq \boldsymbol{h}(u)\leq\Big(1+\frac{1}{3}\Big)\boldsymbol{h}(\lambda).$$
This implies  that
$$H_{\mathcal{U}}:=\sup_{u\in \mathcal{U}\cap\Omega_0 } \boldsymbol{h}(u)<  \frac{3}{2}\inf_{u\in \mathcal{U}\cap\Omega_0} \boldsymbol{h}(u).$$
By shrinking $\mathcal{U}$ if necessary, we may assume that $H_{\mathcal{U}}<1/2$. 
By Lemma \ref{char-S}, we know that 
 $\Xi\cup\{0\}$ is  closed  and perfect, therefore  $H_{\mathcal{U}}\in\Xi$
 and there is   a non-dyadic rational number $\sigma\in\Xi\cap [H_{\mathcal{U}},1/2)$,  equal to or  sufficiently closed
to $H_{\mathcal{U}}$, satisfying that
 $$C_{\sigma} \bigcap\Big[\sigma, \frac{4}{3}\sigma\Big]\subset\bigcap_{u\in\mathcal{U}\cap\Omega_0}\Big(\Theta_u\cap(\boldsymbol{h}(u), 2\boldsymbol{h}(u))\Big),  \ \ \  (*)$$
here, recall that the  sets $C_\sigma, \Theta_u$ are  defined by
$$C_\sigma=\bigcap_{k\geq0}\tau^{-k}([\sigma,1]), \  \Theta_u=C_{\boldsymbol{h}(u)}=\bigcap_{k\geq0}\tau^{-k}([\boldsymbol{h}(u),1]),$$
where $\tau$ is the doubling map on the unit circle, see  Section \ref{head}. 

By Proposition \ref{density}, there is a dyadic angle $\theta\in C_{\sigma} \bigcap(\sigma, \frac{4}{3}\sigma)$.
By $(*)$ and  Facts \ref{con} and \ref{int-ray}, for all $u\in \mathcal{U}$ 
(shrink $\mathcal{U}$  if necessary),   the sets $R_u^{1}(\theta),  R_u^{2}(1-\theta)$ are internal rays landing at same point 
$p_u$ which is pre-repelling (note that $N_u^{k}(p_u)=\infty$ for some positive integer $k$), and   their closures  
 $\overline{R_u^{1}}(\theta),  \overline{R_u^{2}}(1-\theta)$   move continuously with respect to $u\in \mathcal{U}$.
 

 \begin{figure}[!htpb]
  \setlength{\unitlength}{1mm}
  {\centering
  \includegraphics[width=62mm]{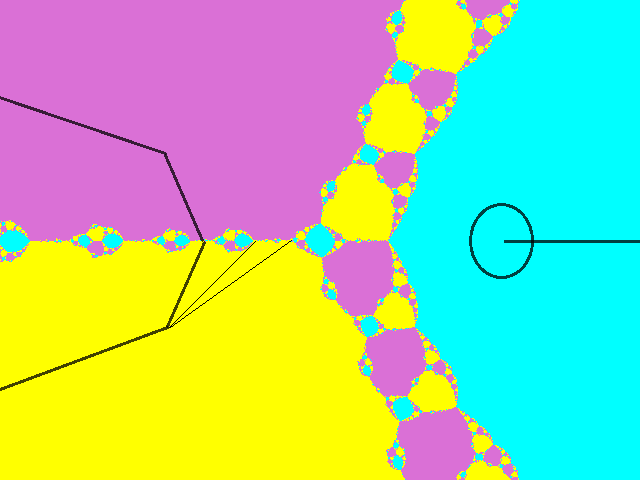}
  \includegraphics[width=62mm]{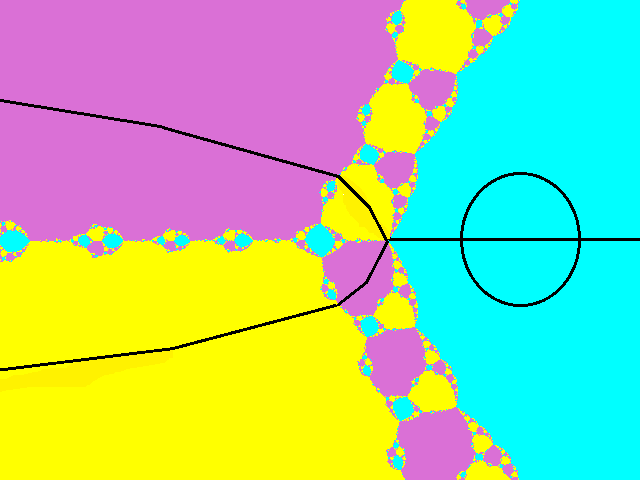}
   \put(-60,33){$0$}  \put(-60,13){$0$} \put(-47,33){$\bullet$} \put(-47,12){$\bullet$}\put(-50,37){$b_2(u)$} \put(-50,8){$b_1(u)$}   \put(-13,22.5){$\bullet$} \put(-3,24){$0$} \put(-23,25){$\frac{1}{2}$}  \put(-28,37){$W_u^2$}
 \put(-28,8){$W_u^1$}
 \put(-122,32){$0$}  \put(-122,12){$0$} \put(-110,31){$\bullet$} \put(-110,14){$\bullet$}\put(-112,35){$b_2(u)$} \put(-112,10){$b_1(u)$}    \put(-70,24){$0$}     \put(-78,22.3){$\bullet$}   \put(-110,18){$\theta$}  \put(-106,20){$\zeta$}     \put(-102,18){$\gamma$}   
  \put(-93,25){$V_u$} 
}
  \caption{The domains $V_u$ (left) and $W_u^\varepsilon$ (right).  Note that:
  the internal rays with angles $\zeta, \gamma=\boldsymbol{h}(u)(\geq \sigma)$ are  included (left), 
  a part of  $N_u^{-1}(G)$ is not included (right).
  } 
\end{figure}

For any  $u\in \mathcal{U}$, we denote by  $V_u$ the connected component of $\mathbb{\widehat{C}}\setminus G$ where
$$G=\overline{R_u^1}(0)\cup \overline{R_u^2}(0)\cup \overline{R_u^1}(\theta)\cup
 \overline{R^2_u}(1-\theta)\cup \overline{R_u^3}(0)\cup E_u^3(1/2)$$ (here $E_u^3(1/2)=(\phi_u^3)^{-1}(\{|z|=1/2\})$ is the equipotential curve in $B_u^3$) which does not
contain the critical value $N_u(0)$. Since $V_u$ is a topological
disk, its preimage $N_u^{-1}(V_u)$ has three connected components and only one of
them intersects both $B_u^3$ and $B_u^\varepsilon$ ($\varepsilon=1,2$); we denote it by $W_u^\varepsilon$. One may see  that $W_u^\varepsilon\subset V_u$. We denote the inverse of  $N_u:W_u^\varepsilon\rightarrow V_u$ by $g_{u,\varepsilon}$. It's clear that $g_{u,\varepsilon}: V_u \rightarrow  W_u^\varepsilon$ extends continuously to 
$R_u^1(0)\cup R_u^2(0)\cup R_u^1(\theta)\cup R^2_u(1-\theta)\cup\{b_1(u), b_2(u)\}$, which is a part of the boundary $\partial{V_u}$. So the points $g_{u,\varepsilon}(b_{\varepsilon'}(u))$ with $\varepsilon,\varepsilon'\in\{1,2\}$ are well-defined.
%

  By Proposition \ref{density} and note that $\alpha$ is non-dyadic, we may  find   a non-dyadic rational angle
 $\zeta\in C_{\sigma}\cap(\sigma, \theta)$  satisfying the following conditions:
 
 \vspace{5pt}
 
 \textbf{C1.}  For all $u\in\mathcal{U}$ (shrink it if necessary), the  set
$$X_u=\overline{R_u^2}(-\zeta)\cup \overline{R_u^1}(\zeta)\cup \overline{R_u^1}(\zeta/2)\cup \overline{g_{u, 1}(R_u^2(-\zeta))}$$
 is a closed arc connecting $b_2(u)$ with $g_{u, 1}(b_2(u))$ (a well-defined point on $\partial W_u^1$) and $X_u$ avoids the free critical point $0$ of $N_u$.
 
When $u=\lambda$,  if $\lambda$ is {\it non-hyperbolic} (i.e. $N_\lambda$ is not hyperbolic), we further require  that  the set $X_\lambda$ avoids the  orbit of the critical point $0$ of
$N_\lambda$ (this is feasible because  there are many angles  $\zeta\in C_{\sigma}\cap(\sigma, \theta)$ for choice and at least one meets the requirement).

 \textbf{C2.} $X_u$ moves continuously with respect to $u\in \mathcal{U}$ in Hausdorff
 topology (by Fact \ref{con} and shrink $\mathcal{U}$ if necessary).
 



\
 
 Now we consider the continuous family of holomorphic  maps 
 $$H_u=g_{u, 2}\circ g_{u, 2}\circ g_{u, 1}: V_u\rightarrow V_u$$
 parameterized by $\mathcal{U}$. 
 Here are several observations: first,  $H_u$ maps $V_u$ to a proper subset of $V_u$ and $\overline{H_u(V_u)}\subset V_u$; second, the landing point of $R_u^3(1/7)$, denoted by $P_u$, is an attracting fixed point of $H_u$, and it is also the unique   fixed point  of $H_u$ in $V_u$  
 because $H_u$ contracts the hyperbolic metric of  $V_u$.



The following is a refinement of Theorem \ref{a-ray}, sufficient for our purpose: 

\begin{thm} [Articulated ray and local stability] \label{art-con}
Let $\lambda\in\Omega_0$ be non-hyperbolic and $\mathcal{U}, \sigma, \theta$, $\zeta\in C_{\sigma}\cap(\sigma, \theta)$ be chosen above. Let $Y_u=g_{u, 2}\circ g_{u, 2} (X_u)$.
Then there is a neighborhood $\mathcal{V} \subset\mathcal{U}$ of $\lambda$ satisfying that

1. for all $u\in\mathcal{V}$,  the set 
$$L_u(\zeta)=\{P_u\}\cup \bigcup_{k\geq 0} H_u^k(Y_u)$$
 is a closed arc 
 stemming from $B_u^2$ with angle $-\zeta/4$, converging to $P_u$ which is the  landing point of the internal  ray $R_u^3(1/7)$. Moreover, 
 
2. the  closed arc $L_u(\zeta)$ moves continuously in Hausdorff topology with respect to $u\in \mathcal{V}$. 
 \end{thm}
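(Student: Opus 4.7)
Both assertions reduce to two facts: that $H_u:=g_{u,2}\circ g_{u,2}\circ g_{u,1}:V_u\to V_u$ is a uniform holomorphic contraction with unique fixed point $P_u$, and that every ingredient in the construction depends continuously on $u\in\mathcal{U}$. I will exhibit $L_u(\zeta)$ as a telescoping nested union of iterates of the ``initial piece'' $Y_u$ under $H_u$, and then extract continuity in $u$ from the uniform contraction together with continuity of the pieces.

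\medskip
\noindent\textbf{Contraction and gluing.} The inclusion $\overline{H_u(V_u)}\Subset V_u$ and the uniqueness of the fixed point $P_u\in V_u$ yield, via the Schwarz lemma in the hyperbolic metric of $V_u$, a contraction rate $\kappa_u<1$; continuity of $V_u$ and $H_u$ in $u$ allows us to pass to a neighborhood $\mathcal{V}\subset\mathcal{U}$ of $\lambda$ on which a uniform $\kappa<1$ works. By C1, $X_u$ is a closed arc in $\overline{V_u}$ from $b_2(u)$ to $g_{u,1}(b_2(u))$ avoiding $0$; since $g_{u,2}:V_u\to W_u^2$ is univalent and extends continuously to the boundary segments of $V_u$ used here (note that $N_u:W_u^2\to V_u$ is unramified because $V_u$ contains no critical value, and $g_{u,2}$ extends at the critical fixed point $b_2(u)$ as the obvious square-root branch of the local normal form $z\mapsto z^2$), $Y_u=g_{u,2}\circ g_{u,2}(X_u)$ is a closed arc with endpoints $b_2(u)$ (fixed by $g_{u,2}$) and $q_u:=g_{u,2}\circ g_{u,2}(g_{u,1}(b_2(u)))=H_u(b_2(u))$. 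The decisive combinatorial identity $H_u(b_2(u))=q_u$ forces $H_u(Y_u)$ to meet $Y_u$ exactly at the single endpoint $q_u$, so $Y_u\cup H_u(Y_u)$ is again an arc, now reaching $H_u(q_u)$; inductively $\bigcup_{k=0}^{N}H_u^k(Y_u)$ is an arc from $b_2(u)$ to $H_u^N(q_u)$, and injectivity of $H_u$ on $V_u$ rules out spurious self-intersections. Moreover, $g_{u,2}\circ g_{u,2}(\overline{R_u^2}(-\zeta))\subset Y_u$ is a segment of $R_u^2(-\zeta/4)$ emanating from $b_2(u)$, because $g_{u,2}$ halves internal angles in $B_u^2$.

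\medskip
\noindent\textbf{Part (1).} The contraction estimate $\operatorname{diam}_{\mathrm{hyp}}(H_u^k(Y_u))\le \kappa^k\operatorname{diam}_{\mathrm{hyp}}(Y_u)$ on $\mathcal{V}$ shows that these arcs shrink in hyperbolic (hence spherical) diameter to $0$, and their only possible accumulation point is the unique fixed point $P_u$ of $H_u$ in $\overline{V_u}$. Therefore $L_u(\zeta)=\{P_u\}\cup\bigcup_{k\ge 0}H_u^k(Y_u)$ is a closed Jordan arc from $b_2(u)$ to $P_u$, stemming from $B_u^2$ with angle $-\zeta/4$, and $P_u$ is the landing point of $R_u^3(1/7)$ by its very construction.

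\medskip
\noindent\textbf{Part (2) and main obstacle.} For every fixed $k$, Hausdorff continuity of $u\mapsto H_u^k(Y_u)$ on $\mathcal{V}$ follows from C2 (continuity of $X_u$), the holomorphic dependence of the inverse branches $g_{u,\varepsilon}$ on $u$ (ensured by the choices of $\sigma$, $\theta$, $\zeta$, which keep $\overline{V_u}$ away from critical values of $N_u$), and continuity of $P_u$ as an attracting fixed point of $H_u$. Given $\varepsilon>0$, pick $N$ with $\kappa^N\sup_{u\in\mathcal{V}}\operatorname{diam}(Y_u)<\varepsilon/2$; then on a smaller neighborhood of $\lambda$ the finite head $\bigcup_{k\le N}H_u^k(Y_u)$ lies within $\varepsilon/2$ Hausdorff distance of its value at $\lambda$, while the tails together with $P_u$ contribute at most $\varepsilon/2$. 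The main technical obstacle is the combinatorial verification that the gluing identity $H_u(b_2(u))=q_u$ together with injectivity of $H_u$ produces a genuine Jordan arc rather than a tree or a self-crossing curve; once this is checked at $u=\lambda$ from the explicit form of $X_\lambda$ and the angle-halving action of $g_{\lambda,2}$, it transports to all of $\mathcal{V}$ by continuity of the endpoints and of the inverse branches.
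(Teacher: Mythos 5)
Your part (2) is essentially the paper's argument: the paper also splits $L_u(\zeta)$ into a finite head $\bigcup_{0\le k\le l}H_u^k(Y_u)$ (continuous because finitely many pullbacks of $X_u$ under the branches $g_{u,\varepsilon}$ depend continuously on $u$ and avoid the critical orbit) and a tail $\{P_u\}\cup\bigcup_{k\ge l}H_u^k(Y_u)$ trapped near the attracting fixed point $P_u$ of $H_u$; the paper confines the tail to a linearized neighborhood of $P_u$ where you use a uniform hyperbolic contraction rate, and either device works. Two corrections there: (i) ${\rm diam}_{\rm hyp}(Y_u)=\infty$, since the endpoint $b_2(u)$ of $Y_u$ lies on $\partial V_u$ (it is an endpoint of the rays $\overline{R_u^2}(0)$ and $\overline{R_u^2}(1-\theta)$ in the graph $G$), so the estimate ${\rm diam}_{\rm hyp}(H_u^k(Y_u))\le\kappa^k\,{\rm diam}_{\rm hyp}(Y_u)$ is vacuous; you must start from $H_u(Y_u)\subset\overline{H_u(V_u)}\Subset V_u$ and compare hyperbolic and spherical metrics on that compact set. (ii) You never use the standing hypothesis that $\lambda$ is non-hyperbolic, i.e.\ that $X_\lambda$ avoids the whole critical orbit; this is precisely what the paper invokes to guarantee that the finite head stays away from the critical orbit for all $u$ in a smaller neighborhood.

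For part (1) the paper simply applies Theorem \ref{a-ray} (\cite[Proposition 4.3]{Ro08}) to each fixed $u$, whereas you attempt to reprove it, and that is where the genuine gap lies. The crux — that $H_u(Y_u)$ meets $Y_u$ only at the shared endpoint $q_u=H_u(b_2(u))$ and that distinct iterates $H_u^j(Y_u)$, $H_u^k(Y_u)$ do not cross — is exactly the content of Roesch's proposition, and you leave it unproved: injectivity of $H_u$ on $V_u$ does not prevent $H_u^j(Y_u)$ from meeting $H_u^k(Y_u)$ for $j\ne k$ away from the endpoints, and your fallback of checking the statement at $u=\lambda$ and "transporting it by continuity" is not a valid step, because two arcs meeting only at a common endpoint can acquire additional intersection points (near that endpoint) under arbitrarily small Hausdorff perturbations. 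The correct route is either to cite Theorem \ref{a-ray} for each $u\in\mathcal{V}$ separately, as the paper does, or to reproduce the separation argument of \cite{Ro08} (the iterates are confined to nested sectors cut out by the internal rays), neither of which is a perturbation argument.
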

 
We still call the closed arc  $L_u(\zeta)$     an {\it articulated ray}.
 

 
\begin{proof}  The first statement is actually Theorem \ref{a-ray}, see \cite[Proposition 4.3]{Ro08} for a proof.  

 The idea of  proof of the second statement,  same as that of Fact \ref{con}, is to  decompose the set $L_u(\zeta)$ into two parts: 
$$\bigcup_{0\leq k\leq l} H_u^k(Y_u)  \text{ \ and \ }   \{P_u\}\cup \bigcup_{k\geq l} H_u^k(Y_u).$$
We only  sketch the proof. The assumption that  $X_\lambda$ avoids the free critical orbit of $N_\lambda$  guarantees that 
 there exist  a large integer $l>0$ (independent of  $\mathcal{U}$) and  a smaller neighborhood $\mathcal{V}\subset \mathcal{U}$ of $\lambda$, such  that   $\cup_{0\leq k\leq l} H_u^k(Y_u) $
  does not meet the set $\{H_u^{j}(0), 0\leq j\leq l\}$. Therefore $\cup_{0\leq k\leq l} H_u^k(Y_u)$
 is an arc and moves continuously in $u$;  the latter part $ \{P_u\}\cup (\cup_{k\geq l} H_u^k(Y_u))$ is contained in a linearized neighborhood of $P_u$ and therefore continuous in $u$.
Gluing them together, we get the continuity  of $L_u(\zeta)$ over $\mathcal{V}$.
\end{proof}

\subsection{Two graphs} \label{two-graph}
 Let $\lambda\in\Omega_0$ be non-hyperbolic  and $\mathcal{V}, \sigma, \theta, \zeta$ be chosen in Theorem \ref{art-con}. 
 Fix a rational angle $\kappa\in C_\sigma \cap (\sigma, \theta)$ so that $\kappa$ and $\zeta$ have disjoint orbit under the doubling map $\tau$ (such $\kappa$ always exists because of Proposition \ref{density}).  
 It follows from Fact \ref{con} and Theorem \ref{art-con} that  for all $u\in \mathcal{V}$ (shrink it if necessary),   the  following two graphs 
  $$G(u,\zeta)= \bigcup_{ j\geq 0} (N_u^j(L_u(\zeta)) \cup \overline{R_u^3}({2^{j}}/{7})),$$
$$I(u, \kappa)=\Big(\overline{R_u^1}(0)\cup \overline{R_u^2}(0)\cup \overline{R_u^3}(0)\Big)\bigcup\bigcup_{j\geq 0} \Big(\overline{R_u^1}(2^{j}\kappa)\cup \overline{R_u^2}(1-2^{j}\kappa)\Big),$$
 avoid the  critical point $0$ and they move continuously in Hausdorff topology  with respect to $u\in\mathcal{V}$.

 \begin{figure}[!htpb]
  \setlength{\unitlength}{1mm}
  {\centering
  \includegraphics[width=62mm]{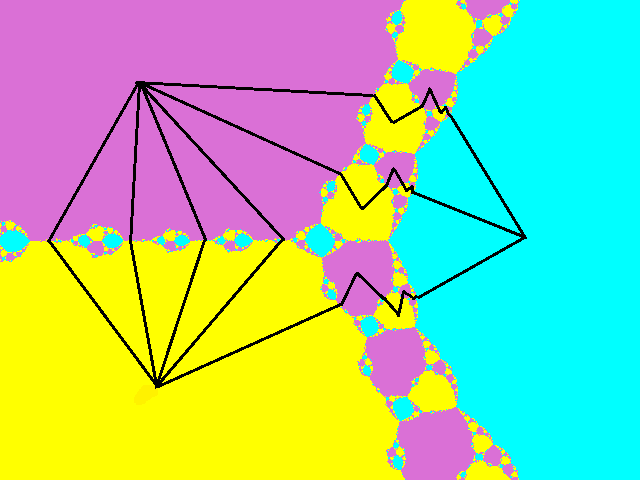}
  \includegraphics[width=62mm]{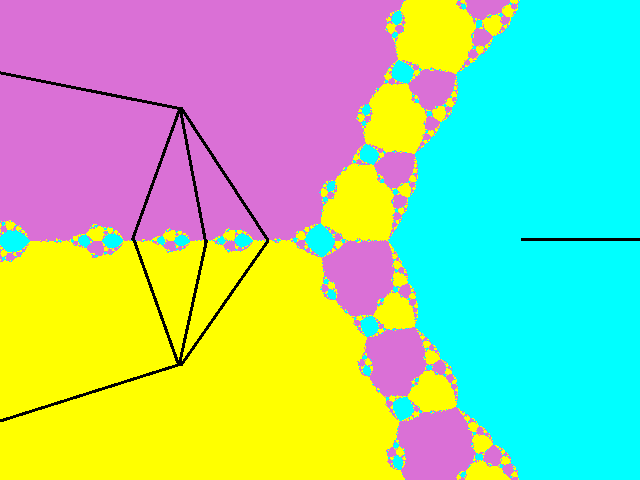}
  \put(-45.5,35){$\bullet$} \put(-45.5,10.5){$\bullet$}\put(-49,39){$b_2(u)$} \put(-49,6){$b_1(u)$}   \put(-13,22.5){$\bullet$}
  \put(-7,25){$0$}  \put(-55,40){$0$}  \put(-55,5){$0$}  \put(-40,15){$\kappa$}
      \put(-112.5,37.5){$\bullet$} \put(-114,41){$b_2(u)$} \put(-111,8){$\bullet$} \put(-114,5){$b_1(u)$}    \put(-73,23){$b_3(u)$}     \put(-76,22.8){$\bullet$}    \put(-100,40){$-\zeta/4$}
}
  \caption{The graphs $G(u,\zeta)$ (left) and $I(u, \kappa)$ (right).   } 
\end{figure}

 Note that  we have assumed that  $\lambda\in\Omega_0$  is  non-hyperbolic, this in  particular implies that  at least one of the graphs $G(\lambda,\zeta), I(\lambda, \kappa)$ avoids the free critical orbit (i.e. the orbit of $0$) of $N_\lambda$. 
 If $G(\lambda,\zeta)$ avoids the free critical orbit, we set $G_u=G(u,\zeta)$, else, we set  $G_u=I(u,\kappa)$.
 It's clear that in either case, $G_\lambda$ avoids the free critical orbit of $N_\lambda$.

 For the graphs $\{G_u\}_{u\in \mathcal{V}}$, we have  the following {\it local stability property}:

 \begin{fact} \label{stability}  
 Let  $\lambda\in\Omega_0$ be non-hyperbolic  and $\{G_u\}_{u\in \mathcal{V}}$   be the family of graphs defined above. 
 Then for any integer $q\geq 0$, there is a neighborhood $\mathcal{V}_q\subset \mathcal{V}$ ($\mathcal{V}_q$ depends on $q$) of $\lambda$, satisfying that 
 
a). for all  $u\in \mathcal{V}_q$,  the set  $N^{-q}_u(G_u)$ avoids the free critical point $0$,  and

b). the set  $N^{-q}_u(G_u)$  moves continuously  in Hausdorff topology  with respect to $u\in \mathcal{V}_q$.
 \end{fact}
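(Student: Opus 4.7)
The plan is to argue by induction on $q\ge 0$. The base case $q=0$ has essentially been established in the paragraph preceding the statement: by Fact \ref{con}, Fact \ref{int-ray} and Theorem \ref{art-con}, on the neighborhood $\mathcal{V}$ the graph $G_u$ moves continuously in Hausdorff topology, and by construction $G_\lambda$ avoids the entire forward orbit $\{N_\lambda^j(0):j\ge 0\}$ of the free critical point. So I would set $\mathcal{V}_0=\mathcal{V}$.

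For the inductive step, assume the conclusion with neighborhood $\mathcal{V}_q$. To verify part (a) at step $q+1$, note that
\[
0\in N_u^{-(q+1)}(G_u) \iff N_u^{q+1}(0)\in G_u.
\]
The map $u\mapsto N_u^{q+1}(0)$ is holomorphic, $u\mapsto G_u$ is continuous in the Hausdorff topology on $\mathcal{V}_q$, and by the inductive hypothesis $N_\lambda^{q+1}(0)\notin G_\lambda$ (a closed set avoiding the whole free critical orbit). A routine open--closed argument then produces an open neighborhood $\mathcal{V}_{q+1}\subset\mathcal{V}_q$ of $\lambda$ on which $N_u^{q+1}(0)\notin G_u$, giving (a).

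To establish (b), I would pull back $N_u^{-q}(G_u)$ one more step under $N_u$. Thanks to (a), the set $N_u^{-q}(G_u)$ avoids the free critical point $0$, so the only critical points of $N_u$ it can meet are the super-attracting fixed points $b_1(u),b_2(u),b_3(u)$, and only along the B\"ottcher-coordinate/internal-ray pieces it inherits from the Fatou components $B_u^\varepsilon$. Near each $b_\varepsilon(u)$ the pull-back of these internal-ray pieces is computed explicitly via $\phi_u^\varepsilon$, which varies continuously with $u$; in addition, the auxiliary simple preimage in $N_u^{-1}(b_\varepsilon(u))\setminus\{b_\varepsilon(u)\}$ moves holomorphically by the implicit function theorem. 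Away from $\{0,b_1(u),b_2(u),b_3(u)\}$ the map $N_u$ is a local biholomorphism whose inverse branches depend holomorphically on $u$, again by the implicit function theorem. Since $N_u^{-q}(G_u)$ is a compact union of finitely many pieces of the above two types, a covering/compactness argument glues these local descriptions and delivers the continuous Hausdorff dependence of $N_u^{-(q+1)}(G_u)$ on $u\in\mathcal{V}_{q+1}$.

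The main obstacle I expect is precisely the last step, namely controlling the pull-back across the super-attracting fixed points where $N_u$ is genuinely branched: a naive implicit function theorem argument does not apply there. What makes this manageable is that near each $b_\varepsilon(u)$ the relevant pieces of $N_u^{-q}(G_u)$ are internal rays, transported by the B\"ottcher coordinate in a uniform way, so their pull-backs are explicit and continuous in $u$; this reduces everything outside these explicit neighborhoods to the standard continuity of inverse branches for a holomorphic family of finite-degree rational maps.
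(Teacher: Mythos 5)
Your proposal is correct and follows essentially the same route as the paper: the paper's (very terse) proof likewise uses the continuity of $u\mapsto G_u$ together with the fact that $G_\lambda$ avoids the whole free critical orbit to shrink to a neighborhood $\mathcal{V}_q$ on which $0,N_u(0),\dots,N_u^q(0)$ miss $G_u$, and then declares that the conclusion follows. Your inductive formulation and your explicit treatment of the pull-back near the super-attracting fixed points via B\"ottcher coordinates simply fill in the details that the paper leaves to the reader.
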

 
 \begin{proof}  By the choice of the graphs $\{G_u\}_{u\in \mathcal{V}}$,  for any  integer $q\geq 0$, 
 the points $0, N_\lambda(0), \cdots, N^q_\lambda(0)$ avoid the graph $G_\lambda$. By the continuity of the graphs  $\{G_u\}_{u\in \mathcal{V}}$ (see Fact \ref{con} and Theorem \ref{art-con}) and the points $N_u^k(0), 1\leq k\leq q$, 
 we  may choose  a neighborhood of $\lambda$, say $\mathcal{V}_q\subset \mathcal{V}$, such that the points $0, N_u(0), \cdots, N^q_u(0)$ do not meet $G_u$, for all $u\in \mathcal{V}_q$.  The conclusion then follows.
\end{proof}
 
 Fact \ref{stability} will be useful in the proofs of  Lemmas \ref{land}, \ref{imp-2} and Theorem \ref{capture-jordan}.

\section{Characterization  of maps on  $\partial\mathcal{H}^\varepsilon_0$} \label{char-maps}

In  this section, we study the  properties of the maps on  $\partial\mathcal{H}^\varepsilon_0$.



\subsection{Characterization of $\partial\mathcal{H}^\varepsilon_0$}

\begin{thm}\label{pri} Let $\varepsilon\in \{1,2\}$. If  $\lambda\in \partial
\mathcal{H}_0^\varepsilon\cap\Omega$, then  $\partial B_\lambda^\varepsilon$   contains either  the free critical point
$0$ or a parabolic cycle of $N_\lambda$.
\end{thm}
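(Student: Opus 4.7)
The plan is to argue by contradiction: assume $\lambda\in\partial\mathcal{H}_0^\varepsilon\cap\Omega$ and suppose $\partial B_\lambda^\varepsilon$ contains neither $0$ nor a parabolic cycle of $N_\lambda$. I will produce a neighborhood of $\lambda$ disjoint from $\mathcal{H}_0^\varepsilon$, contradicting the assumption that $\lambda$ is a boundary point. Since $\mathcal{H}_0^\varepsilon=\{\mu\in\mathcal{X}:\,0\in B_\mu^\varepsilon\}$ is open and $\lambda\notin\mathcal{H}_0^\varepsilon$, we have $0\notin B_\lambda^\varepsilon$; combined with the hypothesis, $0\in\widehat{\mathbb{C}}\setminus\overline{B_\lambda^\varepsilon}$. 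The same openness applied to $\mathcal{H}_0^{\varepsilon'}$ with $\varepsilon'\neq\varepsilon$ rules out $\lambda\in\mathcal{H}_0^{\varepsilon'}$, so $\lambda\in\Omega_0$, and Theorem~\ref{roesch-jordan} gives that $\partial B_\lambda^\varepsilon$ is a Jordan curve. Since $b_\varepsilon(\lambda)$ is the only critical point of $N_\lambda$ in $B_\lambda^\varepsilon$, Riemann--Hurwitz forces $N_\lambda|_{B_\lambda^\varepsilon}$ to have degree $2$.

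Under the no-parabolic hypothesis, every periodic point on $\partial B_\lambda^\varepsilon\subset J(N_\lambda)$ is repelling, and such points are dense there. By the Ma\~n\'e--Sad--Sullivan theorem they move holomorphically on a neighborhood $\mathcal{U}$ of $\lambda$, and the $\lambda$-lemma extends this motion, by density and continuity, to an equivariant holomorphic motion
\[
H:\mathcal{U}\times\partial B_\lambda^\varepsilon\longrightarrow\widehat{\mathbb{C}},\qquad N_{\lambda'}\circ H_{\lambda'}=H_{\lambda'}\circ N_\lambda\text{ on }\partial B_\lambda^\varepsilon,
\]
such that $H_{\lambda'}(\partial B_\lambda^\varepsilon)$ is a Jordan curve contained in $J(N_{\lambda'})$ for every $\lambda'\in\mathcal{U}$. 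Let $D_{\lambda'}$ denote the complementary component of this curve containing $b_\varepsilon(\lambda')$. It is simply connected, $N_{\lambda'}$-invariant, and has boundary in $J(N_{\lambda'})$; hence $D_{\lambda'}$ is a single Fatou component of $N_{\lambda'}$, and since it contains the super-attracting fixed point $b_\varepsilon(\lambda')$, it equals $B_{\lambda'}^\varepsilon$.

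After possibly shrinking $\mathcal{U}$, continuity of $H$ keeps $H_{\lambda'}(\partial B_\lambda^\varepsilon)$ uniformly close to $\partial B_\lambda^\varepsilon$, so $0\notin\overline{D_{\lambda'}}=\overline{B_{\lambda'}^\varepsilon}$ for every $\lambda'\in\mathcal{U}$; therefore $\mathcal{U}\cap\mathcal{H}_0^\varepsilon=\varnothing$, the sought contradiction. The main obstacle is the identification $D_{\lambda'}=B_{\lambda'}^\varepsilon$: for any $\lambda'\in\mathcal{U}\cap\mathcal{H}_0^\varepsilon$, the basin $B_{\lambda'}^\varepsilon$ carries degree-$3$ internal dynamics (it contains the extra critical point $0$), whereas $D_{\lambda'}$ inherits the degree-$2$ structure of $B_\lambda^\varepsilon$ via the motion; one must show that $D_{\lambda'}$ really is a Fatou component of $N_{\lambda'}$ containing $b_\varepsilon(\lambda')$, after which uniqueness of the immediate basin forces equality, and the degree mismatch delivers precisely the contradiction we need. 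A secondary technical point is justifying the equivariance of the extended motion on the closure of the dense set of repelling periodic points, which uses the uniqueness clause of the $\lambda$-lemma and passage to the limit.
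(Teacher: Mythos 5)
Your route is genuinely different from the paper's, and it could in principle work, but as written there are two gaps, both of which you flag but neither of which you close.

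The first and more serious gap is the assertion that, because the periodic points on $\partial B_\lambda^\varepsilon$ are all repelling and dense there, ``by the Ma\~n\'e--Sad--Sullivan theorem they move holomorphically on a neighborhood $\mathcal{U}$ of $\lambda$.'' The MSS theorem, in the form you need, applies to parameters that are already known to be $J$-stable, and $J$-stability of $\lambda$ is not given here. Each individual repelling periodic point persists on some neighborhood by the implicit function theorem, but without a uniform bound on the multipliers these neighborhoods can shrink to a point as the period grows, and then you have no common $\mathcal{U}$ on which to invoke the $\lambda$-lemma. What rescues this is exactly Ma\~n\'e's theorem: with no critical point and no parabolic cycle on $\partial B_\lambda^\varepsilon$, the boundary is a hyperbolic set with uniform expansion, and hyperbolic sets move holomorphically and equivariantly over a uniform neighborhood. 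This is precisely the ingredient the paper isolates in Lemma~\ref{cripar}, where Ma\~n\'e's theorem is used to upgrade the Schwarz-reflected map $G$ to a uniformly expanding map on $\partial\mathbb{D}$ and thereby produce a polynomial-like restriction $N_\lambda^K\colon V_\lambda\to U_\lambda$ of degree $2^K$.

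The second gap is the identification $D_{\lambda'}=B_{\lambda'}^\varepsilon$. A simply connected open set that is forward invariant and whose boundary lies in $J(N_{\lambda'})$ need not be a single Fatou component; you must rule out Julia set points in its interior, and for $\lambda'\in\mathcal{H}_0^\varepsilon$ this cannot actually hold, since there $\partial B_{\lambda'}^\varepsilon$ is not a Jordan curve (it is quasiconformally equivalent to the basin boundary of $z^3+\tfrac32 z$), whereas $H_{\lambda'}(\partial B_\lambda^\varepsilon)$ is one. So the desired contradiction is present but not yet extracted cleanly. The paper avoids this topological delicacy entirely: once the polynomial-like map $N_u^K\colon V_u\to U_u$ of degree $2^K$ has been transported to nearby parameters $u\in\mathcal{U}$, the presence of the second critical point in the filled Julia set for $u\in\mathcal{H}_0^\varepsilon$ would force the degree to be $3^K$, and that purely numerical clash is the contradiction. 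If you want to salvage your motion-based proof, you should replace the MSS appeal by Ma\~n\'e's theorem plus structural stability of hyperbolic sets, and then compare degrees of the restriction $N_{\lambda'}\colon D_{\lambda'}\to D_{\lambda'}$ to the degree forced by the two critical points $0$ and $b_\varepsilon(\lambda')$ lying in $B_{\lambda'}^\varepsilon$, rather than trying to identify $D_{\lambda'}$ with the whole immediate basin.
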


We remark that with a little more effort, one can show that $\lambda\in\partial\mathcal{H}_0^\varepsilon\setminus\{0\}$ if and only if
$\partial B_\lambda^\varepsilon$   contains either  the free critical point
$0$ or a parabolic cycle of $N_\lambda$. However, this result is not necessary for our purpose in this paper, so we will not give the proof.
To prove Theorem \ref{pri}, we  need the following:

\begin{lem} \label{cripar} Let $\lambda\in \Omega_0$. If $\partial B_\lambda^\varepsilon$   contains neither  the free critical point
$0$ nor a parabolic cycle of $N_\lambda$, then there exist an integer
$K\geq 1$ and two topological disks $U_\lambda,V_\lambda$ with
${B_\lambda^\varepsilon}\Subset V_\lambda\Subset U_\lambda$, such that
$N^K_\lambda: V_\lambda\rightarrow U_\lambda$ is a polynomial-like
map of degree $2^K$.
\end{lem}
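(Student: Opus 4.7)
The plan is to thicken $\overline{B_\lambda^\varepsilon}$ to a Jordan disk $U_\lambda$ such that the component $V_\lambda$ of $N_\lambda^{-K}(U_\lambda)$ containing $\overline{B_\lambda^\varepsilon}$ satisfies $\overline{V_\lambda}\Subset U_\lambda$ for a suitable large $K$, then read off the degree by a Riemann--Hurwitz count.

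The first step is to establish hyperbolicity of $\partial B_\lambda^\varepsilon$ as an invariant set of $N_\lambda$. By Theorem \ref{roesch-jordan}, $\partial B_\lambda^\varepsilon$ is a Jordan curve, and the continuous extension of the B\"ottcher coordinate $\phi_\lambda^\varepsilon$ identifies $N_\lambda|_{\partial B_\lambda^\varepsilon}$ topologically with the doubling map $z\mapsto z^2$ on $\mathbb{S}$, so its periodic points are dense. None is parabolic by hypothesis; none is attracting since $\partial B_\lambda^\varepsilon\subset J(N_\lambda)$; so they are all repelling. Together with the absence of any critical point of $N_\lambda$ on $\partial B_\lambda^\varepsilon$ ($b_\varepsilon(\lambda)$ is interior to $B_\lambda^\varepsilon$, and $0\notin\partial B_\lambda^\varepsilon$ by hypothesis), this yields a tubular neighborhood $W$ of $\partial B_\lambda^\varepsilon$, an integer $K_0\geq 1$, and a constant $c>1$ such that $N_\lambda^{K_0}$ expands some conformal metric on $W$ by a factor of at least $c$.

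Next I would fix $K$ and construct $U_\lambda$. The point $0$ and each $b_j(\lambda)$ for $j\neq\varepsilon$ lie at positive distance from $\overline{B_\lambda^\varepsilon}$ ($0$ by hypothesis combined with $\lambda\in\Omega_0$, the $b_j$ by lying in other super-attracting basins). Fix $K=nK_0$ with $n$ large. Since $\overline{B_\lambda^\varepsilon}$ is forward-invariant and avoids $0$ and $\{b_j(\lambda):j\neq\varepsilon\}$, the set $F_K:=\bigcup_{j=0}^{K-1}N_\lambda^{-j}(\{0,b_1(\lambda),b_2(\lambda),b_3(\lambda)\})\setminus\{b_\varepsilon(\lambda)\}$ is finite, disjoint from $\overline{B_\lambda^\varepsilon}$, and hence at positive distance from $\overline{B_\lambda^\varepsilon}$. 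Choose $U_\lambda$ a Jordan neighborhood of $\overline{B_\lambda^\varepsilon}$ with $\overline{U_\lambda}\subset W\cup B_\lambda^\varepsilon$, thin enough that $\overline{U_\lambda}\cap F_K=\emptyset$. Let $V_\lambda=V_K$ be the component of $N_\lambda^{-K}(U_\lambda)$ containing $\overline{B_\lambda^\varepsilon}$; by iterating the expansion estimate $n$ times (each pullback by $N_\lambda^{K_0}$ shrinks in the conformal metric), one obtains the nested sequence $V_K\Subset V_{(n-1)K_0}\Subset\cdots\Subset V_0=U_\lambda$, so $\overline{V_\lambda}\Subset U_\lambda$ and in particular $V_\lambda\subset U_\lambda$.

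Finally I verify the degree. Suppose $z\in V_\lambda$ is a critical point of $N_\lambda^K$; then $N_\lambda^j(z)\in\{0,b_1(\lambda),b_2(\lambda),b_3(\lambda)\}$ for some $0\leq j\leq K-1$. If $N_\lambda^j(z)=b_i(\lambda)$ with $i\neq\varepsilon$, then $N_\lambda^K(z)=b_i(\lambda)\in U_\lambda$, contradicting $U_\lambda\cap F_K=\emptyset$. If $N_\lambda^j(z)=0$, then $z\in F_K$ (since $\overline{B_\lambda^\varepsilon}$ avoids $0$ and is forward-invariant), contradicting $V_\lambda\subset U_\lambda$ disjoint from $F_K$. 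So $N_\lambda^j(z)=b_\varepsilon(\lambda)$; the external preimages of $b_\varepsilon(\lambda)$ lie in $F_K$ and are therefore excluded, so $z\in\overline{B_\lambda^\varepsilon}$, and by the B\"ottcher conjugacy the unique $N_\lambda^j$-preimage of $b_\varepsilon(\lambda)$ inside $\overline{B_\lambda^\varepsilon}$ is $b_\varepsilon(\lambda)$ itself; hence $z=b_\varepsilon(\lambda)$. Since $b_\varepsilon(\lambda)$ has local degree $2^K$ under $N_\lambda^K$, Riemann--Hurwitz applied to the disks $V_\lambda$ and $U_\lambda$ yields $\deg(N_\lambda^K|_{V_\lambda})=2^K$, so $N_\lambda^K:V_\lambda\to U_\lambda$ is polynomial-like of degree $2^K$.

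The principal obstacle is step one: Ma\~n\'e's theorem on hyperbolic invariant sets does not apply directly because the free critical orbit $\{N_\lambda^k(0)\}$ could a priori accumulate on $\partial B_\lambda^\varepsilon$. The resolution is to exploit the intrinsic degree-$2$ covering structure of $N_\lambda|_{\partial B_\lambda^\varepsilon}$ via the B\"ottcher semiconjugacy: density together with uniform repellingness of periodic orbits on the Jordan curve $\partial B_\lambda^\varepsilon$ (both guaranteed by the hypotheses of the lemma) forces uniform expansion on a sufficiently small tubular neighborhood, regardless of any critical accumulation outside it.
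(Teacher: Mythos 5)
The overall architecture of your proposal (thicken $\overline{B_\lambda^\varepsilon}$ to a disk $U_\lambda$, pull back $K$ times, check that $b_\varepsilon(\lambda)$ is the only critical point of $N_\lambda^K$ in the pullback, and read off the degree $2^K$ by Riemann--Hurwitz) is sound, and your steps two and three are essentially correct. The gap is exactly where you locate it: step one. You assert that density of repelling periodic points on the Jordan curve $\partial B_\lambda^\varepsilon$, together with the absence of critical points on it, \emph{yields} uniform expansion of a conformal metric on a tubular neighborhood. That is not a valid deduction. The snail lemma plus the no-parabolic hypothesis tells you each periodic cycle on $\partial B_\lambda^\varepsilon$ is repelling, but gives no uniform lower bound on the multipliers (your phrase ``uniform repellingness \dots guaranteed by the hypotheses'' is unjustified); and even a uniform bound at the dense set of periodic points does not propagate to expansion on a neighborhood without distortion control, which is precisely what the possible accumulation of the free critical orbit $\{N_\lambda^k(0)\}$ on $\partial B_\lambda^\varepsilon$ threatens. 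The B\"ottcher semiconjugacy with the doubling map is purely topological and carries no derivative information, so ``the intrinsic degree-$2$ covering structure'' cannot supply the missing expansion. In short, your proposed resolution of the principal obstacle restates the desired conclusion rather than proving it.

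What closes this gap --- and what the paper does --- is a reduction to one real dimension. Since $0\notin\partial B_\lambda^\varepsilon$, the two components $B_\lambda^\varepsilon$ and $T_\lambda^\varepsilon$ of $N_\lambda^{-1}(B_\lambda^\varepsilon)$ have disjoint closures, so $N_\lambda$ maps a one-sided neighborhood of $\partial B_\lambda^\varepsilon$ into another; transporting by a Riemann map $h:\widehat{\mathbb{C}}\setminus\overline{B_\lambda^\varepsilon}\rightarrow\widehat{\mathbb{C}}\setminus\overline{\mathbb{D}}$ and extending by Schwarz reflection produces a holomorphic map $G$ on a full neighborhood of $\partial\mathbb{D}$ whose restriction to the circle is a real-analytic degree-two covering with no critical points and no non-repelling cycles. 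Ma\~n\'e's theorem for such circle maps then gives $|(G^{k})'|\geq C\rho^k$ on $\partial\mathbb{D}$, a statement immune to whatever the free critical orbit does outside $\overline{B_\lambda^\varepsilon}$, because after reflection the relevant one-dimensional map simply has no critical points. This expansion is pulled back via $h$ to produce the nested annuli and hence $V_\lambda\Subset U_\lambda$, after which your degree count applies. The paper's alternative route is the Yoccoz puzzle; some such theorem is indispensable, and your step one as written does not contain one.
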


There are two proofs of Lemma \ref{cripar}. One is based on the Yoccoz puzzle technique, the other is based on a theorem of Ma\~n\'e. To avoid  technical argument, we prefer to use  Ma\~n\'e's Theorem in the proof. Here is the detail:

\begin{proof}   Write $N_\lambda^{-1}(B_\lambda^\varepsilon)=B_\lambda^\varepsilon \cup T_\lambda^\varepsilon$.  By the assumption that  $0\notin\partial B_\lambda^\varepsilon$,  we know that  $B_\lambda^\varepsilon$ and $T_\lambda^\varepsilon$  have disjoint closures.  So there is a disk neighborhood $U$ of $\overline{B_\lambda^\varepsilon}$, without intersection with $T_\lambda^\varepsilon$. It is easy to see that $V=N_\lambda(U)$ is also a neighborhood of $\overline{B_\lambda^\varepsilon}$.

Take a Riemann mapping $h: \mathbb{\widehat{C}}\setminus \overline{B_\lambda^\varepsilon}\rightarrow \mathbb{\widehat{C}}\setminus{\overline{\mathbb{D}}}$. The  map
$g=hN_\lambda h^{-1}: h(U\setminus \overline{B_\lambda^\varepsilon}) \rightarrow  h(V\setminus \overline{B_\lambda^\varepsilon})$ is a holomorphic map.  By the Schwarz reflection principle, we can extend $g$ to a holomorphic map $G$, mapping a neighborhood of the circle $\partial\mathbb{D}$ to another neighborhood of $\partial\mathbb{D}$. By assumption,  the restriction  $G|_{\partial\mathbb{D}}:\partial\mathbb{D}\rightarrow 
\partial\mathbb{D}$ has neither critical point nor non-repelling cycles.  By Ma\~n\'e's Theorem \cite{Mane},  the circle $\partial\mathbb{D}$ is a hyperbolic set of $G$.  This means, there exist  constants 
$C>0, \rho>1$,  such that for all $k\geq1$ and all $z\in\partial\mathbb{D}$, 
$$|(G^{k})'(z)|\geq C\rho^k.$$
Then one can find an integer $K\geq 1$ and   two annular neighborhoods $X, Y$ of $\partial\mathbb{D}$  with $X\Subset Y\subset U$, such that $G^K: X\rightarrow Y$ is a proper map of degree $2^K$ (the sets $X,Y$ can be  constructed by hand, see for example the proof of \cite[Proposition 6.1]{QWY}).
By pulling back $X\setminus{\overline{\mathbb{D}}}, Y\setminus{\overline{\mathbb{D}}}$ via $h$,  we  get a polynomial-like map $N^K_\lambda: V_\lambda\rightarrow U_\lambda$,
  where $$V_\lambda=h^{-1}(X\setminus{\overline{\mathbb{D}}})\cup  \overline{B_\lambda^\varepsilon}, \ U_\lambda=h^{-1}(Y\setminus{\overline{\mathbb{D}}})\cup  \overline{B_\lambda^\varepsilon}.$$ 
  
  This completes the proof. \end{proof}

\

\noindent{\it Proof of Theorem \ref{pri}.}
By Lemma \ref{cripar}, there is a neighborhood  $\mathcal{U}$ of $\lambda$, such
that for all $u\in \mathcal{U}$, the map  $N_u^K$ has only one  critical value  in $\overline{U_\lambda}$.
 This critical value is nothing but $b_\varepsilon(u)$.
Thus the
component $V_u$ of  $N^{-K}_u(U_\lambda)$ that contains $b_\varepsilon(u)$ is
a disk. Since $\partial V_u$ moves holomorphically with respect to
$u\in \mathcal{U}$, we may shrink $\mathcal{U}$ a little bit
 so that $V_u\Subset U_\lambda$ for all  $u\in \mathcal{U}$. Set $U_u=U_\lambda$. In this way, we get a polynomial-like map $N^K_u: V_u\rightarrow
 U_u$  of degree $2^K$ for all $u\in
 \mathcal{U}$.

 However when $u\in\mathcal{U}\cap\mathcal{H}_0^\varepsilon$, the basin $B_u^\varepsilon$ contains two critical points of $N_u$ and the degree of
 $N^K_u: V_u\rightarrow
 U_u$ is  $3^K$. This is a contradiction.
\hfill $\Box$

\subsection{Parameter ray vs dynamical ray}
Recall that $\mathcal{R}_0^1(t)$ is the parameter ray in $\mathcal{H}_0^1$, defined in Section \ref{parameter-ray}. 
For any $t\in[0,\frac{1}{2}]$, the  {\it impression } $\mathcal{I}_t$ of $\mathcal{R}_0^1(t)$  is defined as the  intersection of  the shrinking   closed sectors
$\overline{\mathcal{S}_k(t)}$, where
$$\mathcal{S}_k(t)=(\Phi_0^1)^{-1}(\{re^{2\pi i \theta}; r\in (1-1/k,1), \ \theta\in(t-1/k,t+1/k)\cap [0,1/2]\}).$$


\begin{lem} \label{land} For any  $t\in [0, \frac{1}{2}]$ and any  $\lambda\in
\mathcal{I}_t\cap \Omega$,

1. if $N_\lambda$ has no parabolic cycle, then
$R^1_\lambda(t)$ lands at $0$;

2. if $N_\lambda$ has a parabolic cycle, then  $R^1_\lambda(t)$ lands at a parabolic point.
\end{lem}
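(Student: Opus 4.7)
The plan is to extract a sequence $\{\lambda_n\}\subset\mathcal{R}_0^1(t)$ with $\lambda_n\to\lambda$, then transfer information about the endpoint of the bifurcating ray $R^1_{\lambda_n}(t)$ to the landing point of $R^1_\lambda(t)$. For each $n$, since $\lambda_n\in\mathcal{R}_0^1(t)$ one has $\phi^1_{\lambda_n}(0)=r_n e^{2\pi i t}$ with $r_n=|\Phi_0^1(\lambda_n)|\to 1^-$, so the ray $R^1_{\lambda_n}(t)$ ends precisely at the critical point $0$. Before studying the limit, I observe that $\lambda\in\partial\mathcal{H}_0^1\cap\Omega$ and the open hyperbolic components $\mathcal{H}_0^1,\mathcal{H}_0^2$ are disjoint, so $\lambda\in\Omega_0$. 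Theorem \ref{roesch-jordan} then gives that $\partial B^1_\lambda$ is a Jordan curve, hence $\phi^1_\lambda$ extends to a homeomorphism $\overline{B^1_\lambda}\to\overline{\mathbb{D}}$ and $R^1_\lambda(t)$ lands at the well-defined point $y_\lambda:=(\phi^1_\lambda)^{-1}(e^{2\pi i t})$. Moreover, $\lambda$ lies in no hyperbolic component, so Theorem \ref{pri} applies: $\partial B^1_\lambda$ contains the critical point $0$ or a parabolic cycle of $N_\lambda$.

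In case~1 (no parabolic cycle), Theorem \ref{pri} forces $0\in\partial B^1_\lambda$. The plan is a Carath\'eodory kernel argument: as the critical point becomes a boundary point of $B^1_\lambda$ in the limit, the pointed domains $(U^1_{\lambda_n},b_1(\lambda_n))$ converge in the Carath\'eodory sense to $(B^1_\lambda,b_1(\lambda))$, so the normalized inverse B\"ottcher maps $(\phi^1_{\lambda_n})^{-1}\colon \mathbb{D}_{r_n}\to U^1_{\lambda_n}$ converge locally uniformly to $(\phi^1_\lambda)^{-1}\colon \mathbb{D}\to B^1_\lambda$. The Jordan property of $\partial B^1_\lambda$ extends this convergence to boundary values, yielding $y_\lambda=\lim_n (\phi^1_{\lambda_n})^{-1}(r_n e^{2\pi i t})=\lim_n 0=0$.

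In case~2 (parabolic cycle exists), the critical orbit of $N_\lambda$ is captured by the parabolic basin, so the Carath\'eodory kernel argument need not give $y_\lambda=0$. Instead I use Fact \ref{stability} (local stability of the articulated-ray graphs $G_u$) to obtain Hausdorff-continuous puzzle pieces around $y_\lambda$ for $u$ close to $\lambda$, which pin down $y_\lambda$ by nested intersection. The dynamical recursion $N_\lambda^k(y_\lambda)=(\phi^1_\lambda)^{-1}(e^{2\pi i 2^k t})$ shows the $N_\lambda$-orbit of $y_\lambda$ lies on $\partial B^1_\lambda$, and a Snail-Lemma-style argument in the spirit of the proof of Lemma \ref{r-ray-land} forces $y_\lambda$ to be either (pre-)repelling or (pre-)parabolic. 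The (pre-)repelling case is ruled out using Fact \ref{con} together with the fact that, for each $\lambda_n$, the set $R^1_{\lambda_n}(t)$ in fact bifurcates at the critical point rather than landing at a repelling cycle, so a repelling landing point for $\lambda$ would be incompatible with local stability under perturbation. Combined with the parabolic hypothesis and Theorem \ref{pri}, this identifies $y_\lambda$ as a parabolic point on $\partial B^1_\lambda$.

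The main obstacle is case~2: the parabolic degeneration creates a genuine discontinuity between the hyperbolic critical-orbit behavior of $N_{\lambda_n}$ and the parabolic behavior of $N_\lambda$, so the Carath\'eodory-type argument of case~1 does not apply directly. Overcoming this will require the uniform geometric control supplied by Fact \ref{stability} together with a careful dynamical identification of $y_\lambda$ on the parabolic cycle.
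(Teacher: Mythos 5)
Your overall skeleton (reduce to $\lambda\in\Omega_0$, invoke Theorem \ref{roesch-jordan} to get a well-defined landing point $y_\lambda$ of $R^1_\lambda(t)$, invoke Theorem \ref{pri} to locate either $0$ or a parabolic point on $\partial B^1_\lambda$) matches the paper, but the two steps that actually identify $y_\lambda$ have genuine gaps. First, a smaller but real issue: $\mathcal{I}_t$ is the impression, i.e.\ $\bigcap_k\overline{\mathcal{S}_k(t)}$, so a point $\lambda\in\mathcal{I}_t$ is only guaranteed to be a limit of parameters $\lambda_n$ with $\Phi_0^1(\lambda_n)=r_ne^{2\pi i\theta_n}$, $r_n\to1$, $\theta_n\to t$; it need not be an accumulation point of the ray $\mathcal{R}_0^1(t)$ itself, so you cannot assume $\phi^1_{\lambda_n}(0)=r_ne^{2\pi it}$. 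Second, and more seriously, your Case~1 hinges on the assertion that Carath\'eodory kernel convergence of $(U^1_{\lambda_n},b_1(\lambda_n))$ to $(B^1_\lambda,b_1(\lambda))$ plus the Jordan property of $\partial B^1_\lambda$ yields convergence of \emph{boundary} values of the inverse B\"ottcher maps, so that $(\phi^1_{\lambda_n})^{-1}(r_ne^{2\pi i\theta_n})\to(\phi^1_\lambda)^{-1}(e^{2\pi it})$. Carath\'eodory convergence only gives locally uniform convergence on the open disk; passing to boundary values requires uniform control (e.g.\ equicontinuity of the boundary extensions) on the \emph{approximating} domains, which the Jordan property of the single limit domain does not provide. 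This is precisely the hard point, and the paper avoids it entirely: it shows instead that $0$ (resp.\ the parabolic point) is the landing point of some ray $R^1_\lambda(t')$ and then proves $t'=t$ by a separation argument — a graph $G_\lambda$ from Fact \ref{stability} avoiding the critical orbit would, if $t'\neq t$, have a preimage $N_\lambda^{-q}(G_\lambda)$ separating $R^1_\lambda(t')$ from a stable sector around $R^1_\lambda(t)$, contradicting that for parameters in $\mathcal{S}_k(t)$ the critical point lies in that sector. You should replace the kernel argument by this combinatorial one.

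Your Case~2 correctly identifies the difficulty but does not resolve it. "Hausdorff-continuous puzzle pieces around $y_\lambda$ which pin down $y_\lambda$ by nested intersection" is not an argument; the Snail-Lemma step presupposes $t$ is (pre)periodic under doubling, which is false for generic $t$; and the final identification of $y_\lambda$ with the parabolic point is asserted, not proved. The paper's route is concrete: by \cite[Lemma 6.5]{Ro08} the parabolic cycle gives a quadratic-like restriction around $0$ hybrid equivalent to $z^2+1/4$ whose filled Julia set meets the Jordan curve $\partial B^1_\lambda$ in exactly one parabolic point $\beta_\lambda$; that point is the landing point of some $R^1_\lambda(t')$, and the same Fact \ref{stability} separation argument as in Case~1 forces $t'=t$. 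Note also that Fact \ref{stability} is only stated for non-hyperbolic parameters, which is why the paper runs one uniform separation argument for both cases rather than treating the hyperbolic-looking Case~1 by a different (analytic) method.
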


\begin{proof} For any  $\lambda\in
\mathcal{I}_t\cap \Omega$, 
it follows from Theorem \ref{pri} that either $0\in
\partial B^1_\lambda$ or $\partial B^1_\lambda$ contains a parabolic cycle.
If  $\partial B^1_\lambda$ contains a parabolic cycle, then by \cite[Lemma 6.5]{Ro08},
there exist an integer $p\geq 1$
and two disks $U$ and $V$ containing  the free critical point $0$, such that  $N_\lambda^p:
U\rightarrow V$ is a quadratic-like map satisfying the following two properties:

(i). $N_\lambda^p: U\rightarrow V$ is hybrid equivalent to  $q(z)=z^2+1/4$, and

(ii).  The  filled Julia set $K$ of $N_\lambda^p:
U\rightarrow V$ intersects
$\partial B_\lambda^1$ at exactly one point.  This point  is a parabolic fixed point of $N_\lambda^p$, say $\beta_\lambda$.

 It is known from Theorem \ref{roesch-jordan} that $\partial B^1_\lambda$ is a Jordan curve.  This implies that the free critical point $0$  (in the case that $0\in \partial B^1_\lambda$), or the parabolic point
$\beta_\lambda$ (in the case that $\partial B^1_\lambda$ contains a parabolic cycle)
is necessarily a landing point of some  internal ray, say $R^1_\lambda(t')$.

In the following, we show $t'=t$.    The proof is based on the  local stability property, as stated in Fact \ref{stability}.
We assume by contradiction that $t'\neq t$. 
Then by Fact \ref{stability}, 
there is a graph $G_\lambda$ avoiding the free critical orbit of $N_\lambda$, an integer $q\geq0$  and a neighborhood $\mathcal{V}'$ of $\lambda$, satisfying that

(a).   $G_u$ is well-defined and continuous when  $u$ ranges over $\mathcal{V}'$;
 
(b).  $N_u^{-q}(G_u)$ avoids the free critical point $0$ for  all  $u\in\mathcal{V}'$;

(c).   when $u=\lambda$, the graph $N_\lambda^{-q}(G_\lambda)$ separates  $R_\lambda^1(t')$ and $R_\lambda^1(t)$.

The third property (c) implies that $N_\lambda^{-q}(G_\lambda)$ also separates $R_\lambda^1(t')$ and a sector neighborhood of   $R_\lambda^1(t)$.  The sector neighborhood can be chosen as follows. We may first  choose  rational angles $t_1, t_2$ such that

(1).   $t_1<t<t_2$  in counter clockwise order.
 
 (2).  The internal rays  with angles $t_1,t,t_2$  are in the same component of $\mathbb{\widehat{C}}\setminus N_\lambda^{-q}(G_\lambda)$.
 

(3). In a neighborhood $\mathcal{V}_0\subset \mathcal{V}'$ of $\lambda$, the sets $R_u^1(\theta)$ with $\theta\in\{t_1,t_2\}$ are   internal rays,  avoiding  the points
 $N^j_u(0)$ with $0\leq j\leq q$,  and their closures  move continuously (this is guaranteed by suitable choices of the angles and  Fact \ref{con}).

Let $S_{u}(t_1, t_2)$  be the open sector containing $(\phi_u^1)^{-1}\{(0,1/2)e^{2\pi i t}\}$ and  bounded by $\partial B_u^1$, $\overline{R_u^1}(\theta), \theta\in\{t_1,t_2\}$. 
By the continuity  of $N_u^{-q}(G_u)$, we see that for  all $u\in \mathcal{V}_0$,  the free critical point  $0$ and $(\phi_u^1)^{-1}\{(0,1/2)e^{2\pi i t'}\}$ are contained in the same component, say $D_u$,  of  $\mathbb{\widehat{C}}\setminus N_u^{-q}(G_u)$,  and $D_u\cap S_{u}(t_1, t_2)=\emptyset$.
However, by  the assumption $\lambda\in \mathcal{I}_{t}$  and the definition of $\mathcal{I}_{t}$, we know  that 
 when $u\in \mathcal{S}_k(t)\cap \mathcal{V}_0$ with ${1}/{k}<\min\{|t_1-t|, |t_2-t|\}$, the free critical point $0\in S_{u}(t_1, t_2)$. This is  a contradiction.
\end{proof}




\section{Yoccoz puzzle theory revisited}\label{yoccoz}

\subsection{The Yoccoz puzzle theory}


 Let $X,X'$ be connected open subsets of $\mathbb{\widehat{C}}$ with finitely
many smooth boundary components and such that $X'\Subset X\neq \mathbb{\widehat{C}}$. A holomorphic
map $f : X'\rightarrow X$ is called a {\it rational-like map}
if it is proper and has finitely many critical points  in $X'$. We denote by ${\rm deg}(f)$ the topological degree
of $f$ and by $K(f)=\bigcap_{n\geq 0}
f^{-n}(X)$ the
 {\it filled Julia set}, by $J(f)=\partial K(f)$ the {\it Julia set}. A rational-like map $f : X'\rightarrow X$ is called {\it simple} if its  filled Julia set $K(f)$ contains only one critical point, and with multiplicity one.

Although we do not use here, we remark that  an analogue of Douady-Hubbard's straightening
theorem \cite{DH}  holds for rational-like maps. That is, a rational-like map  $f : X'\rightarrow X$
 is always  {\it hybrid equivalent} to a rational map $R$ of degree
${\rm deg}(f)$; if $K(f)$ is connected, such $R$
is unique up to M\"obius conjugation,
provided we further require that $R$ is  post-critically finite
outside  its filled Julia set, see \cite[Theorem 7.1]{W}.

A finite, connected graph $\Gamma$ is called a {\it puzzle} of $f$ if it
satisfies the conditions:  $\partial X\subset \Gamma$, $f(\Gamma\cap X')\subset \Gamma$,  and the orbit of each critical point  of $f$ avoids $\Gamma$.

The {\it puzzle pieces} $P_n$ of depth $n$ are the connected components of $f^{-n}(X\setminus \Gamma)$
and the one containing the point $x$ is denoted by $P_n(x)$.  For any  $x\in J(f)$, let ${\rm orb}(x)=\{x,f(x),f^2(x),\cdots\}$ be the forward orbit of $x$. For $n\geq0$,
let $P_n^*(x)=\overline{P_n(x)}$ if ${\rm orb}(x)\cap \Gamma=\emptyset$, and $P_n^*(x)=\bigcup_{x\in \overline{P_n}}\overline{P_n}$
if ${\rm orb}(x)\cap \Gamma\neq\emptyset$.
The  {\it impression}  ${\rm Imp}(x)$  of $x$   is defined by
\begin{equation*}
{\rm Imp}(x)=
 \bigcap_{n\geq0} {P^*_n(x)}.
\end{equation*}

A puzzle $\Gamma$ is said to be $k${\it-periodic} at a critical point $c$ if $f^k(P_{n+k}(c))=P_n(c)$ for any  $n\geq 0$, where $k\geq1$ is  some smallest integer.

 A puzzle $\Gamma$ is said {\it admissible} if satisfies the conditions: 
(a). for each critical point $c\in K(f)$, there is an integer $d_c\geq0$ such that  $P_{d_c}(c)\setminus\overline{ P_{d_c+1}(c)}$ is a non-degenerate annulus;
 (b). each puzzle piece is a topological disk. 


 The following result is fundamental and well-known:

 \begin{thm}[Branner-Hubbard \cite{BH}, Roesch \cite{Ro99}, Yoccoz \cite{M2, H}]\label{Y-P}
Let $f:X'\rightarrow X$ be a simple rational-like map, with critical point $c\in K(f)$. Suppose that $\Gamma$ is an admissible puzzle.

1. If $\Gamma$ is not periodic at $c$, then $K(f)=J(f)$ and for any $x\in J(f)$, the impression ${\rm Imp}(x)=\{x\}$.

2. If $\Gamma$ is $k$-periodic at $c$, then $f^k:P_{n+k}(c)\rightarrow P_{n}(c)$ for some large $n$ defines a
quadratic-like map with filled Julia set ${\rm Imp}(c)$. Moreover,
\begin{equation*}
{\rm Imp}(x)=\begin{cases}
 \text{a conformal copy of ${\rm Imp}(c)$},\ &\text{ if }x\in \bigcup_{k\geq 0}f^{-k}({\rm Imp}(c)),\\
\{x\},\ &\text{ if }x\in K(f)-\bigcup_{k\geq 0}f^{-k}({\rm Imp}(c)).
\end{cases}
\end{equation*}
 \end{thm}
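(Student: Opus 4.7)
The plan splits into the two cases according to whether $\Gamma$ is $k$-periodic at the critical point $c$. In both, the key objects are the nest of critical puzzle pieces $\{P_n(c)\}_{n\geq 0}$ and the annulus moduli $m_n:=\mathrm{mod}(P_n(c)\setminus\overline{P_{n+1}(c)})$. Admissibility supplies $m_{d_c}>0$. Because $f$ is simple, every pullback component of a puzzle piece under $f$ maps either by degree $1$ or by degree $2$, the latter precisely when it contains $c$; this dichotomy is the engine for every modulus estimate below.

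For the periodic case, I would observe that $f^k:P_{n+k}(c)\to P_n(c)$ is proper with $c$ as its only critical point, hence of degree $2$. Choosing $n\geq d_c$ large enough that $P_{n+k}(c)\Subset P_n(c)$, which follows by pulling $m_{d_c}$ back through iterates of this branched cover, the restriction becomes a quadratic-like map. Its filled Julia set equals $\bigcap_{\ell\geq 0}P_{n+k\ell}(c)$, which is $\mathrm{Imp}(c)$ by the definition of the puzzle nest. For a non-critical $x\in K(f)$, either some iterate $f^j(x)$ lands in $\mathrm{Imp}(c)$, in which case a univalent branch of $f^{-j}$ near $x$ transports a neighborhood of $\mathrm{Imp}(c)$ onto one of $x$ and identifies $\mathrm{Imp}(x)$ with a conformal copy of $\mathrm{Imp}(c)$; or the tableau column of $x$ is eventually disjoint from that of $c$, so every pullback in the nest around $x$ is univalent, yielding $\sum_n \mathrm{mod}(P_n(x)\setminus\overline{P_{n+1}(x)})=\infty$ and hence $\mathrm{Imp}(x)=\{x\}$ by the Gr\"otzsch inequality.

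For the non-periodic case, I would invoke the Yoccoz/Branner--Hubbard tableau machinery. Non-periodicity of $\Gamma$ at $c$ means that the sequence of first-return depths of $c$ to its critical nest does not stabilize into an eventual period. Each level at which this non-periodicity manifests produces, via a univalent pullback of $A_{d_c}:=P_{d_c}(c)\setminus\overline{P_{d_c+1}(c)}$ along a branch of some $f^{-j}$ that avoids $c$, a definite contribution to some $m_n$. The standard tableau analysis then yields $\sum_n m_n=\infty$, which forces $\mathrm{Imp}(c)=\{c\}$; in particular $c\in J(f)$ and $K(f)=J(f)$. The same univalent pullback scheme, applied column by column in the tableau of an arbitrary $x\in J(f)$, gives $\mathrm{Imp}(x)=\{x\}$.

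The main obstacle is the combinatorial bookkeeping of the tableau in the non-periodic case: one must follow the critical orbit through the tableau columns carefully enough to locate enough branches avoiding $c$ along which to pull back $A_{d_c}$ univalently. This is precisely the content of the Branner--Hubbard recurrent versus non-recurrent dichotomy, and the detailed execution --- principal nest, modulus-doubling around deep critical returns, and the handling of persistently recurrent orbits that would otherwise stall the argument --- is what would need to be carried out in full; in the periodic case the corresponding subtlety is verifying the strict containment $P_{n+k}(c)\Subset P_n(c)$ for some eventual $n$, which is again an application of the simple-map degree dichotomy to the admissible annulus $A_{d_c}$.
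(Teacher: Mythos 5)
A preliminary remark on the comparison you asked for: the paper does not prove this theorem. It is stated as a quoted result, with the argument deferred wholesale to Branner--Hubbard, Yoccoz (via Milnor and Hubbard) and Roesch's rational-like adaptation. So the benchmark is the standard proof in those references, and your outline does reproduce its architecture faithfully: the critical nest $\{P_n(c)\}$, the degree-$1$/degree-$2$ dichotomy for pullbacks coming from simplicity, the Gr\"otzsch inequality applied to $\sum_n \mathrm{mod}\bigl(P_n(x)\setminus\overline{P_{n+1}(x)}\bigr)$, the quadratic-like restriction $f^k\colon P_{n+k}(c)\to P_n(c)$ obtained by pulling the admissible annulus through the periodic return, and univalent transport of impressions along orbits (where, for the ``conformal copy'' case, you should take $j$ minimal with $f^j(x)\in\mathrm{Imp}(c)$ so that the inverse branch genuinely avoids $c$).

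As a proof, however, the proposal stops exactly where the theorem begins. The entire content of part 1 is the claim that non-periodicity of the critical tableau forces $\sum_n m_n=\infty$, and this is the step you explicitly do not carry out. It is not a routine matter of pulling back the single annulus $A_{d_c}$ univalently: in the persistently recurrent case no such univalent pullback onto the critical nest exists, and one needs the Branner--Hubbard/Yoccoz recursive estimate (the tableau rules, the $\tau$-function, and the halving of modulus under degree-two returns) to extract divergence; the non-recurrent and recurrent-but-not-persistently-recurrent cases each require their own bookkeeping to produce infinitely many disjoint copies of definite modulus. Writing ``the standard tableau analysis then yields $\sum_n m_n=\infty$'' is a citation, not an argument. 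A second, smaller omission: the implication from ``$\mathrm{Imp}(x)=\{x\}$ for all $x$'' to ``$K(f)=J(f)$'' needs the observation that a component $U$ of $\mathrm{int}\,K(f)$ cannot meet $\bigcup_n f^{-n}(\Gamma)$ (since $\Gamma\cap K(f)$ lies in $J(f)$ for these puzzles), hence satisfies $U\subset P_n(x)$ for every $n$ and every $x\in U$ off the graph, i.e.\ $U\subset\mathrm{Imp}(x)=\{x\}$, a contradiction. Neither of these invalidates your strategy --- it is the right one and matches the sources the paper cites --- but the tableau estimate is the theorem, and in the proposal it is named rather than proved.
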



In general, Theorem \ref{Y-P} is used to study  the topology  (e.g. connectivity and local connectivity) of the Julia set. By applying complex analysis especially some distortion results, one can further  study the analytic property (e.g. Lebesgue measure and Hausdorff dimension)
 of the Julia set. One of the fundamental result is due to Lyubich and Shishikura:

  \begin{thm}[Lyubich, Shishikura]\label{Lyu}
Let $f:X'\rightarrow X$ be a simple rational-like map, with critical point $c\in K(f)$. Suppose that $\Gamma$ is an admissible puzzle. 

1. If $\Gamma$ is not periodic at $c$, then the Lebesgue measure of $J(f)$ is zero.

2. If $\Gamma$ is periodic at $c$, then  Lebesgue measure of $J(f)$ is zero if and only if the  Lebesgue measure of $\partial {\rm Imp}(c)$ is zero.
 \end{thm}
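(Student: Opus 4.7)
The plan is to reduce both statements to one measure-theoretic core lemma: the set
\[
J_\ast(f) = \{x \in J(f) \mid \mathrm{Imp}(x) = \{x\}\}
\]
has Lebesgue measure zero. This is the essential content, and Theorem \ref{Y-P} will then package the two cases. In the non-periodic case Theorem \ref{Y-P}(1) gives $J(f) = J_\ast(f)$, so $|J(f)| = 0$ follows immediately. In the periodic case Theorem \ref{Y-P}(2) gives the decomposition $J(f) = J_\ast(f) \sqcup \bigcup_{k\geq 0} f^{-k}(\partial \mathrm{Imp}(c))$, where each component of $f^{-k}(\mathrm{Imp}(c))$ meets $J(f)$ in a conformal copy of $\partial \mathrm{Imp}(c)$; since there are at most countably many such pullbacks and each inverse branch has bounded Jacobian off a negligible critical set, $|J(f)| = 0$ if and only if $|\partial \mathrm{Imp}(c)| = 0$.

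The core lemma $|J_\ast(f)| = 0$ I would prove by a Lebesgue density argument driven by the admissibility hypothesis. Admissibility provides an annulus $A_0 = P_{d_c}(c) \setminus \overline{P_{d_c+1}(c)}$ of definite modulus $\mu_0 > 0$, whose interior (after a harmless thickening that keeps it inside the Fatou set, exploiting that puzzle boundaries of finite depth are piecewise real-analytic and intersect $J(f)$ only at countably many preperiodic points) is disjoint from $J(f)$ up to a set of measure zero. The plan is then: for every $x \in J_\ast(f)$, construct a sequence of depths $n_j \to \infty$ and integers $k_j$ such that $f^{k_j}$ maps a neighbourhood $U_j$ of $P_{n_j}(x)$ univalently onto the neighbourhood $P_{d_c}(c)$ of $P_{d_c+1}(c)$, carrying $P_{n_j}(x)$ into $P_{d_c+1}(c)$. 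The simplicity hypothesis, that $c$ is the unique critical point in $K(f)$ and is simple, is exactly what allows these inverse branches to be chosen univalent on the larger piece: any orbit segment of $x$ that stays in $K(f) \setminus \{c, f(c), \ldots\}$ avoids critical values, and a first-return analysis to $P_{d_c}(c)$ controls the multiplicity when it does not.

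Pulling back $A_0$ by these univalent branches, I obtain annuli $A_j$ of modulus exactly $\mu_0$ separating $P_{n_j}(x)$ from the complement of $U_j$. Applying Koebe distortion to the univalent branch, $A_j$ is contained in a disk of diameter comparable to that of $P_{n_j}(x)$ and encloses a round disk of comparable radius lying in the Fatou set. Hence there exists $\delta = \delta(\mu_0) > 0$, depending only on the distortion constant, such that for a sequence of radii $r_j \to 0$,
\[
\frac{|B(x, r_j) \setminus J(f)|}{|B(x, r_j)|} \geq \delta.
\]
Thus no point of $J_\ast(f)$ is a Lebesgue density point of $J(f)$, and the Lebesgue density theorem forces $|J_\ast(f)| = 0$.

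The main obstacle will be the combinatorial step of constructing the univalent pullbacks with uniformly bounded distortion: controlling the itinerary of $x$ relative to the critical puzzle pieces, and showing that a suitable $k_j$ and "thickened" pullback $U_j \supset P_{n_j}(x)$ always exist with the map $f^{k_j}\colon U_j \to P_{d_c}(c)$ univalent. For points $x$ whose forward orbit accumulates on the graph $\Gamma$, this requires handling the case where $P_n^\ast(x)$ is a finite union rather than a single piece, which I would manage by a local first-landing map into $P_{d_c}(c)$ and a standard Markov-partition argument. The hybrid equivalence to a quadratic-like map in the periodic case, together with the fact that $\partial \mathrm{Imp}(c) = \mathrm{Imp}(c) \cap J(f)$, then cleanly yields the dichotomy of statement~(2).
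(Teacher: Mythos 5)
Your overall architecture --- reduce both cases via Theorem \ref{Y-P} to the claim that the set of points with singleton impression has measure zero, and prove that claim by a Lebesgue density argument using univalent pullbacks of a fixed critical piece with Koebe distortion control --- is exactly the architecture of the proofs the paper points to (it gives none itself, deferring to \cite{L} and \cite[Theorem 9.1]{QRWY}). But one step of your argument is false as written. The annulus $A_0=P_{d_c}(c)\setminus\overline{P_{d_c+1}(c)}$ supplied by admissibility is \emph{not} contained in the Fatou set up to measure zero: it contains every depth-$(d_c+1)$ puzzle piece inside $P_{d_c}(c)$ other than $P_{d_c+1}(c)$, hence in general a substantial portion of $J(f)$, and no thickening changes this. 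The only legitimate role of $A_0$ is geometric --- its definite modulus, transported by the univalent branch, bounds the shape of $P_{n_j}(x)$ inside $U_j$. The definite gap of Fatou measure must come from elsewhere: fix once and for all a round disk $D_0\subset P_{d_c}(c)$ with $D_0\cap J(f)=\emptyset$ (such a disk exists because $J(f)=\partial K(f)$ and the boundary of a closed set has empty interior), and carry $D_0$ along the same univalent branches; Koebe then produces a Fatou disk of definite relative size at each scale around $x$. With that substitution your density argument is the standard one.

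The second problem is that the step you defer is the actual content of the theorem, and the tool you propose for it would not suffice. Producing, for every $x$ with singleton impression and for a sequence of scales tending to $0$, an iterate $f^{k_j}$ that is \emph{univalent} on a thickened neighbourhood $U_j\supset P_{n_j}(x)$ and maps it onto the fixed piece $P_{d_c}(c)$ with modulus bounded below is straightforward only when the orbit of $x$ stays away from $c$ at some definite puzzle depth. When the orbit of $x$ (or of $c$ itself) returns recurrently to arbitrarily deep critical pieces, the naive pullback of $P_{d_c}(c)$ along the orbit accumulates unboundedly many critical passages, and neither univalence nor a uniform lower modulus bound follows from ``a first-landing map and a standard Markov-partition argument''; one needs the first-return / principal-nest analysis (and, in the persistently recurrent case, the enhanced nest together with the complex bounds of \cite{KSS,KS,QY}) --- precisely the machinery this paper itself has to invoke in Step 3, Case 3.2 of the proof of Theorem \ref{rigidity1}. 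As it stands, your proposal correctly identifies where the difficulty lies but does not supply the idea that closes it, so the core lemma remains unproved.
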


  Theorem \ref{Lyu} is slightly stronger than Lyubich's original result \cite{L}, but the proof works well without any problem. See \cite{L} or \cite[Theorem 9.1]{QRWY} for a proof.

\subsection{Yoccoz puzzle for cubic Newton maps}

The previous subsection provides the  basic  machinery of the Yoccoz puzzle theory. Further developments of these techniques  can be found in 
\cite{KSS},\cite{KS},\cite{QY} and the references therein.  Now, we  concentrate   on the settings of cubic Newton maps and state a theorem for our later use.

In \cite{Ro08},   to apply the Yoccoz puzzle theory, two kinds of graphs are constructed. We  briefly recall the constructions here.
Let $\lambda\in \Omega_0$, define
$$X_\lambda=\mathbb{\widehat{C}}\setminus \bigcup_{1\leq\varepsilon\leq 3}(\phi_\lambda^{\varepsilon})^{-1}(\mathbb{D}_{1/2}).$$

Fix a  rational angle  $\eta\in \Theta_\lambda$, it can induce a graph: 
$$Z_\lambda(\eta)=\bigcup_{j\geq 0} (\overline{R_\lambda^1}(2^j \eta)\cup \overline{R_\lambda^2}(1-2^j \eta)).$$

The graphs defining the Yoccoz puzzles are the refinement of the two graphs $G(\lambda, \zeta), I(\lambda, \kappa)$  in Section \ref{two-graph}.  They are defined as follows:
$$Y^I_\lambda(\kappa)=\partial X_\lambda \cup (X_\lambda\cap I(\lambda, \kappa)),$$
$$Y^{II}_\lambda(\zeta, \eta)=\partial X_\lambda \cup (X_\lambda\cap (G(\lambda, \zeta)\cup Z_\lambda(\eta))),$$
here, the rational angle $\eta$ is chosen so that $\zeta$ and $\eta$ have disjoint orbits under the angle doubling map.

  \begin{figure}[!htpb]
  \setlength{\unitlength}{1mm}
  {\centering
  \includegraphics[width=62mm]{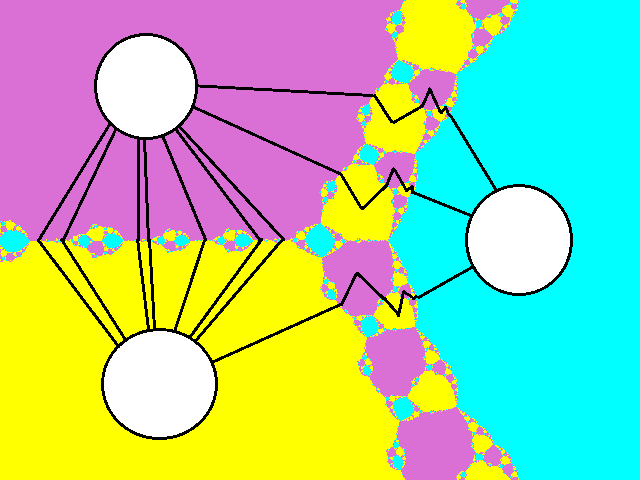}
  \includegraphics[width=62mm]{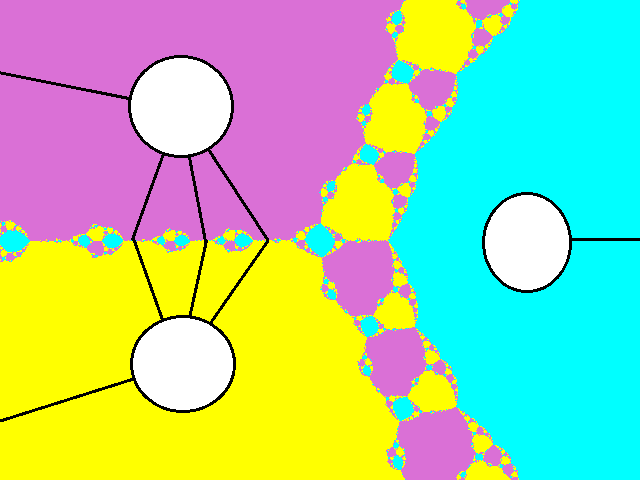}
}
  \caption{The graphs $Y^{II}_\lambda(\zeta, \eta)$ (left) and $Y^I_\lambda(\kappa)$ (right).   } 
\end{figure}

\begin{thm} \label{newton-puzzle}  1.   Fix $\lambda\in \Omega_0$, then with suitable choices of  $\zeta, \eta, \kappa$, 
at least one of the graphs $Y^I_\lambda(\kappa), Y^{II}_\lambda(\zeta, \eta)$ is an admissible puzzle.

2.  If we further assume that $N^k_\lambda(0)\in \cup_{\varepsilon=1}^3\partial B_{\lambda}^{\varepsilon}$ for some $k\geq 0$, then 
the Lebesgue measure of the Julia set $J(N_\lambda)$ is zero,   and with respect to the admissible puzzle, we have ${\rm Imp}(x)=\{x\}$ for each point $x\in J(N_\lambda)$.
Moreover, the intersection of any shrinking closed puzzle pieces is a singleton.
\end{thm}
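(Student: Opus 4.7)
The plan is to treat the two parts separately, with Part 1 being largely a recompilation of constructions from Section \ref{two-graph} and \cite{Ro08}, and Part 2 being an application of Theorems \ref{Y-P} and \ref{Lyu} once non-periodicity of the puzzle at the free critical point is established.

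For Part 1, by the construction of Section \ref{two-graph}, one can choose $\zeta, \eta, \kappa$ so that at least one of the graphs $G(\lambda,\zeta), I(\lambda,\kappa)$ avoids the forward orbit of the free critical point $0$; let $\Gamma$ denote the corresponding $Y^{II}_\lambda(\zeta,\eta)$ or $Y^{I}_\lambda(\kappa)$, viewed as a puzzle for the rational-like restriction of $N_\lambda$ to $N_\lambda^{-1}(X_\lambda)\cap X_\lambda$. Admissibility is then checked condition by condition. The forward invariance $N_\lambda(\Gamma\cap X_\lambda)\subset\Gamma$ holds because an internal ray of angle $t$ is mapped to the internal ray of angle $2t$, the articulated ray is $3$-periodic (Theorem \ref{a-ray}), and the equipotentials $\partial X_\lambda$ are mapped to equipotentials of strictly smaller level that lie outside $X_\lambda$. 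Each puzzle piece is a topological disk because $\Gamma$ is a finite graph built from Jordan arcs meeting at finitely many endpoints. The non-degenerate annulus condition at each super-attracting fixed point $b_\varepsilon(\lambda)$ is immediate from the level-$\tfrac12$ equipotential structure, and at the free critical point $0$ (when it lies in the filled Julia set of the rational-like restriction) it reduces to the strict inclusion $P_1(0)\Subset P_0(0)$, which follows from the articulated/internal-ray geometry established in \cite{Ro08}.

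For Part 2, apply Theorems \ref{Y-P} and \ref{Lyu} to this admissible $\Gamma$. The hypothesis $N_\lambda^k(0)\in\bigcup_{\varepsilon=1}^3\partial B_\lambda^\varepsilon$ forces $\Gamma$ to be non-periodic at $0$: otherwise Theorem \ref{Y-P}(2) produces a quadratic-like renormalization $N_\lambda^p:P_{n+p}(0)\to P_n(0)$ with connected filled Julia set $K_0={\rm Imp}(0)\ni 0$, so the $N_\lambda$-orbit of $0$ would accumulate on $\bigcup_{j=0}^{p-1}N_\lambda^j(K_0)$; but by hypothesis that orbit eventually lies on $\bigcup_\varepsilon\partial B_\lambda^\varepsilon$, where $N_\lambda$ is conjugate to the angle-doubling map on $\mathbb{S}$ (by Theorem \ref{roesch-jordan} together with the super-attracting nature of each $b_\varepsilon$), and a conformal copy of a quadratic filled Julia set cannot sit as such an invariant subset of a finite union of doubling circles. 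Consequently Theorem \ref{Y-P}(1) gives ${\rm Imp}(x)=\{x\}$ for every $x\in J(N_\lambda)$, Theorem \ref{Lyu}(1) gives zero Lebesgue measure for $J(N_\lambda)$, and the singleton-intersection statement for shrinking closed puzzle pieces is an immediate reformulation of ${\rm Imp}(x)=\{x\}$.

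The main obstacle is twofold. First, the detailed verification of the non-degenerate annulus condition in Part 1 is delicate and, as in \cite{Ro08}, rests on careful geometric control of how the articulated rays sit relative to the immediate basins; citing this directly is the most economical route. Second, the rigorous ruling-out of the periodic case in Part 2 must be done with care: the sketch above requires comparing the intrinsic degree $2$ of the hypothetical renormalization with the degree $2^p$ of $N_\lambda^p$ on each basin boundary, and confirming that no conformal copy of a connected quadratic filled Julia set can be realized inside $\bigcup_\varepsilon\partial B_\lambda^\varepsilon$.
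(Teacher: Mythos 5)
Your Part 1 argument tracks the paper's approach essentially exactly: the paper also cites \cite[Proposition 5.4]{Ro08} for the admissibility of (at least) one of the two graphs, and your re-derivation of the forward invariance and disk-structure conditions is standard. That part is fine.

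For Part 2, however, the central step --- ruling out the periodic case for the admissible puzzle --- contains a genuine gap, and it is not the one you flag in your ``main obstacle'' paragraph. You argue: if $\Gamma$ were $p$-periodic at $0$, then there is a quadratic-like renormalization with connected filled Julia set $K_0={\rm Imp}(0)$; the orbit of $0$ eventually lies in $\bigcup_\varepsilon\partial B_\lambda^\varepsilon$; and ``a conformal copy of a quadratic filled Julia set cannot sit as such an invariant subset of a finite union of doubling circles.'' This last sentence requires $K_0$ itself (or at least $\partial K_0$) to be \emph{contained} in $\bigcup_\varepsilon\partial B_\lambda^\varepsilon$. But what you have actually established is much weaker: only that the \emph{critical orbit} of the renormalization $N_\lambda^p$ is eventually contained in $\bigcup_\varepsilon\partial B_\lambda^\varepsilon$, hence that the post-critical set of the renormalization is a compact, $N_\lambda^p$-invariant subset of $K_0\cap\bigcup_\varepsilon\partial B_\lambda^\varepsilon$. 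The filled Julia set of a quadratic-like map is in general strictly larger than its post-critical set (already $z^2-1$ has post-critical set $\{0,-1\}$ inside a circle while its Julia set is the basilica), so nothing about the geometry of $K_0$ follows. Consequently, the degree comparison and the nonexistence of quadratic filled Julia sets inside Jordan curves --- the two checks you list as remaining --- never get a chance to apply; the hypotheses needed to invoke them are not delivered by the preceding steps.

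To actually rule out periodicity one has to exploit more than the hypothesis on the critical orbit in the abstract: one must use the expanding dynamics on the locally connected boundaries $\partial B_\lambda^\varepsilon$ (conjugate to angle doubling) together with the explicit geometry of the puzzle graph to show the forward-invariant compact set $K_0\cap\partial B_\lambda^\varepsilon$ cannot have positive diameter, or, more combinatorially, that the tableau of $0$ cannot be eventually periodic when the critical orbit is captured by a basin boundary. This is precisely the content of the deeper analysis carried out in \cite{Ro08}, on which the paper's one-line ``follows from Theorems \ref{Y-P} and \ref{Lyu}'' is implicitly leaning, and it does not reduce to the topological obstruction you invoke. As written, your argument would not survive the case of a renormalization whose Julia set is a dendrite transverse to $\partial B_\lambda^\varepsilon$, intersecting it only in the (one-dimensional) post-critical set.
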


The first statement is proven  in  \cite[Proposition 5.4]{Ro08}, the second statement follows from Theorems \ref{Y-P}, \ref{Lyu}. 


 




  

\section{Rigidity and boundary regularity of $\partial\mathcal{H}^\varepsilon_0$}\label{rigidity}

The aim of this section is to show that all the boundaries $\partial \mathcal{H}_0^\varepsilon$ are Jordan curves. By the symmetry of  the parameter space $\mathcal{X}$, it suffices to prove the results in the fundamental domain $\mathcal{X}_{FD}$. 
By Remark \ref{tran},  our task is further reduced to
  show that $\partial \mathcal{H}_0^1\cap\overline{\Omega}$ is a Jordan arc.
  
  The main ingredient of the proof is the following rigidity theorem:

\begin{thm} \label{rigidity1}  Given two parameters 
$\mathbf \lambda_1, \lambda_2\in \Omega$. Assume that   the internal rays 
$R^1_{\lambda_1}(t), R^1_{\lambda_2}(t)$ both  land at the  free critical point $0$ in the corresponding dynamical planes. Then we have $\lambda_1= \lambda_2$.
 \end{thm}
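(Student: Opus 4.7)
The plan is to upgrade the combinatorial information encoded by the common landing angle $t$ into a quasiconformal, and hence Möbius, conjugacy between $N_{\lambda_1}$ and $N_{\lambda_2}$; since both parameters lie in the fundamental domain $\mathcal{X}_{FD}$, a Möbius conjugacy forces $\lambda_1=\lambda_2$. This will generalize the baby case Lemma \ref{pre-rigidity}, in which $t$ is dyadic and Thurston's theorem applies directly. The new content is handling irrational and non-dyadic rational $t$, where the maps are no longer post-critically finite and one must replace Thurston rigidity by Yoccoz puzzle techniques.

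First I would verify that both $\lambda_i\in\Omega_0$ and that neither $N_{\lambda_i}$ carries a parabolic cycle on $\partial B^1_{\lambda_i}$: otherwise, by \cite[Lemma 6.5]{Ro08}, a quadratic-like restriction at $0$ hybrid equivalent to $z^2+1/4$ would appear, which cannot have the internal ray $R^1_{\lambda_i}(t)$ landing at its critical/parabolic boundary point $0$. Thus Theorem \ref{newton-puzzle} supplies an admissible puzzle $\Gamma^i$ for each $N_{\lambda_i}$ whose fibers satisfy ${\rm Imp}(x)=\{x\}$ and whose Julia set has zero Lebesgue measure. Using the local stability of articulated rays (Theorem \ref{art-con}) I would choose the same defining angles $\zeta,\eta,\kappa$ on both sides, so that $\Gamma^1$ and $\Gamma^2$ carry matching combinatorics: edges correspond via common angle labels, and vertices correspond via their landing data, with Lemma \ref{land} pinning down where $R^1_\cdot(t)$ and its iterates attach. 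The Böttcher maps $\phi^\varepsilon_{\lambda_i}$ then identify every Fatou component of $N_{\lambda_1}$ conformally with the corresponding Fatou component of $N_{\lambda_2}$, and gluing with the graph identification yields a homeomorphism $H_0$ from a neighborhood of the Fatou set union $\Gamma^1$ onto its $\lambda_2$-counterpart, intertwining the dynamics and quasiconformal on each Fatou closure.

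Next I would extend $H_0$ to a global homeomorphism $H:\widehat{\mathbb{C}}\to\widehat{\mathbb{C}}$ using the shrinking property: for each $x\in J(N_{\lambda_1})$, the nested closed puzzle pieces $P_n^\ast(x)$ shrink to $\{x\}$, and their images under $H_0$ on $\Gamma^1$ bound nested puzzle pieces in the $\lambda_2$-plane that also shrink to a single point, which I declare to be $H(x)$. Standard arguments (bounded distortion on Fatou components, continuity across the Julia set) show $H$ is quasiconformal with dilatation supported on $J(N_{\lambda_1})$, a set of Lebesgue measure zero by Theorem \ref{newton-puzzle}. Hence $\bar\partial H=0$ almost everywhere, so $H$ is a Möbius transformation conjugating $N_{\lambda_1}$ to $N_{\lambda_2}$. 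By Lemma \ref{2-1}(2), $\lambda_2=\gamma(\lambda_1)$ for some $\gamma\in\mathcal{G}$, and since both lie in $\Omega\subset\mathcal{X}_{FD}$ we must have $\gamma=\mathrm{id}$, giving $\lambda_1=\lambda_2$.

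The main obstacle will be the extension and gluing step around the free critical point $0$ itself, which is simultaneously a branch vertex of the puzzle graph and a Julia point at which $N_{\lambda_i}$ has local degree two. Establishing that $H$ is a homeomorphism with bounded dilatation in a neighborhood of $0$ requires a careful analysis of the shrinking puzzle pieces containing $0$: one needs either a definite-modulus annulus argument, or (if the combinatorics make $0$ recurrent in a controlled way) Kahn--Lyubich-type quasiconformal bounds adapted to the Newton setting. The other ingredients are relatively routine once the combinatorics is pinned down, but this local analysis at the critical point is where the proof will demand real care.
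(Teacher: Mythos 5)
Your high-level strategy is the one the paper uses---build matching Yoccoz puzzles on the two dynamical planes, extract a topological conjugacy that is holomorphic on Fatou sets, upgrade it to a quasiconformal conjugacy, and use the zero-measure Julia set to conclude it is M\"obius---but you skip one essential preparatory step and you misstate the logical structure of the hardest part.

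First, before you can ``choose the same defining angles $\zeta,\eta,\kappa$ on both sides,'' you must prove that $\boldsymbol{h}(\lambda_1)=\boldsymbol{h}(\lambda_2)$: the set $\Theta_\lambda$ (and hence which angles $\zeta\in\Theta_\lambda\cap(\boldsymbol{h}(\lambda),2\boldsymbol{h}(\lambda))$ yield articulated rays in Theorem \ref{a-ray}) is entirely governed by the Head's angle. This equality is \emph{not} immediate from the hypothesis that both $R^1_{\lambda_i}(t)$ land at $0$. The paper devotes its Step 1 to this: if $\boldsymbol{h}(\lambda_1)<\boldsymbol{h}(\lambda_2)$, then (using Lemma \ref{char-S} and Corollary \ref{H-tongue2}) the interval between them contains a gap $(t_1,t_2)$ of $\Xi$, so $\lambda_1$ and $\lambda_2$ must lie in impressions $\mathcal{I}_\alpha$, $\mathcal{I}_\beta$ with $\alpha\le t_1<t_2\le\beta$; then Lemma \ref{land} forces $R^1_{\lambda_1}(\alpha)$ and $R^1_{\lambda_2}(\beta)$ to land at $0$, contradicting the assumption that both maps have the \emph{same} landing angle $t$. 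Without this step, nothing guarantees the two puzzles carry the ``matching combinatorics'' your construction of $H_0$ relies on.

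Second, the sentence ``Standard arguments (bounded distortion on Fatou components, continuity across the Julia set) show $H$ is quasiconformal with dilatation supported on $J(N_{\lambda_1})$ ... hence $\bar\partial H=0$ a.e., so $H$ is M\"obius'' reverses the order of the real argument and trivializes its hardest ingredient. A homeomorphism that is conformal off a zero-measure set is \emph{not} automatically quasiconformal; one cannot invoke Weyl's lemma until quasiconformality is established by other means. The paper's Step 3 does this via the QC-criterion (Lemma \ref{qc-c}), and verifying its hypotheses---a sequence of shrinking disks of uniformly bounded shape around every Julia point, with $\psi$-images also of bounded shape---is the bulk of the work. Your last paragraph gestures at this (``definite-modulus annulus argument,'' ``Kahn--Lyubich-type bounds''), and those are indeed the right family of tools (the paper uses the shape-distortion lemma of \cite[Lemma 6.1]{QWY}, the enhanced nest of \cite{KSS}, and complex bounds from \cite{KS,QY,YZ}), but you frame it as a local difficulty ``around $0$'' to be dealt with after the quasiconformality has been granted. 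In fact the shape control must be established for all Julia points (the paper splits $J(N_\lambda)$ into $J^0$, $J^1$, $J^2$, with $J^2$ and in particular the case $\liminf_d\tau_Y(d)=\infty$ being the serious one), and only once that is done can the zero Lebesgue measure of the Julia set be used to promote $\psi$ to a M\"obius map. So the logical flow should be: same Head's angle $\Rightarrow$ matching puzzles $\Rightarrow$ topological conjugacy via shrinking impressions $\Rightarrow$ bounded-shape nests at every Julia point $\Rightarrow$ QC-criterion gives quasiconformality $\Rightarrow$ zero-measure Julia set gives conformality $\Rightarrow$ $\lambda_1=\lambda_2$ in $\mathcal{X}_{FD}$.

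One further small point: your $H_0$ should be \emph{conformal}, not merely quasiconformal, on the closures of Fatou components---matching B\"ottcher coordinates gives a holomorphic identification---and the conjugacy is obtained as a limit of lifts, not by a one-shot gluing; keeping this straight makes the support of the dilatation concentrate on the Julia set as needed.
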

 
 When $t$ is rational, the maps $N_{\lambda_1}$ and $N_{\lambda_2}$ are post-critically finite.
  In this  case, the proof is same as that of Lemma \ref{pre-rigidity} (applying  Thurston's Theorem). We omit the details.

 Our main effort is to treat the technical case: $t$ is irrational.  In this case, the maps are  post-critically infinite and Thurston's Theorem is not available.
  Instead,  an important role of the Yoccoz puzzle theory will emerge. 
An  essential  step  in the proof of Theorem \ref{rigidity1} is a  {\it QC-criterion}, due to Kozlovski, Shen and van Strien \cite{KSS}.    
Let $U\subsetneq\mathbb{C}$ be a simply connected planar domain and
$z\in U$. The {\it shape} of $U$ about $z$ is defined by:
$${S}(U,z)=\sup_{x\in \partial U}|x-z|/\inf_{x\in \partial U}|x-z|.$$

  \begin{lem}[QC-criterion]\label{qc-c} Let $\phi:\Omega\rightarrow \tilde{\Omega}$  be a  homeomorphism
between two Jordan domains, $k\in(0,1)$ be a constant.  Let $X$ be a subset of $\Omega$ such that both $X$ and $\phi(X)$ have zero Lebesgue measures. Assume the following holds:

1. $|\bar{\partial}\phi|\leq k|{\partial}\phi|$ a.e. on  $\Omega\backslash X$.

2.  There is a constant $M>0$ such that for all  $x \in X$, there is a sequence of open topological disks $D_1 \Supset  D_2 \Supset  \cdots$ containing $x$, satisfying that

(a). $\bigcap_j \overline{D_j}=\{x\}$, and 

(b).  $\sup _jS(D_j , x) \leq M$, \   $\sup_j S(\phi(D_j ), \phi(x))< \infty.$ 

Then $\phi$ is a $K$-quasi-conformal map, where $K$ depends on $k$ and $M$.
\end{lem}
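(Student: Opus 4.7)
The plan is to verify the analytic definition of $K$-quasiconformality for $\phi$: namely, that $\phi\in W^{1,2}_{\mathrm{loc}}(\Omega)$ together with an a.e. Beltrami inequality $|\bar\partial\phi|\le k'|\partial\phi|$ with $k'<1$. Since hypothesis (1) already provides $|\bar\partial\phi|\le k|\partial\phi|$ on $\Omega\setminus X$ and both $X$ and $\phi(X)$ are null, once Sobolev regularity on all of $\Omega$ is in hand, the Beltrami bound extends automatically. The heart of the proof is therefore to upgrade the pointwise information at $X$ coming from the bounded-shape condition into genuine analytic regularity.

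\textbf{Step 1 (Pointwise control from bounded shape).} Fix $x\in X$ and the disks $D_j$ furnished by hypothesis (2). The condition $S(D_j,x)\le M$ furnishes radii $r_j<R_j$ with $R_j/r_j\le M$ and $r_j\to 0$, such that $B(x,r_j)\subset D_j\subset B(x,R_j)$. Analogously $B(\phi(x),r'_j)\subset \phi(D_j)\subset B(\phi(x),R'_j)$ with $R'_j/r'_j\le M'$, where $M':=\sup_j S(\phi(D_j),\phi(x))<\infty$. Chaining these nested inclusions gives, at every sufficiently small scale $r\in[r_{j+1},R_j]$, the two-sided comparison
\[
r'_{j+1}\le\ell(x,r)\le L(x,r)\le R'_{j},
\]
where $L(x,r)=\max_{|y-x|=r}|\phi(y)-\phi(x)|$ and $\ell(x,r)=\min_{|y-x|=r}|\phi(y)-\phi(x)|$. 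This bounds the linear distortion of $\phi$ at $x$ in terms of $M$ and $M'$ alone.

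\textbf{Step 2 (Uniform linear dilatation) and Step 3 (metric-to-analytic passage).} On $\Omega\setminus X$, the Beltrami inequality bounds the linear dilatation by $H_0=(1+k)/(1-k)$ via the classical pointwise equivalence. Combined with Step 1, the upper linear dilatation
\[
H_\phi(x)=\limsup_{r\to 0^+}\frac{L(x,r)}{\ell(x,r)}
\]
is bounded throughout $\Omega$ by a constant $H$ depending only on $k$ and $M$. By the classical metric criterion of Gehring--V\"ais\"al\"a, a sense-preserving homeomorphism between planar domains with $H_\phi\le H$ pointwise is $K$-quasiconformal, for some $K=K(H)$. Thus $\phi\in W^{1,2}_{\mathrm{loc}}(\Omega)$, and combined with hypothesis (1) together with $|X|=0$, the a.e. Beltrami inequality $|\bar\partial\phi|\le k|\partial\phi|$ holds almost everywhere on $\Omega$, proving the lemma.

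\textbf{Main obstacle.} I expect the principal difficulty to lie in Step 1. The shape hypothesis is supplied only along one specific sequence of scales $\{R_j\}$, and upgrading it to control at every intermediate scale requires exploiting monotonicity of the outer maximum $L(x,\cdot)$ and the homeomorphism property of $\phi$ (to ensure the inner radii $r'_j$ are not collapsed). One must be careful that $\phi(D_j)$ truly contains $B(\phi(x),r'_{j+1})$ at the next scale down, otherwise the chain of two-sided inclusions breaks. If this direct route proves too rigid, the fallback is to bypass the linear-dilatation framework entirely and derive $\phi\in W^{1,2}_{\mathrm{loc}}$ directly by summing modulus inequalities $\mathrm{mod}(\phi(D_j\setminus\overline{D_{j+1}}))\le C(M,M')\,\mathrm{mod}(D_j\setminus\overline{D_{j+1}})$ and converting these into an energy bound on $\nabla\phi$ near $X$, in the spirit of classical QC-removability arguments for sets of $\sigma$-finite length.
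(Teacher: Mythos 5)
First, note that the paper does not actually prove this lemma: it states that it is a mild variant of Lemma 12.1 of Kozlovski--Shen--van Strien \cite{KSS} (round disks replaced by disks of uniformly bounded shape) and that ``the original proof goes through''. That proof establishes quasiconformality through the \emph{analytic} definition: one shows that $\phi$ is ACL with $|D\phi|\in L^2_{\mathrm{loc}}$ by a covering argument in which the bounded shape of the images is used to compare $(\mathrm{diam}\,\phi(D_j))^2$ with the area $|\phi(D_j)|$, and the hypothesis $|\phi(X)|=0$ is used to kill the singular part of the derivative; the a.e.\ Beltrami bound then finishes. Your route through the metric definition is genuinely different, and it has a gap that I do not think can be repaired.

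The gap is exactly where you suspect it, in Step 1, but it is fatal rather than technical. The chaining gives, for $r$ between consecutive scales, only $\ell(x,r)\ge r'_{j+1}$ and $L(x,r)\le R'_j$, hence $L/\ell\le R'_j/r'_{j+1}$; nothing in the hypotheses relates the size of $\phi(D_{j+1})$ to that of $\phi(D_j)$, so this ratio is uncontrolled and $H_\phi(x)$ need not be bounded, nor even finite, at points of $X$ (take round $D_j$ with round images of rapidly decreasing radii and let $\phi$ be arbitrarily eccentric at the intermediate scales). Monotonicity of $L(x,\cdot)$ and $\ell(x,\cdot)$ cannot close this, and the $\liminf$ versions of the metric criterion fail for the same reason. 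Even granting pointwise finiteness, your bound would involve $\sup_j S(\phi(D_j),\phi(x))$, which is only assumed finite, not uniform in $x$; the versions of the Gehring--V\"ais\"al\"a criterion that tolerate an exceptional set where $H_\phi$ is merely finite require that set to be countable or of $\sigma$-finite length, whereas $X$ is only Lebesgue-null (in the application it is essentially a Julia set, nowhere near $\sigma$-finite length). A further warning sign is that your main route never invokes $|\phi(X)|=0$, which is indispensable in the KSS argument. The fallback via modulus estimates inherits both defects: the moduli of the annuli $D_j\setminus\overline{D_{j+1}}$ are not controlled by the shape constants, and $\sigma$-finite-length removability is unavailable. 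The robust fix is to follow the KSS proof and establish ACL together with the local $L^2$ bound on the gradient directly.
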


We remark that this  QC-criterion is a simplified version of \cite[Lemma 12.1]{KSS}, with a slightly difference in the second assumption (that is, we replace {\it a sequence of
 round disks} in \cite{KSS} by
{\it a sequence of disks with uniformly bounded shape}),  and the original proof goes through without any problem.

\vspace{5pt}

\noindent{\it Proof of Theorem \ref{rigidity1}  when $t$ is irrational.}  The proof consists of three steps, and the Yoccoz puzzle theory plays an important role in the proof.

\vspace{5pt}

{\textbf{Step 1 (Same Head's angle)}} {\it  We first show that $\boldsymbol{h}(\lambda_1)=\boldsymbol{h}(\lambda_2)$.}

\vspace{5pt}

If not,  without loss of generality, we assume   $0< \boldsymbol{h}(\lambda_1)<\boldsymbol{h}(\lambda_2)< 1/2$.  We claim $( \boldsymbol{h}(\lambda_1), \boldsymbol{h}(\lambda_2))\cap \Xi\neq \emptyset$. This is because if $(\boldsymbol{h}(\lambda_1), \boldsymbol{h}(\lambda_2))\cap \Xi=\emptyset $, then by Lemma \ref{char-S}, we see that   $\boldsymbol{h}(\lambda_1)$ takes the form 
$p/2^q$ and  $\boldsymbol{h}(\lambda_2)$ takes the form $p/(2^q-1)$. By Theorem \ref{H-tongue1} and Corollary  \ref{H-tongue2}, the parameter ray $\mathcal{R}^1_0(\boldsymbol{h}(\lambda_1))$ lands at $\lambda_1$.
By Lemma \ref{land}, the internal ray $R_{\lambda_1}^1(\boldsymbol{h}(\lambda_1))$ would land at $0\in \partial{B_{\lambda_1}^1}$.  This would imply that  $t=\boldsymbol{h}(\lambda_1)$ is rational, which is impossible by assumption.

Therefore, by Lemma \ref{char-S}, the open interval  $(\boldsymbol{h}(\lambda_1), \boldsymbol{h}(\lambda_2))$ contains a component of $(0,1/2)\setminus \Xi$, say $(t_1,t_2)\Subset(\boldsymbol{h}(\lambda_1), \boldsymbol{h}(\lambda_2))$.
   It's obvious that  $\lambda_1, \lambda_2$ are contained in two impressions $\mathcal{I}_{\alpha}, \mathcal{I}_{\beta}$ with $\alpha\leq t_1<t_2\leq \beta$, respectively. 
By Lemma \ref{land}, in the $\lambda_1$-dynamical plane, the internal ray $R_{\lambda_1}^1(\alpha)$ lands at $0$;  in the $\lambda_2$-dynamical plane, the internal ray $R_{\lambda_2}^1(\beta)$ lands at $0$. This contradicts our assumption.

\vspace{5pt}

{\textbf{Step 2 (Topological conjugacy)}} {\it There is  a topological conjugacy $\psi$ between $N_{\lambda_1}$ and $N_{\lambda_2}$,  which is holomorphic in the Fatou set of $N_{\lambda_1}$.} 

\vspace{5pt}

  The  construction of   $\psi$ is  based on the Yoccoz puzzle theory,  as follows:  

It's known from Step 1 that $\boldsymbol{h}(\lambda_1)=\boldsymbol{h}(\lambda_2)$ and $\Theta_{\lambda_1}=\Theta_{\lambda_2}$. By the construction of the articulated rays (see Theorem \ref{a-ray}), 
we know that if $L_{\lambda_1}(\zeta)$ is an articulated ray for $N_{\lambda_1}$, then  $L_{\lambda_2}(\zeta)$ is an articulated ray for $N_{\lambda_2}$, and vice versa.
As a consequence,  we can define the same type of Yoccoz puzzles  $Y^I_\lambda(\kappa)$ and $Y^{II}_\lambda(\zeta, \eta)$ for $\lambda=\lambda_1, \lambda_2$. 
The assumption that $R^1_{\lambda_1}(t), R^1_{\lambda_2}(t)$ both  land at $0$  implies that if $Y^I_{\lambda_1}(\kappa)$ (resp. $Y^{II}_{\lambda_1}(\zeta, \eta)$) is admissible for $N_{\lambda_1}$, then   $Y^I_{\lambda_2}(\kappa)$ (resp. $Y^{II}_{\lambda_2}(\zeta, \eta)$) is admissible for $N_{\lambda_2}$, and vice versa.

Now we fix a pair of  admissible puzzles, say  $(Y^\nu_{\lambda_1},Y^\nu_{\lambda_2})$.  For the reader's convenience, we recall some definitions from Section \ref{yoccoz}.
The {\it puzzle piece} of depth $d\geq0$, denoted by $P_{d}^\lambda$,  is a connected    component of 
$\mathbb{\widehat{C}}\setminus N_{\lambda}^{-d}(Y^\nu_{\lambda})$. 
For any  $z\in J(N_{\lambda})$, no matter whether the orbit the critical point $0$ meets the puzzle $Y^\nu_{\lambda}$, the set  
$$P_{d, \lambda}^*(z)=\bigcup_{z\in \overline{P_{d}^\lambda}} \overline{P_{d}^\lambda}$$
is always a closed topological disk.

We first construct a homeomorphism $\psi_0$ from the $\lambda_1$-dynamical plane  to the $\lambda_2$-dynamical plane in the following way:

Set  $\widehat\psi_0|_{B_{\lambda_1}^\varepsilon}= (\phi_{\lambda_2}^{\varepsilon})^{-1}\circ\phi_{\lambda_1}^{\varepsilon}, \varepsilon=1,2,3$.  This $\widehat\psi_0$ 
can be extended to $\overline{B_{\lambda_1}^\varepsilon}$ because  the boundaries $\partial B_{\lambda_1}^\varepsilon$  are Jordan curves. 
Then we are able to extend $\widehat\psi_0$ to  the  complementary  set $\mathbb{\widehat{C}}\setminus( \cup_\varepsilon \overline{B_{\lambda_1}^\varepsilon} )$, which consists  of countably many disk components.  This extension  can be made by interpolation 
 since we have already known the boundary information. 
In this way, we get an extension of $\widehat\psi_0$, say $\psi_0$.  We make an  additional  plausible requirement for $\psi_0$, that is, $\psi_0(Y^\nu_{\lambda_1})=Y^\nu_{\lambda_2}$.

Then, we can lift $\psi_0$ to $\psi_1$ so that $\psi_0\circ N_{\lambda_1}=N_{\lambda_2}\circ \psi_1$ and $\psi_1|_{\cup_\varepsilon B_{\lambda_1}^\varepsilon}=\psi_0|_{\cup_\varepsilon B_{\lambda_1}^\varepsilon}$. The lift  process is feasible 
because in the corresponding  dynamical planes, the itineraries   of  the critical point $0$ are  same  with respect to the corresponding Yoccoz puzzles.  Moreover,  we can lift infinitely many times and get a sequence of 
homeomorphisms $\psi_k$, satisfying that $\psi_k\circ N_{\lambda_1}=N_{\lambda_2}\circ \psi_{k+1}$
and  $\psi_{k+1}|_{N_{\lambda_1}^{-k}(\cup_\varepsilon B_{\lambda_1}^\varepsilon)}=\psi_k |_{N_{\lambda_1}^{-k}( \cup_\varepsilon B_{\lambda_1}^\varepsilon)}$
 for all $k\geq 0$. Note that  the  requirement $\psi_0(Y^\nu_{\lambda_1})=Y^\nu_{\lambda_2}$ implies that  $\psi_k$ preserves the puzzle pieces up to depth $k$
 (namely, $\psi_k (N_{\lambda_1}^{-k}(Y^\nu_{\lambda_1}))=N_{\lambda_2}^{-k}(Y^\nu_{\lambda_2})$).   
By  Theorem \ref{newton-puzzle}, for every point  $z\in J(N_{\lambda_1})$, the   shrinking sequence of  closed disks 
$$P_{1, \lambda_1}^*(z)\supset P_{2, \lambda_1}^*(z)\supset P_{3, \lambda_1}^*(z)\supset  \cdots$$
has impression ${\rm Imp}_{\lambda_1}(z)=\bigcap_k P_{k, \lambda_1}^*(z) =\{z\}$.
The sequence $$\psi_1(P_{1, \lambda_1}^*(z)),  \psi_2(P_{2, \lambda_1}^*(z)),  \psi_3(P_{3, \lambda_1}^*(z)), \cdots$$ is a  sequence of shrinking closed disks  in the
$\lambda_2$-dynamical plane. Again by Theorem \ref{newton-puzzle}, the intersection $\bigcap_k \psi_k(P_{k, \lambda_1}^*(z))$ consists of a single point, which is denoted by 
$\rho(z)$.

  We define a  map  $\psi:\mathbb{\widehat{C}}\rightarrow \mathbb{\widehat{C}}$ by

\begin{equation*}
\psi(z)=\begin{cases}
 \lim_{k\rightarrow \infty} \psi_k(z),\ &\text{ if }z \in \mathbb{\widehat{C}}\setminus J(N_{\lambda_1 }),\\
\rho(z),\ &\text{ if } z \in   J(N_{\lambda_1 }).
\end{cases}
\end{equation*}

It's easy to see that $\psi$ is holomorphic in the Fatou set  of $N_{\lambda_1}$. 
 The continuity and injectivity   of  $\psi$ follows from Theorem \ref{newton-puzzle}.   It's also clear that $\psi$ preserves the puzzle pieces of all depths and  $\psi$ is surjective.
 So $\psi$ is a homeomorphism. Since  $\psi$ is  conjugacy between $N_{\lambda_1}$ and $N_{\lambda_2}$  in the Fatou set, it is actually a global conjugacy, by continuity.
 
\vspace{5pt}
  
  {\textbf{Step 3 (Rigidity)}} {\it The conjugacy $\psi$ is a quasi-conformal map.} 

\vspace{5pt}

By Theorem \ref{newton-puzzle},  we know that the Julia sets $J(N_{\lambda_1})$ and $J(N_{\lambda_2})$ both have zero Lebesgue measures. To show that  $\psi$ is quasi-conformal,  
 by Lemma \ref{qc-c}, it suffices to show that there is a constant $M>0$ such that  for any point $z\in J(N_{\lambda_1})$, there is a sequence of  open topological disks $D_1 \Supset  D_2 \Supset  \cdots$ containing $x$, satisfying that

(a). $\bigcap_j \overline{D_j}=\{x\}$, and 

(b).  $\sup _jS(D_j , x) \leq M$, \   $\sup_j S(\psi(D_j ), \psi(x))< \infty.$

The proof is very similar to  \cite[Section 5.4]{QRWY}, with a  slight difference.
 For completeness and the reader's convenience, we include the details here.

We decompose the Julia set $J(N_\lambda)$ (here $\lambda$ can be either $\lambda_1$ or $\lambda_2$) into three disjoint sets:
\bess
J_\lambda^0&=&J(N_\lambda)\cap \big(\cup_{k\geq0} N_{\lambda}^{-k} (Y_\lambda^\nu)\big),\\
J_\lambda^1&=&\{z\in J(N_\lambda)\setminus J_\lambda^0;  \  0\notin \omega(z) \}, \\
J_\lambda^2&=&\{z\in J(N_\lambda)\setminus J_\lambda^0;  \  0\in \omega(z) \},
\eess
where  $\omega(z)$ is the $\omega$-limit set of $z$, defined
as  $\{y\in  J(N_\lambda);  \text{ there exist } n_k\rightarrow\infty  \text{ such that }
N_{\lambda}^{n_k}(z)\rightarrow y\}$.

\textbf{Case 1:  Points in $J_\lambda^0$. }  By the construction of the Yoccoz puzzle $Y_\lambda^\nu$, 
the set $J_\lambda^0$ is countable, forward invariant (that is $N_\lambda(J_\lambda^0)\subset J_\lambda^0$), and each point in $J_\lambda^0$ is preperiodic. 
Note also $J_\lambda^0$ contains only finitely many periodic  points, all are repelling.

Let  $z\in J_\lambda^0$ be a repelling periodic point with period say $p$, then there are two small topological disks $U, V$ in a linearizable  neighborhood of $z$, both containing $z$ so that 
$N_{\lambda}^p(U)=V$ and $U\Subset V$.  By pulling back $V$ via $N_{\lambda}^{kp}$,  
we get  a sequence of neighborhoods of $z$, say
$$V_0(z)\Supset V_1(z)\Supset V_2(z)\Supset \cdots$$
with $V_0(z)=V$, such that $N_{\lambda}^{kp}: V_k(z)\rightarrow V_0(z)$ is  a conformal map for all $k\geq 1$.
By the shape  distortion \cite[Lemma 6.1]{QWY},
$$S(V_k(z), z)\leq C(m_z) \cdot S(V_1(z),z), \ \forall  k\geq 2, $$
where $C(m_z)$ is a constant depending only on $m_z={\rm mod}(V_0(z)\setminus \overline{V_1}(z))$.

For any aperiodic point $z\in J_\lambda^0$, there is a smallest number $\ell\geq1$ so that $N_\lambda^\ell(z)$ is periodic. 
Then by pulling back the sequence of neighborhoods of $V_k(N_\lambda^\ell(z))$ of $N_\lambda^\ell(z)$, we get a sequence of neighborhoods $V_k(z)$ of $z$.
Again by the shape distortion, we get 
$$S(V_k(z), z)\leq C(m_{N_\lambda^\ell(z)})  \cdot S(V_1(N_\lambda^\ell(z)), N_\lambda^\ell(z)).$$
 For any periodic point $z\in J_{\lambda_1}^0$, set  $m_z^*={\rm mod}\psi((V_0(z)\setminus \overline{V_1}(z))$. 
Take  
\bess & M_0= \max \{ C(m_z) \cdot S(V_1(z),z);  z\in J_{\lambda_1}^0 \text{is  periodic}\},& \\
 &{M}_0^*=\max\{ C(m_z^*) \cdot S(\psi(V_1(z)),\psi(z));  z\in J_{\lambda_1}^0 \text{ is  periodic}\},&
\eess
then  for all $z\in J_{\lambda_1}^0$, 
 $$\sup _jS(V_j(z) , z) \leq M_0, \   \sup_j S(\psi(V_j(z) ), \psi(z))\leq {M}^*_0<\infty.$$

 \textbf{Case 2:  Points in $J_\lambda^1$. }  In this case,  there is a integer   $d_0\geq 0$ such that $N_\lambda^k(z)\notin P_{d_0}^\lambda(0)$ for all $k\geq 1$.  Then 
 we can find a sequence of integers $k_j$ and  a point $w\in J(N_\lambda)\setminus J_\lambda^0$ so that $N_{\lambda}^{k_j}(z)\rightarrow w$ as $j\rightarrow \infty$.
 By  passing to a subsequence,  we assume that $N_{\lambda}^{k_j}(z)\in P_{d_0}^\lambda(w)$ for all $j$.  It's clear that the degree of 
 $N_\lambda^{k_j}: P_{d_0+k_j}^\lambda(z)\rightarrow P_{d_0}^\lambda(w)$ is at most two.
 Take a small number $r>0$ so that ${\rm mod}(P_{d_0}^\lambda(w)\setminus\overline{D_w^r})\geq 1$,  where
$D_w^r$ is the open Euclidean disk centered at $w$ with radius $r$. When $j$ is large, we have $N_{\lambda}^{k_j}(z)\in D_w^{r/2}$, let $V_j(z)$ be the component of 
 $N_{\lambda}^{-k_j}(D_w^{r})$ containing $z$. Then by the shape distortion \cite[Lemma 6.1]{QWY},  for large $j$, 
 $$S(V_j(z),z)\leq C \cdot S(D_w^r, N_{\lambda}^{k_j}(z))\leq C\cdot \frac{r+r/2}{r-r/2}=3C,$$
 here $C$ is a universal constant.  
  
  Note that the above shape control holds when  $\lambda=\lambda_1$, and the upper bound is a universal constant. When $\lambda=\lambda_2$, the upper bound  of the shape distortion depends on $z$, this can be seen from the following estimate:
  \bess S(\psi(V_j(z)), \psi(z))&\leq&  C_1 \cdot S(\psi(D_w^r), \psi(N_{\lambda_1}^{k_j}(z)))\\
  &\leq& C_1 \sup_{\zeta\in \psi(D_w^{r/2})}S(\psi(D_w^r), \zeta)<\infty,\eess
  where $C_1$ depends on the modulus of $\psi(D_w^r\setminus\overline{D_w^{r/2}})$, which turns out to be related
  to  $z\in J_{\lambda_1}^1$.

   \textbf{Case 3:  Points in $J_\lambda^2$. }     We first look at the free critical point $0$. If $0\notin \omega(0)$, 
   then  we may choose a sequence of topological disks  $V_j(0)$  with  uniformly  bounded shape as Case 2. 
    For any $z\in J_{\lambda}^2$,   just as the proof of Case 2, by pulling back the sequence of $V_j(0)$,
    we  get a sequence of disks $V_j(z)$ surrounding $z$ with uniformly bounded shapes.

   The case $0\in \omega(0)$ is more delicate and it is our main focus. In this case, for each $d\geq 0$, let $\tau_Y(d)\in [0,d]$  be first integer 
   $k$ such that $0\in N_{\lambda}^{d-k}(P_d^\lambda(0))$.    The function $\tau_Y$ is called  {\it the Yoccoz $\tau$-function}. It satisfies $\tau_Y(d+1)\leq
   \tau_Y(d)+1$.
   It's clear that the assumption $0\in \omega(0)$ implies that $\limsup_d \tau_Y(d)=\infty$.
   There are two possibilities for $\liminf_d \tau_Y(d)$, either $\liminf_d \tau_Y(d)\leq L<\infty$ or 
   $\liminf_d \tau_Y(d)=\infty$. We treat these two cases separately. 
   
   \textbf{Case 3.1 $\liminf_d \tau_Y(d)\leq L<\infty$. }      Choose $n_2>n_1>n_0\geq L$ such that $P_{n_2}^\lambda(0)\Subset 
   P_{n_1}^\lambda(0)\Subset P_{n_0}^\lambda(0)$.  We see that $\tau_Y^{-1}(n_0)$ is an infinite set, and we write
   $\tau_Y^{-1}(n_0)=\{k_1, k_2, \cdots \}$. By the definition of $\tau_Y$, the map $N_{\lambda}^{k_j}: P_{n_0+k_j}^\lambda(0)\rightarrow
  P_{n_0}^\lambda(0)$ has degree two, for all $j$.
  For each $j$, let $l_j\geq 0$ be the smallest integer such that $N_\lambda^{l_j}(N_\lambda^{k_j}(0))\in
   P_{n_2}^\lambda(0)$. Then for all $j$, the degree of $N_{\lambda}^{k_j+l_j}: P_{n_0+k_j+l_j}^\lambda(0)\rightarrow 
    P_{n_0}^\lambda(0)$  is at most $2\cdot 2^{n_2-n_0}$. 
    For any $z\in J_\lambda^2$,  and any $j\geq 0$, let $m_j\geq 0$ be the first integer such that $N_{\lambda}^{m_j}(z)\in P_{n_2+k_j+l_j}^{\lambda}(0)$, with the same discussion as above, we see that  
    $${\rm deg}(N_{\lambda}^{m_j}: P_{n_0+k_j+l_j+m_j}^\lambda(z)\rightarrow 
     P_{n_0+k_j+l_j}^\lambda(0))\leq 2\cdot 2^{n_2-n_0}.$$
     This implies that, for all $j$,  
    $${\rm deg}(N_{\lambda}^{m_j+k_j+l_j}: P_{n_0+k_j+l_j+m_j}^\lambda(z)\rightarrow 
     P_{n_0}^\lambda(0))\leq 4\cdot 4^{n_2-n_0}.$$
     By the shape distortion \cite[Lemma 6.1]{QWY},
     $$S(P_{n_1+k_j+l_j+m_j}^\lambda(z), z)\leq C_2 \cdot S(P_{n_1}^\lambda(0), 0)<+\infty,$$
     here $C_2$ depends on ${\rm mod}(P_{n_0}^\lambda(0)\setminus\overline{P_{n_1}^\lambda(0)})$, independent of
      $z\in J_{\lambda}^2$ and $j$.
     
       \textbf{Case 3.2 $\liminf_d \tau_Y(d)=\infty$. }  By a very technical construction 
       of enhanced nests,  we have the following property: 
       
       {\it There exist a constant $m>0$ and  a sequence of 
       critical puzzle pieces  
       $$P_{d_1}^\lambda(0)\Supset P_{l_1}^\lambda(0)
       \Supset P_{k_1}^\lambda(0)\Supset P_{d_2}^\lambda(0)\Supset P_{l_2}^\lambda(0)\Supset P_{k_2}^\lambda(0)\cdots$$
       satisfying that 
       
       (a) $\bigcap  \overline{P_{d_j}^\lambda(0)}=\{0\}$;
       
       (b) ${\rm mod}(P_{d_j}^\lambda(0)\setminus \overline{P_{l_j}^\lambda(0)}) \geq m, \ \
        {\rm mod}(P_{l_j}^\lambda(0)\setminus \overline{P_{k_j}^\lambda(0)})  \geq m, \ \forall j \geq 1$ ;
        
        (c) Both $P_{d_j}^\lambda(0)\setminus \overline{P_{l_j}^\lambda(0)}$ and
         $P_{l_j}^\lambda(0)\setminus \overline{P_{k_j}^\lambda(0)}$ avoid the free critical orbit, for all $j$.}
         
         See \cite[Section 8]{KSS} for the enhanced nest construction in a more general setting, see \cite{KS} and 
         \cite{QY} for the proof of the complex bounds.
         
         Then by \cite[Proposition 1]{YZ}, there is a constant $M>0$ such that $S(P_{l_j}^\lambda(0), 0)\leq M$ for all $j$.
         For any $z\in J_\lambda^2$ and any $j\geq 1$, let $m_j\geq 0$ be the first  integer such that 
         $N_{\lambda}^{m_j}(z)\in P_{k_j}^\lambda(0)$. It follows  that ${\rm deg}(N_{\lambda}^{m_j}: 
         P_{m_j+k_j}^\lambda(z)\rightarrow P_{k_j}^\lambda (0))\leq 2$. 
         By   the above property (c),  we have
         $${\rm deg}(N_{\lambda}^{m_j}: 
         P_{m_j+d_j}^\lambda(z)\rightarrow P_{d_j}^\lambda (0))\leq 2.$$
         By the shape distortion, we have 
         $$S(P_{m_j+l_j}^\lambda(z),z)\leq C_3 \cdot S(P_{l_j}^\lambda(0),0)\leq C_3M,$$
         here the constant $C_3$ depends on $m$ but independent of $z$ and $j$.
         
         \
   
   This completes the proof of Step 3. Since $\psi$ is conformal on the Fatou set and the
    Julia set has zero Lebesgue measure (see Theorem \ref{newton-puzzle}), this  $\psi$ is a M\"obius map. Note that $\lambda_1, \lambda_2$ are in the 
    fundamental   domain,   we have  $\lambda_1=\lambda_2$, completing the proof of the theorem. 
    \hfill $\Box$

\begin{thm} \label{jordan} The boundary $\partial \mathcal{H}_0^1$ is a Jordan curve.
\end{thm}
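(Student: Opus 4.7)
The plan is to build a continuous bijection $\gamma \colon \mathbb{S} \to \partial \mathcal{H}_0^1$ as the landing map of parameter rays, so that the Jordan curve conclusion follows from the classical fact that continuous bijections between compact Hausdorff spaces are homeomorphisms. Using the double cover $\Phi_0^1 \colon \mathcal{H}_0^1 \to \mathbb{D}$ and the complex-conjugation symmetry of $\mathcal{X}$, it suffices to work in the fundamental domain and show that $\partial \mathcal{H}_0^1 \cap \overline{\Omega}$ is a Jordan arc parametrised by $t \in [0,1/2]$; the full $\partial \mathcal{H}_0^1$ is then a Jordan curve by reflection and the deck transformation of $\Phi_0^1$.

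The first step will be landing of parameter rays. For rational $t \in (0, 1/2)$ this is Lemma~\ref{r-ray-land}. For irrational $t$, the impression $\mathcal{I}_t \subset \partial \mathcal{H}_0^1$ is compact and connected; by Theorem~\ref{pri} combined with Lemma~\ref{land}, any $\lambda \in \mathcal{I}_t \cap \Omega$ has the property that $R^1_\lambda(t)$ lands at the free critical point $0$ (the parabolic alternative is excluded by irrationality of $t$). The rigidity Theorem~\ref{rigidity1} then forces such $\lambda$ to be unique, so $\mathcal{I}_t = \{\gamma(t)\}$. Continuity of $\gamma$ is immediate from singleton impressions, and surjectivity holds because $\gamma(\mathbb{S})$ is a compact connected subset of $\partial \mathcal{H}_0^1$ exhausting the accessible prime ends of the simply connected domain $\mathcal{H}_0^1$.

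The remaining and main step is injectivity. Suppose $\gamma(t_1) = \gamma(t_2) = \lambda$ with $t_1 \neq t_2$. In the dynamical plane of $N_\lambda$ the two distinct internal rays $R^1_\lambda(t_1), R^1_\lambda(t_2)$ would land at a common point $p \in \partial B^1_\lambda$, and by Theorem~\ref{pri} $p$ is either the critical point $0$ or a parabolic periodic point. I would rule this out by showing that $B^1_\lambda$ admits a single access to $p$ from its interior: for $p = 0$, the local-degree-$2$ structure of $N_\lambda$ at $0$ distributes the two local preimage branches of $B^1_\lambda$ between $B^1_\lambda$ itself and the disjoint sibling component $T^1_\lambda = N_\lambda^{-1}(B^1_\lambda) \setminus B^1_\lambda$, so only one prong of $B^1_\lambda$ reaches $0$; for $p$ parabolic, the parabolic normal form together with $\partial B^1_\lambda$ being a Jordan curve through $p$ forces the internal angle to be unique.

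The main obstacle will be making this access count fully rigorous, particularly at the critical point $0$, where the bifurcating preimage structure must be tracked carefully and where the standard Jordan-curve-boundary results from Theorem~\ref{roesch-jordan} no longer apply inside the hyperbolic component. The rigidity Theorem~\ref{rigidity1} handles cross-parameter coincidences cleanly, but the same-parameter, different-angle coincidences require precisely the local dynamical analysis sketched above.
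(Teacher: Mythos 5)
Your overall architecture (singleton impressions via Lemma~\ref{land} plus the rigidity Theorem~\ref{rigidity1}, then injectivity of the landing map read off in the dynamical plane) is the same as the paper's, but the step you flag as the ``main obstacle'' is exactly where you have talked yourself out of the tool that closes the argument in one line. A common landing parameter $\lambda$ of two parameter rays lies on $\partial\mathcal{H}_0^1\cap\Omega$, which is contained in $\Omega_0=\Omega\setminus(\mathcal{H}_0^1\cup\mathcal{H}_0^2)$ because hyperbolic components are open and pairwise disjoint; so Theorem~\ref{roesch-jordan} applies to $N_\lambda$ and $\partial B^1_\lambda$ is a Jordan curve. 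By Carath\'eodory the B\"ottcher map then extends to a homeomorphism $\overline{B^1_\lambda}\to\overline{\mathbb{D}}$, so every boundary point --- including the critical point $0$ and the parabolic point --- is the landing point of exactly one internal ray, and $t_1=t_2$ follows at once (this is precisely the paper's one-line conclusion). Your worry that Theorem~\ref{roesch-jordan} ``no longer applies inside the hyperbolic component'' is a red herring: it is never needed inside $\mathcal{H}_0^1$. Moreover your substitute argument at $p=0$ is circular as sketched: to conclude from the local degree-$2$ structure that only one prong of $B^1_\lambda$ reaches $0$, you must already know how many accesses $B^1_\lambda$ has at the critical value $N_\lambda(0)$, and the only available source for that is the Jordan-curve property you are trying to avoid.

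Two smaller gaps. First, continuity of $\gamma$ needs singleton impressions at \emph{every} angle, and for rational $t$ Lemma~\ref{r-ray-land} only gives landing of the parameter ray, not triviality of the impression; the paper closes this by observing that $\mathcal{I}_t\cap\Omega$ consists of at most one non-parabolic parameter (by rigidity) together with finitely many parabolic parameters, so the continuum $\mathcal{I}_t$ is countable, hence a point --- an argument that works uniformly in $t$ and makes your rational/irrational split unnecessary. Second, the endpoint angles $t\in\{0,1/2\}$, whose landing points $0$ and $\sqrt3 i/2$ lie on $\partial\Omega$ where Lemma~\ref{land} does not apply, must be treated separately; the paper does this by using Theorem~\ref{H-tongue1} to separate $\mathcal{R}_0^1(0)$ (resp.\ $\mathcal{R}_0^1(1/2)$) from any other ray $\mathcal{R}_0^1(t_2)$ by the pair of rays at an angle $t_*\in\partial(\Xi)$ lying between them. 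Your reflection-and-gluing step from the arc over $[0,1/2]$ to the full Jordan curve hinges on exactly these endpoint identifications, so this case cannot be omitted.
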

\begin{proof}   We first show that  $\partial \mathcal{H}_0^1$ is locally connected. For $t\in[0,1/2]$,  recall that $\mathcal{I}_t$  is the impression  of the parameter ray
$\mathcal{R}_0^1(t)$.  If $\mathcal{I}_t\cap \Omega\neq \emptyset$, then we take two parameters $\lambda_1, \lambda_2\in \mathcal{I}_t\cap \Omega$ (if any)
 so that $N_{\lambda_1}$ and $N_{\lambda_2}$ have no parabolic cycles. By Lemma \ref{land}, the internal rays $R_{\lambda_1}^1(t), R_{\lambda_2}^1(t)$ both land at the 
 free critical point $0$, in the corresponding dynamical planes. By Theorem \ref{rigidity1}, we see that $\lambda_1=\lambda_2$. Therefore  
 \bess \mathcal{I}_t&=&(\mathcal{I}_t\cap{\partial \Omega}) \cup(\mathcal{I}_t\cap \Omega)\\ 
 &\subset&\{0, \sqrt{3}i/2\}\cup\{\text{parabolic parameters}\}\cup\{\text{singleton}\},
 \eess
which means the continuum  $\mathcal{I}_t$ is contained in  a countable set.  So  $\mathcal{I}_t$ is a singleton.  
Since  $t\in[0,1/2]$ is arbitrary, this means that $\partial \mathcal{H}_0^1\cap \overline{\Omega}$ (hence $\partial \mathcal{H}_0^1$) is locally connected.

 In the following, we will show that if two 
parameter rays $\mathcal{R}_0^1(t_1), \mathcal{R}_0^1(t_2)$ with $t_1,t_2\in [0,1/2]$  land  at the same point $\lambda$,  then $t_1=t_2$. This would
 imply that $\partial \mathcal{H}_0^1$ is a Jordan curve.

We first show that if one of $t_1,t_2$ is $0$ (resp. $1/2$), then the other would be  $0$ (resp. $1/2$). To see this, we only consider the case $t_1=0$, 
the same discussion works for the other case.  
Note that $0$ is an accumulation point of the set $\partial(\Xi)$ (see Fact \ref{angle-Xi} for its definition).  If $t_2\in (0,1/2]$, we can find an angle $t_*\in \partial(\Xi)$ lying in between $0$ and $t_2$. By Theorem \ref{H-tongue1}, the parameter rays $\mathcal{R}_0^1(0), \mathcal{R}_0^1(t_2)$ are contained in different components of $\mathbb{C}\setminus ([-1/2,1/2]\cup \overline{\mathcal{R}_0^1}(t_*)\cup \overline{\mathcal{R}_0^2}(1-t_*))$, contradicting that $\mathcal{R}_0^1(0)$ and $\mathcal{R}_0^1(t_2)$ land at the same point. So we must have $t_2=t_1=0$.

Now, it  suffices to assume that   $t_1,t_2\in (0,1/2)$.  This assumption implies that $\lambda\in\Omega$.  By  Lemma \ref{land}, we know that in the dynamical plane, the internal rays $R_\lambda^1(t_1)$ and $R_\lambda^1(t_2)$ would land at the same point. 
It follows from  Theorem \ref{roesch-jordan} that $t_1=t_2$, completing the proof.
\end{proof}







\section{Boundaries of Capture domains} \label{cap-jordan}
Let $\mathcal{H}\subset \Omega$ be a capture domain of level $k\geq2$. That is, it is a component of $\mathcal{H}_k^\varepsilon$ for some $\varepsilon\in\{1,2,3\}$.
By Theorem \ref{1b}, the map  $\Phi_\mathcal{H}:\mathcal{H}\rightarrow{\mathbb{D}} $ defined by
$\Phi_\mathcal{H}(\lambda)=\phi_\lambda^\varepsilon(N_\lambda^{k}(0))$  parameterizes  $\mathcal{H}$.
The {\it parameter ray} $\mathcal{R}_{\mathcal{H}}(t)$  in $\mathcal{H}$,  with angle $t\in\mathbb{S}=\mathbb{R}/\mathbb{Z}$,  is defined by $$\mathcal{R}_\mathcal{H}(t)=\Phi_\mathcal{H}^{-1}(\{re^{2\pi i t}; 0<r<1\}).$$

For any $t\in\mathbb{S}$ and any integer $j\geq1$, we define  an open sector  as follows:
$$\mathcal{S}_{\mathcal{H},j}(t)=\Phi_\mathcal{H}^{-1}(\{re^{2\pi i \theta}; r\in(1-1/j,1), \theta\in(t-1/j,t+1/j)\}).$$

The {\it impression}  $\mathcal{I}_{\mathcal{H}}(t)$  of the parameter ray  $\mathcal{R}_{\mathcal{H}}(t)$  is defined by
$$\mathcal{I}_{\mathcal{H}}(t)=\bigcap_{j\geq1}\overline{\mathcal{S}_{\mathcal{H}, j}(t)}.$$
It's a standard fact that $\mathcal{I}_{\mathcal{H}}(t)$ is a connected and compact set.  Our goal in this section is to show that $\mathcal{I}_{\mathcal{H}}(t)$ is a singleton, which implies that $\partial{\mathcal{H}}$ is locally connected.

Before further  discussion, we give an observation for $\mathcal{H}$. 
 By Corollary \ref{H-tongue2},  all the maps in $\mathcal{H}$ have the same Head's angle $\alpha$ of the form ${p\over2^n-1}$, 
 where $(\beta, \alpha)$ is some connected component of $(0,1/2)\setminus\Xi$.  Therefore $\mathcal{H}$ is contained in 
 $V(\beta, \alpha)\cap \Omega_0$, see Lemma \ref{Head-bounds}.  In particular, we have $\mathcal{\overline{H}}\subset\Omega$.


 When  $\lambda\in\mathcal{H}$, we define the  set $U_\lambda$ to be  the Fatou component of $N_\lambda$ containing the free critical point $0$.  Clearly,  the {\it center} $c_\lambda$ of  $U_\lambda$, defined as  the unique point $c_\lambda\in  U_\lambda$ satisfying $N_\lambda^{k}(c_\lambda)=b_\varepsilon(\lambda)$,   moves continuously with respect to   $\lambda\in\mathcal{{H}}$.  It's also obvious that the center map $\lambda\mapsto c_\lambda$ has a continuous extension to $\partial\mathcal{H}$. Therefore,  when $\lambda\in \partial\mathcal{H}$, the point $c_\lambda$ is well-defined and  we define $U_\lambda$  to be the unique Fatou component of $N_\lambda$ containing $c_\lambda$.
 In this way,  the set $U_\lambda$ is designated  for all $\lambda \in\mathcal{\overline{H}}$.

  \begin{lem}\label{5b} For any $t\in[0,1)$ and any $\lambda\in\mathcal{I}_{\mathcal{H}}(t)\setminus(\partial\mathcal{H}_0^1\cup
  \partial\mathcal{H}_0^2)$,  we have $0\in \partial U_\lambda$
and the internal ray $R_{U_\lambda}(t)=(N_\lambda^k|_{U_\lambda})^{-1}(R_\lambda^\varepsilon(t))$ lands at $0$.
\end{lem}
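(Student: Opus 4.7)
My strategy is to use continuity of Böttcher coordinates and of Fatou boundaries to transport the convergence $\Phi_{\mathcal H}(\lambda_n)\to e^{2\pi i t}$ from the model disk into the dynamical plane, identify $y:=N_\lambda^k(0)$ as the landing point of the internal ray $R_\lambda^\varepsilon(t)$, and then pull that ray back through the restriction $N_\lambda^k|_{U_\lambda}$.

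I first observe that $\lambda\in\Omega_0$. Indeed, distinct hyperbolic components are disjoint open sets, so $\partial\mathcal H\cap(\mathcal H_0^1\cup\mathcal H_0^2)=\emptyset$; combined with the hypothesis $\lambda\notin\partial\mathcal H_0^1\cup\partial\mathcal H_0^2$ this places $\lambda$ in $\Omega_0$. By Theorem~\ref{roesch-jordan}, $\partial B_\lambda^\varepsilon$ and (each iterated preimage, in particular) $\partial U_\lambda$ are Jordan curves, and $\phi_\lambda^\varepsilon$ extends to a homeomorphism $\overline{B_\lambda^\varepsilon}\to\overline{\mathbb D}$. From the definition of $\mathcal I_{\mathcal H}(t)$ I pick $\lambda_n\to\lambda$ with $\lambda_n\in\mathcal R_{\mathcal H}(t_n)$, $t_n\to t$ and $r_n:=|\Phi_{\mathcal H}(\lambda_n)|\to 1$; then $y_n:=N_{\lambda_n}^k(0)=(\phi_{\lambda_n}^\varepsilon)^{-1}(r_n e^{2\pi i t_n})$ converges to $y$. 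A standard compactness argument, using that the conformal Böttcher maps $\phi_u^\varepsilon$ converge uniformly on compacta of $B_\lambda^\varepsilon$, rules out $y\in B_\lambda^\varepsilon$ (otherwise $r_n\to|\phi_\lambda^\varepsilon(y)|<1$, absurd). Together with $y_n\in\overline{B_{\lambda_n}^\varepsilon}$ this forces $y\in\partial B_\lambda^\varepsilon$, so $y$ is the landing point of a unique internal ray $R_\lambda^\varepsilon(t')$.

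To show $t'=t$ I mimic the argument of Lemma~\ref{land}. Assuming $t'\ne t$, I choose rational angles $t_1<t<t_2$ close enough to $t$ that $t'\notin[t_1,t_2]$ and whose rays $R_\lambda^\varepsilon(t_j)$ land at repelling periodic points distinct from $y$; this is always possible because $N_\lambda$ has only finitely many parabolic cycles, and by Fact~\ref{stability} one may additionally arrange that a suitable graph containing these rays avoids the critical orbit. Fact~\ref{con} then makes the closures $\overline{R_u^\varepsilon(t_j)}$ move continuously in a neighborhood of $\lambda$, bounding a sector $S_u\subset B_u^\varepsilon$ with $R_u^\varepsilon(t_n)\subset S_u$ for $n$ large; hence $y_n\in\overline{S_{\lambda_n}}$, and passing to the limit yields $y\in\overline{S_\lambda}$, contradicting the fact that $R_\lambda^\varepsilon(t')$ lands outside $\overline{S_\lambda}$.

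Once $y$ is identified as the landing point of $R_\lambda^\varepsilon(t)$, assertions~(1) and~(2) follow quickly. Since $y\in J(N_\lambda)$ and $N_\lambda^k(U_\lambda)=B_\lambda^\varepsilon$ is a Fatou component, we have $0\notin U_\lambda$; on the other hand $0\in U_{\lambda_n}$ for all $n$, and the Jordan closures $\overline{U_{\lambda_n}}$ Hausdorff-converge to $\overline{U_\lambda}$ along $\lambda_n\to\lambda$ in $\Omega_0$, so $0\in\partial U_\lambda$, proving~(1). With the only critical point $0$ of $N_\lambda$ in $\overline{U_\lambda}$ lying on the boundary, $N_\lambda|_{U_\lambda}$ is univalent near $0$, so $N_\lambda^k|_{U_\lambda}:U_\lambda\to B_\lambda^\varepsilon$ is a conformal isomorphism between Jordan domains, extending to a homeomorphism of closures sending $0\mapsto y$. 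Pulling back $R_\lambda^\varepsilon(t)$ produces the internal ray $R_{U_\lambda}(t)=(N_\lambda^k|_{U_\lambda})^{-1}(R_\lambda^\varepsilon(t))$ landing at $0$, giving~(2). The main obstacle is the angle-identification step: verifying that the auxiliary angles $t_1,t_2$ can be chosen to avoid the parabolic cycles and the critical orbit requires the genericity built into Facts~\ref{con} and~\ref{stability}, exactly as in the proof of Lemma~\ref{land}.
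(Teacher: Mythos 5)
Your argument has a real gap precisely where the paper expends its main technical effort. You pass from ``$y_n=N_{\lambda_n}^k(0)\in B_{\lambda_n}^\varepsilon$, $y_n\to y$, and $y\notin B_\lambda^\varepsilon$'' to ``$y\in\partial B_\lambda^\varepsilon$''; later you also invoke Hausdorff convergence of $\overline{U_{\lambda_n}}$ to $\overline{U_\lambda}$. Both steps require upper semi-continuity of the basin closures $\overline{B_u^\varepsilon}$ (and of their $k$-th preimages), which is not automatic in complex dynamics: a limit of points lying in $B_{\lambda_n}^\varepsilon$ need not lie in $\overline{B_\lambda^\varepsilon}$ unless one first controls how $\partial B_u^\varepsilon$ moves. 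Establishing this control is exactly the content of the paper's proof: it fixes a disk $\mathcal U\ni\lambda$ inside $\Omega\setminus\overline{\mathcal H_0^1\cup\mathcal H_0^2}$, sets up the holomorphic motion $h(u,z)=(\phi_u^\varepsilon)^{-1}\circ\phi_{u_0}^\varepsilon(z)$ on $B_{u_0}^\varepsilon$, extends it by the Holomorphic Motion Theorem, and uses Theorem~\ref{roesch-jordan} to see that the extension carries the Jordan curve $\partial B_{u_0}^\varepsilon$ to $\partial B_u^\varepsilon$, so that $\partial B_u^\varepsilon$ and all $N_u^{-p}(\partial B_u^\varepsilon)$ move continuously in Hausdorff topology. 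Without this, the assertion $y\in\partial B_\lambda^\varepsilon$ is unsupported, and your subsequent sector argument (which presupposes that $y$ is a boundary point reached by some internal ray $R_\lambda^\varepsilon(t')$) cannot get started.

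Once that continuity is in place, your sector-based identification of the landing angle is also unnecessary: since $\partial B_u^\varepsilon$ moves holomorphically and $\phi_u^\varepsilon$ converges on compacta, the closed rays $\overline{R_u^\varepsilon}(s)$ vary continuously in $(u,s)$, so $N_{u_n}^k(0)\in\overline{R_{u_n}^\varepsilon}(t_n)$ immediately forces $N_\lambda^k(0)$ to be the landing point of $R_\lambda^\varepsilon(t)$, and pulling back by the continuous family $N_u^{-k}(\partial B_u^\varepsilon)$ gives $0\in\partial U_\lambda$ and the landing of $R_{U_\lambda}(t)$ at $0$ in one stroke. The sector argument you borrow from Lemma~\ref{land} is appropriate there because in that setting $\lambda$ may lie on $\partial\mathcal H_0^\varepsilon$ and the boundary holomorphic motion is not available; here the hypothesis $\lambda\notin\partial\mathcal H_0^1\cup\partial\mathcal H_0^2$ makes the motion available and makes it the natural tool.
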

\begin{proof} Note that for any $\lambda\in\mathcal{I}_{\mathcal{H}}(t)\setminus(\overline{\mathcal{H}_0^1\cup \mathcal{H}_0^2})\subset\Omega$,  there is a disk neighborhood  $\mathcal{U}$ of $\lambda$ contained in  $\Omega\setminus(\overline{\mathcal{H}_0^1\cup \mathcal{H}_0^2})$.

 We first claim that for any $\varepsilon\in \{1,2,3\}$, the boundary $\partial B_u^\varepsilon$ moves holomorphically  with respect to $u\in\mathcal{U}$. To see this, fix  some $u_0\in \mathcal{U}$, we define a map $h: \mathcal{U}\times B_{u_0}^\varepsilon\rightarrow \mathbb{\widehat{C}}$ by
 $h(u, z)=(\phi_{u}^\varepsilon)^{-1}\circ \phi_{u_0}^\varepsilon(z)$.  It satisfies:

 (1). Fix any $z\in B_{u_0}^\varepsilon$, the map $u\mapsto h(u, z)$ is holomorphic;

 (2). Fix any $u\in \mathcal{U}$, the map $z\mapsto h(u, z)$ is injective;

 (3). $h(u_0, z)=z$ for all $z\in B_{u_0}^\varepsilon$.

 These properties imply that $h$ is a holomorphic motion parameterized by $\mathcal{U}$,  with base point $u_0$. By the Holomorphic Motion Theorem (see \cite{GJW} or \cite{Sl}), there is a holomorphic motion $H:\mathcal{U}\times
\mathbb{\widehat{C}}$ extending $h$ and for any $u\in \mathcal{U}$,
we have $H(u,\partial B_{u_0}^\varepsilon)=\partial B_u^\varepsilon$. Therefore  $\partial B_u^\varepsilon$ moves holomorphically  with respect to $u\in\mathcal{U}$. The claim is proved.

It follows that  for any $p\geq 0$, the set $N_u^{-p}(\partial B_u^\varepsilon)$ moves continuously in Hausdorff topology
 with respect to $u\in\mathcal{U}$.
  By the assumption  $\lambda\in\mathcal{I}_{\mathcal{H}}(t)\setminus(\partial\mathcal{H}_0^1\cup
  \partial\mathcal{H}_0^2)$, there exist a sequence of parameters $\{u_n\}$ in $\mathcal{H}$ and  a sequence of angles $\{t_n\}$, such that
  $$u_n\rightarrow\lambda, \ t_n\rightarrow t \text{ as } n\rightarrow\infty,  \text{ and } N_{u_n}^k(0)\in R_{u_n}^\varepsilon(t_n)\subset B_{u_n}^\varepsilon \text{ for all } n.$$


By the continuity of $u\mapsto \partial B_u^\varepsilon$  (which also implies the continuity of the internal rays with respect to the parameter), we have $N_\lambda^{k}(0)\in \partial B_\lambda^\varepsilon$
and the internal ray $R_{\lambda}^\varepsilon(t)$ lands at $N_\lambda^{k}(0)$.   By the continuity of $u\mapsto  N_u^{-k}(\partial B_u^\varepsilon)$
 for $u\in \mathcal{U}$  and the fact $N_\lambda^{k}(R_{U_\lambda}(t))=R_{\lambda}^\varepsilon(t)$, we have that $0\in \partial U_\lambda$ and
 $R_{U_\lambda}(t)$ lands at $0$.
\end{proof}

To show that $\partial{\mathcal{H}}$ is a Jordan curve, we need  two lemmas, whose proofs   are very technical.

\begin{lem} \label{imp-h} For each $t$, let  $\mathcal{I}_{\mathcal{H}}^*(t)= \mathcal{I}_{\mathcal{H}}(t)\setminus(\partial\mathcal{H}_0^1\cup\partial\mathcal{H}_0^2)$. Then 
$\mathcal{I}_{\mathcal{H}}^*(t)$  is either empty or a singleton.
\end{lem}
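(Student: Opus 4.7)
The plan is to reduce Lemma \ref{imp-h} to a rigidity statement in the spirit of Theorem \ref{rigidity1}, with the role of the immediate-basin internal ray $R^1_\lambda(t)$ now played by the lifted ray $R_{U_\lambda}(t)$ in the preperiodic Fatou component $U_\lambda$. First I would fix two points $\lambda_1,\lambda_2\in\mathcal{I}_{\mathcal{H}}^*(t)$ and invoke Lemma \ref{5b} to conclude that in each dynamical plane $0\in\partial U_{\lambda_i}$ and $R_{U_{\lambda_i}}(t)$ lands at $0$. Since $\overline{\mathcal{H}}\subset\overline{V(\beta,\alpha)}$ (from the Head's angle observation recorded just before the lemma statement), both $N_{\lambda_i}$ share the same Head's angle $\alpha$, so the articulated rays and the admissible puzzle graphs $Y^I_\lambda(\kappa)$, $Y^{II}_\lambda(\zeta,\eta)$ of Theorem \ref{newton-puzzle} can be set up with identical angle data in the two dynamical planes.

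The core of the argument is then a transcription of the three-step proof of Theorem \ref{rigidity1}. In the first step I would check that both Head's angles equal $\alpha$ and that the itinerary of the free critical point $0$ relative to the common admissible puzzle coincides for the two maps, which is forced by the fact that $R_{U_{\lambda_i}}(t)$ lands at $0\in\partial U_{\lambda_i}$ for $i=1,2$ and by the combinatorial rigidity of the finite chain $U_\lambda\mapsto\cdots\mapsto B_\lambda^\varepsilon$. In the second step I would build a homeomorphism $\psi:\widehat{\mathbb{C}}\to\widehat{\mathbb{C}}$ conjugating $N_{\lambda_1}$ to $N_{\lambda_2}$ and holomorphic on the Fatou set: start by equating the B\"ottcher coordinates on the three immediate basins, interpolate through the finite chain of Fatou components ending at $U_{\lambda_1}$ so that $0\in\partial U_{\lambda_1}$ is matched with $0\in\partial U_{\lambda_2}$, then lift indefinitely along the common Yoccoz puzzle and invoke the shrinking-puzzle-piece property of Theorem \ref{newton-puzzle} to extend $\psi$ continuously over the Julia set. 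In the third step I would promote $\psi$ to a quasiconformal map using the QC-criterion (Lemma \ref{qc-c}), producing disks with uniformly bounded shape through the same $J_\lambda^0$--$J_\lambda^1$--$J_\lambda^2$ decomposition carried out in the proof of Theorem \ref{rigidity1}, using the Yoccoz $\tau$-function analysis and, when needed, the enhanced nest construction. Since the Julia set has zero Lebesgue measure by Theorem \ref{newton-puzzle} and $\psi$ is conformal on the Fatou set, $\psi$ is then a M\"obius conjugacy, and both parameters lying in the fundamental domain forces $\lambda_1=\lambda_2$.

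The main difficulty I expect lies in the third step: because the free critical point now sits on the boundary of an intermediate Fatou component $U_\lambda$ that eventually maps to $B_\lambda^\varepsilon$ only after $k$ iterations, the defining graph $Y^\nu_\lambda$ must be chosen so that $\overline{U_\lambda}$ is contained in a single puzzle piece of some uniform depth. Only under such a choice do the inductive pull-backs of critical puzzle pieces stay of bounded degree through the orbit of $U_\lambda$, so that the enhanced nest and the complex bounds of \cite{KSS, KS, QY} still apply and the uniform shape distortion required by the QC-criterion can be extracted near $J_\lambda^2$.
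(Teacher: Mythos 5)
Your proposal takes a genuinely different route from the paper. You reduce the lemma to a full replay of the three-step rigidity argument of Theorem~\ref{rigidity1}: same Head's angle, Yoccoz-puzzle conjugacy, then the QC-criterion of Lemma~\ref{qc-c} with the $J^0/J^1/J^2$ decomposition, the $\tau$-function, and (potentially) enhanced nests. The paper instead exploits the one piece of structure that Theorem~\ref{rigidity1} does not have: the impression $\mathcal{I}_{\mathcal{H}}^*(t)$ is \emph{connected}. Arguing by contradiction, it takes a connected compact set $\mathcal{E}\subset\mathcal{I}_{\mathcal{H}}^*(t)$ with at least two points, constructs a holomorphic motion over a disk neighborhood $\mathcal{D}\supset\mathcal{E}$ (of the closures of the three immediate basins together with the finite critical orbit segment $\{N_{\lambda_1}^j(0);0\le j\le k-1\}$), extends it by the Holomorphic Motion Theorem, restricts to $\mathcal{E}$, and then lifts indefinitely under the dynamics -- lifting being possible precisely because on $\mathcal{E}$ the critical orbit is matched by Lemma~\ref{5b}. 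The resulting sequence $\psi_j$ has dilatation bounded by that of the initial motion, so the limit $\psi_\infty$ is quasiconformal with no puzzle shape analysis whatsoever; since it is conformal on the Fatou set and $J(N_{\lambda_1})$ has zero Lebesgue measure by Theorem~\ref{newton-puzzle}, $\psi_\infty$ is M\"obius, forcing $\lambda_1=\lambda_2$. What the paper's route buys is that the entire Step~3 of Theorem~\ref{rigidity1} -- QC-criterion, shape distortion, the $\tau$-function case analysis, and the enhanced-nest complex bounds -- is bypassed. Your route, if it can be made to work, is substantially heavier for no gain.

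Two further remarks on your plan. First, your flagged ``main difficulty'' is a non-issue: in the capture case $k\ge2$, the paper observes that $N_\lambda^{k-1}(0)\notin\partial B_\lambda^\varepsilon$ while $N_\lambda^k(0)\in\partial B_\lambda^\varepsilon$, so $\omega(0)\subset\partial B_\lambda^\varepsilon\not\ni 0$; the critical point is non-recurrent, you never enter Case~3.2 of the Step~3 argument, and the enhanced-nest construction and complex bounds of \cite{KSS,KS,QY} are never invoked. Second, your Step~1 assertion that the puzzle itinerary of $0$ ``is forced by the combinatorial rigidity of the finite chain $U_\lambda\mapsto\cdots\mapsto B_\lambda^\varepsilon$'' is stated but not justified; transferring admissibility of the graph from $N_{\lambda_1}$ to $N_{\lambda_2}$ in the capture setting (where the relevant ray is $R_{U_\lambda}(t)$, not $R_\lambda^1(t)$) requires an argument you do not give. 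These gaps are plausibly fillable, but they reinforce that the puzzle-rigidity route is the wrong tool here; the holomorphic-motion argument, tailored to a connected impression, is both shorter and closes the argument cleanly.
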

\begin{proof}  It's clear that $\mathcal{I}_{\mathcal{H}}^*(t)\subset \overline{\mathcal{H}} \subset \Omega_0=\Omega\setminus(\mathcal{H}_0^1\cup\mathcal{H}_0^2)$.  Recall that  $\mathcal{H}$ is a  component of $\mathcal{H}_k^\varepsilon$ for some $\varepsilon\in\{1,2,3\}$.

We shall prove the lemma by contradiction.   If it is not true, then there exist   a connected and compact subset $\mathcal{E}$ of $\mathcal{I}_{\mathcal{H}}^*(t)$  containing at least two points.
  By Lemma \ref{5b}, the internal ray $R_{U_\lambda}(t)$ lands at $0$ for all $\lambda\in \mathcal{E}$.
It is worth observing that for all $\lambda\in \mathcal{E}$, we have
$N_\lambda^{k-1}(0)\notin \partial B_\lambda^\varepsilon$ and
 $N_\lambda^{k}(0)\in  \partial B_\lambda^\varepsilon$. (To see this, if $N_\lambda^{k-1}(0)\in \partial B_\lambda^\varepsilon$, then $N_\lambda^{k-1}(0)$ would be a common  boundary point of  $N_\lambda^{k-1}(U_\lambda)$ and $N_\lambda^{k}(U_\lambda)=B_\lambda^\varepsilon$.
 It turns out that $N_\lambda^{k-1}(0)$ is a critical point of $N_\lambda$.  Necessarily, we have $N_\lambda^{k-1}(0)=0$. Contradiction.) So by  continuity, there is  disk neighborhood
 $\mathcal{D}\subset \Omega_0$ of $\mathcal{E}$ such that for all $\lambda\in \mathcal{D}$, we have $N_\lambda^{k-1}(0)\notin  \overline{ B_\lambda^\varepsilon}$.

Now take two different parameters $\lambda_1,\lambda_2\in \mathcal{E}$.   Suppose that  $\{1,2,3\}$ can be rewritten as $\{\varepsilon,\varepsilon_1,\varepsilon_2\}$. For $l=1,2$,  we define a subset  $Z_{\lambda_1}^l$ of
 $B^{\varepsilon_l}_{\lambda_1}$ to be $\{z\in B^{\varepsilon_l}_{\lambda_1}; |\phi^{\varepsilon_l}_{\lambda_1}(z)|<1/2\}.$
Let
 $$J=\{N_{\lambda_1}^j(0); 0\leq j \leq k-1\}\cup {B_{\lambda_1}^\varepsilon}\cup Z_{\lambda_1}^1\cup Z_{\lambda_1}^2.$$ It's clear that its closure $\overline{J}$ contains the post-critical set of $N_{\lambda_1}$.
We define a continuous map $h: \mathcal{D}\times J\rightarrow \mathbb{\widehat{C}}$ satisfying:


 1.  $h(\lambda, z)=(\phi_\lambda^\varepsilon)^{-1}\circ \phi_{\lambda_1}^\varepsilon(z)$ for all $(\lambda,z)\in \mathcal{D}\times {B_{\lambda_1}^\varepsilon}$;

 2.  $h(\lambda, z)=(\phi_\lambda^{\varepsilon_l})^{-1}\circ \phi_{\lambda_1}^{\varepsilon_l}(z)$ for all $(\lambda,z)\in \mathcal{D}\times Z_{\lambda_1}^l, \ l=1,2$;

 3.  $h(\lambda, N_{\lambda_1}^j(0))=N_{\lambda}^j(0)$  for all $\lambda\in \mathcal{D}$ and  $0\leq j\leq k-1$.

By definition, $z\mapsto h(\lambda_1, z)$ is the identity map. The map $h$ is a holomorphic motion parameterized by $\mathcal{D}$, with base point $\lambda_1$. By the Holomorphic Motion  Theorem (see \cite{GJW,Sl}), there is a holomorphic motion $H:\mathcal{D}\times
\mathbb{\widehat{C}}\rightarrow\mathbb{\widehat{C}}$ extending $h$. We consider the restriction $H_0=H|_{\mathcal{E}\times
\mathbb{\widehat{C}}}$ of $H$.
  Note that fix any  $\lambda\in \mathcal{E}$, the map  $z\mapsto H(\lambda,z)$ sends
  the post-critical set of $N_{\lambda_1}$ to that of $N_{\lambda}$, preserving the dynamics on this set.
  By the  lifting property, there is a
  unique continuous map  $H_1: \mathcal{E}\times \mathbb{\widehat{C}}\rightarrow \mathbb{\widehat{C}}$ such that
$N_\lambda(H_1(\lambda,z))=H_0(\lambda, N_{\lambda_1}(z))$ for all $(\lambda,z)\in\mathcal{E}\times\widehat{\mathbb{C}}$
and
   $H_1(\lambda_1,\cdot)\equiv id$.

 Set $\psi_0=H_0(\lambda_2, \cdot)$ and $\psi_1=H_1(\lambda_2, \cdot)$.
  Both  $\psi_0$ and $\psi_1$ are quasi-conformal maps, satisfying  $N_{\lambda_2}\circ \psi_{1}=\psi_0\circ N_{\lambda_1}$.
   One may verify that $\psi_0$ and $\psi_1$ are isotopic rel $\overline{J}$.
Again by the lifting property, there is a sequence of quasi-conformal maps $\psi_j$ satisfying that

 (a).  $N_{\lambda_2}\circ \psi_{j+1}=\psi_j\circ N_{\lambda_1}$
for all $j\geq0$,

(b). $\psi_{j+1}$ and $\psi_{j}$ are isotopic rel $N_{\lambda_1}^{-j}(\overline{J})$.


The maps $\psi_j$ form a normal family since their dilatations are
uniformly bounded above.  Let  $\psi_\infty$ be the limit map of  $\psi_j$.
It is quasi-conformal in $\mathbb{\widehat{C}}$,  holomorphic in the Fatou set $F(N_{\lambda_1})=\bigcup_{j\geq 0} N_{\lambda_1}^{-j}(B_{\lambda_1}^\varepsilon \cup Z_{\lambda_1}^1\cup Z_{\lambda_1}^2)$ and  satisfies
$N_{\lambda_2}\circ \psi_{\infty}=\psi_\infty\circ N_{\lambda_1}$ in $F(N_{\lambda_1})$. By continuity, we have $N_{\lambda_2}\circ \psi_{\infty}=\psi_\infty\circ N_{\lambda_1}$ in $\mathbb{\widehat{C}}$.
By Theorem \ref{newton-puzzle},
 the  Lebesgue measure of  the Julia set $J(N_{\lambda_1})$ is zero.
Therefore  $\psi_\infty$ is a
M\"obius map.
Since both  $\lambda_1$ and $\lambda_2$ are  contained in  the fundamental domain $\mathcal{X}_{FD}$, we have
$\lambda_1=\lambda_2$.
This  contradicts the assumption that  $\lambda_1\neq \lambda_2$.  
\end{proof}

It follows from Lemma \ref{imp-h}  that  the impression $\mathcal{I}_{\mathcal{H}}(t)$ is either a singleton or
 contained in
   $\partial\mathcal{H}_0^1\cup\partial\mathcal{H}_0^2$.  To analyze the latter case, we  shall prove

\begin{lem} \label{imp-2} If $\mathcal{I}_{\mathcal{H}}(t)\subset \partial\mathcal{H}_0^1\cup\partial\mathcal{H}_0^2$, then $\mathcal{I}_{\mathcal{H}}(t)$ is a singleton. 
\end{lem}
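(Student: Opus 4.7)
The plan is to argue by contradiction: suppose $\mathcal{I}_{\mathcal{H}}(t)$ contains two distinct points. The argument splits into localizing the impression to a single boundary arc, and then using rigidity to show the impression is forced to be a singleton.

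\emph{Localization step.} By Corollary~\ref{H-tongue2} every $\lambda \in \mathcal{H}$ has the same Head's angle $\alpha$, where $(\beta,\alpha)=(p/2^n, p/(2^n-1))$ is the component of $(0,1/2)\setminus\Xi$ with $\mathcal{H} \subset \mathcal{U}_\alpha \cap \Omega_0$; by Lemma~\ref{Head-bounds}, $\overline{\mathcal{H}} \subset \overline{V(\beta,\alpha)} \cap \overline{\Omega}$. Lemma~\ref{intersect} identifies $\partial\mathcal{H}_0^1 \cap \partial\mathcal{H}_0^2 \cap \overline{\Omega}$ as the discrete set $\{0, \sqrt{3}i/2\}$. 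I plan to exclude both from $\overline{\mathcal{H}}$: since $\boldsymbol{h}(\sqrt{3}i/2)=1/2>\alpha$ (Fact~\ref{head-special}), the lower semi-continuity of $\boldsymbol{h}$ (Corollary~\ref{head-semi-con}) together with $\boldsymbol{h}\equiv\alpha$ on $\mathcal{H}$ rules out $\sqrt{3}i/2$; the origin $0$ lies outside $\overline{V(\beta,\alpha)}$ and is excluded directly. Hence $\mathcal{I}_{\mathcal{H}}(t) \cap (\partial\mathcal{H}_0^1 \cap \partial\mathcal{H}_0^2)=\emptyset$, and because the boundaries $\partial\mathcal{H}_0^\varepsilon \cap \overline{\Omega}$ are Jordan arcs (Theorem~\ref{jordan}), the connected impression must lie entirely in one of them; WLOG $\mathcal{I}_{\mathcal{H}}(t) \subset \partial\mathcal{H}_0^1$.

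\emph{Rigidity step.} Each $\lambda \in \mathcal{I}_{\mathcal{H}}(t)$ satisfies (by Theorem~\ref{pri}) either $0 \in \partial B_\lambda^1$, or $N_\lambda$ admits a parabolic cycle. The parabolic case, together with the capture constraint from $\mathcal{H}$ (i.e.\ $N_\lambda^k(0) \in \partial B_\lambda^\varepsilon$ with prescribed itinerary under a Yoccoz puzzle common to all maps in $\overline{\mathcal{H}}$, see Theorem~\ref{newton-puzzle}), reduces to a polynomial system with finitely many solutions in $\overline{\Omega}$, and so the parabolic parameters form a finite set $F \subset \mathcal{I}_{\mathcal{H}}(t)$. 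For $\lambda \in \mathcal{I}_{\mathcal{H}}(t) \setminus F$, the landing angle $s(\lambda) \in [0,1/2]$ of the parameter ray $\mathcal{R}_0^1$ at $\lambda$ is determined by the Jordan arc parameterization of $\partial\mathcal{H}_0^1$, and Lemma~\ref{land} gives that $R_\lambda^1(s(\lambda))$ lands at $0$. I will argue that $s(\lambda)$ is independent of $\lambda$: since $0 \in \partial B_\lambda^1$, the degenerating capture component $U_u$ ($u\in\mathcal{H}$) collapses onto $B_\lambda^1$, so Lemma~\ref{5b} combined with the identification $U_\lambda=B_\lambda^1$ forces the internal ray $R_\lambda^1(s(\lambda))$ to be the limit of $R_{U_u}(t)$. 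The itinerary of the orbit $\{0, N_\lambda(0),\ldots,N_\lambda^k(0)\}$ with respect to the common Yoccoz puzzle of $\overline{\mathcal{H}}$ then translates the angle $t$ (the internal angle at which $N_\lambda^k(0)$ meets $\partial B_\lambda^\varepsilon$) to a definite value of $s$ via the $N_\lambda$-action on internal rays, yielding $s = s(t, \mathcal{H})$ independent of $\lambda$. Theorem~\ref{rigidity1} now produces at most one such $\lambda$, so $\mathcal{I}_{\mathcal{H}}(t)\setminus F$ is a singleton; together with $|F|<\infty$ and the connectedness of $\mathcal{I}_{\mathcal{H}}(t)$, the impression is a single point, contradicting our assumption.

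\emph{The main obstacle} is establishing the combinatorial formula $s=s(t,\mathcal{H})$ and its independence of $\lambda \in \mathcal{I}_{\mathcal{H}}(t)\setminus F$. This requires controlling the degeneration of the capture Fatou component $U_u$ into $B_\lambda^1$ along sequences $u \to \lambda$, and transporting internal angles consistently through this degeneration. I expect to extend the local stability property of articulated rays (Theorem~\ref{art-con}, Fact~\ref{stability}) to a neighborhood of $\partial\mathcal{H}_0^1 \cap \overline{\mathcal{H}}$, combined with a holomorphic motion of $\partial B_u^1$ matching internal angles of $U_u$ and $B_u^1$ along the itinerary fixed by the Yoccoz puzzle. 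If this direct approach proves too delicate, a fallback is to mimic the quasiconformal-deformation construction used in the proof of Lemma~\ref{imp-h}: set up a holomorphic motion of the post-critical set for $\lambda_1,\lambda_2 \in \mathcal{I}_{\mathcal{H}}(t)\setminus F$ using the natural parameterization of $B_\lambda^1$ in place of the (degenerating) parameterization of $U_\lambda$, lift to a quasiconformal conjugacy, and invoke the zero-Lebesgue-measure property of the Julia set from Theorem~\ref{newton-puzzle} to conclude that $\lambda_1$ and $\lambda_2$ are M\"obius conjugate, hence equal in the fundamental domain.
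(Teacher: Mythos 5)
Your rigidity step follows the paper's skeleton (parabolic parameters form a thin set; for the remaining $\lambda$ the internal angle landing at $0$ is constrained combinatorially by $t$; Theorem~\ref{rigidity1} then gives at most one parameter per angle; countability plus connectedness finishes). But two points need attention.

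First, the localization step rests on a false reading of Lemma~\ref{intersect}: that lemma only asserts that $0$ and $\pm\sqrt{3}i/2$ \emph{belong} to $\partial\mathcal{H}_0^1\cap\partial\mathcal{H}_0^2$, not that the intersection equals $\{0,\sqrt{3}i/2\}$. In fact the common boundary is uncountable — by Theorem~\ref{main2} it contains the singleton fibers $\boldsymbol{h}^{-1}(\theta)$ for every non-periodic $\theta\in\Xi$ — so you cannot conclude that the connected impression sits inside a single one of the two Jordan arcs. Fortunately this step is dispensable: the paper simply treats each non-parabolic $\lambda\in\mathcal{I}_{\mathcal{H}}(t)$ individually, according to whether it lies on $\partial\mathcal{H}_0^1$ or on $\partial\mathcal{H}_0^2$, and derives $2^k\alpha=t$ or $2^k\beta=t$ respectively; no global localization is needed.

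Second, the identity $s=s(t,\mathcal{H})$ — concretely $2^k\alpha=t\ (\mathrm{mod}\ \mathbb{Z})$, where $R_\lambda^1(\alpha)$ lands at $0$ and $k$ is the level of $\mathcal{H}$ — is the entire content of the lemma, and you defer it as "the main obstacle." Your primary plan for it (a holomorphic motion of $\partial B_u^1$ matching internal angles through the degeneration of $U_u$ onto $B_\lambda^1$) is problematic: $\partial B_u^1$ does not move holomorphically in any full neighborhood of $\lambda\in\partial\mathcal{H}_0^1$, since its topology changes as $u$ crosses into $\mathcal{H}_0^1$, and the capture component $U_u$ genuinely disappears in the limit. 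The tool you mention only in passing, Fact~\ref{stability}, is the one that actually works and is what the paper uses: assuming $2^k\alpha\neq t$, choose a stable graph $G_u$ with $N_u^{-q-k}(G_u)$ avoiding $0$, arrange that $N_\lambda^{-q}(G_\lambda)$ separates $R_\lambda^1(2^k\alpha)$ from a sector around $R_\lambda^1(t)$ bounded by persistent rational rays $R_u^1(t_1),R_u^1(t_2)$, and then observe that for $u\in\mathcal{H}\cap\mathcal{S}_{\mathcal{H},j}(t)$ close to $\lambda$ the critical point must lie in $N_u^{-k}(S_{B_u^1}(t_1,t_2))$, contradicting the separation. Without this argument (or an equally concrete substitute) the proposal does not yet prove the lemma.
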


\begin{proof}  
Let $\lambda \in \mathcal{I}_{\mathcal{H}}(t)$ be a parameter so that $N_\lambda$ has no parabolic cycle. By assumption, 
 either $\lambda \in \partial\mathcal{H}_0^1$ or  $\lambda \in \partial\mathcal{H}_0^2$. 
 
 \begin{figure}[h]
\begin{center}
\includegraphics[height=5cm]{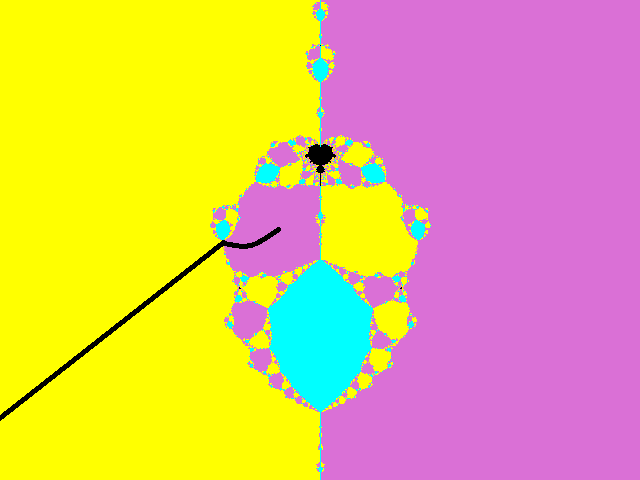}
 \put(-155,58){$\alpha$}  \put(-116,73){$t$}  \put(-106,77){$\mathcal{H}$} 
\put(-180,110){$\mathcal{H}_0^1$}    \put(-30,110){$\mathcal{H}_0^2$}       
 \caption{A case that $\mathcal{I}_{\mathcal{H}}(t)\cap \partial\mathcal{H}_0^1\neq \emptyset$. }
\end{center}\label{f5}
\end{figure}

 If $\lambda \in \partial\mathcal{H}_0^1$, 
 by Theorem \ref{pri},  the free critical point $0$ of $N_\lambda$  is  on $\partial B_\lambda^1$. 
Let  $R_\lambda^1(\alpha)$ be the internal ray  landing at $0$, we will show that 
$$2^k\alpha= t  \  {\rm mod} \ \mathbb{Z},$$
where we recall that  $k$ is the level of $\mathcal{H}$.

We prove the assertion by contradiction, mimicking the proof of Lemma \ref{land}. If  the above equation is not true,  then by Fact \ref{stability}, 
there is a graph $G_\lambda$ avoiding the free critical orbit of $N_\lambda$, an integer $q\geq0$  and a neighborhood $\mathcal{V}'$ of $\lambda$, satisfying that

(a).   $G_u$ is well-defined and moves continuously when   $u\in\mathcal{V}'$;
 
(b).  $N_u^{-q-k}(G_u)$ avoids the free critical point $0$ for all $u\in\mathcal{V}'$;

(c).   when $u=\lambda$, the graph $N_\lambda^{-q}(G_\lambda)$ separates  $R_\lambda^1(2^k\alpha)$ and $R_\lambda^1(t)$.

The third property (c) implies that $N_\lambda^{-q}(G_\lambda)$ also separates $R_\lambda^1(2^k\alpha)$ and a sector neighborhood of   $R_\lambda^1(t)$.  The sector neighborhood can be chosen in the following way: we can first  choose  rational angles $t_1, t_2$ such that

1.   $t_1<t<t_2$ are in counter clockwise order.
 
 2.  The internal rays  with angles $t_1,t,t_2$  are in the same component of $\mathbb{\widehat{C}}\setminus N_\lambda^{-q}(G_\lambda)$.

3. The closure of the  internal rays  $R_u^1(\theta)$ with $\theta\in\{t_1,t_2\}$ are well-defined,  avoiding  $0, N_u(0),\cdots, N_u^{q+k}(0)$  and moves continuously   in a neighborhood $\mathcal{V}_0\subset \mathcal{V}'$ of $\lambda$ (by suitable choices of the angles and  Fact \ref{con}).

Let $S_{B_u^1}(t_1, t_2)$  be the open sector which  contains $(\phi_u^1)^{-1}\{(0,1/2)e^{2\pi i t}\}$ and  bounded by $\partial B_u^1$, $\overline{R_u^1}(\theta), \theta\in\{t_1,t_2\}$. 
By the continuity  of $N_u^{-q-k}(G_u)$, we see that  for all $u\in \mathcal{V}_0$,  the free critical point  $0$ and $(\phi_u^1)^{-1}\{(0,1/2)e^{2\pi i \alpha}\}$ are contained in the same component, say $D_u$,  of  $\mathbb{\widehat{C}}\setminus N_u^{-q-k}(G_u)$,  and $D_u\cap N_u^{-k}(S_{B_u^1}(t_1, t_2))=\emptyset$.
However, the assumption $\lambda\in \mathcal{I}_{\mathcal{H}}(t)$ implies that when $u\in \mathcal{H}\cap \mathcal{V}_0\cap \mathcal{S}_{\mathcal{H},j}(t)$ with $1/j
<\min\{|t_1-t|, |t_2-t|\}$, the free critical point $0\in N_u^{-k}(S_{B_u^1}(t_1, t_2))$. Contradiction. This proves  that $2^k\alpha= t  \  {\rm mod} \ \mathbb{Z}$.

If $\lambda \in \partial\mathcal{H}_0^2$, again by Theorem \ref{pri},  the free critical point $0$ of $N_\lambda$  is  on $\partial B_\lambda^2$. Let  $R_\lambda^2(\beta)$ be the internal ray  landing at $0$, using the same argument as above,  we can show that 
$$2^k\beta= t  \  {\rm mod} \ \mathbb{Z}.$$


 

Therefore each  $\lambda\in \mathcal{I}_{\mathcal{H}}(t)$ either corresponds to   a map $N_\lambda$   having  a parabolic  cycle  or is contained in  the following finite  set
$$\{ \text{the landing point of the parameter ray } \mathcal{R}_0^\varepsilon(\alpha); 2^k\alpha= t, \ \varepsilon=1,2\}.$$
So  $\mathcal{I}_{\mathcal{H}}(t)$ is at most a countable set.  The connectivity of  $\mathcal{I}_{\mathcal{H}}(t)$ implies that it is a singleton. 
  \end{proof}
Now we are ready to prove the main result of this section:

\begin{thm} \label{capture-jordan} $\partial \mathcal{H}$ is a Jordan curve.
\end{thm}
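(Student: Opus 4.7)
The plan is to combine Lemmas \ref{imp-h} and \ref{imp-2} to conclude that for every $t\in[0,1)$ the impression $\mathcal{I}_{\mathcal{H}}(t)$ is a singleton. By Carath\'eodory's theorem, the conformal isomorphism $\Phi_\mathcal{H}^{-1}:\mathbb{D}\to\mathcal{H}$ of Theorem \ref{1b} then extends to a continuous surjection $\Psi:\overline{\mathbb{D}}\to\overline{\mathcal{H}}$, proving that $\partial\mathcal{H}$ is locally connected. The remaining task is to show that $\Psi|_{\partial\mathbb{D}}$ is injective, so that $\partial\mathcal{H}$ is homeomorphic to the circle.

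Suppose for contradiction that $\Psi(e^{2\pi it_1})=\Psi(e^{2\pi it_2})=\lambda$ with $t_1\ne t_2$. In the generic case $\lambda\notin\partial\mathcal{H}_0^1\cup\partial\mathcal{H}_0^2$ one has $\lambda\in\Omega_0$ (since $\lambda\in\partial\mathcal{H}\subset\overline\Omega$ cannot lie in the open set $\mathcal{H}_0^1\cup\mathcal{H}_0^2$), so Lemma \ref{5b} applies to both angles and yields internal rays $R_{U_\lambda}(t_1)$ and $R_{U_\lambda}(t_2)$ landing at the free critical point $0\in\partial U_\lambda$. Since $U_\lambda$ is an iterated pre-image of some $B_\lambda^\varepsilon$, Theorem \ref{roesch-jordan} guarantees $\partial U_\lambda$ is a Jordan curve; its B\"ottcher coordinate therefore extends to a homeomorphism on the boundary, so different internal angles must land at different points, forcing $t_1=t_2$ and giving a contradiction. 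When $\lambda\in\partial\mathcal{H}_0^\varepsilon$ for some $\varepsilon\in\{1,2\}$ and $N_\lambda$ has no parabolic cycle, Theorem \ref{pri} gives $0\in\partial B_\lambda^\varepsilon$, and I would adapt the Fact \ref{stability}-based argument from the proof of Lemma \ref{imp-2} to establish the identity $2^k\alpha_\lambda\equiv t\pmod{\mathbb{Z}}$, where $\alpha_\lambda$ is the unique internal angle in $B_\lambda^\varepsilon$ whose ray lands at $0$ (uniqueness coming again from Theorem \ref{roesch-jordan}). This identity determines $t$ from $\lambda$ alone, contradicting $t_1\ne t_2$.

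The main obstacle is the parabolic subcase: $\lambda\in\partial\mathcal{H}_0^\varepsilon$ with $N_\lambda$ carrying a parabolic cycle. Here the holomorphic motion of $\partial B_u^\varepsilon$ degenerates at $\lambda$, so Lemma \ref{5b} does not apply directly. I would handle it by noting that such parabolic parameters satisfy algebraic equations in $\lambda$ and hence form a discrete subset of $\partial\mathcal{H}_0^\varepsilon$; combining this with a Schwarz-lemma/maximum-modulus argument on the multiplier of the perturbed cycle (mirroring the final step of the proof of Lemma \ref{imp-2}) forces the set of parameter-ray angles landing at any such $\lambda$ to reduce to a singleton. Once injectivity of $\Psi|_{\partial\mathbb{D}}$ is secured, $\Psi|_{\partial\mathbb{D}}$ is a continuous bijection from the circle onto $\partial\mathcal{H}$, hence a homeomorphism, and $\partial\mathcal{H}$ is a Jordan curve.
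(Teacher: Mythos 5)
Your proposal differs in structure from the paper's proof, and one of your three cases has a genuine gap. The paper proves injectivity of $\Psi|_{\partial\mathbb{D}}$ by a single, uniform argument: for any $\lambda\in\partial\mathcal{H}$ (which is automatically non-hyperbolic and lies in $\Omega_0$), the internal rays $R_{U_\lambda}(t_1)$ and $R_{U_\lambda}(t_2)$ land at \emph{distinct} points of the Jordan curve $\partial U_\lambda$ (Theorem~\ref{roesch-jordan}); Fact~\ref{stability} then furnishes a graph $N_u^{-q}(G_u)$ that separates the two internal arcs, moves continuously with $u$, and avoids the critical point $0$; since the component of $\mathbb{\widehat{C}}\setminus N_u^{-q}(G_u)$ containing $0$ is locally constant on a connected neighborhood of $\lambda$, $0$ cannot lie in the $t_1$-sector as $u\to\lambda$ along $\mathcal{R}_\mathcal{H}(t_1)$ and simultaneously in the $t_2$-sector as $u\to\lambda$ along $\mathcal{R}_\mathcal{H}(t_2)$, a contradiction. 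No case distinction on whether $\lambda\in\partial\mathcal{H}_0^1\cup\partial\mathcal{H}_0^2$ or whether $N_\lambda$ has a parabolic cycle is required.

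Your Cases 1 and 2 are reasonable alternative routes to the same contradiction (Case 1 via Lemma~\ref{5b} and the homeomorphic boundary parametrization of $\partial U_\lambda$; Case 2 by re-running the $2^k\alpha\equiv t$ computation from the proof of Lemma~\ref{imp-2}). But Case 3 is a genuine gap. Discreteness of the parabolic parameters on $\partial\mathcal{H}_0^\varepsilon$ has no bearing on whether two distinct parameter rays of $\mathcal{H}$ can land at the same parabolic $\lambda$; a set being discrete does not preclude the landing map from failing to be injective at its points. The maximum-modulus argument you invoke actually appears in the proof of Theorem~\ref{H-tongue1}, not in the final step of Lemma~\ref{imp-2} (which is a countability-plus-connectivity argument), and it serves a completely different purpose there: ruling out interior parabolic parameters in $\mathcal{U}_\alpha$. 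To adapt it to exclude two parameter rays of $\mathcal{H}$ landing at one parabolic parameter you would need to produce a concrete construction, which your sketch does not do. The cleanest repair is to drop the case analysis altogether and adopt the paper's uniform separation argument; the only facts it uses, that $\partial U_\lambda$ is a Jordan curve and that Fact~\ref{stability} applies, both hold for parabolic $\lambda$ without modification.
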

\begin{proof}  We know from the previous two lemmas  that  $\partial \mathcal{H}$ is locally connected.  Assume by contradiction that 
there are two  parameter rays  $\mathcal{R}_{\mathcal{H}}(t_1),  \mathcal{R}_{\mathcal{H}}(t_2)$  with $t_1\neq t_2$, landing at the same point $\lambda\in\partial \mathcal{H}$. Let's look at the dynamical plane of $N_\lambda$, it follows from Theorem \ref{roesch-jordan}  that the internal rays $R_{U_\lambda}(t_1)$ and $R_{U_\lambda}(t_2)$ would land at two different points.  Using the same idea of  proof as  Lemma \ref{imp-2},  by Fact \ref{stability},
there is a graph $G_\lambda$ avoiding the free critical orbit of $N_\lambda$, an integer $q\geq0$  and a neighborhood $\mathcal{V}'$ of $\lambda$ satisfying that

(a).   $G_u$ is well-defined and continuous for   $u\in\mathcal{V}'$;
 
(b).  $N_u^{-q}(G_u)$ avoids the free critical point $0$;

(c).   when $u=\lambda$, $N_\lambda^{-q}(G_\lambda)$ separates  $R_{U_\lambda}(t_1)$ and $R_{U_\lambda}(t_2)$. 
 
 The continuity of $N_u^{-q}(G_u)$ implies that it would separated two sector neighborhoods of the internal arcs  with angles  $t_1$ and $t_2$ in $U_u$ for $u\in\mathcal{V}'$ (here, the  {\it internal arc} with angle $\theta\in \{t_1, t_2\}$ refers to the section  of  the set  $R_{U_u}(\theta)$  that is closed  to the center $c_u$). 
This 
  implies  that $\lambda$ can not be the landing points of  the parameter rays $\mathcal{R}_{\mathcal{H}}(t_1)$ and $\mathcal{R}_{\mathcal{H}}(t_2)$ simultaneously.   
  It contradicts our assumption.
\end{proof}

\section{Proof of  Theorems \ref{main3} and  \ref{main2}} \label{final-s}

In this section, we will prove Theorems \ref{main3} and \ref{main2}. 
To prove these results, it suffices to work in the fundamental domain $\mathcal{X}_{FD}$.








Up to now, we have shown  that $\partial\mathcal{H}_0^1$ and $\partial\mathcal{H}_0^2$ are Jordan curves.  This implies that  each  parameter ray   $\mathcal{R}_0^1(t)$  with $t\in[0, 1/2]$ (resp. $\mathcal{R}_0^2(\theta)$  with $\theta\in[1/2,1]$)  converges to a point on  $\partial{\mathcal{H}_0^1}$  (resp. $\partial{\mathcal{H}_0^2}$).

  Let $\nu_1(t), \nu_2(\theta)$ be the landing points of the parameter rays 
$\mathcal{R}_0^1(t), \mathcal{R}_0^2(\theta)$ respectively.   By Caratheodory's Theorem,  the maps
$$\nu_1: [0,1/2]\rightarrow \partial\mathcal{H}_0^1\cap  \overline{\Omega},  \ \ 
\nu_2: [1/2, 1]\rightarrow \partial\mathcal{H}_0^2\cap  \overline{\Omega}$$
both are  homeomorphisms.
By Theorem \ref{H-tongue1}, we have
$$\nu_1(t)=\nu_2(1-t),\  \forall \ t\in \partial(\Xi).$$


It's known from Lemma \ref{char-S} that 
$\Xi\cup\{0\}$ is the accumulation set of  $\partial(\Xi)$.  Therefore by the continuity of $\nu_\varepsilon$, we get 
$$\nu_1(t)=\nu_2(1-t),\ \forall \ t  \in \Xi\cup\{0\}.$$

Let's look at the Head's angles for the maps in $\mathcal{X}_{FD}\setminus (\mathcal{H}_0^1\cup \mathcal{H}_0^2)=\Omega_0\cup\{\sqrt{3}i/2\}$. 
By Corollary \ref{H-tongue2}, we see that 
$$\boldsymbol{h}(\nu_1(t))=t,  \ \forall \ t\in \partial(\Xi).$$
By Corollary \ref{head-semi-con}, we see that $\boldsymbol{h}$ is continuous at the points $\nu_1(t)$ with $t\in \Xi\setminus\partial(\Xi)$. Therefore by continuity and Fact \ref{head-special} we have
$$\boldsymbol{h}(\nu_1(t))=t,  \forall \  t\in \Xi.$$
This equality implies that the map $\boldsymbol{h}: \Omega_0\cup\{\sqrt{3}i/2\}\rightarrow \Xi$ is surjective. 

To finish, we show that this map is also monotone. In fact, by Corollary \ref{H-tongue2} and Fact \ref{head-special}, we know that 

(a) If  $\theta\in \Xi\cap\Theta_{dya}$, then $\boldsymbol{h}^{-1}(\theta)$ is a singleton.
 
 (b) If $\theta\in \Xi\cap\Theta_{per}$, then   $\boldsymbol{h}^{-1}(\theta)$ is a closed disk $D$ minus a boundary point $w$.  Assume that $\theta$ takes the form $\frac{p}{2^q-1}$, then this closed disk $D$ is  bounded exactly by the curve 
$$\nu_1\Big(\Big[\frac{p}{2^q}, \frac{p}{2^q-1}\Big]\Big)\bigcup \nu_2\Big(\Big[1-\frac{p}{2^q}, 1-\frac{p}{2^q-1}\Big]\Big)$$
and  the boundary point  $w$ is exactly $\nu_1(\frac{p}{2^q})$.   
 
 (c)  If  $\theta\in \Xi\setminus\partial(\Xi)$, we see from the above discussion that  $\boldsymbol{h}^{-1}(\theta)$ is also a singleton.   
 
The discussions in this section can be summarized as follows: 

{\it  The fiber  $\boldsymbol{h}^{-1}(\theta)$ over $\theta\in \Xi$ is a singleton if and only if  $\theta$ is not $\tau$-periodic.
If $\theta$ is $\tau$-periodic, then  $\boldsymbol{h}^{-1}(\theta)$ is homeomorphic to a closed disk minus a boundary point.} 

This completes the proof of Theorem  \ref{main2}, hence also Theorem \ref{main3}.

\end{document}